\numberwithin{equation}{section}
\theoremstyle{plain}
\numberwithin{equation}{section}
\newcommand{\bzeta}{\bm{\zeta}}
\newcommand{\bxi}{\bm{\xi}}
\newcommand{\Sig}{\Sigma}
\newcommand{\ri}{\mathrm{i}}
\newcommand{\rI}{\mathrm{I}}
\newcommand{\Ub}{\mathbf{U}}
\newcommand{\beq}{\begin{equation}}
\newcommand{\bEq}{\end{equation}}
\newcommand{\bx}{{\bf{x}}}
\newcommand{\al}{\alpha}
\newcommand{\be}{\begin{equation}}
\newcommand{\ee}{\end{equation}}
\newcommand{\e}{{\varepsilon}}
\newcommand{\si}{\sigma}
\newcommand{\ctQ}{\widetilde{\mathcal{Q}}}
\newcommand{\CTG}{\widetilde{\mathcal{G}}}
\newcommand{\tsig}{\widetilde{\sigma}}
\newcommand{\fa}{{\mathfrak a}}
\newcommand{\fb}{{\mathfrak b}}
\newcommand{\bU}{ {\bf  U}}
\renewcommand{\cal}{\mathcal}
\newcommand{\wh}{\widehat}
\newcommand{\wt}{\widetilde}
\newcommand{\ii}{\mathrm{i}} 
\newcommand{\dd}{\mathrm{d}}
\renewcommand{\epsilon}{\varepsilon}
\renewcommand{\leq}{\leqslant}
\renewcommand{\geq}{\geqslant}
\renewcommand{\le}{\leq}
\renewcommand{\ge}{\geq}
\newcommand{\E}{\mathbb{E}}
\newcommand{\C}{\mathbb{C}}
\newcommand{\N}{\mathbb{N}}
\newcommand{\norm}[1]{\lVert #1 \rVert}
\DeclareMathOperator{\tr}{Tr}
\DeclareMathOperator{\re}{Re}
\DeclareMathOperator{\im}{Im}
\DeclareMathOperator{\OO}{O}
\DeclareMathOperator{\oo}{o}
\DeclareMathOperator{\bv}{\mathbf{v}}
\DeclareMathOperator{\bu}{\mathbf{u}}
\DeclareMathOperator{\bw}{\mathbf{w}}
\DeclareMathOperator{\bbN}{\mathbb{N}}
\DeclareMathOperator{\bbP}{\mathbb{P}}
\theoremstyle{plain} 
\newtheorem{theorem}{Theorem}[section]
\newtheorem*{theorem*}{Theorem}
\newtheorem{lemma}[theorem]{Lemma}
\newtheorem{assumption}[theorem]{Assumption}
\newtheorem*{lemma*}{Lemma}
\newtheorem{corollary}[theorem]{Corollary}
\newtheorem*{corollary*}{Corollary}
\newtheorem{proposition}[theorem]{Proposition}
\newtheorem*{proposition*}{Proposition}
\newtheorem{definition}[theorem]{Definition}
\newtheorem*{definition*}{Definition}
\theoremstyle{remark}
\newtheorem{example}[theorem]{Example}
\newtheorem*{example*}{Example}
\newtheorem{remark}[theorem]{Remark}
\newtheorem*{remark*}{Remark}
\newtheorem*{remarks*}{Remarks}
\renewcommand{\Im}{{\rm{Im}}}
\renewcommand{\Re}{{\rm{Re}}}
\newcommand{\nc}{\normalcolor}
\newcommand{\vb}{\mathbf{v}}
\newcommand{\wb}{\mathbf{w}}
\begin{document}

\begin{frontmatter}
\title{Spiked separable covariance matrices and principal components}
\runtitle{Spiked separable covariance matrices}

\begin{aug}
\author{\fnms{Xiucai} \snm{Ding}\thanksref{T1}\ead[label=e1]{xiuca.ding@duke.edu}}
\and 
\author{\fnms{Fan} \snm{Yang}\thanksref{T2}\ead[label=e2]{fyang75@wharton.upenn.edu}}

\runauthor{X. Ding and F. Yang}

\affiliation{Duke University \thanksmark{T1} and University of Pennsylvania \thanksmark{T2} }

\address{ Department of Mathematics \\
 Duke University \\
120 Science Drive\\
 Durham, NC, 27710\\
\printead{e1}}

\address{
 Department of Statistics \\
 University of Pennsylvania\\
 3730 Walnut Street\\ 
 Philadelphia, PA 19104\\
\printead{e2}
}
\end{aug}

\begin{abstract}
We study a class of separable sample covariance matrices of the form $\widetilde{\mathcal{Q}}_1:=\widetilde A^{1/2} X \widetilde B X^* \widetilde A^{1/2}.$ Here $\widetilde{A}$ and $\widetilde{B}$ are positive definite matrices whose spectrums consist of bulk spectrums plus several spikes, i.e. larger eigenvalues that are separated from the bulks. Conceptually, we call  $\widetilde{\mathcal{Q}}_1$ a \emph{spiked separable covariance matrix model}. On the one hand, this model includes the spiked covariance matrix as a special case with $\widetilde{B}=I$. On the other hand, it allows for more general correlations of datasets. In particular, for spatio-temporal dataset, $\widetilde{A}$ and $\widetilde{B}$ represent the spatial and temporal correlations, respectively. 

 In this paper, we study the outlier eigenvalues and eigenvectors, i.e., the principal components, of the spiked separable covariance model $\widetilde{\mathcal{Q}}_1$. We prove the convergence of the outlier eigenvalues $\wt\lambda_i$ and the generalized components (i.e. $\langle \mathbf v, \wt\bxi_i\rangle$ for any deterministic vector $\mathbf v$) of the outlier eigenvectors $\wt\bxi_i$ with optimal convergence rates. Moreover, we also prove the delocalization of the non-outlier eigenvectors.  We state our results in full generality, in the sense that they also hold near the so-called BBP transition and for degenerate outliers. Our results highlight both the similarity and difference between the spiked separable covariance matrix model and the spiked covariance matrix model in \cite{principal}. In particular, we show that the spikes of both $\wt A$ and $\wt B$ will cause outliers of the eigenvalue spectrum, and the eigenvectors can help to select the outliers that correspond to the spikes of $\wt A$ (or $\wt B$).
\end{abstract}

\begin{keyword}[class=MSC]
\kwd[Primary ]{15B52}
\kwd{62E20}
\kwd[; secondary ]{62H99}
\end{keyword}

\begin{keyword}
\kwd{Spiked separable covariance matrices}
\kwd{Principal components}
\kwd{BBP transition}
\kwd{Local laws}
\end{keyword}

\end{frontmatter}

\section{Introduction}

High-dimensional data obtained at space-time points has been increasingly employed in various
scientific fields, such as geophysical and environmental sciences \cite{st3, st2}, wireless communications \cite{MIMO,tulino2004random, mimo1}, medical imaging \cite{skup2010} and financial economics \cite{factor1, factor2, yp2016}.  The structural assumption of separability is a popular assumption in
the analysis of spatio-temporal data. Although this assumption does not allow for space-time interactions in the covariance matrix, in many real data applications (e.g., the study of Irish wind speed \cite{jasadata}), the covariance matrix can be well approximated    using separable covariance matrices by solving a nearest Kronecker product for a space-time covariance matrix problem (NKPST)  \cite{st1}.


Consider a $p \times n$ data matrix $Y$ of the form
\begin{equation}\label{eq_defny}
Y=\widetilde{A}^{1/2} X \widetilde{B}^{1/2},
\end{equation}
where $X=(x_{ij})$ is a $p \times n$ random matrix with independent entries such that $\mathbb{E} x_{ij}=0$ and $\mathbb{E} |x_{ij}|^2=n^{-1},$ and $\widetilde{A}$ and $\widetilde{B}$ are respectively $p \times p$ and $n \times n$ deterministic positive-definite matrices. We say $Y$ has a separable covariance structure because the joint spatio-temporal covariance of $Y$, viewed as an $(np)$-dimensional vector consisting of
the columns of $Y$ stacked on top of one another, is given by a separable form $\widetilde{A} \otimes \widetilde{B}$, where $\otimes$ denotes the Kronecker product. This model has different names and meanings in different fields. For example, in wireless communications \cite{MIMO,tulino2004random, mimo1}, especially for the multiple-input-multiple-output (MIMO) systems, the $\widetilde{A}$ and $\widetilde{B}$ represent the covariances between the receiver antennas and between the transmitter antennas, respectively. Also, $Y$ is called the doubly-heteroscedastic noise in \cite{leeb} for matrix denoising and the separable idiosyncratic part in factor model \cite{factor1}. However, as a convention, in this paper we always say that the row indices of $Y$ correspond to spatial locations while the column indices correspond to time points. Moreover, we shall call $\widetilde{A}$ and $\widetilde{B}$ as spatial and temporal covariance matrices, respectively. In this paper, we are mainly interested in the so-called {\it separable sample covariance matrix} $\wt{\mathcal Q}_1:= YY^*$ for the above separable data model $Y$.

One special case 
is the classic sample covariance matrix when $\widetilde{B}=I_n$, which has been a central object of study in multivariate statistics. 
In the null case with $\wt A= I_p$, it is well-known that the empirical spectral distribution (ESD) of $\wt{\mathcal Q}_1$ converges to the celebrated Marchenko-Pastur (MP) law \cite{MP}.
Later on the convergence result of the ESD is extended to various settings with general positive definite covariance matrices $\widetilde{A}$; we refer the readers to the monograph \cite{bai2009spectral} and the review paper \cite{RMTreview}. For the extremal eigenvalues, 
the Tracy-Widom distribution \cite{TW1,TW} of the extremal eigenvalue was first proved in \cite{spikedmodel} for sample covariance matrices with $\widetilde{A}=I_p$ and Gaussian $X$ (i.e. the entries of $X$ are i.i.d.\;Gaussian), and later proved for $X$ with generally distributed entries in \cite{PY}. When $\wt A$ is a general non-scalar matrix, the Tracy-Widom distribution was first proved for the case with $i.i.d.$ Gaussian $X$ in \cite{Karoui,Onatski} and later proved under various moment assumptions on the entries $x_{ij}$ \cite{BPZ1,dingyang, Knowles2017, lee2016}. 
Finally, for the (non-outlier) sample eigenvectors, the completely delocalization \cite{Knowles2017, OROURKE2016361}, quantum unique ergodicity \cite{principal}, distribution of the eigenvector components \cite{ding2016} and convergence of eigenvector empirical spectral distribution \cite{XYY_VESD} have been constructed.

In the statistical study of sample covariance matrices, a popular model is the \emph{Johnstone's spiked  covariance matrix model} \cite{spikedmodel}. In this model, a few spikes, i.e., eigenvalues detached from the bulk eigenvalue spectrum, are added to $\widetilde{A}$. Since the seminal work of Baik, Ben Arous and P\'{e}ch\'{e} \cite{BBP}, it is now well-understood that the extremal eigenvalues undergo a so-called BBP transition along with the change of the strength of the spikes. Roughly speaking, there is a critical value such that the following properties hold: if the strength of the spike is smaller than the critical value, then the extremal eigenvalue of the spiked sample covariance matrix will stick to the right endpoint of the bulk eigenvalue spectrum (and hence is not an outlier), and the corresponding sample eigenvector will be delocalized; otherwise, if the strength of the spike is larger than the critical value, then the associated eigenvalue will jump out of the bulk eigenvalue spectrum, and the outlier sample eigenvector will be concentrated on a cone with axis parallel to the population eigenvector with an (almost) deterministic aperture. 
For an extensive overview of such results, we refer the reader to \cite{principal, ding2017, DP_spike} . 

One purpose of this paper is to generalize some important results for sample and spiked covariance matrices to the more general {separable and spiked separable covariance matrices.} The convergence of the ESD of separable covariance matrices to a limiting law were shown in \cite{Separable, WANG2014, Zhang_thesis}. The edge universality and delocalization of eigenvectors have been proved by the second author \cite{yang2018} for separable covariance matrices {\it without} spikes on $\widetilde{A}$ and $\widetilde{B}$. The convergence of VESD of separable covariance matrices was proved in \cite{yang_thesis}, which is an extension of the result in \cite{XYY_VESD}. 
Then the main goal of this paper is to study the outlier eigenvalues and eigenvectors of separable covariance matrices with spikes on both the spatial and temporal covariance matrices $\widetilde{A}$ and $\widetilde{B}$, which we shall refer to as the {\it spiked separable covariance matrices}.
The precise definition is given in Section \ref{sec_defspike}. 


In this paper, we derive precise large deviation estimates on the outlier eigenvalues and the generalized components of the outlier eigenvectors. In particular, our results give both the first order limits and the (almost) optimal rates of convergence of the relevant quantities. We now describe them briefly. Let $\wt A= \sum_{i=1}^p \widetilde{\sigma}_i^a \vb_i^a(\vb_i^a)^*$ and $\wt B= \sum_{\mu=1}^n \widetilde{\sigma}_\mu^b \vb_\mu^b(\vb_\mu^b)^*$ be the eigendecomposition of $\widetilde{A}$ and $\widetilde{B}$, respectively, where we label the eigenvalues in descending order. We assume that the spiked eigenvalues are $\{\wt\sigma_i^a\}_{i=1}^r$ and $\{\wt\sigma_\mu^b\}_{\mu=1}^s$, where $r$ and $s$ are some fixed integers. Then there exists a threshold $\ell_a$ (or $\ell_b$) such that $\widetilde{\sigma}_i^a$ (or $\widetilde{\sigma}_\mu^b$) gives rise to outliers of $\wt{\mathcal Q}_1$ if and only if $\widetilde{\sigma}_i^a>\ell_a $ (or $\widetilde{\sigma}_\mu^b >\ell_b$). Moreover, the outlier lies around a fixed location determined by the spike $\widetilde{\sigma}_i^a$ (or $\widetilde{\sigma}_\mu^b$); see Theorem \ref{thm_outlier}.
If $\widetilde{\sigma}_i^a - \ell_a \gg n^{-1/3}$ or $\widetilde{\sigma}_j^b - \ell_b \gg n^{-1/3}$, i.e. the spike is supercritical, then the outlier will be well-separated from the bulk spectrum 
and can be detected readily. 
For $0< \widetilde{\sigma}_i^a - \ell_a \ll n^{-1/3}$ or $0< \widetilde{\sigma}_j^b - \ell_b \ll n^{-1/3}$, i.e. the spike is subcritical, the corresponding ``outlier" cannot be distinguished from the bulk spectrum and will instead stick to the right-most edge of the bulk spectrum up to some random fluctuation of order $\OO(n^{-2/3})$. 
Next for the sample eigenvector of $\wt{\mathcal Q}_1$ that is associated with the outlier caused by a {supercritical} spike $\wt\sigma_i^a$, we show that it is concentrated on a cone with axis parallel to the population eigenvector $\vb_i^a$ with an explicit aperture determined by $\widetilde{\sigma}_i^a$. On the other hand, the sample eigenvector of $\wt{\mathcal Q}_1$ that is associated with a {supercritical} spike $\widetilde{\sigma}_\mu^b$ is  delocalized. Similar results hold for the right singular vectors of $Y$, i.e. the eigenvectors of $\wt{\mathcal Q}_2:=\wt B^{1/2} X^* \wt A X \wt B^{1/2}$, by switching the roles of $\wt A$ and $\wt B$. Finally, for the non-outlier singular vectors, {i.e., singular vectors associated with subcritical and bulk eigenvalues}, we proved that they are delocalized. We point out that our results are in the same spirit as the ones for deformed Wigner matrix \cite{KY2013}, deformed rectangular matrix  \cite{bgn2012, ding20171} and spiked  covariance matrices \cite{principal, ding2017, DP_spike}. 
 
The information from sample singular vectors is very important in the estimation of spiked separable covariance matrices. For example, one important parameter to estimate is the number of spikes. For spiked separable covariance matrices, the outliers have two different origins from either $\widetilde{A}$ or $\widetilde{B}$. Hence we need to estimate the number of spikes for each of them. In the literature of spiked covariance matrices \cite{PY14}, the number of spikes is estimated using statistic constructed from eigenvalues only. However, this only gives an estimation of the total number of spikes. To distinguish the two types of spikes, we also need to utilize the information from singular vectors. This will be discussed in detail in Section \ref{sec:statapp}. 

Before concluding the introduction, we summarize the main contributions of our work. 
\begin{itemize}
\item We introduce the spiked separable covariance matrix model; see (\ref{eq_sepamodel}). It allows for more general covariance structure and is suitable for spatio-temporal data analysis with spikes in  both space and time. 

\item For both supercritical and subcritical spikes, we obtain the first order limits of the corresponding {eigenvalue} outliers and the generalized components of the associated eigenvectors. Moreover, our results provide a precise rate of convergence, which we believe to be optimal up to some $n^{\e}$ factor. They are presented in Theorems \ref{thm_outlier} and \ref{thm_eveout}.

\item We prove large deviation bounds for the non-outlier eigenvalues and eigenvectors. In particular, we prove that the non-outlier eigenvalues will stick with those of the reference matrix. Moreover, the non-outlier eigenvectors near the spectrum edge will be biased in the direction of the population eigenvectors of the subcritical spikes. These results are presented in Theorems \ref{thm_eigenvaluesticking} and \ref{thm_noneve}.   

\item We address two important issues in the estimation of spiked separable covariance matrices. First, we provide statistics to estimate the number of spikes for $\wt{A}$ and $\wt{B}$. In particular, we will show that the eigenvectors are important for us to separate the outliers from the spikes of $\wt A$ and those from the spikes of $\wt B$. Second, we obtain the optimal shrinkage for the eigenvalues, which is adaptive to the data matrix only. These are discussed in Section \ref{sec:statapp}.
\end{itemize}


This paper is organized as follows. In Section \ref{sec_defspike}, we define the spiked separable covariance matrix. In Section \ref{main_results}, we state our main results.
In Section \ref{sec:statapp}, we address two important issues regarding the statistical estimation of the proposed spiked separable covariance matrices.  We present the technical proofs in the supplementary material. 

\section{Definition of spiked separable covariance matrices}\label{sec_defspike}

\subsection{The model}

We first consider a class of separable sample covariance matrices of the form $\mathcal Q_1:=A^{1/2}XBX^*A^{1/2}$, where $A$ and $B$ are deterministic non-negative definite symmetric (or Hermitian) matrices. Note that $A$ and $B$ are not necessarily diagonal. We assume that $X=(x_{ij})$ is a $p\times n$ random matrix, where the entries $x_{ij}$, $1 \leq i \leq p$, $1 \leq j \leq n$, are real or complex independent random variables satisfying
\begin{equation}\label{eq_12moment} 
\mathbb{E} x_{ij} =0, \ \quad \ \mathbb{E} \vert x_{ij} \vert^2  = n^{-1}.
\end{equation}
For definiteness, in this paper we focus on the real case, that is, the random variables $x_{ij}$ are real. However, our proof can be applied to the complex case after minor modifications if we assume in addition that $\Re\, x_{ij}$ and $\Im\, x_{ij}$ are independent centered random variables with variance $(2n)^{-1}$. 
 We assume that the entries  $\sqrt{n}x_{ij}$ have bounded fourth moment:
\be\label{conditionA3} 
 \max_{i,j}\mathbb{E} | \sqrt{n}x_{ij} |^4  \le C_4, 
\ee
for some constant $C_4>0$. 
We will also use the $n \times n$ matrix $\mathcal Q_2:=B^{1/2}X^* A X B^{1/2}$. 
We denote the eigenvalues of $\mathcal Q_1$ and $\mathcal Q_2$ in descending order by $\lambda_1(\mathcal Q_1)\geq \ldots \geq \lambda_{p}(\mathcal Q_1)$ and $\lambda_1(\mathcal Q_2) \geq \ldots \geq \lambda_n(\mathcal Q_2)$. Since $\mathcal Q_1$ and $\mathcal Q_2$ share the same nonzero eigenvalues, we will simply write $\lambda_j$, $1\le j \le p \wedge n$, to denote the $j$-th eigenvalue of both $\mathcal Q_1$ and $\mathcal Q_2$ without causing any confusion.

We shall consider the high-dimensional setting in this paper. More precisely, we assume that there exists a constant $0<\tau <1$ such that the aspect ratio $d_n:= p/n$ satisfies 
\begin{equation}
 \tau \le d_n \le \tau^{-1} \ \ \text{ for all } n. \label{eq_ratio} 
 \end{equation}

We assume that $A$ and $B$ have eigendecompositions
\be\label{eigen}
A= V^a\Sig^a (V^a)^*, \quad  B= V^b \Sigma^b (V^b)^* ,
\ee
where
$$\Sig^a=\text{diag}(\si_1^a, \ldots, \si_p^a), \quad  \Sig^b=\text{diag}( \si_1^b, \ldots,  \si_n^b),$$
and
$$V^a= (\bv^a_1, \cdots, \bv^a_p),\quad V^b= (\bv^b_1, \cdots, \bv^b_n). $$ 
We denote the empirical spectral distributions (ESD) of $A$ and $B$ by
\begin{equation}\label{sigma_ESD}
\pi_A\equiv \pi_A^{(p)} := \frac{1}{p} \sum_{i=1}^p \delta_{\si_i^a} \ ,\quad \pi_B\equiv \pi_B^{(n)} := \frac{1}{n} \sum_{i=1}^n \delta_{\si_i^b}\ .
\end{equation}
We assume that there exists a small constant $0<\tau<1$ such that for all $n$ large enough,
\begin{equation}\label{assm3}
\max\{\si_1^a,  \sigma_1^b \} \le \tau^{-1}, \quad \max\left\{\pi_A^{(p)}([0,\tau]), \pi_B^{(n)}([0,\tau])\right\} \le 1 - \tau .
\end{equation}
Note the first condition means that the operator norms of $A$ and $B$ are bounded by $\tau^{-1}$, and the second condition means that the spectrums of $A$ and $B$ cannot concentrate at zero.

In this paper, we study spiked separable sample covariance matrices, which can be realized through a low rank perturbation of the non-spiked version. We shall assume that $\mathcal Q_1$ is a separable sample covariance matrix without spikes (see Assumption \ref{ass:unper} below).
To add spikes, we follow the setup in \cite{ding2017} and assume that there exist some fixed intergers $r, s\in \N$ and constants $d_i^a$, $1\le i \le r$, and $d_\mu^b$, $1\le \mu \le s$, such that
\begin{equation}\label{eq_defnsigmaa}
\begin{split}
\wt A= V^a\wt \Sig^a (V^a)^*, \quad & \wt B= V^b \wt \Sigma^b (V^b)^* ,\\
 \wt\Sig^a=\text{diag}(\wt\si_1^a, \ldots, \wt\si_p^a), \quad & \wt\Sig^b=\text{diag}( \wt\si_1^b, \ldots,  \wt\si_n^b),
 \end{split}
\end{equation}
where 
  \begin{equation}\label{eq_defnsigmab}
\widetilde{\sigma}_i^a=
 \begin{cases}
 \sigma_i^a(1+d^a_i),   & 1 \leq i \leq r \\
 \sigma_i^a,  & \text{otherwise}
 \end{cases},\qquad \widetilde{\sigma}_\mu^b=
 \begin{cases}
 \sigma_\mu^b(1+d^b_\mu),   & 1 \leq \mu \leq s \\
 \sigma_\mu^b,  & \text{otherwise}
 \end{cases}.
 \end{equation} 
 Without loss of generality, we assume that we have reordered indices such that 
 \be\label{reorder} \wt\si_1^a \ge \wt\si_2^a \ge \ldots \ge \wt\si_p^a \ge 0 \ , \quad  \wt\si_1^b \ge \wt\si_2^b \ge \ldots \ge  \wt\si^b_n \ge 0 \ .\ee
 Moreover, we assume that 
 \begin{equation}\label{assm33}
\max\{\widetilde\si_1^a,  \widetilde\sigma_1^b \} \le \tau^{-1} .
\end{equation}
 With (\ref{eq_defnsigmaa}) and (\ref{eq_defnsigmab}), we can write   
\be\label{AOBO} 
\begin{split}
& \widetilde A = A\Big(I_p+{V_o^a} {D}^{a} (V_o^a)^*\Big)=\Big(I_p+ V_o^{a} D^a(V_o^a)^*\Big)A, \\ 
& \widetilde B= B\Big(I_n+V_o^{b} {D}^{b} {(V_o^b)}^*\Big)=\Big(I_n+V_o^{b} {D}^{b} {(V_o^b)}^*\Big)B,
\end{split}
\ee
where 
$${D}^{a}=\text{diag}(d_1^{a},\cdots, d_r^a), \quad V_o^a=(\bv_1^a,\cdots, \bv^a_r),$$
and
$${D}^{b}=\text{diag}(d_1^{b},\cdots, d_s^b), \quad V_o^b=(\bv_1^b,\cdots, \bv^b_s).$$
Then we define the spiked separable sample covariance matrices as
\begin{equation}\label{eq_sepamodel}
\widetilde{\mathcal{Q}}_1=\widetilde A^{1/2} X \widetilde B X^* \widetilde A^{1/2}, \quad  \widetilde{\mathcal{Q}}_2=\widetilde B^{1/2} X^* \widetilde A X \widetilde B^{1/2}.
\end{equation} 

\begin{remark}\label{generaladdrmk}
In the above definition, we have assumed that the non-spiked covariance matrix $A$ (or $B$) and the spiked one $\wt A$ (or $\wt B$) share the same eigenvectors. Theoretically, the more general additive model actually can be reduced to our case as following:
consider the following model
\begin{equation*}
\widetilde{A}=A+\Delta_A,
\end{equation*}
where $A$ is the non-spiked part as above, and $\Delta_A$ is a finite rank perturbation. We can perform the eigendecomposition of $\wt A$ as
$$\wt A=\sum_{i=1}^p \wt\sigma_i^a \wt\bv_i^a (\wt\bv_i^a)^*,$$
where $\wt\bv_i^a$ are not necessarily the eigenvectors of $A$. Then we can decompose $\wt A$ in its eigenbasis as
\be\label{decomposewtA}\wt A=A'+\Delta_A', \quad A'=\sum_{i=1}^p  \sigma_i' \wt\bv_i^a (\wt\bv_i^a)^*,\ee
such that $A'$ is a non-spiked matrix and $\Delta_A'$ is a finite rank perturbation. This is reduced to our setting again. Similar discussion also applies to $\widetilde{B}$. 

In general, how the eigenvalues and eigenvectors of $\wt A$ are related to those of $A$ and $\Delta_A$ is unknown---we even do not know whether $\Delta'_A$ has the same rank as $\Delta_A$.  One possible assumption is that the eigenvalues (i.e. the signal strengths) of $\Delta_A$ are relatively large compared to those of $A$, then the largest few eigenvalues and the corresponding eigenvectors should be well approximated by those of $\Delta_A$, and our results can be applied again. However, the behaviors of the smaller eigenvalues can still be very interesting. For example, in Section \ref{additiveapp} of the supplement \cite{dysupple}, we construct an example such that $B=I$ and $\Delta_A$ is a rank-1 matrix with a large signal, but $\widetilde{Q}_1$ has two outlier eigenvalues. The behavior of the decomposition \eqref{decomposewtA} should depend strongly on the assumptions on $A$ and $\Delta_A$, and we will not pursue this direction in the current paper---it will be a subject for future study.
\end{remark}

 We summarize our basic assumptions here for future reference. For our purpose, we shall relax the assumption \eqref{eq_12moment} a little bit. 
\begin{assumption}\label{assm_big1}
 We assume that $X$ is a $p\times n$ random matrix with real entries satisfying \eqref{conditionA3} and that 
\begin{align}
\max_{i,j}\left|\mathbb{E} x_{ij}\right|   \le n^{-2-\tau}, \quad  \max_{i,j}\left|\mathbb{E} | x_{ij} |^2  - n^{-1}\right|   \le n^{-2-\tau}, \label{entry_assm1}
\end{align}
for some constant $\tau>0$. Note that (\ref{entry_assm1}) is slightly more general than (\ref{eq_12moment}). Moreover, we assume that both $A$ and $B$ are deterministic non-negative definite symmetric matrices satisfying \eqref{eigen} and (\ref{assm3}), $\wt A$ and $\wt B$ are deterministic non-negative definite symmetric matrices satisfying \eqref{eq_defnsigmaa}, \eqref{eq_defnsigmab}, \eqref{reorder} and \eqref{assm33}, and $d_n$ satisfies \eqref{eq_ratio}.
\end{assumption}

\subsection{Resolvents and limiting laws}
In this paper, we study the eigenvalue statistics of $\mathcal Q_{1}$, $\mathcal Q_{2}$ and $\wt {\mathcal Q}_{1}$, $\wt {\mathcal Q}_{2}$ through their {\it{resolvents}} (or  {\it{Green's functions}}). 
Throughout the paper, we shall denote the upper half complex plane and the right half real line by 
$$\mathbb C_+:=\{z\in \mathbb C: \im z>0\}, \quad \mathbb R^+:=[0,\infty).$$ 

\begin{definition}[Resolvents]\label{defn_resolvent}
For $z = E+ \ii \eta \in \mathbb C_+,$ we define the following resolvents for $\al=1,2$:  
\begin{equation}\label{def_green}
 \mathcal G_{\al}(X,z):=\left({\mathcal Q}_{\al}(X) -z\right)^{-1} , \ \ \ \wt{\mathcal G}_{\al} (X,z):=(\wt{\mathcal Q}_{\al}(X)-z)^{-1} .
\end{equation}
 We denote the ESD $\rho^{(p)}$ of ${\mathcal Q}_{1}$ and its Stieltjes transform as
\be\label{defn_m}
\rho^{(p)} := \frac{1}{p} \sum_{i=1}^p \delta_{\lambda_i({\mathcal Q}_1)},\quad m^{(n)}(z):=\int \frac{\rho^{(p)}(\dd x)}{x-z}=\frac{1}{p} \mathrm{Tr} \, \mathcal G_1(z).
\ee
\end{definition}

 It was shown in \cite{Separable} that if $d_n \to d \in (0,\infty)$ and $\pi_A^{(p)}$, $\pi_B^{(n)}$ converge to certain probability distributions, then almost surely $\rho^{(p)}$ converges to a deterministic distributions $ \rho_{\infty}$.
{We now give its definition.} 
For any finite $n$, $p=nd_n$ and $z\in \mathbb C_+$, we define $(m^{(n)}_{1c}(z),m^{(n)}_{2c}(z))\in \mathbb C_+^2$ as the unique solution to the following system of self-consistent equations
\begin{equation}\label{separa_m12}
\begin{split}
& {m^{(n)}_{1c}(z)} = d_n \int\frac{x}{-z\left[1+xm^{(n)}_{2c}(z) \right]} \pi_A^{(p)}(\dd x) ,\\ 
& {m^{(n)}_{2c}(z)} =  \int\frac{x}{-z\left[1+xm^{(n)}_{1c}(z) \right]} \pi_B^{(n)}(\dd x) .
\end{split}
\end{equation}
Then we define
\begin{equation}\label{def_mc}
m_c(z)\equiv m_c^{(n)}(z):= \int\frac{1}{-z\left[1+xm^{(n)}_{2c}(z) \right]} \pi_A^{(p)}(\dd x).
\end{equation}
It is easy to verify that $m_c(z)\in \mathbb C_+$ for $z\in \mathbb C_+$. Letting $\eta \downarrow 0$, we can obtain a probability measure $\rho_{c}^{(n)}$ with the inverse formula
\begin{equation}\label{ST_inverse}
\rho_{c}^{(n)}(E) = \lim_{\eta\downarrow 0} \frac{1}{\pi}\Im\, m^{(n)}_{c}(E+\ii \eta).
\end{equation}
If $d_n \to d \in (0,\infty)$ and $\pi_A^{(p)}$, $\pi_B^{(n)}$ converge to certain probability distributions, {then $\rho_{c}^{(n)}$ converges weakly as $n\to \infty$, and its weak limit is $\rho_\infty$.}


The above definitions of $m_c^{(n)}$, $\rho_c^{(n)}$ and $\rho_\infty$ make sense due to the following theorem. Throughout the rest of this paper, we often omit the super-indices $(p)$ and $(n)$ from our notations for simplicity.

\begin{theorem} [Existence, uniqueness, and continuous density]
For any $z\in \mathbb C_+$, there exists a unique solution $(m_{1c},m_{2c})\in \mathbb C_+^2$ to the systems of equations in (\ref{separa_m12}). The function $m_c$ in (\ref{def_mc}) is the Stieltjes transform of a probability measure $\mu_c$ supported on $\mathbb R^+$. Moreover, $\mu_c$ has a continuous derivative $\rho_c(x)$ on $(0,\infty)$.
\end{theorem}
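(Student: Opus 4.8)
The plan is to establish the three assertions---existence and uniqueness of $(m_{1c},m_{2c})$, the Stieltjes-transform representation of $m_c$, and continuity of the density $\rho_c$ on $(0,\infty)$---in turn, adapting the classical analysis of Marchenko--Pastur-type self-consistent systems (in the spirit of Silverstein--Bai and Silverstein--Choi) to the separable structure of \eqref{separa_m12}, as was done for the non-spiked limiting law in \cite{Separable}.

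\emph{Existence and uniqueness.} Fix $z\in\mathbb{C}_+$. Since the first equation of \eqref{separa_m12} determines $m_{1c}$ from $m_{2c}$ and the second determines $m_{2c}$ from $m_{1c}$, the system is equivalent to a single fixed-point equation $m_{2c}=\Psi_z(m_{2c})$, where $\Psi_z$ is the composition of the two integral maps. For $\im z$ large, both integral maps take values in an $\OO(|z|^{-1})$-neighborhood of the origin, on which $\Psi_z$ is a strict contraction; hence the Banach fixed-point theorem yields a unique solution $(m_{1c}(z),m_{2c}(z))$, holomorphic in $z$, and expanding $(1+xm_{2c})^{-1}=1+\OO(|z|^{-1})$ in the integrands shows $\im m_{1c}(z),\im m_{2c}(z)>0$ in this regime. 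To reach all of $\mathbb{C}_+$ I would continue this solution holomorphically: the implicit function theorem keeps it holomorphic as long as $I-D\Psi_z$ is invertible, which fails only if $\im m_{1c}$ or $\im m_{2c}$ degenerates, and a continuity argument based on the \emph{coupled} system (each integral map alone does not preserve $\mathbb{C}_+$, so the joint structure is essential) rules this out while $\im z>0$. Alternatively, one may identify $m_{1c},m_{2c}$ with normalized traces of a block resolvent attached to the linearization of $A^{1/2}XB^{1/2}$, for which a positive imaginary part is automatic. Uniqueness in $\mathbb{C}_+^2$ for every $z\in\mathbb{C}_+$ follows by subtracting the equations for two solutions and using $\im m_{2c},\im m_{2c}'>0$ to obtain $|m_{2c}-m_{2c}'|\le\Theta(z)\,|m_{2c}-m_{2c}'|$ with $\Theta(z)<1$, forcing equality.

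\emph{Stieltjes-transform representation.} With $m_{1c},m_{2c}\in\mathbb{C}_+$ in hand, multiplying the second equation of \eqref{separa_m12} by $-z$ and using that \eqref{assm3} prevents $\pi_B$ from concentrating at $0$ gives $-zm_{2c}\in\mathbb{C}_-$, that is, $zm_{2c}\in\mathbb{C}_+$; consequently $\im(z+xzm_{2c})\ge\im z>0$ for all $x\ge0$, so every integrand in \eqref{def_mc} has positive imaginary part and $\im m_c(z)>0$. Since $m_c$ is holomorphic on $\mathbb{C}_+$ and $\ii\eta\,m_c(\ii\eta)\to-1$ as $\eta\to\infty$ (the equations force $m_{2c}(\ii\eta)\to0$), the Nevanlinna/Herglotz theorem gives $m_c(z)=\int_{\mathbb{R}}(x-z)^{-1}\mu_c(\dd x)$ for a positive Borel measure $\mu_c$ with $\mu_c(\mathbb{R})=1$. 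For real $z=E<0$, running the contraction on $(0,\infty)^2$ (using $-E>0$ and $1+xm_{2c}\ge1$) gives a real solution, so $m_c$ continues analytically with real values across $(-\infty,0)$; hence $\mu_c$ charges no subset of $(-\infty,0)$, i.e.\ $\supp\mu_c\subseteq\mathbb{R}^+$.

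\emph{Continuity of the density.} This is the crux. On every compact $K\subset(0,\infty)$ one first proves the a priori bounds $|m_{1c}(z)|+|m_{2c}(z)|\le C_K$, together with $|1+xm_{2c}(z)|\ge c_K$ for all $x\in\supp\pi_A$ and $|1+xm_{1c}(z)|\ge c_K$ for all $x\in\supp\pi_B$, uniformly over $z\in K+\ii(0,1]$. The non-degeneracy bounds are where the hypotheses \eqref{eq_ratio} and \eqref{assm3} enter essentially: were $|1+xm_{2c}(z)|$ able to degenerate for some $x$ in the support of $\pi_A$, then $\im m_{1c}(z)\to\infty$, and substituting this into the second equation together with the non-concentration of $\pi_B$ near $0$ would contradict the boundedness of $\im m_{2c}(z)$ (and symmetrically). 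Granting these bounds, differentiating \eqref{separa_m12} shows $(m_{1c},m_{2c})$ is uniformly Lipschitz on $K+\ii(0,1]$, hence extends continuously to $K+\ii[0,1]$; thus the boundary values $m_{1c}(E+\ii0)$, $m_{2c}(E+\ii0)$, $m_c(E+\ii0)$ exist and are continuous on $(0,\infty)$, and by the boundary theory of Nevanlinna functions $\mu_c$ is absolutely continuous there with continuous density $\rho_c(E)=\pi^{-1}\im m_c(E+\ii0)$---the inversion formula \eqref{ST_inverse}. On $\{E>0:\rho_c(E)>0\}$ the implicit function theorem (the Jacobian of \eqref{separa_m12} being invertible when $\im m_{1c},\im m_{2c}>0$) upgrades continuity to real-analyticity, and at the boundary points of $\supp\rho_c$ the Silverstein--Choi functional-inverse analysis---parametrizing $z$ as a real-analytic function of the real variable $m_{2c}$ outside the support and locating the edges via $z'(m_{2c})=0$---shows $\rho_c$ vanishes there at a square-root (or, at a cusp, cube-root) rate, which is more than enough. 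The main obstacle is precisely this a priori non-degeneracy bound: keeping $|1+xm_{2c}(z)|$ and $|1+xm_{1c}(z)|$ bounded below uniformly up to the real axis on compacts of $(0,\infty)$ is what everything downstream---existence of the boundary values, continuity of $\rho_c$, and the edge behaviour---rests on, and it is here that \eqref{eq_ratio} and \eqref{assm3} are indispensable.
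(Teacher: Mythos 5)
The paper does not prove this theorem itself; it quotes it from the literature (Zhang's thesis, Hachem et al., and the reference \cite{Separable_solution}), whose proofs follow the Silverstein--Bai/Silverstein--Choi program that you are paraphrasing. Your first two parts (contraction for large $\im z$ plus continuation and the imaginary-part uniqueness estimate; the Herglotz/Nevanlinna representation with the sign computation $zm_{2c}\in\mathbb C_+$ and the asymptotics $\ii\eta\, m_c(\ii\eta)\to -1$) are in the right spirit and essentially reproduce what those references do.

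The genuine gap is in the third part. You claim that the a priori bounds $|m_{1c}|+|m_{2c}|\le C_K$ and $|1+xm_{2c}|\ge c_K$ imply, by differentiating \eqref{separa_m12}, that $(m_{1c},m_{2c})$ is \emph{uniformly Lipschitz} on $K+\ii(0,1]$, and you obtain the continuous boundary extension from this. That inference fails precisely at the spectral edges: by Lemma \ref{lambdar} the edges $e_k$ are characterized by \eqref{equationEm2}, i.e. $\partial f/\partial m=0$, so the Jacobian of the self-consistent system degenerates there even though $|m|$ stays bounded and $|1+xm_{2c}|$ stays bounded below; correspondingly $m_{2c}(z)-m_{2c}(e_k)\sim (z-e_k)^{1/2}$ and $|m_{2c}'(z)|\sim|z-e_k|^{-1/2}$ (cf.\ Lemma \ref{lambdar_sqrt}), which is continuous but not Lipschitz. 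So no uniform Lipschitz bound holds on any compact $K$ meeting an edge, and the step ``Lipschitz $\Rightarrow$ continuous boundary values $\Rightarrow$ continuous density'' collapses. In addition, your derivation of the non-degeneracy bound itself is shaky: if $|1+x_0m_{2c}(z)|$ became small for some $x_0\in\supp\pi_A$, this forces $\im m_{1c}\to\infty$ only when $\pi_A$ puts non-negligible mass near $x_0$, and proving the bound \emph{uniformly up to the real axis} essentially presupposes the existence and continuity of the boundary values you are trying to establish. The cited proofs avoid both problems by a different mechanism: for a sequence $z_n\to E\in(0,\infty)$ one first shows $\{(m_{1c}(z_n),m_{2c}(z_n))\}$ is bounded (by a contradiction argument using the imaginary parts of \eqref{separa_m12}), then shows any subsequential limit satisfies the boundary system and is unique (the delicate case being a real limit), which yields existence and continuity of $m_c(E+\ii 0)$ and hence of $\rho_c$ via \eqref{ST_inverse}, with the square-root edge behaviour recovered afterwards by the functional-inverse analysis rather than assumed regularity. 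To repair your proof you would need to replace the Lipschitz argument by this boundedness-plus-uniqueness-of-limits argument (or at least by a H\"older-type estimate whose proof already contains the edge analysis).
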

\begin{proof}
See {\cite[Theorem 1.2.1]{Zhang_thesis}}, {\cite[Theorem 2.4]{Hachem2007}} and {\cite[Theorem 3.1]{Separable_solution}}.
\end{proof}

From (\ref{separa_m12}), it is easy to see that if we define the function
\begin{equation}\label{separable_MP}
f(z,m):=- m + \int\frac{x}{-z+xd_n \int\frac{t}{1+tm} \pi_A(\dd t)} \pi_B(\dd x) ,
\end{equation}
then $m_{2c}(z)$ can be characterized as the unique solution to the equation $f(z,m)=0$ that satisfies $\Im \, m> 0$ for $z\in \mathbb C_+$, and $m_{1c}(z)$ can be defined using the first equation in \eqref{separa_m12}.
Moreover,  {$m_{1c}(z)$ and $m_{2c}(z)$} are the Stieltjes transforms of the densities {$\rho_{1c}$ and $\rho_{2c}$}:
\begin{equation}\label{eq_inverse}
 \rho_{\al c}(E) = \lim_{\eta\downarrow 0} \frac{1}{\pi}\Im\, m_{\al c}(E+\ii \eta),\quad \al=1,2.
\end{equation}
Then we have the following result.

\begin{lemma}\label{lambdar}
The densities $\rho_{c}$, {$\rho_{1c}$ and $\rho_{2c}$} all have the same support on $(0,\infty)$, which is a union of intervals: {for $\al=1,2$,}
\begin{equation}\label{support_rho1c}
{\rm{supp}} \, \rho_{c} \cap (0,\infty) ={\rm{supp}} \, \rho_{\al c} \cap (0,\infty) = \bigcup_{k=1}^{L} [e_{2k}, e_{2k-1}] \cap (0,\infty),
\end{equation}
where $L\in \mathbb N$ depends only on $\pi_{A,B}$. Moreover, $(x,m)=(e_k, m_{2c}(e_k))$ are the real solutions to the equations
\begin{equation}
f(x,m)=0, \quad \text{and} \quad \frac{\partial f}{\partial m}(x,m) = 0. \label{equationEm2}
\end{equation}
Finally, we have $e_1 ={\rm O}(1)$, $m_{1c}(e_1) \in (-(\max_{\mu}\sigma_\mu^b)^{-1}, 0)$ and $m_{2c}(e_1) \in (-(\max_i \sigma_{i}^a)^{-1}, 0)$. 
\end{lemma}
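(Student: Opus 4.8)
The plan is to adapt the classical Silverstein--Choi inverse-function analysis to the coupled separable system \eqref{separa_m12}. Recall from \eqref{separable_MP} that for $z\in\mathbb C_+$ the function $m_{2c}(z)$ is characterized as the unique solution of $f(z,m)=0$ with $\Im m>0$, that $m_{1c}(z)=-g(m_{2c}(z))/z$ with $g(m):=d_n\int \frac{t}{1+tm}\,\pi_A(\dd t)$, and that $m_c(z)$ is then given by \eqref{def_mc}. The first step is to treat $f(z,m)=0$ as an implicit equation for $m_{2c}$ and apply the implicit function theorem on the real axis: on any open interval $I\subset(0,\infty)$ on which a real-analytic branch $E\mapsto m_{2c}(E)$ exists one has $\partial_m f(E,m_{2c}(E))\neq 0$ there, $m_{2c}(E)$ is real with $m_{2c}'(E)=\int (x-E)^{-2}\,\rho_{2c}(\dd x)>0$, so $m_{2c}$ is strictly increasing on $I$; conversely such a branch exists on every interval of $(0,\infty)\setminus\supp\rho_{2c}$ (where $m_{2c}(E)=\int(x-E)^{-1}\rho_{2c}(\dd x)$). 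Hence $\supp\rho_{2c}\cap(0,\infty)$ is exactly the closure of the set of $E>0$ admitting no such branch, and its endpoints are precisely the points where the branch degenerates.

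For the coincidence of the three supports I would use the explicit relations: wherever $m_{2c}$ has a real-analytic branch and the denominators $1+\sigma_i^a m_{2c}$ do not vanish, the formulas $m_{1c}=-g(m_{2c})/z$ and \eqref{def_mc} show that $m_{1c}$ and $m_c$ are real-analytic there as well, so $\supp\rho_{1c},\supp\rho_c\subset\supp\rho_{2c}$ on $(0,\infty)$; checking that $1+\sigma_i^a m_{2c}(E)\neq0$ off $\supp\rho_{2c}$ (or that any apparent pole is removable, being simultaneously a pole of the other resolvent data) is part of this step. Since the roles of $(A,\pi_A)$ and $(B,\pi_B)$ in \eqref{separa_m12} are symmetric up to the factor $d_n$, the same argument with $A$ and $B$ interchanged gives the reverse inclusion $\supp\rho_{2c}\subset\supp\rho_{1c}$, and \eqref{def_mc} identifies $\supp\rho_c$ with these; this yields \eqref{support_rho1c}.

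The edge characterization \eqref{equationEm2} then comes from the degeneracy just described: at an endpoint $e_k$, continuity of $m_{2c}$ up to the real axis (which follows from the continuous density furnished by the existence theorem cited above) gives $f(e_k,m_{2c}(e_k))=0$, while failure of the real-analytic branch to extend past $e_k$ forces $\partial_m f(e_k,m_{2c}(e_k))=0$; here $\partial_m f=-1-g'(m)\int \frac{x^2}{(xg(m)-z)^2}\,\pi_B(\dd x)$, so this is a genuine constraint. To obtain that $L$ is finite and depends only on $\pi_A,\pi_B$, I would run the argument in reverse in the $m$-variable: the points $0$, $\{-1/\sigma_i^a\}_i$ and, after reintroducing $m_{1c}$, the analogous poles for $\pi_B$ partition $\mathbb R$ into finitely many open intervals, on each of which the pair $(z,m)$ recovered from \eqref{separa_m12} traces a real-analytic arc; the support is the complement in $(0,\infty)$ of the $z$-images of the sub-arcs on which the relevant monotonicity/sign condition holds, and a case analysis bounds the number of resulting bands by a quantity depending only on these partition points, i.e.\ on $\pi_A,\pi_B$. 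This counting step, together with making the ``allowed region'' description in the $m$-variable precise for the coupled (rather than single-equation) system, is the part I expect to be the main obstacle.

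Finally, for the rightmost edge: $e_1=\OO(1)$ follows from \eqref{separa_m12} and the uniform bounds \eqref{assm3}, which confine $\supp\rho_c$ to a fixed bounded interval (for real $E$ beyond an explicit $\tau$-dependent threshold one solves the system with a real negative solution, so such $E$ lie off the support). For real $E>e_1$ one has $m_{1c}(E),m_{2c}(E)<0$ (Stieltjes transforms of positive measures evaluated to the right of their supports), with $m_{\alpha c}(E)\uparrow 0$ as $E\to\infty$; since the real-analytic branch exists on all of $(e_1,\infty)$, the denominators $1+\sigma_i^a m_{2c}(E)$ and $1+\sigma_\mu^b m_{1c}(E)$ stay nonzero there, which, given $m_{\alpha c}<0$, forces $m_{2c}(E)>-(\max_i\sigma_i^a)^{-1}$ and $m_{1c}(E)>-(\max_\mu\sigma_\mu^b)^{-1}$; letting $E\downarrow e_1$ gives the non-strict bounds. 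Strictness at $e_1$ follows because equality would make an integral appearing in $\partial_m f(e_1,m_{2c}(e_1))$ or in $g'(m_{2c}(e_1))$ diverge, contradicting $\partial_m f(e_1,m_{2c}(e_1))=0$. This yields $m_{1c}(e_1)\in(-(\max_\mu\sigma_\mu^b)^{-1},0)$ and $m_{2c}(e_1)\in(-(\max_i\sigma_i^a)^{-1},0)$, completing the plan.
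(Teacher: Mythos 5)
The paper does not actually prove this lemma: the stated ``proof'' is a citation to Section~3 of \cite{Separable_solution}, and your outline follows the same general Silverstein--Choi philosophy as that reference, so in spirit you are on the right track. However, as a proof your proposal has two genuine gaps. The first is the one you flag yourself: the finiteness of $L$ and the band structure. For the coupled system \eqref{separa_m12} the classical device of inverting to an \emph{explicit} function $z(m)$ and counting its intervals of increase is not available, because in $f(z,m)=0$ from \eqref{separable_MP} the variable $z$ sits inside the $\pi_B$-integral; one must instead analyse the zero set of $f$ (equivalently the system in $(z,m_{1c},m_{2c})$) jointly, show that off the support both branches are real with a definite monotonicity and that the denominators $1+xm_{2c}$, $1+xm_{1c}$ keep a definite sign there, and only then count bands via the finitely many singular points $0$, $\{-1/\sigma_i^a\}$, $\{-1/\sigma_\mu^b\}$. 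This is the bulk of the argument in the cited work, not a routine case analysis, so as written your plan essentially defers the hardest part.

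The second gap is the strictness of $m_{2c}(e_1)>-(\max_i\sigma_i^a)^{-1}$ (and its analogue for $m_{1c}$). Your argument is that equality would make an integral in $g'$ or $\partial_m f$ diverge, contradicting $\partial_m f(e_1,m_{2c}(e_1))=0$. This does not go through as stated: writing $g(m)=d_n\int \frac{t}{1+tm}\,\pi_A(\dd t)$, as $m$ approaches $-(\max_i\sigma_i^a)^{-1}$ one has $-g'(m)\to+\infty$ but simultaneously $g(m)\to+\infty$, and the $\pi_B$-factor in $\partial_m f=-1-g'(m)\int \frac{x^2}{(xg(m)-z)^2}\,\pi_B(\dd x)$ decays like $g(m)^{-2}$, so the product can remain bounded (indeed it is $\mathrm{O}(1)$ when $\pi_A$ puts mass of order one at its maximum); no contradiction with $\partial_m f=0$ follows without a quantitative comparison. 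A correct route must exploit the equation itself (for instance that $f(e_1,m)\to -m\neq 0$ in this degeneration, or the square-root/critical-point structure at the edge established in \cite{Separable_solution}), and in addition your intermediate claim that $1+\sigma_i^a m_{2c}(E)\neq 0$ for all $E\in(e_1,\infty)$ is asserted from analyticity but itself needs justification from the defining equations before you can pass to the limit $E\downarrow e_1$.
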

\begin{proof}
See Section 3 of \cite{Separable_solution}.
\end{proof}

We shall call $e_k$ the spectral edges. In particular, we focus on the rightmost edge $\lambda_+ := e_1$. Now we make the following assumption. It guarantees a regular square-root behavior of the spectral densities {$\rho_{1c}$ and $\rho_{2c}$} near $\lambda_+$ and rules out the existence of outliers.

\begin{assumption} \label{ass:unper} 
There exists a constant $\tau>0$ such that 
\begin{equation}\label{assm_gap}
1+m_{1c}(\lambda_+) \max_\mu \sigma_\mu^b \geq \tau, \quad 1+m_{2c}(\lambda_+) \max_i \sigma_i^a \geq \tau.
\end{equation}
\end{assumption}

 \section{Main results}\label{main_results}

 In this section, we state the main results on the eigenvalues and eigenvectors of $\widetilde{\mathcal{Q}}_1$ and $\widetilde{\mathcal{Q}}_2$, together with some interpretations of these results. Their proof will be presented in the supplement.

Throughout this paper, we use the words {\it spikes} and {\it spiked eigenvectors} for those of the population matrices $\wt A$ and $\wt B.$ Meanwhile, we shall use the words {\it outlier eigenvalues} and {\it outlier eigenvectors} for those of the sample separable covariance matrices $\ctQ_1$ and $\ctQ_2.$

We will see that a spike $\wt\sigma_i^a$, $1\le i \le r$, or $\wt\sigma_\mu^b$, $1\le \mu \le s$, causes an outlier eigenvalue beyond $\lambda_+$, if 
\be\label{spikes}
 \tsig_{i}^a > - m_{2c}^{-1}(\lambda_+)  \quad \text{ or } \quad  \tsig_{\mu}^b > - m_{1c}^{-1}(\lambda_+),
 \ee
 where $m_{1c}(\cdot)$ and $m_{2c}(\cdot)$ are defined in (\ref{separa_m12}). Moreover, such an outlier is around a deterministic location
\be\label{g12c} 
\theta_1(\tsig_{i}^a): = g_{2c}\left(-(\tsig_i^a)^{-1}\right) \quad \text{or} \quad \theta_2(\tsig_{\mu}^b): = g_{1c}\left(-(\tsig_\mu^b)^{-1}\right),
\ee
where $g_{1c}$ and $g_{2c}$ are the inverse functions of $m_{1c}: (\lambda_+,\infty)\to (m_{1c}(\lambda_+),0)$ and $m_{2c}: (\lambda_+,\infty)\to (m_{2c}(\lambda_+),0)$, respectively. Note that the inverse functions exist because
\begin{equation}\label{real_stiel}
 m_{\al c}(x) = \int_0^{\lambda_+} \frac{\rho_{\al c}(t)}{t-x}\dd t, \quad x>\lambda_+, \quad \al=1,2,
 \end{equation}
are monotonically increasing functions of $x$ for $x> \lambda_+$. 


For $X$, we introduce the following bounded support condition.

\begin{definition}[Bounded support condition] \label{defn_support}
We say a random matrix $X$ satisfies the {\it{bounded support condition}} with $\phi_n$ if
\begin{equation}
\max_{i,j}\vert x_{ij}\vert \le \phi_n, \label{eq_support}
\end{equation}
where $ \phi_n$ is a deterministic parameter and usually satisfies $ n^{-{1}/{2}} \leq \phi_n \leq n^{- c_\phi} $ for some (small) constant $c_\phi>0$. Whenever (\ref{eq_support}) holds, we say that $X$ has support $\phi_n$. 
\end{definition}
The main reason for introducing this notation is as following: for a random matrix $X$ whose entries have at least $(4+\e)$-moments, it can be reduced to a random matrix with bounded support with probability $1-\oo(1)$ using a standard cut-off argument; see Corollary \ref{main_cor} below. 

\begin{assumption}\label{ass:spike} 
We assume that \eqref{spikes} holds for all $1\le i \le r$ and $1\le \mu \le s$. Otherwise, if \eqref{spikes} fails for some $\wt\sigma_i^a$ or $\wt\sigma_\mu^b$, we can simply redefine it as the unperturbed version $\sigma_i^a$ or $\sigma_\mu^b$. 
Moreover, we define the integers $0\le r^+ \leq r$ and $0\le s^+ \leq s$ such that 
\begin{equation}\label{eq_spikeassua}
\wt{\sigma}^a_{i} \geq - m_{2c}^{-1} (\lambda_{+}) + n^{-1/3}  + \phi_n  \quad \text{if and only if} \quad  1\le  i \le r^+,
\end{equation}
and 
\begin{equation} \label{eq_spikeassub}
\wt{\sigma}^b_{\mu}\ge - m_{1c}^{-1} (\lambda_{+}) + n^{-1/3} + \phi_n  \quad \text{if and only if} \quad 1\le  \mu \le s^+ .
\end{equation}
The lower bound $n^{-1/3}+\phi_n$ is chosen for definiteness, and {it can be replaced with any $n$-dependent parameter that is of the same order}.
\end{assumption}

\begin{remark}
 Consider the case where $\phi_n\le n^{-1/3}$ (this holds if we assume the existence of $12$-th moment). A spike $\wt\sigma_i^a$ or $\wt \sigma_\mu^b$ that does not satisfy \eqref{eq_spikeassua} or \eqref{eq_spikeassub} will give an outlier that lies within an $\OO(n^{-2/3})$ neighborhood of the rightmost edge $\lambda_+$. It is essentially indistinguishable from the extremal eigenvalue of $\mathcal{Q}_1$, which has typical fluctuation of order $n^{-2/3}$ around $\lambda_+$. Hence in \eqref{eq_spikeassua} and \eqref{eq_spikeassub}, we simply choose the ``real" spikes of $\wt A$ and $\wt B$.
\end{remark}

We will use the following notion of stochastic domination, which was first introduced in \cite{Average_fluc} and subsequently used in many works on random matrix theory, such as \cite{isotropic,principal,local_circular,Delocal,Semicircle,Knowles2017}. It simplifies the presentation of the results and their proofs by systematizing statements of the form ``$\xi$ is bounded by $\zeta$ with high probability up to a small power of $n$".

\begin{definition}[Stochastic domination]\label{stoch_domination}
(i) Let
\[\xi=\left(\xi^{(n)}(u):n\in\bbN, u\in U^{(n)}\right),\hskip 10pt \zeta=\left(\zeta^{(n)}(u):n\in\bbN, u\in U^{(n)}\right)\]
be two families of nonnegative random variables, where $U^{(n)}$ is a possibly $n$-dependent parameter set. We say $\xi$ is stochastically dominated by $\zeta$, uniformly in $u$, if for any fixed (small) $\epsilon>0$ and (large) $D>0$, 
\[\sup_{u\in U^{(n)}}\bbP\left(\xi^{(n)}(u)>n^\epsilon\zeta^{(n)}(u)\right)\le n^{-D}\]
for large enough $n \ge n_0(\epsilon, D)$, and we shall use the notation $\xi\prec\zeta$. Throughout this paper, the stochastic domination will always be uniform in all parameters that are not explicitly fixed (such as matrix indices, and $z$ that takes values in some compact set). Note that $n_0(\epsilon, D)$ may depend on quantities that are explicitly constant, such as $\tau$ in Assumption \ref{assm_big1} and \eqref{assm_gap}. If for some complex family $\xi$ we have $|\xi|\prec\zeta$, then we will also write $\xi \prec \zeta$ or $\xi=\OO_\prec(\zeta)$.

(ii) We extend the definition of $\OO_\prec(\cdot)$ to matrices in the weak operator norm sense as follows. Let $A$ be a family of random matrices and $\zeta$ be a family of nonnegative random variables. Then $A=\OO_\prec(\zeta)$ means that $\left|\left\langle\mathbf v, A\mathbf w\right\rangle\right|\prec\zeta \| \mathbf v\|_2 \|\mathbf w\|_2 $ uniformly in any deterministic vectors $\mathbf v$ and $\mathbf w$. Here and throughout the following, whenever we say ``uniformly in any deterministic vectors", we mean that ``uniformly in any deterministic vectors belonging to a set of cardinality $n^{\OO(1)}$".

(iii) We say an event $\Xi$ holds with high probability if for any constant $D>0$, $\mathbb P(\Xi)\ge 1- n^{-D}$ for large enough $n$.
\end{definition}

%
%
%
%

\subsection{Eigenvalue statistics} {In this subsection, we describe the results on the sample eigenvalues.} To state our result on the outlier eigenvalues, we first introduce the following labelling of such outliers.

\begin{definition}\label{defn_relabelling}
We define the labelling functions $\al:\{1,\cdots, p\}\to \N$ and $\beta:\{1,\cdots, n\}\to \N$ as follows. For any $1\le i \le r$, we assign to it a label $\alpha(i)\in \{1,\cdots, r+s\}$ if $\theta_1(\wt{\sigma}^a_{i})$ is the $\alpha(i)$-th largest element in $\{\theta_1(\wt{\sigma}^a_i)\}_{i=1}^r \cup  \{\theta_{2}(\wt{\sigma}^b_\mu)\}_{\mu=1}^s$. We also assign to any $1\le \mu \le s$ a label $\beta(\mu)\in \{1,\cdots, r+s\}$ in a similar way. Moreover, we define $\al(i)=i+s$ if $i>r$ and $\beta(\mu)=\mu + r$ if $\mu >s$.
We define the following sets of outlier indices:
\begin{align*}
& \mathcal O:= \{\al(i): 1\le i \le r\}\cup \{\beta(\mu): 1\le \mu \le s\}, 
\end{align*}
and
\begin{align*}
& \mathcal O^+:= \{\al(i): 1\le i \le r^+\}\cup \{\beta(\mu): 1\le \mu \le s^+\}.
\end{align*}
\end{definition}

We first state the results on the locations of the outlier and the first few non-outlier eigenvalues.  Denote the nontrivial eigenvalues of $\widetilde{\mathcal{Q}}_{1,2}$ by $\wt\lambda_1 \geq \wt\lambda_2 \geq \cdots \geq \wt\lambda_{n \wedge p}.$ For $1\le i \le r$ and $1\le \mu \le s$, we define 
\begin{align}\label{deltaimu}
& \Delta_1(\widetilde{\sigma}_i^a):= \left(\widetilde{\sigma}_i^a+m_{2c}^{-1}(\lambda_+)\right)^{1/2}, \quad \Delta_2(\widetilde{\sigma}_\mu^b):= \left(\widetilde{\sigma}_\mu^b+m_{1c}^{-1}(\lambda_+)\right)^{1/2}.
\end{align}

\begin{theorem}\label{thm_outlier}
 Suppose $X$ has bounded support $\phi_n$ such that $ n^{-{1}/{2}} \leq \phi_n \leq n^{- c_\phi} $ for some constant $c_\phi>0$. Suppose that Assumptions \ref{assm_big1}, \ref{ass:unper} and \ref{ass:spike} hold.  Then we have 
\begin{equation}\label{eq_spike}
\left|\wt\lambda_{\alpha(i)}-\theta_1(\wt\sigma_i^a)\right| \prec n^{-1/2}\Delta_1(\widetilde{\sigma}_i^a)  + \phi_n\Delta_1^2(\widetilde{\sigma}_i^a), \quad 1 \leq i \leq r^+,
\end{equation}
and 
\begin{equation}\label{eq_spike2}
\left|\wt\lambda_{\beta(\mu)}-\theta_2(\wt\sigma_\mu^b)\right| \prec n^{-1/2} \Delta_2(\widetilde{\sigma}_\mu^b) + \phi_n\Delta_2^2(\widetilde{\sigma}_\mu^b), \quad 1 \leq \mu \leq s^+ .
\end{equation}
Furthermore, for any fixed integer $\varpi>r+s$, we have 
\begin{equation}\label{eq_nonspike}
|\wt \lambda_{i}-{\lambda_{+}}| \prec n^{-2/3} + \phi_n^2, \quad  \text{for } \ i\notin \mathcal O^+  \text{ and }\ i \leq \varpi . 
\end{equation}
\end{theorem}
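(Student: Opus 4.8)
The plan is to follow the standard isotropic-local-law-plus-perturbation approach, adapted to the separable setting. First I would reduce the spiked model $\ctQ_1$ to the non-spiked model $\mathcal Q_1$ via a resolvent identity: using \eqref{AOBO}, write $\wt A^{1/2} X \wt B X^* \wt A^{1/2}$ in terms of $A^{1/2}XBX^*A^{1/2}$ plus finite-rank corrections encoded by $V_o^a, D^a, V_o^b, D^b$. The eigenvalue equation $\det(\ctQ_1 - z) = 0$ away from $\spt\rho_c$ is then equivalent, by the Schur complement / Woodbury-type formula, to the vanishing of the determinant of a finite-dimensional $(2r+2s)\times(2r+2s)$ matrix whose entries are bilinear forms of the resolvents $\mathcal G_1(z), \mathcal G_2(z)$ in the directions $\bv_i^a, \bv_\mu^b$, together with the spectral parameters $d_i^a, d_\mu^b$. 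The key analytic input is the isotropic local law for separable covariance matrices (from \cite{yang2018,yang_thesis}): for $z$ in the relevant spectral domain, $\langle \mathbf v, \mathcal G_\alpha(z)\mathbf w\rangle$ is close, up to errors $\prec \sqrt{\Im m_{\alpha c}(z)/(n\eta)} + 1/(n\eta)$, to an explicit deterministic function $\Pi_\alpha(z)$ built from $m_{1c}, m_{2c}$ and $A$ or $B$.

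\textbf{Main steps.} For the outlier part \eqref{eq_spike}–\eqref{eq_spike2}: (i) substitute the deterministic approximation into the master determinant equation, showing that its deterministic counterpart vanishes exactly at $z = \theta_1(\wt\sigma_i^a)$ (respectively $\theta_2(\wt\sigma_\mu^b)$) — this is where the definitions \eqref{g12c} of $\theta_1,\theta_2$ via the inverse Stieltjes transforms come in; (ii) quantify, via a quantitative implicit function / Rouché argument, how far the true root of the random determinant equation can be from $\theta_1(\wt\sigma_i^a)$, tracking carefully the size of the derivative of the deterministic function at the root, which degenerates like $\Delta_1(\wt\sigma_i^a)^2$ as the spike approaches criticality; (iii) balance the local-law error against this derivative, using $\Im m_{1c}(z)\asymp \Delta_1$-type estimates near the edge, to obtain the claimed rate $n^{-1/2}\Delta_1 + \phi_n\Delta_1^2$. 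The supercriticality gap $\wt\sigma_i^a - \ell_a \gg n^{-1/3}+\phi_n$ from Assumption \ref{ass:spike} is exactly what guarantees $\theta_1(\wt\sigma_i^a)$ is separated from $\lambda_+$ by $\gg n^{-2/3}$, so the local law is valid at that location and the counting of eigenvalues via the argument principle assigns the root to index $\alpha(i)$ using the interlacing/labelling of Definition \ref{defn_relabelling}.

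\textbf{Non-outlier part.} For \eqref{eq_nonspike} I would argue by eigenvalue interlacing and rigidity. The matrix $\ctQ_1$ is a finite-rank perturbation of $\mathcal Q_1$ (of rank at most $2(r+s)$), so by Weyl/Cauchy interlacing, $\wt\lambda_{i+2(r+s)} \le \lambda_i(\mathcal Q_1) \le \wt\lambda_{i}$ (in the appropriate ordering), and the edge rigidity estimate for non-spiked separable covariance matrices — $\lambda_i(\mathcal Q_1) = \lambda_+ + \OO_\prec(n^{-2/3})$ for fixed $i$, which follows from the local law at the edge plus the square-root behavior guaranteed by Assumption \ref{ass:unper} — gives the lower bound $\wt\lambda_i \ge \lambda_+ - \OO_\prec(n^{-2/3}+\phi_n^2)$ for $i\notin\mathcal O^+$. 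For the matching upper bound one shows that outside an $(n^{-2/3}+\phi_n^2)$-neighborhood of $\lambda_+$ the only roots of the master determinant equation are the genuine outliers at $\theta_1(\wt\sigma_i^a), \theta_2(\wt\sigma_\mu^b)$ with $i\le r^+$ or $\mu\le s^+$; a subcritical spike produces a root stuck within $\OO(n^{-2/3})$ of $\lambda_+$, hence counted among the non-outliers.

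\textbf{Main obstacle.} I expect the hardest part to be the uniform control of the master determinant equation in the near-critical regime — where $\Delta_1(\wt\sigma_i^a)$ or $\Delta_2(\wt\sigma_\mu^b)$ is small — since there the relevant derivative degenerates and one must extract the precise rate rather than a crude bound; this requires sharp two-sided estimates on $\Im m_{\alpha c}(z)$, $\partial_z m_{\alpha c}(z)$ and the square-root edge expansion of $\rho_{\alpha c}$ near $\lambda_+$, together with the isotropic local law carried down to the optimal scale $\eta \sim n^{-2/3}$ (or $\eta\sim\Delta^2$ when $\Delta\gg n^{-1/3}$). A secondary technical point is handling degenerate outliers (several spikes of $\wt A$ and/or $\wt B$ giving nearly coincident $\theta$-values), which forces the argument-principle bookkeeping in step (ii) to be done with a contour enclosing a cluster of roots rather than a single root; this is routine but must be set up carefully so the labelling of Definition \ref{defn_relabelling} is respected.
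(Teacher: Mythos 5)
Your overall architecture is the same as the paper's: linearize the spiked model, reduce the eigenvalue problem to the finite-dimensional master equation $\det(\mathcal D^{-1}+x\,\mathbf U^*G(x)\mathbf U)=0$ (the paper's Lemma \ref{lem_pertubation}), feed in the anisotropic local law inside and outside the spectrum so that the deterministic equation is solved exactly at $\theta_1(\wt\sigma_i^a)$ and $\theta_2(\wt\sigma_\mu^b)$, exclude eigenvalues outside permissible neighborhoods of these locations, and treat the non-outliers by interlacing plus edge rigidity for $\mathcal Q_1$ (your additive Weyl bound with rank $\OO(r+s)$ is slightly lossier than the paper's multiplicative interlacing Lemma \ref{lem_weylmodi} but suffices). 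The exclusion step and the non-outlier bound \eqref{eq_nonspike} as you describe them are essentially the paper's Steps 1 and 4.

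The genuine gap is in your counting/labelling step. You propose to pin the eigenvalue with label $\alpha(i)$ to $\theta_1(\wt\sigma_i^a)$ by a quantitative Rouch\'e/argument-principle argument applied directly to the actual ($n$-dependent) configuration, handling near-degenerate outliers by contours around clusters. This cannot work uniformly in the regime that Theorem \ref{thm_outlier} covers: Assumption \ref{ass:spike} allows $\Delta_1^2(\wt\sigma_i^a)\sim n^{-1/3}+\phi_n$, in which case $\theta_1(\wt\sigma_i^a)-\lambda_+\asymp\Delta_1^4(\wt\sigma_i^a)\asymp n^{-2/3}+\phi_n^2$, i.e.\ of the same order as both the error claimed in \eqref{eq_spike} and the fluctuation scale of the extreme eigenvalues of $\mathcal Q_1$. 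Any contour around $\theta_1(\wt\sigma_i^a)$ small enough to separate it from the bulk then passes through (or arbitrarily close to) poles of $G$, the outside-spectrum local law is unavailable there, and no lower bound on $|l(z)|$ beating the perturbation is possible on the contour; a "cluster contour" does not help when the cluster of outliers merges with the edge region. The missing ingredient is the continuity (bootstrapping) argument in configuration space used in the paper's Steps 2--3: Rouch\'e is applied only to a fixed, $n$-independent configuration with well-separated outliers, where $\min_{z\in\mathcal C}|l(z)|\gtrsim 1$ while $|h-l|=\oo(1)$; one then deforms continuously to the actual configuration, and the count is transported along the path using the permissible-region structure (gaps between components are preserved along the path), continuity of eigenvalues, interlacing, and the cluster-diameter estimates (comparing $\Delta_1$ and $\Delta_2$ across overlapping intervals coming from spikes of $\wt A$ and of $\wt B$, via $m_{1c},m_{2c}$) which guarantee that a merged cluster still has diameter $\OO_\prec\bigl(n^{-1/2}\Delta_1+\phi_n\Delta_1^2\bigr)$ and stays separated from the edge interval whenever $i\le r^+$. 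Without this mechanism your scheme establishes the exclusion of eigenvalues from forbidden regions but not that the $\alpha(i)$-th eigenvalue lies in the interval around $\theta_1(\wt\sigma_i^a)$, which is the substance of \eqref{eq_spike} near the BBP threshold and for colliding outliers of mixed type.
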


The above theorem gives the large deviation bounds for the locations of the outliers and the first few extremal non-outlier eigenvalues.  Again consider the case with $\phi_n\le n^{-1/3}$. Then Theorem \ref{thm_outlier} shows that the fluctuation of the outlier changes from the order $n^{-1/2}\Delta_1(\widetilde{\sigma}_i^a)$ to $n^{-2/3}$ when $\Delta_1(\widetilde{\sigma}_i^a)$ or $\Delta_2(\widetilde{\sigma}_\mu^b)$ crosses the scale $n^{-1/6}$. This implies the occurrence of the BBP transition \cite{BBP}. In a future work, we will show that under certain assumptions, the outlier eigenvalues are normally distributed, whereas the extremal non-outlier eigenvalues follow the Tracy-Widom law.

Next, we study the non-outlier eigenvalues of $\ctQ_1.$ We prove that the eigenvalues of $\ctQ_1$ for $i>r^++s^+$ are governed by \emph{eigenvalue sticking,} which states that the non-outlier eigenvalues of $\ctQ_1$ ``stick" with high probability to the eigenvalues of the reference matrix $\mathcal{Q}_1$.  Recall that we denote the eigenvalues of $\mathcal{Q}_1$  as $\lambda_1 \geq \lambda_2 \geq \cdots \geq \lambda_{p \wedge n}$. 

\begin{theorem}\label{thm_eigenvaluesticking}
 Suppose $X$ has bounded support $\phi_n$ such that $ n^{-{1}/{2}} \leq \phi_n \leq n^{- c_\phi} $ for some constant $c_\phi>0$.  Suppose that Assumptions \ref{assm_big1}, \ref{ass:unper} and \ref{ass:spike} hold. We define 
\begin{equation}\label{alpha+}
\alpha_+:=\min\left\{ \min_i \left|\tsig_i^a+m_{2c}^{-1}(\lambda_+)\right|, \min_\mu \left|\tsig_\mu^b+m_{1c}^{-1}(\lambda_+)\right|\right\}.
\end{equation}
 Assume that $\al_+ \ge n^{c_0} \phi_n$ for some constant $c_0>0$.  Fix any sufficiently small constant $\tau>0.$ We have that for $ 1 \leq i \leq \tau n$,
\begin{equation} \label{eq_stickingeq}
\left|\wt\lambda_{i+ r^+ + s^+ }-\lambda_i\right| \prec \frac{1}{n \alpha_+} + n^{-3/4} + i^{1/3}n^{-5/6} + n^{-1/2}\phi_n +  i^{-2/3}n^{-1/3}\phi_n^2 . 
\end{equation}
If either (a) the third moments of the entries of $X$ vanish in the sense that  
\be\label{assm_3rdmoment}
\mathbb E x_{ij}^3 = 0,\quad 1\le i \le p, \ \ 1\le j \le n,
\ee
or (b) either $A$ or $B$ is diagonal, then we have the stronger estimate
\begin{equation} \label{eq_stickingeq_strong}
\left|\wt\lambda_{i+ r^+ + s^+}-\lambda_i\right| \prec \frac{1}{n \alpha_+}, \quad 1 \leq i \leq \tau n. 
\end{equation}
\end{theorem}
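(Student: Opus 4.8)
The plan is to adapt the ``master equation'' approach used for spiked non-separable sample covariance matrices in \cite{principal, ding2017} to the present two-sided deformation, combined with the anisotropic local laws for separable covariance matrices from \cite{yang2018}. \emph{Step 1 (reduction to a finite determinant equation).} Writing $\wt A^{1/2}=A^{1/2}(I_p+U_a)$ and $\wt B^{1/2}=B^{1/2}(I_n+U_b)$ with $U_a,U_b$ of ranks $r,s$ built from the spike eigenvectors $\bv_i^a$ $(1\le i\le r)$ and $\bv_\mu^b$ $(1\le\mu\le s)$, the difference $\ctQ_1-\mathcal Q_1$ is a finite sum of matrices each carrying at least one of these deformations, and hence has rank $O(r+s)$. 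A Schur-complement (Woodbury) computation then shows that, for $z\notin\mathrm{spec}(\mathcal Q_1)$, $z$ is an eigenvalue of $\ctQ_1$ if and only if $\det\mathcal M(z)=0$, where $\mathcal M(z)$ is an $O(1)\times O(1)$ matrix whose entries are ``generalized resolvent entries'' of $\mathcal Q_1$ and $\mathcal Q_2$ evaluated at the spike eigenvectors $\bv_i^a,\bv_\mu^b$ --- such as $\langle\bv_i^a,\mathcal G_1(z)\bv_j^a\rangle$, $\langle\bv_\mu^b,\mathcal G_2(z)\bv_\nu^b\rangle$, and cross terms of the schematic form $\langle\bv_i^a,A^{1/2}X\mathcal G_2(z)B^{1/2}\bv_\mu^b\rangle$ that couple the $\bv^a$ and $\bv^b$ directions through $X$. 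The feature that is genuinely new relative to the one-sided problem is that $\mathcal M(z)$ couples blocks involving $\mathcal G_1$, $\mathcal G_2$ and $X$-sandwiched quantities coming from both deformations at once.

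\emph{Step 2 (deterministic approximation).} Next I would substitute the anisotropic/isotropic local laws for separable sample covariance matrices --- proved for the non-spiked model in \cite{yang2018} and recalled in the supplement --- to replace $\mathcal M(z)$ by a deterministic matrix $\wh{\mathcal M}(z)$, expressed through $m_{1c},m_{2c}$ and the population data $\{\sigma_i^a\},\{\sigma_\mu^b\}$, up to a fluctuation controlled uniformly on the relevant spectral domain by the local-law error parameter $\Psi(z)$. The determinant $\det\wh{\mathcal M}(z)$ is precisely the equation whose roots outside $[0,\lambda_+]$ produce the outliers in Theorem \ref{thm_outlier}; the quantity $\alpha_+$ in \eqref{alpha+} measures how far $\wh{\mathcal M}$ stays from being degenerate near the right edge $\lambda_+$, and the hypothesis $\alpha_+\ge n^{c_0}\phi_n$ is exactly what keeps this non-degeneracy above the fluctuation scale, so that each near-edge estimate below carries a factor $\alpha_+^{-1}$.

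\emph{Step 3 (sticking via residue analysis and interlacing).} For the sticking itself I would fix a non-outlier index $i$ and analyze $\mathcal M(z)$ for real $z$ in a window around $\lambda_i$ of size comparable to the target error. There the generalized resolvent entries in $\mathcal M(z)$ are dominated by a residue term $(\lambda_i-z)^{-1}R_i$, where $R_i$ is a fixed rank-one matrix whose entries are products of the edge eigenvector overlaps of $\mathcal Q_1$ and $\mathcal Q_2$ with the spike eigenvectors $\bv_i^a,\bv_\mu^b$; the sizes of these overlaps near $\lambda_+$ follow from the isotropic local law / eigenvector delocalization for $\mathcal Q_1$ established in \cite{yang2018}, while $\lambda_+-\lambda_i\asymp(i/n)^{2/3}$ and the local eigenvalue spacing follow from rigidity. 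Combining (i) this residue structure, (ii) monotonicity of $\det\mathcal M$ on intervals free of $\mathrm{spec}(\mathcal Q_1)$, and (iii) a Cauchy-interlacing count --- which, together with Theorem \ref{thm_outlier} and Definition \ref{defn_relabelling}, forces the index shift to equal exactly $r^++s^+$, the number of supercritical spikes of $\wt A$ and $\wt B$ --- shows that $\det\mathcal M(z)=0$ has exactly one root within distance $\delta_i$ of $\lambda_i$, with $\delta_i$ equal to the right-hand side of \eqref{eq_stickingeq}. Here $(n\alpha_+)^{-1}$ is the genuine sticking error (an edge residue of size $O(1/n)$ amplified by $\alpha_+^{-1}$), while the remaining terms $n^{-3/4}+i^{1/3}n^{-5/6}+n^{-1/2}\phi_n+i^{-2/3}n^{-1/3}\phi_n^2$ track the local-law error $\Psi$ together with a further contribution --- involving $\E x_{ij}^3$ and the misalignment between the eigenbases of $A,B$ and the canonical basis --- that enters $\wh{\mathcal M}$ and is propagated to the eigenvalue through the square-root behavior of $\wh{\mathcal M}$ near $\lambda_+$. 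This last contribution vanishes when $\E x_{ij}^3=0$, and also when $A$ or $B$ is diagonal (in which case $V^a$ or $V^b$ disappears and the offending term is absent), so that $\delta_i=(n\alpha_+)^{-1}$, which gives \eqref{eq_stickingeq_strong}.

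\emph{Main obstacle.} I expect the hard part to be the quantitative analysis of $\det\mathcal M(z)$ near the poles $\lambda_i$: extracting the residue matrix $R_i$ from the mixed $\mathcal G_1/\mathcal G_2/X$-block form precisely enough to reach the optimal error $(n\alpha_+)^{-1}$, uniformly over $1\le i\le\tau n$, while simultaneously handling the near-critical regime $\alpha_+\asymp n^{c_0}\phi_n$, where $\wh{\mathcal M}$ is nearly singular and the BBP transition is approached. Controlling the generalized resolvent entries and the eigenvector overlaps of $\mathcal Q_1$ and $\mathcal Q_2$ down to the optimal scale near the edge, and the bookkeeping that isolates the clean error $(n\alpha_+)^{-1}$ from the third-moment/basis corrections, are the remaining delicate points.
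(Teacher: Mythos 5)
Your Steps 1 and 2 coincide with the paper's route: the linearization plus Woodbury identity giving the master equation $\det(\mathcal D^{-1}+x\,\mathbf U^*G(x)\mathbf U)=0$ (Lemma \ref{lem_pertubation}) and the substitution of the anisotropic local law. The genuine gap is in your Step 3. You propose a per-eigenvalue residue analysis around each $\lambda_i$, claiming the single-pole term $(\lambda_i-z)^{-1}R_i$ dominates and that $\det\mathcal M$ has exactly one root within distance $\delta_i$ of $\lambda_i$. But in the regime covered by the theorem the spacing of $\mathrm{Spec}(\mathcal Q_1)$ near the edge is $\sim n^{-2/3}i^{-1/3}$, which is comparable to or much smaller than the target error $\delta_i\sim (n\alpha_+)^{-1}+n^{-3/4}+i^{1/3}n^{-5/6}+n^{-1/2}\phi_n$ (e.g.\ $\alpha_+\sim n^{-1/3}$, or $i\sim n^{1/2}$), so a window of radius $\delta_i$ around $\lambda_i$ typically contains many poles; the single-residue approximation does not dominate there, "exactly one root near each $\lambda_i$" is neither provable locally nor what is needed, and monotonicity of $\det\mathcal M$ is not available in this matrix-valued two-sided setting. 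The paper closes precisely this step differently: (i) it evaluates the local law slightly off the real axis at $\eta_x\sim n^{-1+\epsilon}\alpha_+^{-1}+n^{\epsilon}\eta_l(x)$ to show no eigenvalue of $\ctQ_1$ lies farther than $\eta_x$ from $\mathrm{Spec}(\mathcal Q_1)$ (Lemma \ref{prop_eigensticking}); (ii) it groups the $\lambda_j$ into clusters and counts the eigenvalues of $\ctQ_1$ in each cluster via a deformation/continuity argument with a random regularization $\omega\Delta$, using delocalization to control the in-cluster pole block (Lemma \ref{lem_counting}, Proposition \ref{prop_atleat}); and (iii) it pins the index shift $r^++s^+$ and treats general, possibly degenerate spike configurations by a continuity argument in the spikes together with interlacing (Lemma \ref{lem_weylmodi}) and Theorem \ref{thm_outlier}. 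Your heuristic does correctly identify $(n\alpha_+)^{-1}$ as an $O(1/n)$ residue amplified by the non-degeneracy gap, but without the cluster-level counting and the off-axis evaluation the argument does not go through uniformly for $i\le \tau n$.

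A secondary inaccuracy concerns the dichotomy \eqref{eq_stickingeq} versus \eqref{eq_stickingeq_strong}. There is no third-moment or basis-misalignment correction term entering the deterministic matrix $\wh{\mathcal M}$; the extra terms $n^{-3/4}+i^{1/3}n^{-5/6}+n^{-1/2}\phi_n+i^{-2/3}n^{-1/3}\phi_n^2$ come from the restricted domain of validity of the local law (Theorem \ref{LEM_SMALL}), which forces the scale $\eta_l(\gamma_i)\sim n^{-3/4}+n^{-5/6}i^{1/3}+n^{-1/2}\phi_n$, and from the corresponding rigidity error $n^{2/3}i^{-2/3}\eta_l^2(\gamma_i)$ in Theorem \ref{thm_largerigidity}. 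Under (a) vanishing third moments or (b) $A$ or $B$ diagonal, the local law holds down to $\eta\ge n^{-1+\epsilon}$ and rigidity improves to $n^{-2/3}i^{-1/3}$, so one may effectively take $\eta_l=n^{-1}$, which is the sole source of \eqref{eq_stickingeq_strong}. To reach the stated exponents you would need to invoke these inputs rather than a cancellation inside $\wh{\mathcal M}$.
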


Theorem \ref{thm_eigenvaluesticking} establishes the large deviation bounds for the non-outlier eigenvalues of $\ctQ_1$ with respect to the eigenvalues of $\mathcal{Q}_1$. In particular, when $\alpha_+ \gg n^{-1/3}$ and $\phi_n \ll n^{-1/6}$, the right-hand side of (\ref{eq_stickingeq}) or (\ref{eq_stickingeq_strong}) is much smaller than $n^{-2/3}$ for $i =\OO(1)$. In fact it was proved in \cite{yang2018} that the limiting joint distribution of the first few eigenvalues $\{\lambda_i\}_{1\le i \le k}$ of $\mathcal Q_1$ is universal under an $n^{2/3}$ scaling for any fixed $k\in \N$. Together with \eqref{eq_stickingeq}, this implies that the limiting distribution of the largest non-outlier eigenvalues of $\ctQ_1$ is also universal under an $n^{2/3}$ scaling as long as $\al_+\gg n^{-1/3}$ and $\phi_n \ll n^{-1/6}$. In a future paper, we will prove that $\{n^{2/3}(\lambda_i-\lambda_+)\}_{1\le i \le k}$ converges to the Tracy-Widom law for any fixed $k\in \mathbb N$, which immediately implies that the largest non-outlier eigenvalues of $\ctQ_1$ also satisfy the Tracy-Widom law.

\begin{remark} \label{remark_generalestimation}
The Theorems \ref{thm_outlier} and \ref{thm_eigenvaluesticking} can be combined to potentially estimate the spikes of $\wt A$ and $\wt B$ if {they are} low-rank perturbations of identity matrices. By Theorem \ref{thm_outlier}, the spike $\wt \sigma^a_i$ or $\wt \sigma^b_{\mu}$ can be effectively estimated using $-m^{-1}_{2c}(\wt\lambda_{\alpha(i)})$ or $-m^{-1}_{1c}(\wt\lambda_{\beta(\mu)})$. Although calculating $m_{1c}$ and $ m_{2c}$ needs the knowledge of the spectrums of $A$ and $B$, we will see that $m_{1c}$ and $m_{2c}$ can be well approximated using the eigenvalues of $\ctQ_1$ and $\ctQ_2$ only. We record such result in Theorem \ref{thm_adaptiveest}.

On the other hand, for the non-spiked eigenvalues, to our best knowledge there does not exist any literature on the estimation of the spectrums of general $A$ and $B$ using the eigenvalues of $\mathcal Q_1$ and $\mathcal Q_2$ only. However, for sample covariance matrices with $B=I$, the spectrum of $A$ can be estimated using the eigenvalues of $A^{1/2}XX^*A^{1/2}$ by solving a convex optimization problem involving the self-consistent equation for $m_{2c}$ in \cite{elkaroui2008,kong2017}. 
In the future work, we will try to generalize their results to the separable covariance matrices with more general $B.$ Note that although we cannot observe the eigenvalues of $\mathcal{Q}_1,$ Theorem \ref{thm_eigenvaluesticking} implies that the non-outlier eigenvalues of $\ctQ_1$ are close to those of $\mathcal{Q}_1.$      
\end{remark}

\begin{remark} 
{ We have seen from Theorem \ref{thm_outlier} that the locations of the outlier eigenvalues depend on the spikes and the spectrums of both $A$ and $B.$ Consider the case with $r=s=1$ and  supercitical spikes (c.f. Assumption \ref{assum_supercritical}). By (\ref{eq_spike}), we see that the outlier locations depend on  the 4-tuple $(\widetilde{\sigma}^a, \widetilde{\sigma}^b, \bm{\sigma}(A), \bm{\sigma}(B)),$ where $\widetilde{\sigma}^a$ and $\widetilde{\sigma}^b$ are the spikes associated with $A$ and $B$, respectively, and $\bm{\sigma}(A)$ and $\bm{\sigma}(B)$ denote the spectrums of $A$ and $B$. 
In general, the 4-tuple is not jointly identifiable. Indeed, even the pair $(\bm{\sigma}(A), \bm{\sigma}(B))$ is not jointly identifiable \cite{LU2005449}. 

To handle this issue,  one needs to impose some constraints. For instance, when $B=I_n,$ $\widetilde{\sigma}^a$ can be efficiently estimated using the eigenvalues of $\widetilde{\mathcal{Q}}_1$ by Theorem \ref{thm_adaptiveest}. Moreover, as mentioned in Remark \ref{remark_generalestimation}, the spectrum of $A$ can be estimated using the methods mentioned in \cite{elkaroui2008, kong2017, LW2015}. In this situation,  $(\widetilde{\sigma}^a, \bm{\sigma}(A))$ is identifiable. More generally, 
assume we know that the two triplets $(\widetilde{\sigma}_\alpha^a, \bm{\sigma}(A_\alpha), \bm{\sigma}(B))$ and $(\widetilde{\sigma}_\beta^a, \bm{\sigma}(A_\beta), \bm{\sigma}(B))$ share the same temporal covariance matrix $B$. Then using their sample eigenvalues $\{\widetilde{\lambda}^\alpha_k\}$ and $\{\widetilde{\lambda}_k^\beta\},$  we can employ the following two-step procedure to check whether they are identifiable. 

\vspace{5pt}
\noindent{\bf Step (i)}: Checking whether they have the same number of outliers and whether the outliers share the same values. 
More precisely 
given a threshold $\omega \rightarrow 0,$ we need to check whether $|\widetilde{\lambda}_k^{\alpha}-\widetilde{\lambda}_k^\beta| \leq \omega$, $1\le k \le r$, where $r$ is the number of outliers. If this does not hold true, then the two triples are different according to Theorem \ref{thm_outlier}. Otherwise, we continue with the second step. 

\vspace{5pt}

\noindent{\bf Step (ii)}: Checking whether the spectrums of $A_{\alpha}$ and $A_{\beta}$ are the same. In fact, the eigenvalues of $\mathcal{Q}_1$ are determined by the spectrums of $A$ and $B$; see the eigenvalues rigidity result, Theorem \ref{thm_largerigidity}, in the supplement \cite{dysupple}. Then with Theorem \ref{thm_eigenvaluesticking},  if $\bm{\sigma}(A_{\alpha})=\bm{\sigma}(A_\beta)$, we should have $|\widetilde{\lambda}_k^\alpha-\widetilde{\lambda}_k^\beta|\le \omega$, $k\ge r+1$, for the non-outliers. 
If this does not hold true, we claim that these two triplets are different. 

\vspace{5pt}

Finally, we mention that for a rigorous  statement of the above hypothesis testing on whether $(\widetilde{\sigma}_\alpha^a, \bm{\sigma}(A_\alpha), \bm{\sigma}(B))$ and $(\widetilde{\sigma}_\beta^a, \bm{\sigma}(A_\beta), \bm{\sigma}(B))$ are the same,  we need to derive the second order asymptotics of the eigenvalues. This will be our future work. }
\end{remark}

\subsection{Eigenvector statistics} {In this subsection, we state the results on the eigenvectors of $\ctQ_1$ and $\ctQ_2.$} We denote the eigenvectors of $\ctQ_1$ by $\wt{\bm\xi}_k$, $1\le k\le p$, and the eigenvectors of $\ctQ_2$ by $\wt{\bm \zeta}_\mu$, $1\le \mu\le  n$. To remove the arbitrariness in the definitions of eigenvectors, 
we shall consider instead the products of generalized components
$$\langle \bv,\wt{\bm\xi}_k\rangle\langle \wt{\bm\xi}_k,\bw\rangle ,\quad \langle \bv',\wt{\bm\zeta}_k\rangle\langle \wt{\bm\zeta}_k,\bw'\rangle ,$$
where $\vb, \wb, \vb'$ and $\wb'$ are some given deterministic vectors. Note that these products characterize the eigenvectors $\wt{\bm\xi}_k$ and $\wt{\bm\zeta}_k$ completely up to the ambiguity of a phase. More generally, if we consider degenerate  or near-degenerate outliers, then 
only eigenspace matters. {Here the degenerate (or near-degenerate) outliers refer to the outliers corresponding to identical (or near-degenerate) population spikes.} 
As in \cite{principal}, we shall consider the generalized components $\langle \bv,\cal P_S\bw\rangle$ of the random projection
$$ \cal P_S:=\sum_{k\in S} \wt{\bxi}_k\wt{\bxi}_k^*, \quad \text{ for } S\subset \mathcal O^+.$$
In particular, in the non-degenerate case $S=\{k\}$, the generalized components of $\cal P_S$ are the products of the generalized components of $\wt\bxi_k$.

 For $1\le i \le r^+$, $1\le  j \le p$ and $1\le \nu \le n$, we define
 \begin{equation}\label{eq_nu}
\delta_{\al(i), \al(j)}^{a}:=|\tsig^{a}_j-\tsig^{a}_i|, 
\quad \delta_{\al(i), \beta(\nu)}^{a}:=
\left| \tsig^{b}_\nu + m_{1c}^{-1}(\theta_1( \tsig^{a}_i)\right|.
\end{equation}
Similarly, for $1\le \mu \le s^+$, $1\le  j \le p$ and $1\le \nu \le n$, we define
 \begin{equation}\label{eq_nu2}
\delta_{\beta(\mu), \al(j)}^{b}:=|\tsig^{a}_j + m_{2c}^{-1}(\theta_2(\tsig^{b}_\mu))|, \quad \delta_{\beta(\mu), \beta(\nu)}^{b}:=|\tsig^{b}_\nu-\tsig^{b}_\mu|.
\ee
Given any $S \subset \mathcal O^+$, if $\mathfrak a\in S$, then we define
$$\delta_{\mathfrak a}(S):=\begin{cases}\left( \min_{ k:\al(k)\notin S}\delta^a_{\mathfrak a, \al(k)}\right)\wedge \left( \min_{\mu:\beta(\mu)\notin S}\delta^a_{a, \beta(\mu)}\right), & \ \text{if } \mathfrak a=\al(i) \in S\\
\left( \min_{k:\al(k)\notin S}\delta^b_{\mathfrak a, \al(k)}\right)\wedge \left( \min_{\mu:\beta(\mu)\notin S}\delta^b_{\mathfrak a,\beta(\mu)}\right), & \ \text{if } \mathfrak a=\beta(\mu) \in S
\end{cases};$$
if $\mathfrak a\notin S$, then we define
$$\delta_{\mathfrak a}(S):=\left( \min_{k:\al(k)\in S}\delta^a_{\al(k), \mathfrak a}\right)\wedge \left( \min_{\mu:\beta(\mu)\in S}\delta^b_{\beta(\mu), \mathfrak a}\right).$$
We now state the results on the left outlier singular vectors of ${\wt A}^{1/2}X{\wt B}^{1/2}, $ i.e., the outlier eigenvectors of $\ctQ_1.$ 


\begin{theorem}\label{thm_eveout} 
{Suppose $X$ has bounded support $\phi_n$ such that $ n^{-{1}/{2}} \leq \phi_n \leq n^{- c_\phi} $ for some constant $c_\phi>0$. Suppose that Assumptions \ref{assm_big1}, \ref{ass:unper} and \ref{ass:spike} hold. Fix any $S\subset \mathcal O^+$, we define the following deterministic positive quadratic form
\be\label{defn_ZA}
\langle \bv, \cal Z_S\bv\rangle := \sum_{i:\al(i)\in S}\frac{|v_i|^2}{\wt{\sigma}_i^a} \frac{g_{2c}'(-(\wt{\sigma}_i^a)^{-1})}{g_{2c}(-(\wt{\sigma}_i^a)^{-1})} , \ \ \text{for } \ \bv\in \mathbb C^p, \ \ v_i : =\langle \bv_i^a, \bv\rangle.
\ee
Then for any deterministic vector $\bv\in \mathbb C^p$, we have that
\begin{equation}\label{eq_spikePA}
\begin{split}
&\left| \langle \bv, \cal P_S\bv\rangle- \langle \bv, \cal Z_S\bv\rangle \right| \prec  \sum_{1\le i\le r:\al(i)\in S}|v_i|^2  \psi_{1}(\wt\sigma_i^a)  \\
&+\sum_{1\le i\le r:\al(i)\notin S}|v_i|^2  \frac{\phi_n^2}{\delta_{\al(i)}(S)} + \sum_{i=1}^p {|v_i|^2} \left(\frac{\psi_{1}^2(\wt\sigma_i^a)\Delta_1^2(\widetilde{\sigma}_i^a) }{\delta^2_{\al(i)}(S)} + \frac{\kappa_i}{n^{1/2}}\right) \\
&+ \langle \bv , \cal Z_S\bv \rangle^{1/2}\left[ \sum_{1\le i\le r:\al(i)\notin S}|v_i|^2  \frac{\phi_n^2}{\delta_{\al(i)}(S)}+ \sum_{1\le i \le p:\al(i)\notin S} {|v_i|^2} \left(\frac{\psi_{1}^2(\wt\sigma_i^a)\Delta_1^2(\widetilde{\sigma}_i^a) }{\delta^2_{\al(i)}(S)} +  \frac{\kappa_i}{n^{1/2}}\right)\right]^{1/2} ,
\end{split}
\end{equation}
where we denote
 $$\psi_{1}(\wt\sigma_i^a):= \phi_n +  n^{-1/2}\Delta_1^{-1}(\widetilde{\sigma}_i^a) .$$
 If we have (a) \eqref{assm_3rdmoment} holds, or (b) either $A$ or $B$ is diagonal, then the above estimate holds without the $ n^{-1/2}\kappa_i$ terms.}
\end{theorem}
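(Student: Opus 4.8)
\textbf{Proof proposal for Theorem \ref{thm_eveout}.}

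The plan is to use the standard contour-integral representation of spectral projections together with the isotropic local laws for the resolvents of the non-spiked matrices $\mathcal Q_1$ and $\mathcal Q_2$. First I would express $\langle \bv, \cal P_S \bv\rangle$ as a contour integral: choosing a contour $\Gamma$ in $\mathbb C$ that encloses exactly the outlier eigenvalues $\{\wt\lambda_k : k\in S\}$ and no others (this is justified by Theorem \ref{thm_outlier}, which controls the locations of all outlier and extremal non-outlier eigenvalues, and by the eigenvalue-sticking Theorem \ref{thm_eigenvaluesticking} for the bulk), one writes
\begin{equation}\label{eq_propcontour}
\langle \bv, \cal P_S \bv\rangle = -\frac{1}{2\pi\ii}\oint_{\Gamma} \langle \bv, \wt{\mathcal G}_1(z)\bv\rangle\, \dd z.
\end{equation}
The next step is to expand $\wt{\mathcal G}_1(z)$ in terms of the non-spiked resolvent $\mathcal G_1(z)$ using the low-rank structure \eqref{AOBO} of $\wt A, \wt B$. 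A resolvent (Woodbury) identity reduces $\langle \bv, \wt{\mathcal G}_1(z)\bv\rangle$ to $\langle \bv, \mathcal G_1(z)\bv\rangle$ plus correction terms involving a finite-dimensional ``master matrix'' whose entries are of the form $\langle \bv_i^a, \mathcal G_1(z)\bv_j^a\rangle$, $\langle \bv_i^a, \mathcal G_1(z) X \wt B^{1/2} \bv_\mu^b\rangle$, and so on — the same algebraic mechanism as in \cite{principal, ding2017}, but now with the extra layer coming from the spikes of $\wt B$ which must be resolved simultaneously with those of $\wt A$.

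The core analytic input is then the \emph{isotropic local law} for $\mathcal G_1$, $\mathcal G_2$ and the mixed resolvents (these should be stated as preliminary results in the supplement, in the spirit of \cite{yang2018, Knowles2017}): for $z$ near the edge $\lambda_+$ one has $\langle \bv_i^a, \mathcal G_1(z)\bv_j^a\rangle = \Pi_{ij}(z) + \OO_\prec(\Psi(z))$ with an explicit deterministic limit $\Pi_{ij}$ and error $\Psi(z)$ scaling like $n^{-1/2}(\kappa+\eta)^{-1/4} + \phi_n$ near the edge. One plugs these estimates into the master-matrix expansion, so that the integrand in \eqref{eq_propcontour} becomes a deterministic rational function of $z$ (built from $m_{1c}, m_{2c}$ and the $\wt\sigma$'s) plus a stochastically-dominated error. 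Performing the residue calculus on the deterministic part: the poles inside $\Gamma$ sit at the $\theta_1(\wt\sigma_i^a)$ with $\al(i)\in S$ and at the $\theta_2(\wt\sigma_\mu^b)$ with $\beta(\mu)\in S$; evaluating the residues via the inverse-function identities $g_{2c}'(-(\wt\sigma_i^a)^{-1})$, etc., produces exactly the quadratic form $\langle \bv, \cal Z_S \bv\rangle$ of \eqref{defn_ZA}, with the $\bv_\mu^b$-spikes contributing zero to the $\bv$-projection (reflecting delocalization of the $\wt B$-type outlier eigenvectors). The error terms are then obtained by bounding $\oint_\Gamma |\text{error}(z)|\,|\dd z|$, where the contour length is $\OO(\delta_{\mathfrak a}(S))$ — the separation of the relevant spikes from the rest of the spectrum — and the integrand is controlled using the local law; this is where the denominators $\delta_{\al(i)}(S)$, $\delta^2_{\al(i)}(S)$ and the gaps $\Delta_1(\wt\sigma_i^a)$ enter, and where the cross-terms $\langle \bv, \cal Z_S\bv\rangle^{1/2}[\cdots]^{1/2}$ arise from Cauchy–Schwarz applied to the ``off-diagonal'' part of the master matrix. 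The improvement under \eqref{assm_3rdmoment} or diagonal $A$/$B$, removing the $n^{-1/2}\kappa_i$ terms, comes from the corresponding improvement in the isotropic local law (the third-order/fluctuation-averaging term in the self-consistent equation vanishes), exactly parallel to the $\OO(n^{-1/2})$-vs-sharper dichotomy in Theorem \ref{thm_eigenvaluesticking}.

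The main obstacle I anticipate is handling the \emph{degenerate and near-degenerate} outliers and the subcritical regime simultaneously — i.e., making the contour argument uniform when several $\wt\sigma_i^a$ (and/or $\wt\sigma_\mu^b$) coalesce or approach the critical threshold $-m_{2c}^{-1}(\lambda_+)$, so that $\Delta_1(\wt\sigma_i^a)\to 0$ and the poles of the deterministic integrand collide or merge into the edge. In that situation one cannot separate individual residues, and one must instead estimate the master matrix and its inverse as a block, tracking how its smallest singular value degrades — this is the analogue of the stability analysis near the BBP transition in \cite{principal, ding2017}, but complicated here by the two-sided spike structure (a spike of $\wt A$ and a spike of $\wt B$ may produce nearly-coincident outliers $\theta_1(\wt\sigma_i^a)\approx \theta_2(\wt\sigma_\mu^b)$ even though they have different ``origins''). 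A secondary technical point is establishing the mixed isotropic local law for quantities like $\langle \bv_i^a, \mathcal G_1 X \wt B^{1/2}\bv_\mu^b\rangle$ with the sharp edge-scaling error, which requires extending the arguments of \cite{yang2018} to include the deterministic test vectors on both sides; I expect this to be routine but lengthy, and it is what makes the final error bound \eqref{eq_spikePA} match the (conjecturally optimal) rate.
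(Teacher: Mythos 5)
Your core strategy --- the contour representation of $\langle \bv,\cal P_S\bv\rangle$, a Woodbury reduction to a finite $(r+s)$-dimensional master matrix, the anisotropic local law outside the spectrum as the analytic input, and residue calculus on the deterministic part --- is exactly the route the paper takes (the paper works with the linearized $(p+n)\times(p+n)$ resolvent and takes its contour to be the $g_{2c}$-image of small discs centred at $-(\wt\sigma_i^a)^{-1}$ and $m_{2c}(\theta_2(\wt\sigma_\mu^b))$, but that is bookkeeping). The genuine gap is in the regime you yourself flag as the main obstacle. First, when an index in $S$ is near-degenerate with an index \emph{outside} $S$ (allowed, since $S\subset\mathcal O^+$ is arbitrary), no contour can enclose the former while excluding the latter, so ``estimating the master matrix as a block'' on a merged contour does not compute $\cal P_S$ at all --- it computes the projection onto the merged cluster. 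The paper resolves this not by contour analysis but by a monotonicity-plus-Cauchy--Schwarz device: it first proves the estimate under an explicit non-overlapping condition, then sandwiches $S$ between the largest non-overlapping subset $L_1(S)\subset S$ and the smallest non-overlapping superset $L_2(S)\supset S$, and uses $\langle\bv,\cal P_{L_1(S)}\bv\rangle\le\langle\bv,\cal P_S\bv\rangle\le\langle\bv,\cal P_{L_2(S)}\bv\rangle$ together with $|\langle\bv,\cal P_S\bw\rangle|^2\le\langle\bv,\cal P_S\bv\rangle\,\langle\bw,\cal P_S\bw\rangle$. Some argument of this kind is needed and is absent from your plan.

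Second, for spikes only marginally above threshold (at distance of order $n^{-1/3}+\phi_n$ from $-m_{2c}^{-1}(\lambda_+)$), and for the component of a general $\bv$ along non-spiked or subcritical directions $\bv_j^a$, $j>r$ or $\al(j)\notin\mathcal O^+$, the contour argument cannot be pushed through: the admissible radius shrinks to the scale of the edge fluctuations and the local-law error $\phi_n+n^{-1/2}\kappa^{-1/4}$ is no longer small relative to the signal $\Delta_1^2(\wt\sigma_i^a)$. The paper's proof needs a second, non-contour ingredient here: the delocalization bound for non-outlier eigenvectors, Theorem \ref{thm_noneve}, proved via $|\langle\bv,\wt\bxi_k\rangle|^2\le\eta\,\Im\,\langle\bv,\wt{\cal G}_1(\wt\lambda_k+\ii\eta)\bv\rangle$ at a carefully tuned $\eta$. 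That bound supplies the a priori estimate $\langle\bv_i^a,\cal P_{S_0}\bv_i^a\rangle\prec\Delta_1^2(\wt\sigma_i^a)$-type control for the weak spikes in $S$, and, applied to the part of $\bv$ orthogonal to the spiked directions, is precisely the origin of the $\kappa_i n^{-1/2}$ (and $\phi_n^3$) error terms in \eqref{eq_spikePA}; removing $n^{-1/2}\kappa_i$ under (a)/(b) comes from the stronger version of that theorem (together with stronger rigidity), not merely from an improved local law inside the master-matrix expansion. Since your proposal attributes all error terms to the contour estimate and to Cauchy--Schwarz on the master matrix, as written it neither reproduces the stated error structure nor covers all $S\subset\mathcal O^+$ and all deterministic $\bv$.
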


\begin{remark}
For any deterministic vectors $\vb, \wb \in \mathbb{C}^p,$ we can state Theorem \ref{thm_eveout}  for more general quantities of the form $\langle \bv, \cal Z_S\bw\rangle$ using the polarization identity. Moreover, $\cal Z_S$ is a matrix that is uniquely determined by the quadratic form in \eqref{defn_ZA}. It can be  written as
$$\mathcal Z_S= \sum_{i:\alpha(i)\in S}\mathbf v_i^a(\mathbf v_i^a)^*\frac{1}{\wt{\sigma}_i^a} \frac{g_{2c}'(-(\wt{\sigma}_i^a)^{-1})}{g_{2c}(-(\wt{\sigma}_i^a)^{-1})}.$$
\end{remark}

The index set $S$ in Theorem \ref{thm_eveout} can be chosen according to user's goal. We now consider two typical cases to illustrate the idea. 

\begin{example}[Non-degenerate case]\label{exam_nondege} 
If all the outliers are well-separated, then we can choose $S=\{\alpha(i)\}$ or $S=\{\beta(\mu)\}$. For example, suppose $S=\{\alpha(i)\}$ and $\vb=\vb_i^a.$ Denote $\delta_{\alpha(i)}:= \delta_{\alpha(i)}(\{\alpha(i)\})$. Then we get from (\ref{eq_spikePA}) that 
\begin{equation}\nonumber 
|\langle \vb_i^a, { \wt{\bm \xi}_{\alpha(i)}} \rangle|^2=\frac{1}{\tsig_i^a} \frac{g_{2c}'(-(\tsig_i^a)^{-1})}{g_{2c}(-(\tsig_i^a)^{-1})}+\OO_{\prec} \left (\psi_{1}(\wt\sigma_i^a)+\frac{\psi_{1}^2(\wt\sigma_i^a)\Delta_1^2(\wt\sigma_i^a)}{n \delta_{\alpha(i)}^2}  \right ).
\end{equation}
Note that $\wt{\bm \xi_i}$ is concentrated on a cone with axis parallel to $\vb_i^a$ if the error term is much smaller than the first term, which is of order 
$$\frac{1}{\tsig_i^a} \frac{g_{2c}'(-(\tsig_i^a)^{-1})}{g_{2c}(-(\tsig_i^a)^{-1})} \sim \tsig_i^a+m_{2c}^{-1}(\lambda_+)$$
by Lemma \ref{lem_derivativeprop} in the supplement. This leads to the following conditions
\begin{equation}\label{eq_sepe}
{  \tsig_i^a+m_{2c}^{-1}(\lambda_+) \gg \phi_n + n^{-1/3}, \quad \delta_{\alpha(i)} \gg \phi_n +  n^{-1/2}\Delta_1^{-1}(\widetilde{\sigma}_i^a).}
\end{equation}
The first condition means that $\wt\lambda_{\al(i)}$ is truly an outlier (c.f. Theorem \ref{thm_outlier}), whereas the second condition is a \emph{non-overlapping condition}. 
In fact, by (\ref{eq_spike}), $\widetilde{\lambda}_{\al(i)}$ fluctuates around $\theta_1(\tsig_i^a)$ on the scale of order $  n^{-1/2}\Delta_1 (\widetilde{\sigma}_i^a) +\phi_n \Delta_1^2 (\widetilde{\sigma}_i^a)  $. Therefore, $\widetilde{\lambda}_{\al(i)}$ is well-separated from the other outlier eigenvalues if 
\begin{equation}\label{eq_connonover}
\begin{split}
\left( \min_{\al(j) \in \mathcal{O} \setminus \{\al(i)\}} |\theta_1(\tsig_i^a)-\theta_1(\tsig_j^a) | \right) \wedge  \left( \min_{\beta(\mu) \in \mathcal{O}} |\theta_1(\tsig_i^a)-\theta_2(\tsig_{\mu}^b)|  \right) \\
\gg   n^{-1/2}\Delta_1 (\widetilde{\sigma}_i^a) +\phi_n \Delta_1^2 (\widetilde{\sigma}_i^a) .  
\end{split}
\end{equation} 
Moreover, by Lemma \ref{lem_derivativeprop} in the supplement, the left-hand side of \eqref{eq_connonover} is of order $\delta_{\alpha(i)}\Delta_1^2 (\widetilde{\sigma}_i^a)$. This gives the second condition in \eqref{eq_sepe}. 

\end{example}

%

For degenerate or near-degenerate outliers, 
their indices should be included in the same set $S$. We now consider an example with multiple outliers that share exactly the same classical location.

\begin{example}[Degenerate case]\label{exam_dege} 
Suppose that we have an $|S|$-fold degenerate outlier, i.e., for some $\theta_0>\lambda_+$, 
\begin{equation*}
\theta_1(\tsig_i^a)=\theta_2(\tsig_\mu^b)= \theta_0, \quad \text{ for all } \ \al(i), \beta(\mu) \in S.
\end{equation*}
Suppose the outlier $\theta_0$ is well-separated from both the bulk and the other outliers (i.e., with distances of order 1). Then by (\ref{eq_spikePA}), we have that 
\begin{equation*}
\mathcal{P}_S =\sum_{\al(i)\in S} \frac1{\tsig_i^a} \frac{g_{2c}'(-(\tsig_i^a)^{-1})}{g_{2c}(-(\tsig_i^a)^{-1})}\bv_i^a(\bv_i^a)^*+ \mathcal E,
\end{equation*} 
where $\mathcal E$ is an error that is delocalized in the basis of $\bv_i^a$, i.e. $\langle \bv_i^a ,\mathcal E\bv_j^a\rangle \prec \phi_n  $. This can be regarded as a generalized cone concentration for the subspace spanned by $\{\wt\bxi_{\mathfrak a}\}_{\mathfrak a\in S}$.
\end{example}

\nc

Then we state the delocalization results on the non-outlier eigenvectors when $\al(i) \notin \mathcal{O}^+.$  Denote 
\begin{equation*}
 \eta_i:=n^{-3/4}+n^{-5/6} i^{1/3} +  n^{-1/2}\phi_n, \quad \kappa_i:=i^{2/3}n^{-2/3}. 
\end{equation*}
\begin{theorem}\label{thm_noneve}
 Suppose $X$ has bounded support $\phi_n$ such that $ n^{-{1}/{2}} \leq \phi_n \leq n^{- c_\phi} $ for some constant $c_\phi>0$.  Suppose that Assumptions \ref{assm_big1}, \ref{ass:unper} and \ref{ass:spike} hold. Fix any sufficiently small constant $\tau>0$. For  $\al( i)\notin \mathcal O^+$, $ i \leq {\tau}p$ and any deterministic vector $\bv \in \mathbb{C}^p$, we have 
\begin{equation}\label{eq_evebulka}
 |\langle \bv, \wt\bxi_{\al(i)} \rangle|^2 \prec \sum_{j=1}^p |v_j|^2 \frac{n^{-1}+  \eta_i \sqrt{\kappa_i} +\phi_n^3}{|\tsig_j^a+m_{2c}^{-1}(\lambda_+)|^2 + \phi_n^2+\kappa_i}.
\end{equation}
If we have (a) \eqref{assm_3rdmoment} holds, or (b) either $A$ or $B$ is diagonal, then the following stronger estimate holds:
\begin{equation}\label{eq_evebulka_strong}
 |\langle \bv, \wt\bxi_{\al(i)} \rangle|^2 \prec  \sum_{j=1}^p |v_j|^2 \frac{n^{-1} + \phi_n^3}{ |\tsig_j^a+m_{2c}^{-1}(\lambda_+)|^2+ \phi_n^2+\kappa_i }.
\end{equation}
\end{theorem}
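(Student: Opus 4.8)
The plan is to deduce the delocalization of the non-outlier eigenvectors of $\ctQ_1$ from an isotropic local law for the resolvent $\wt{\mathcal G}_1(z)$, following the strategy used for spiked covariance matrices in \cite{principal}. The starting point is the integral (spectral-decomposition) representation: for $z=E+\ii\eta\in\mathbb C_+$ and any deterministic unit vector $\bv$,
\begin{equation*}
\sum_{k} \frac{\eta}{(\wt\lambda_k-E)^2+\eta^2}\,|\langle\bv,\wt\bxi_k\rangle|^2 = \Im\langle\bv,\wt{\mathcal G}_1(z)\bv\rangle .
\end{equation*}
If $\wt\lambda_{\al(i)}$ is localized on a scale $\eta_i$ (from Theorem \ref{thm_outlier}, more precisely its non-outlier version and the rigidity estimate Theorem \ref{thm_largerigidity} in the supplement) and one can show that $\wt\lambda_{\al(i)}$ is separated from its neighbours, then picking $E$ near $\wt\lambda_{\al(i)}$ and a suitable $\eta$ of order slightly larger than the local eigenvalue spacing isolates the single term $|\langle\bv,\wt\bxi_{\al(i)}\rangle|^2$ up to $n^\e$ and gives
\begin{equation*}
|\langle\bv,\wt\bxi_{\al(i)}\rangle|^2 \prec \eta\,\Im\langle\bv,\wt{\mathcal G}_1(E+\ii\eta)\bv\rangle .
\end{equation*}
So the crux is a sufficiently precise estimate on the bilinear form $\langle\bv,\wt{\mathcal G}_1(z)\bv\rangle$ near the right spectral edge $\lambda_+$, in the regime $|E-\lambda_+|\lesssim \kappa_i$ and $\eta\sim\eta_i$.

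The second, and main, step is to establish that isotropic local law for the spiked resolvent $\wt{\mathcal G}_1$. Here I would expand $\wt{\mathcal Q}_1$ around the unspiked $\mathcal Q_1$ via the low-rank factorization \eqref{AOBO}: writing $\wt A = (I+V_o^aD^a(V_o^a)^*)A$ and similarly for $\wt B$, a resolvent/Woodbury-type identity expresses $\wt{\mathcal G}_1$ in terms of $\mathcal G_1$, $\mathcal G_2$, and finitely many bilinear forms $\langle \bv_i^a,\mathcal G_1(z)\bv_j^a\rangle$, $\langle \bv_\mu^b,\mathcal G_2(z)\bv_\nu^b\rangle$, and the mixed resolvent blocks. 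For these unspiked objects one invokes the (already available) anisotropic/isotropic local laws for separable sample covariance matrices from \cite{yang2018} (and the companion estimates recorded in the supplement), which control $\langle\bv,\mathcal G_1(z)\bw\rangle - \langle\bv,\bw\rangle\,\Pi_1(z)$ and the analogous $\mathcal G_2$-quantity down to the optimal scale $\eta\gg n^{-1}$, with error $\prec\sqrt{\Im m_c(z)/(n\eta)} + (n\eta)^{-1}$, plus the $\phi_n$-corrections coming from the bounded-support reduction. Substituting these into the Woodbury expansion, and using that the deterministic denominators in the correction terms are exactly the quantities $|\tsig_j^a + m_{2c}^{-1}(\lambda_+)|$ (resp. the $\wt B$-analogue) evaluated near $\lambda_+$ — this is where the factor $|\tsig_j^a+m_{2c}^{-1}(\lambda_+)|^2+\phi_n^2+\kappa_i$ in the denominator of \eqref{eq_evebulka} emerges — yields the desired bound on $\langle\bv,\wt{\mathcal G}_1(z)\bv\rangle$. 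Tracking the square-root edge behaviour $\Im m_c(E+\ii\eta)\sim\sqrt{\kappa+\eta}$ near $\lambda_+$ and optimizing over the choice of $\eta$ (taking $\eta\sim\eta_i$) produces the numerator $n^{-1}+\eta_i\sqrt{\kappa_i}+\phi_n^3$.

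The two improvements — the stronger bound \eqref{eq_evebulka_strong} under the vanishing-third-moment condition \eqref{assm_3rdmoment} or when $A$ or $B$ is diagonal — follow because in those cases the subleading fluctuation terms in the local law for $\mathcal G_1,\mathcal G_2$ that are responsible for the $\eta_i\sqrt{\kappa_i}$ contribution vanish (this is the same mechanism behind the strong eigenvalue-sticking estimate \eqref{eq_stickingeq_strong} in Theorem \ref{thm_eigenvaluesticking}, and indeed one can reuse that input). The main obstacle, and where the most care is needed, is the resolvent expansion near the edge in the near-degenerate regime: when $i$ is large (up to $\tau p$) the relevant eigenvalue spacing is of order $i^{-1/3}n^{-2/3}$, so one must carry out the spectral-projection argument with $\eta$ close to this spacing while keeping the local law valid, which forces a careful bootstrap between the rigidity of the $\wt\lambda_k$ (to control which term is isolated) and the size of $\Im\langle\bv,\wt{\mathcal G}_1\bv\rangle$ (to bound that term). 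Handling the $\phi_n$-dependent error terms uniformly, and verifying that the bounded-support reduction (Corollary \ref{main_cor}) does not spoil the final estimate, is the remaining technical bookkeeping.
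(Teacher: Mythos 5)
Your overall route is the same as the paper's: bound $|\langle \bv,\wt\bxi_{\al(i)}\rangle|^2$ by $\eta\,\im\langle\bv,\wt{\mathcal G}_1(\wt\lambda_{\al(i)}+\ii\eta)\bv\rangle$, then control the spiked resolvent through the low-rank/Woodbury expansion around the unspiked $G$, the anisotropic local law of \cite{yang2018}, and a lower bound on the denominators $(d_j^a)^{-1}+1-(1+m_{2c}(z)\sigma_j^a)^{-1}$, with the stronger bound under (a)/(b) coming from the stronger local law and rigidity. However, two points in your write-up are off. First, the "isolation" step is both unnecessary and unavailable: you do not need (and cannot in general prove, for $i$ up to $\tau p$) that $\wt\lambda_{\al(i)}$ is separated from its neighbours at the scale $\eta$; since every term in the spectral sum $\sum_k \eta\,|\langle\bv,\wt\bxi_k\rangle|^2/((\wt\lambda_k-E)^2+\eta^2)$ is nonnegative, keeping only the $k=\al(i)$ term at $E=\wt\lambda_{\al(i)}$ already gives the one-sided inequality you need, and this is exactly what the paper uses. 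If your argument genuinely relied on separation, it would fail away from the first few eigenvalues.

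Second, "optimizing over $\eta$, taking $\eta\sim\eta_i$" glosses over the step where the real work happens. To invert $\mathcal D^{-1}+z\bU^*G(z)\bU$ in the Woodbury expansion you must have the local-law error $\|\mathcal E(z)\|\lesssim \phi_n+\Psi(z)$ dominated by $\min_j|(d_j^a)^{-1}+1-(1+m_{2c}(z)\sigma_j^a)^{-1}|$, which is only of order $\im m_{2c}(z)$ when some spike sits near the BBP threshold (this is Lemma \ref{lem_denominator}). At the natural scale $\eta\sim\eta_i$ near the edge, $\im m_{2c}\sim\sqrt{\kappa+\eta}$ need not beat $\phi_n+\Psi$, so the expansion does not close. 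The paper resolves this by choosing the spectral parameter implicitly through \eqref{eq_defneta}, i.e. $\im m_{2c}(\wt\lambda_i+\ii\wh\eta_i)=n^{2\e}\phi_n+n^{-1+6\e}\wh\eta_i^{-1}$, so that near the edge $\eta$ is taken as large as $n^{4\e}(\phi_n^2+n^{-2/3})$ — possibly much larger than the local spacing — and it is this balancing (together with the lower bound $\im m_{2c}(z_i)\gtrsim\phi_n+\sqrt{\kappa_{\gamma_i}}$ for non-outlier indices) that produces both the denominator $|\tsig_j^a+m_{2c}^{-1}(\lambda_+)|^2+\phi_n^2+\kappa_i$ and the numerator $n^{-1}+\eta_i\sqrt{\kappa_i}+\phi_n^3$. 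You should also note that passing from the spiked directions $\bv_j^a$, $j\le r$, to a general deterministic $\bv$ requires either the decomposition $\bv=\bv_{\parallel}+\bv_{\perp}$ with a direct anisotropic-law bound on the $\bv_\perp$ part, or the paper's trick of adding fictitious infinitesimal spikes $d_j^a=\wt\e\downarrow 0$; your expansion as stated only yields the bilinear forms along the finitely many perturbed directions.
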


\begin{remark} 
Note that for $\phi_n\le n^{-1/3}$ and $i\le n^{1/4}$, we have $\eta_i \sqrt{\kappa_i} +\phi_n^3=\OO(n^{-1})$. Hence \eqref{eq_evebulka} becomes the stronger estimate \eqref{eq_evebulka_strong} for the non-outlier eigenvalues with indices $i\le n^{1/4}$.
\end{remark}

\begin{example}
 Again we assume that $\phi_n\le n^{-1/3}$.  If $\tsig_j^a + m_{2c}^{-1}(\lambda_+)\gtrsim 1$, i.e. {$\tsig_j^a$} is well separated from the threshold, then $\wt{\bxi}_{\alpha(i)}$ is completely delocalized in the direction of $\vb_j^a$ for all $i\notin \mathcal O^+$ and $i\le n^{1/4}$. We next consider the outliers that are close to the threshold.  

Suppose that $i \le C$, i.e. $\wt\lambda_i$ is near the edge. Then (\ref{eq_evebulka_strong}) gives 
\begin{equation}\label{eq_partial}
|\langle \vb_j^a, \wt{\bxi}_{\alpha(i)} \rangle|^2 \prec \frac{1}{n(|\tsig_j^a+m_{2c}^{-1}(\lambda_+)|^2+n^{-2/3})}.
\end{equation}
 Therefore, the delocalization bound for the generalized component $|\langle \vb_j^a, \wt{\bxi}_{\alpha(i)} \rangle|$ 
changes from the optimal order $n^{-1/2}$ to $n^{-1/6}$ as $\tsig_j^a$ approaches the transition point $m_{2c}^{-1}(\lambda_+).$ This shows that the non-outlier eigenvectors near the edge are biased in the direction of $\vb_j^a$ provided that $\tsig_j^ a$ is near the transition point $m_{2c}^{-1}(\lambda_+).$ In particular, for $|\tsig_j^a+m_{2c}^{-1}(\lambda_+)| \le n^{-1/3}$, we have that   
\begin{equation}\label{eq_subcritical}
|\langle \vb_j^a, \wt{\bxi}_{\alpha(i)} \rangle|^2 \prec  n^{-1}|\tsig_j^a+m_{2c}^{-1}(\lambda_+)|^{-2}.
\end{equation}

In the literature, the $\tsig_j^a$ in this case is called a weak spike in statistics \cite{KZT} or subcritical spike in probability \cite{principal}. Thus (\ref{eq_subcritical}) shows that the non-outlier eigenvectors still retain information about the weak spikes of $\wt A$ in contrast to the non-outlier eigenvalues as seen from (\ref{eq_nonspike}). 
\end{example}

The Theorems \ref{thm_outlier}, \ref{thm_eigenvaluesticking}, \ref{thm_eveout} and \ref{thm_noneve} give the first order limits and convergent rates of the principal eigenvalues and eigenvectors of  $\widetilde{\mathcal{Q}}_{1}$. The second order asymptotics of the outlier eigenvalues and eigenvectors will be studied in another paper.

Note that for separable covariance matrices, $\wt{A}^{1/2}X \wt{B}^{1/2}$ and $\wt{B}^{1/2}X^*\wt{A}^{1/2}$ take exactly the same form. Hence by exchanging the roles of $(\wt{A},X)$ and $(\wt{B},X^*)$, one can immediately obtain from Theorems \ref{thm_eveout} and \ref{thm_noneve} the similar results for the eigenvectors $\wt{\bm\zeta}_k$ of $\wt Q_2$.  For reader's convenience, we state them in the following two theorems. Denote 
$$ \cal P'_S:=\sum_{k\in S} \wt{\bzeta}_k\wt{\bzeta}_k^*, \quad \text{ for } S\subset \mathcal O^+.$$

\begin{theorem} \label{thm_rightout}
Suppose $X$ has bounded support $\phi_n$ such that $ n^{-{1}/{2}} \leq \phi_n \leq n^{- c_\phi} $ for some constant $c_\phi>0$. Suppose that Assumptions \ref{assm_big1}, \ref{ass:unper} and \ref{ass:spike} hold. Fix any $S\subset \mathcal O^+$, we define the following deterministic positive quadratic form
\begin{equation*}
\langle \wb, \cal Z'_S\wb \rangle := \sum_{\mu:\beta(\mu)\in S}\frac{|w_\mu|^2}{\wt{\sigma}_\mu^b} \frac{g_{1c}'(-(\wt{\sigma}_\mu^b)^{-1})}{g_{1c}(-(\wt{\sigma}_\mu^b)^{-1})} , \quad  \text{for } \ \bw\in \mathbb C^n, \ \ w_\mu : =\langle \bv_\mu^b, \wb\rangle.
\end{equation*}
Then for any deterministic vector $\wb \in \mathbb C^n$, we have that
\begin{equation*}
\begin{split}
&\left| \langle \wb, \cal P'_S\wb\rangle- \langle \wb, \cal Z'_S\wb \rangle \right| \prec  \sum_{1\le \mu\le s:\beta(\mu)\in S}|w_\mu|^2  \psi_{2}(\wt\sigma_\mu^b)  \\
&+\sum_{1\le \mu\le s:\beta(\mu)\notin S}|w_\mu|^2  \frac{\phi_n^2}{\delta_{\beta(\mu)}(S)} + \sum_{\mu=1}^n {|w_\mu|^2} \left(\frac{\psi_{2}^2(\wt\sigma_\mu^b)\Delta_2^2(\widetilde{\sigma}_\mu^b) }{\delta^2_{\beta(\mu)}(S)} + \frac{\kappa_\mu}{n^{1/2}}\right) \\
&+ \langle \bw , \cal Z'_S\bw \rangle^{1/2}\left[\sum_{1\le \mu\le s:\beta(\mu)\notin S} \frac{|w_\mu|^2 \phi_n^2}{\delta_{\beta(\mu)}(S)} + \sum_{1\le \mu \le n:\beta(\mu)\notin S} {|w_\mu|^2} \left(\frac{\psi_{2}^2(\wt\sigma_\mu^b)\Delta_2^2(\widetilde{\sigma}_\mu^b) }{\delta^2_{\beta(\mu)}(S)} + \frac{\kappa_\mu}{n^{1/2}}\right)\right]^{1/2} ,
\end{split}
\end{equation*}
where we denote
 $$\psi_{2}(\wt\sigma_\mu^b):= \phi_n +  n^{-1/2}\Delta_2^{-1}(\widetilde{\sigma}_\mu^b) .$$
 If we have (a) \eqref{assm_3rdmoment} holds, or (b) either $A$ or $B$ is diagonal, then the above estimate holds without the $ n^{-1/2}\kappa_\mu$ terms.
\end{theorem}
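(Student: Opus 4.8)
\textbf{Proof proposal for Theorem \ref{thm_rightout}.}

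The plan is to deduce this theorem from Theorem \ref{thm_eveout} by a symmetry argument, so that no new analysis is needed. The key observation is that the separable model is invariant under swapping the two covariance matrices and transposing the Gaussian-like factor: if we set $X' := X^*$, $\wt A' := \wt B$, $\wt B' := \wt A$, then $X'$ is an $n\times p$ random matrix whose entries satisfy exactly the same moment conditions as those in Assumption \ref{assm_big1} (the bounds in \eqref{conditionA3} and \eqref{entry_assm1} are symmetric under transposition), the aspect ratio becomes $d_n' = p/n = d_n^{-1}$, which still lies in $[\tau', \tau'^{-1}]$ for a possibly adjusted constant, and the pair $(\wt A', \wt B')$ satisfies the spike structure in \eqref{eq_defnsigmaa}--\eqref{assm33} with the roles of $r,s$ and of the $d_i^a,d_\mu^b$ interchanged. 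Under this relabelling one has
$$\wt{\mathcal Q}_1' := (\wt A')^{1/2} X' \wt B' (X')^* (\wt A')^{1/2} = \wt B^{1/2} X^* \wt A X \wt B^{1/2} = \wt{\mathcal Q}_2,$$
so the eigenvectors $\wt{\bm\zeta}_k$ of $\wt{\mathcal Q}_2$ are precisely the ``left'' outlier eigenvectors of the primed model, and $\cal P'_S$ is the corresponding random projection $\cal P_S'$ in the primed coordinates.

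First I would check that all the auxiliary objects transform consistently. The self-consistent equations \eqref{separa_m12} are symmetric under $(m_{1c}, \pi_A, d_n) \leftrightarrow (m_{2c}, \pi_B, 1)$ together with the substitution $d_n \to d_n^{-1}$; more precisely, writing $m_{\al c}'$ for the functions associated with the primed model, one gets $m_{1c}' = m_{2c}$ and $m_{2c}' = m_{1c}$, and the common support and rightmost edge $\lambda_+$ are unchanged. Consequently Assumption \ref{ass:unper} for the primed model is identical to the original one with the two inequalities in \eqref{assm_gap} swapped, hence still valid, and the inverse functions satisfy $g_{1c}' = g_{2c}$, $g_{2c}' = g_{1c}$. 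The spike threshold condition \eqref{spikes} and the classical locations \eqref{g12c} become $\theta_1'(\wt\sigma_\mu^{b}) = g_{2c}'(-(\wt\sigma_\mu^{b})^{-1}) = g_{1c}(-(\wt\sigma_\mu^{b})^{-1}) = \theta_2(\wt\sigma_\mu^b)$, so that the outlier index set $\mathcal O^+$, the labelling function $\beta$, and the quantities $\Delta_2$, $\psi_2$, $\kappa_\mu$, and the $\delta^b$-distances \eqref{eq_nu2} all correspond exactly to their primed counterparts. In particular the quadratic form $\langle \bv, \cal Z_S' \bv\rangle$ from \eqref{defn_ZA} applied to the primed model reads $\sum_{\mu:\beta(\mu)\in S} |w_\mu|^2 (\wt\sigma_\mu^b)^{-1} g_{1c}'(-(\wt\sigma_\mu^b)^{-1})/g_{1c}(-(\wt\sigma_\mu^b)^{-1})$ with $w_\mu = \langle \bv_\mu^b, \wb\rangle$, which is exactly $\langle \wb, \cal Z'_S \wb\rangle$ as defined in the statement.

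With this dictionary in place, the proof is immediate: apply Theorem \ref{thm_eveout} to the primed model $(\wt A', \wt B', X')$ and the deterministic vector $\wb \in \mathbb C^n = \mathbb C^{p'}$, and then translate every symbol back through the substitution above. The bounded support hypothesis $n^{-1/2}\le \phi_n \le n^{-c_\phi}$ is unaffected by transposition, and Assumption \ref{ass:spike} for the primed model holds because it is by hypothesis assumed for the original model and is symmetric in $a \leftrightarrow b$. The error bound \eqref{eq_spikePA} then becomes, term by term, the displayed bound in Theorem \ref{thm_rightout}, including the improvement under hypothesis (a)--(b), noting that ``$A$ or $B$ diagonal'' is manifestly symmetric and that the vanishing third moment condition \eqref{assm_3rdmoment} is preserved under transposition. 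The only genuine care needed—and the one place I would slow down—is to verify that the constant $\tau$ appearing in the various assumptions can be chosen uniformly so that the implicit constants in $\prec$ for the primed model are legitimate; since $d_n^{-1} \in [\tau, \tau^{-1}]$ whenever $d_n \in [\tau,\tau^{-1}]$ and all other quantities ($\sigma_1^a, \sigma_1^b$, the ESD non-concentration at $0$, the gap \eqref{assm_gap}) are literally symmetric, this is routine. There is no substantive obstacle; the entire content is the bookkeeping of the $a \leftrightarrow b$ symmetry, which is why the authors state the result ``for the reader's convenience'' rather than proving it anew.
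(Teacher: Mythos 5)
Your proposal is correct and coincides with the paper's own argument: the paper obtains Theorem \ref{thm_rightout} precisely by exchanging the roles of $(\wt A, X)$ and $(\wt B, X^*)$ in Theorem \ref{thm_eveout}, which is exactly the symmetry you invoke (you simply spell out the dictionary $m_{1c}\leftrightarrow m_{2c}$, $g_{1c}\leftrightarrow g_{2c}$, $d_n\leftrightarrow d_n^{-1}$ in more detail than the paper does).
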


\begin{theorem}\label{thm_rightbulk}
Suppose $X$ has bounded support $\phi_n$ such that $ n^{-{1}/{2}} \leq \phi_n \leq n^{- c_\phi} $ for some constant $c_\phi>0$. Suppose that Assumptions \ref{assm_big1}, \ref{ass:unper} and \ref{ass:spike} hold. Fix any sufficiently small constant $\tau>0$. For  $\beta(\mu)\notin \mathcal O^+$, $ \mu \leq {\tau}n$ and any deterministic vector $\wb \in \mathbb{C}^n$, we have 
\begin{equation*}
|\langle \wb, \wt\bzeta_{\beta(\mu)} \rangle|^2 \prec \sum_{\nu=1}^n |w_\nu|^2 \frac{n^{-1}+  \eta_\mu \sqrt{\kappa_\mu} +\phi_n^3}{|\tsig_\nu^b+m_{1c}^{-1}(\lambda_+)|^2+\phi_n^2 +\kappa_\mu}.
\end{equation*}
 If we have (a) \eqref{assm_3rdmoment} holds, or (b) either $A$ or $B$ is diagonal, then we have the stronger estimate
\begin{equation*}
|\langle \wb, \wt\bzeta_{\beta(\mu)} \rangle|^2 \prec  \sum_{\nu=1}^n |w_\nu|^2 \frac{n^{-1}+\phi_n^3}{ |\tsig_\nu^b+m_{1c}^{-1}(\lambda_+)|^2+\phi_n^2 + \kappa_\mu }.
\end{equation*}
\end{theorem}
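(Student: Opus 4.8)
\textbf{Proof proposal for Theorem \ref{thm_rightbulk}.}

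The plan is to obtain Theorem \ref{thm_rightbulk} from Theorem \ref{thm_noneve} by the duality between the two models, rather than by repeating the resolvent analysis from scratch. The key observation is that $Y = \wt A^{1/2} X \wt B^{1/2}$ and $Y^* = \wt B^{1/2} X^* \wt A^{1/2}$ are the same type of object: if we set $X' \col= X^*$ (a $n\times p$ random matrix with independent entries satisfying the same moment assumptions, since transposition preserves \eqref{conditionA3} and \eqref{entry_assm1}), $A' \col= \wt B$, $B' \col= \wt A$, then $\ctQ_2 = (A')^{1/2} X' B' (X')^* (A')^{1/2}$ has exactly the form of $\ctQ_1$ with the primed data. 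First I would check that all the hypotheses transfer: Assumption \ref{assm_big1} is symmetric in $A\leftrightarrow B$ and invariant under $X\mapsto X^*$ (only $\max_{i,j}$ bounds and $d_n \in [\tau, \tau^{-1}]$, with $p/n$ replaced by $n/p \in [\tau,\tau^{-1}]$); Assumption \ref{ass:unper} is symmetric because the gap condition \eqref{assm_gap} is stated as a pair of symmetric inequalities; and Assumption \ref{ass:spike} is likewise symmetric, with $r^+, s^+$ and the thresholds $-m_{2c}^{-1}(\lambda_+)$, $-m_{1c}^{-1}(\lambda_+)$ swapping roles. The point is that the self-consistent system \eqref{separa_m12} is symmetric under simultaneously exchanging $(\pi_A, d_n m_{1c}) \leftrightarrow (\pi_B, m_{2c})$, so the spectral data of the primed model is just the old data with $m_{1c}$ and $m_{2c}$ (suitably rescaled) interchanged, and in particular $\lambda_+$ is unchanged.

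Next I would track what the dictionary does to the conclusion. Under the substitution, the eigenvectors $\wt\bxi_{\al(i)}$ of $\ctQ_1$ become the eigenvectors $\wt\bzeta_{\beta(\mu)}$ of $\ctQ_2$, the population eigenvectors $\bv_i^a$ become $\bv_\mu^b$, the spikes $\wt\sigma_i^a$ become $\wt\sigma_\mu^b$, and $m_{2c}^{-1}(\lambda_+)$ becomes $m_{1c}^{-1}(\lambda_+)$; the labelling function $\al$ is replaced by $\beta$ and the condition $\al(i)\notin\mathcal O^+$ by $\beta(\mu)\notin\mathcal O^+$. The error parameters $\eta_i$ and $\kappa_i$ depend only on $n$ (the common nonzero-eigenvalue count) and the index, so they are unchanged as functions except for relabelling $i\mapsto \mu$. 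Applying Theorem \ref{thm_noneve} to the primed model and translating back gives precisely the two displayed bounds in Theorem \ref{thm_rightbulk}, including the improved estimate under hypothesis (a) (which is insensitive to $X\mapsto X^*$ since $\E (x_{ij}^*)^3 = 0 \iff \E x_{ij}^3 = 0$ in the real case) or hypothesis (b) (``$A'$ or $B'$ diagonal'' is the same as ``$\wt B$ or $\wt A$ diagonal'', equivalently ``$A$ or $B$ diagonal up to the finite-rank spike, which does not affect the diagonal structure in the relevant sense'').

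The only genuinely delicate point—and the step I expect to be the main obstacle—is verifying that the reduction is \emph{clean}, i.e. that Theorem \ref{thm_noneve} was proved for a general separable model and not, somewhere in the supplement, under an auxiliary convention (such as $p\le n$, or a particular ordering of $A$ versus $B$) that would break under the swap. If such a convention exists, one must instead re-run the proof of Theorem \ref{thm_noneve} with the roles interchanged; since that proof goes through resolvent identities and local laws for $\ctQ_2$ that are already developed symmetrically in \cite{yang2018}, this is mechanical but must be stated. I would therefore structure the proof as: (1) state the substitution $X'=X^*$, $A'=\wt B$, $B'=\wt A$; (2) verify Assumptions \ref{assm_big1}, \ref{ass:unper}, \ref{ass:spike} hold for the primed model, noting the symmetry of \eqref{separa_m12} and of the gap and spike conditions; (3) invoke Theorem \ref{thm_noneve} for the primed model; (4) translate the conclusion back via the dictionary $\wt\bxi\leftrightarrow\wt\bzeta$, $\bv_i^a\leftrightarrow\bv_\mu^b$, $m_{2c}\leftrightarrow m_{1c}$, $\al\leftrightarrow\beta$, observing that $\lambda_+$, $\eta$, $\kappa$ are invariant. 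This yields the claim, with both the general and the refined estimates.
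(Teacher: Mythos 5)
Your proposal is correct and is essentially the paper's own argument: the authors obtain Theorem \ref{thm_rightbulk} directly from Theorem \ref{thm_noneve} by exchanging the roles of $(\wt A, X)$ and $(\wt B, X^*)$, exactly the duality you describe, with the assumptions and the self-consistent equations \eqref{separa_m12} symmetric under this swap. Your extra verification steps (checking the assumptions transfer and that the conditions in (a) and (b) are invariant) are fine but do not change the substance of the argument.
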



Using a simple cutoff argument, it is easy to obtain the following corollary under certain moment assumptions. Since we do not assume the entries of $X$ are identically distributed, the means and variances of the truncated entries may be different. This is why we assume the slightly more general conditions in \eqref{entry_assm1}.

\begin{corollary}\label{main_cor}
Assume that $X=(x_{ij})$ is a real $p\times n$ matrix, whose entries are independent random variables that satisfy \eqref{eq_12moment} and  
\be\label{condition_4e} 
\max_{i,j}\mathbb{E}  |\sqrt{n} x_{ij} | ^{a}  \le C,  
\ee 
for some constants $C>0$ and $a>4$. Suppose $A$, $B$, $\wt A$, $\wt B$ and $d_n$ satisfy Assumptions \ref{assm_big1} and \ref{ass:unper}. Then Theorems \ref{thm_outlier}, \ref{thm_eigenvaluesticking}, \ref{thm_eveout}, \ref{thm_noneve}, \ref{thm_rightout} and \ref{thm_rightbulk} hold for $\phi_n= n^{2/a-1/2}$ on an event with probability $1-\oo(1)$.
\end{corollary}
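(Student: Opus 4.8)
The plan is to derive Corollary \ref{main_cor} from the already-stated Theorems \ref{thm_outlier}--\ref{thm_rightbulk} via a standard truncation (cutoff) argument that replaces the matrix $X$ with a matrix $\wh X$ that has bounded support $\phi_n = n^{2/a-1/2}$, and then to control the perturbation caused by the truncation using a rank/norm estimate on $X - \wh X$. First I would fix the cutoff level at $\phi_n n^{1/2} = n^{2/a}$ (so that entries of $\sqrt n x_{ij}$ are cut at scale $n^{2/a}$), and define $\wh x_{ij} := x_{ij}\ind{|x_{ij}|\le \phi_n}$ together with the recentered, rescaled version $\check x_{ij} := (\wh x_{ij} - \E\wh x_{ij})/s_{ij}$, where $s_{ij}^2 := \E|\wh x_{ij}-\E\wh x_{ij}|^2$. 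The point is that $\check X := (\check x_{ij})$ satisfies \eqref{conditionA3} (its entries are bounded, hence have all moments bounded uniformly), and $\wh X := (\wh x_{ij})$ satisfies \eqref{entry_assm1} because, by the moment hypothesis \eqref{condition_4e} with $a>4$, the truncation perturbs the mean and the second moment only by $\OO(n^{-2-\tau})$ for a suitable $\tau>0$; this is precisely the reason the paper relaxed \eqref{eq_12moment} to \eqref{entry_assm1} in Assumption \ref{assm_big1}. So Theorems \ref{thm_outlier}--\ref{thm_rightbulk} apply verbatim to $\wh{\mathcal Q}_{1,2}$ built from $\wh X$ with the claimed $\phi_n$.

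Next I would show that on an event of probability $1-\oo(1)$ the matrix $X$ actually coincides with $\wh X$. By Markov's inequality, $\P(|x_{ij}| > \phi_n) = \P(|\sqrt n x_{ij}| > n^{2/a}) \le C n^{-2}$, so by a union bound over the $pn = \OO(n^2)$ entries, $\P(X \ne \wh X) \le C p n \cdot n^{-2} \to 0$ (more carefully, one uses $a>4$ strictly to get $\P(|\sqrt n x_{ij}| > n^{2/a}) = \oo(n^{-2})$ by truncating the expectation $\E|\sqrt n x_{ij}|^a$ over the tail; with $a>4$ exactly this gives $\oo(1)$ after the union bound, and with more moments one gets high probability). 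On this event $\wh X = X$, so $\wh{\mathcal Q}_{1,2} = \mathcal Q_{1,2}$ and $\wt{\mathcal Q}_{1,2}$ built from the two matrices agree exactly, whence all six theorems transfer from $\wh X$ to $X$ on this event. The only remaining gap is that the six theorems are stated for $\check X$ (which has the clean moment normalization \eqref{conditionA3} and mean/variance exactly $0$ and $n^{-1}$, up to the $n^{-2-\tau}$ slack) rather than for $\wh X$; but $\wh x_{ij} = s_{ij}\check x_{ij} + \E \wh x_{ij}$ with $s_{ij} = n^{-1/2}(1+\OO(n^{-1-\tau}))$ and $|\E\wh x_{ij}|\le n^{-2-\tau}$, so $\wh X$ itself already satisfies Assumption \ref{assm_big1} directly and one may simply apply the theorems to $\wh X$ without passing through $\check X$ at all. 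This is why I phrased the truncation as producing $\wh X$ satisfying \eqref{entry_assm1} rather than insisting on exact normalization.

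The main (and essentially only) technical obstacle is bookkeeping the truncation error at the level of the \emph{moments}: one must verify that with $a > 4$, cutting at $n^{2/a}$ indeed yields $\max_{ij}|\E \wh x_{ij}| \le n^{-2-\tau}$ and $\max_{ij}|\E|\wh x_{ij}|^2 - n^{-1}| \le n^{-2-\tau}$ for some $\tau = \tau(a) > 0$, and simultaneously that $\phi_n = n^{2/a-1/2}$ still satisfies the admissibility constraint $n^{-1/2} \le \phi_n \le n^{-c_\phi}$ required by the theorems. For the mean: $|\E\wh x_{ij} - \E x_{ij}| = |\E x_{ij}\ind{|x_{ij}|>\phi_n}| \le \E |x_{ij}| |x_{ij}/\phi_n|^{a-1} = \phi_n^{1-a} n^{-a/2}\E|\sqrt n x_{ij}|^a \le C \phi_n^{1-a} n^{-a/2}$, and plugging $\phi_n = n^{2/a-1/2}$ gives an exponent $(1-a)(2/a-1/2) - a/2 = 2/a - 1/2 - 2 + a/2 - a/2 = 2/a - 5/2 < -2$ precisely when $a > 4$, so $\tau = 1/2 - 2/a > 0$ works (and combined with $|\E x_{ij}|\le n^{-2-\tau}$ from the original assumption this closes the mean bound); the variance bound is identical with $a-1$ replaced by $a-2$, giving exponent $-1 + (2/a)(a-2) - \cdots$ which one checks is $\le -2-\tau$ for the same range. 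Once these arithmetic facts are in place, everything else is a one-line union bound plus a verbatim citation of the six theorems, so the corollary follows. Since $2/a - 1/2 < 0$ for $a > 4$ and $2/a-1/2 > -1/2$, the admissibility range for $\phi_n$ is respected, so there is genuinely nothing deeper to do.
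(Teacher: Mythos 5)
Your overall strategy (truncate, check that the truncated matrix satisfies Assumption \ref{assm_big1} -- this is exactly why the paper relaxed \eqref{eq_12moment} to \eqref{entry_assm1} -- apply the six theorems to the truncated matrix, and transfer back on the event where no entry is truncated) is the same as the paper's, and your moment arithmetic for \eqref{entry_assm1} with $\tau=1/2-2/a$ is correct. However, there is a genuine gap in the probability estimate for the event $\{X=\wh X\}$. You cut at exactly $\sqrt n\,|x_{ij}|\le n^{2/a}$, and Markov with the $a$-th moment then only gives $\P(|\sqrt n x_{ij}|>n^{2/a})\le Cn^{-2}$, so the union bound over the $pn\sim n^2$ entries yields $\OO(1)$, not $\oo(1)$ -- as you yourself noticed. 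Your proposed repair, namely that $\P(|\sqrt n x_{ij}|>n^{2/a})=\oo(n^{-2})$ because the tail expectation $\E|\sqrt n x_{ij}|^a\,\ind{|\sqrt n x_{ij}|>n^{2/a}}$ vanishes, does not follow from the hypotheses of the corollary: the entries form a triangular array whose laws may depend on $n$, and \eqref{condition_4e} only provides a uniform bound on the $a$-th moments, not uniform integrability of $|\sqrt n x_{ij}|^a$. For instance, if $\sqrt n x_{ij}$ carries an atom of mass $c\,n^{-2}$ at the value $2n^{2/a}$ (compatible with mean zero, unit variance and bounded $a$-th moment), then the tail expectation stays of order one and $\P(\max_{i,j}|\sqrt n x_{ij}|>n^{2/a})$ stays bounded away from $0$, so with your cutoff the event $\{X=\wh X\}$ need not have probability $1-\oo(1)$.

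The fix -- and this is what the paper does -- is to truncate at a slightly higher level: take $\phi_n=n^{2/a-1/2+\e}$ for an arbitrarily small constant $\e>0$, i.e.\ cut $\sqrt n x_{ij}$ at $n^{2/a+\e}$. Then Markov gives $\P(|\sqrt n x_{ij}|>n^{2/a+\e})\le Cn^{-2-a\e}$ and the union bound yields $\P(X\ne\wh X)=\OO(n^{-a\e})=\oo(1)$, while the moment verification of \eqref{entry_assm1} goes through exactly as in your computation. One then applies Theorems \ref{thm_outlier}, \ref{thm_eigenvaluesticking}, \ref{thm_eveout}, \ref{thm_noneve}, \ref{thm_rightout} and \ref{thm_rightbulk} with this enlarged $\phi_n$; since all error bounds are stated with $\prec$, which absorbs $n^{\e}$ factors, and $\e$ is arbitrary, this is equivalent to the statement with $\phi_n=n^{2/a-1/2}$ claimed in the corollary. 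With that one change of cutoff level your argument closes; without it, the transfer step from $\wh X$ to $X$ is not justified under the stated assumptions.
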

Its proof is given in Section \ref{sec_supp_right} of the supplement. 

\begin{remark}
We remark that one can take $r=0$ or $s=0$ (i.e. either $\wt A$ or $\wt B$ has no spikes) in the statements of our main results, although some results will become trivial null results. As an example, we consider the case where $r \geq 1$ and $s=0$. 
In this case, the outlier eigenvalues only come from $\widetilde{A}.$ Consequently, in Definition \ref{defn_relabelling}, we have that $\mathcal O:= \{\al(i): 1\le i \le r\}$ and $\mathcal O^+:= \{\al(i): 1\le i \le r^+\}.$ Then Theorem \ref{thm_outlier} still holds, although \eqref{eq_spike2} becomes a null result since there is no $\mu$ such that $1\le \mu \le 0$;  Theorem \ref{thm_eigenvaluesticking} holds true with $s^+=0$ and $\alpha_+:= \min_i \left|\tsig_i^a+m_{2c}^{-1}(\lambda_+)\right|$; 
Theorems \ref{thm_eveout}, \ref{thm_noneve}, \ref{thm_rightout} and \ref{thm_rightbulk} still hold for the left and right singular vectors, although Theorem \ref{thm_rightout} actually can be  derived from  Theorem  \ref{thm_rightbulk} since there is no outlier coming from $\wt B$.


If $r=s=0,$ $\widetilde{Q}_1$ reduces to the non-spiked version $\mathcal{Q}_1=A^{1/2}XBX^*A^{1/2}.$ All of our main results are still valid, but better estimates actually hold  in this case as given in \cite{yang2018}, which studied non-spiked separable covariance matrices. Some of these results are also stated in Theorem \ref{thm_largerigidity} and Lemma \ref{delocal_rigidity} of our supplement \cite{dysupple}. 
\end{remark}

\subsection{Strategy for the proof} We conclude this section by describing briefly the main ideas and mathematical tools used in our proof. Using a linearization method (c.f. \eqref{linearize_block} of \cite{dysupple}), we can show that the outlier eigenvalues satisfy a master equation in terms of the resolvents in (\ref{def_green}) (c.f. Lemma \ref{lem_pertubation} of \cite{dysupple}). Moreover, the resolvents appear in the forms $(V_o^a)^*\cal G_1 V_o^a$ and  $(V_o^b)^*\cal G_2 V_o^b$, where we recall the notations in \eqref{AOBO}. These functionals of resolvents can be estimated using the anisotropic local law in \cite{yang2018}, which shows that they are close to certain deterministic matrices up to some small errors (c.f. Theorem \ref{LEM_SMALL} of \cite{dysupple}). By replacing  $(V_o^a)^*\cal G_1 V_o^a$ and  $(V_o^b)^*\cal G_2 V_o^b$ with their deterministic equivalents, we can solve the master equation to get the asymptotic locations $\theta_1(\wt\sigma_i^a)$ and $\theta_2(\wt\sigma_\mu^b)$ of the outliers. To obtain the convergence rates in Theorems \ref{thm_outlier} and \ref{thm_eigenvaluesticking}, we need to control the errors using the anisotropic local law and  a three-step proof strategy developed in \cite{KY2013}, which is summarized at the beginning of Section \ref{sec:ev} in supplement \cite{dysupple}.  

Once we know the asymptotic locations of the outliers, we can use Cauchy's integral formula to study the eigenvectors. For example, suppose the largest outlier $\wt\lambda_1$ is well separated from all the other eigenvalues. Then using the Cauchy's integral formula, we get
$$|\langle \bv,\wt{\bm \xi}_1\rangle|^2 = -\frac{1}{2\pi \ii} \oint_{\Gamma}\bv^* \sum_{k = 1}^{p} \frac{\wt{\bm \xi}_k(i) \wt{\bm \xi}_k^*(j)}{\wt\lambda_k-z} \bv \dd z= -\frac{1}{2\pi \ii} \oint_{\Gamma} \sum_{k = 1}^{p} \bv^* \wt {\cal G}_1(z) \bv \dd z$$
where $\Gamma$ is a small contour enclosing $\wt\lambda_1$ only. For a more general integral representation of $\langle \bv, \cal P_S\bv\rangle$, we refer the reader to  (\ref{eq_greenrepresent}) of \cite{dysupple}. 
Using the anisotropic local law, we can obtain the convergence limits and rates in Theorem \ref{thm_eveout}. The proof of Theorem \ref{thm_noneve} relies on the simple bound
$$ |\langle \bv,\wt{\bm \xi}_k\rangle|^2 \le \eta \cdot \left(\bv^* \sum_{k = 1}^{p} \frac{\eta \wt{\bm \xi}_k(i) \wt{\bm \xi}_k^*(j)}{|\wt\lambda_k-z_k|^2} \bv\right)  =\eta \im  \bv^* \wt {\cal G}_1(z_k) \bv,$$
where we take $z_k= \wt\lambda_k + \ii\eta$. Again we will use the anisotropic local law to establish the delocalization bounds. 

 \section{Statistical estimation for spiked separable covariance matrices}\label{sec:statapp}
 
 In this section, we consider the estimation of $\wt A$ and $\wt B$ from the data matrix $\wt{A}^{1/2}X \wt{B}^{1/2}.$ In particular, we address two fundamental issues:  
\begin{itemize}
\item[(1)]  estimating the number of spikes in $\widetilde{A}$ and $\widetilde{B};$  

\item[(2)]  adaptive optimal shrinkage of the eigenvalues of $\widetilde{A}$ and $\widetilde{B}.$ 
\end{itemize}

To ease our discussion, till the end of this section, we will replace Assumption \ref{ass:spike} with the following stronger \emph{super-critical}  condition. It is commonly used in the statistical literature, for instance \cite{bgn2012, DO2019, donoho2018, RRN2014}.   

\begin{assumption}\label{assum_supercritical} For some fixed constant $\tau>0,$  we assume that  there are $r$ spikes for $\widetilde{A}$ and $s$ spikes for $\widetilde{B}$, which satisfy
\begin{equation*} 
\wt{\sigma}^a_{i}+{m_{2c}^{-1}(\lambda_+)} > \tau, \ \ 1 \leq i \leq r, \quad 
\text{ and }
\quad \wt{\sigma}^b_{\mu}+m_{1c}^{-1}(\lambda_{+}) > \tau,  \  \ 1 \leq \mu \leq s.
\end{equation*}
\end{assumption}
For simplicity of presentation, we will also assume the following non-overlapping condition.
\begin{assumption}\label{big_gap}
Recall (\ref{eq_nu}) and (\ref{eq_nu2}).  For some fixed constant $\tau>0,$ we assume that
\begin{equation*}
 \min_{1\le j \le r}\delta_{\al(i),\al(j)}^a \wedge  \min_{1\le \mu \le s}\delta_{\al(i),\beta(\mu)}^a \ge \tau,  \quad 1\le i \le r,
\end{equation*}
and 
\begin{equation*}
 \min_{1\le \nu \le s}\delta_{\beta(\mu),\beta(\nu)}^b \wedge  \min_{1\le i \le r}\delta_{\beta(\mu),\al(i)}^b \ge \tau,  \quad 1\le \mu \le s.
\end{equation*}
\end{assumption}

 \subsection{ Estimating the number of spikes}\label{sec:estspike}
 
The number of spikes has important meaning in practice. For instance, it represents the number of factors in factor model \cite{factor1, factor2} and number of signals in signal processing \cite{signaldec}. Such a problem has been studied for spiked covariance matrix, see e.g. \cite{PY14}.  In this section, we extend the discussion to the more general spiked separable model (\ref{eq_sepamodel}). 
 
Different from the spiked covariance matrix model, we have two sources of spikes from either $\widetilde A$ or $\widetilde B$. For spiked covariance matrices, the statistic only involves sample eigenvalues. However, as we have seen from Theorem \ref{thm_outlier}, the sample eigenvalues only contain information of the total number of spikes, i.e. $r+s$.  
One way to deal with this issue is to use the information from the sample eigenvectors and apply Theorem \ref{thm_eveout}. In Figure \ref{fig_distinguish}, we use a numerical simulation to illustrate how the eigenvectors can help us to gather information of separable covariance matrices. We consider two different settings:
 \begin{equation}
 \widetilde{\Sigma}^a=\text{diag}(5, 1, \cdots, 1), \quad  \widetilde{\Sigma}^b=\text{diag}(5, 1, \cdots, 1)  \tag{Case I},
 \end{equation}
 and
  \begin{equation}
 \widetilde{\Sigma}^a=\text{diag}(3, 2,1,  \cdots, 1), \   \widetilde{\Sigma}^b=\text{diag}(1, 1, \cdots, 1) \tag{Case II}.
 \end{equation}
Figure \ref{fig_distinguish} (a) shows that there are two spikes in both cases. However, from Figure \ref{fig_distinguish} (b) and Figure \ref{fig_distinguish} (c), we can see that there are two parts of spikes in Case I, but only one part in Case II as expected.   It shows the necessity to take into consideration the information from the eigenvectors. Here we take $p=150, n=200.$ 

\begin{figure}[ht]
 \captionsetup{width=0.8\linewidth}
 \begin{center}
\begin{subfigure}{0.5\textwidth}
\includegraphics[width=6.5cm, height=5cm]{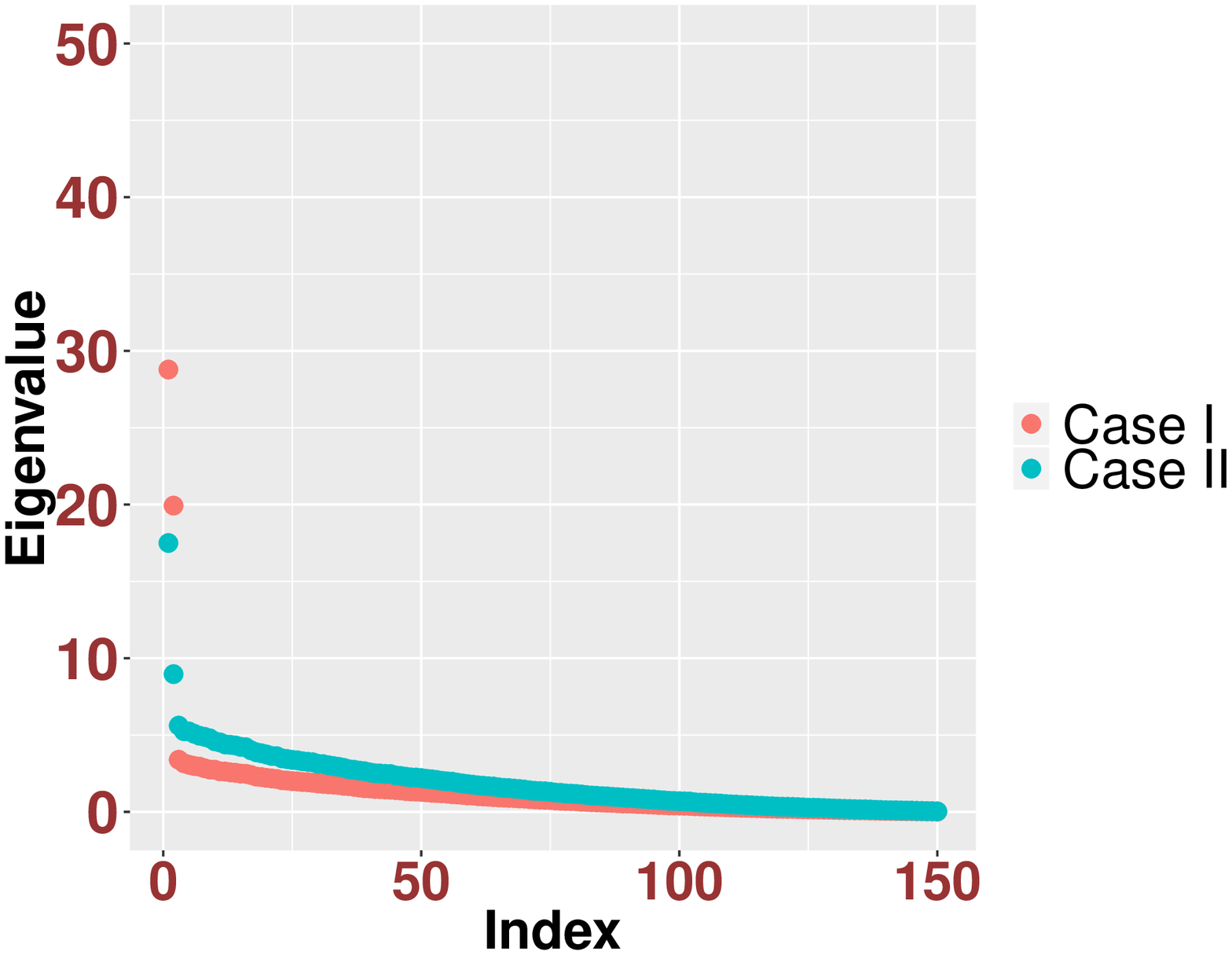}
\subcaption{Eigenvalues.  } \label{fig_evdis}
\end{subfigure}\\
\begin{subfigure}{0.4\textwidth}
\includegraphics[width=6.4cm,height=5cm]{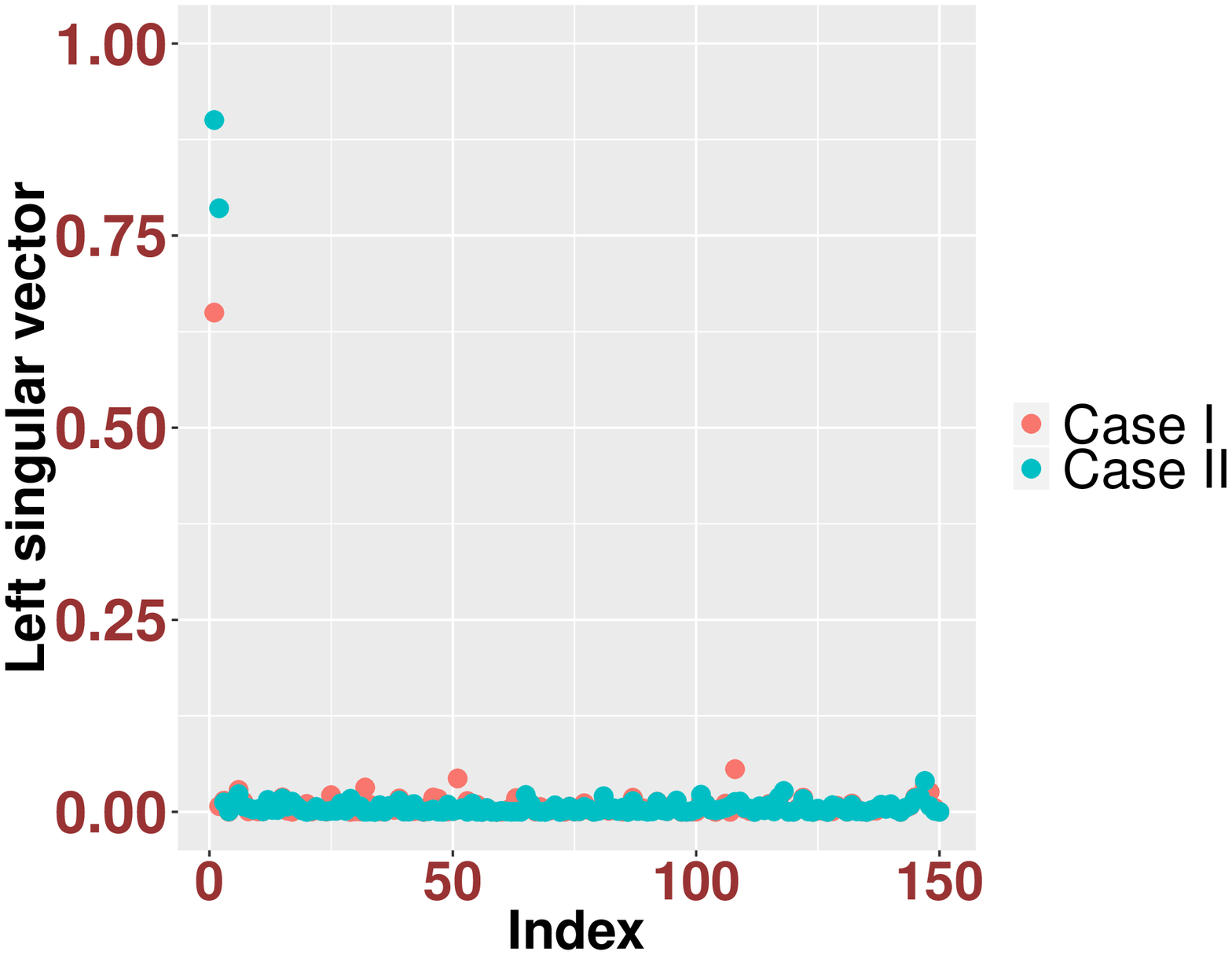}
\subcaption{
$|\langle \bv_i^a, \wt\bxi_{\al(i)} \rangle|^2$  }\label{fig_leve}
\end{subfigure}
\begin{subfigure}{0.4\textwidth}
\includegraphics[width=6.4cm,height=5cm]{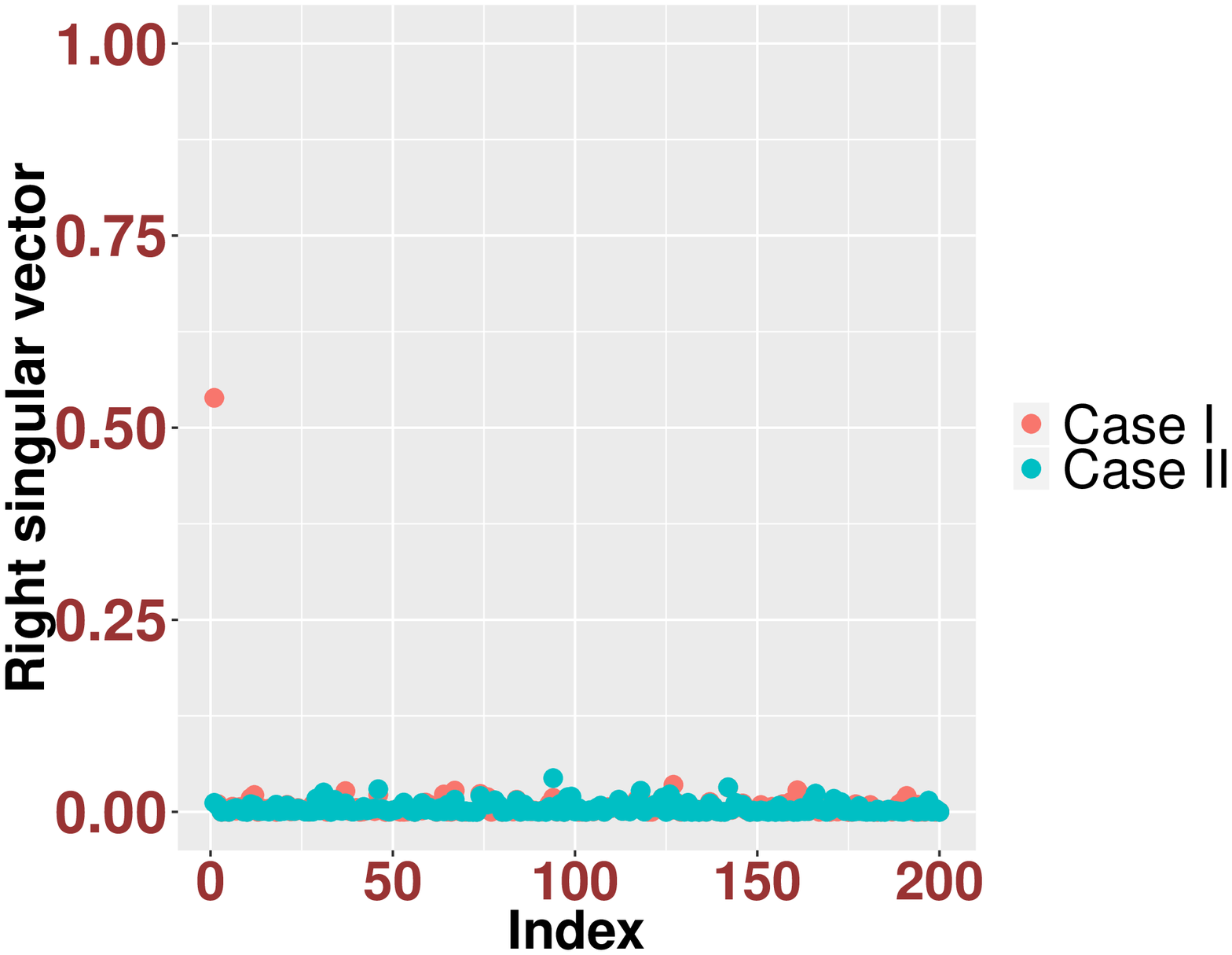}
\subcaption{
 $|\langle \bv_\mu^b, \wt{\bzeta}_{\beta(\mu)} \rangle|^2$ }\label{fig_reve}
\end{subfigure}
\caption[]{Eigenvalues and eigenvectors for spiked separable covariance matrices.} \label{fig_distinguish}
\end{center}
\end{figure} 

 In the following discussion, we assume that the population eigenvectors of $\widetilde A$ and $\widetilde B$ are known. For the more general case where such information is unavailable, we will study it somewhere else (see also Remark \ref{cone cond}).

We provide our statistic and start with a heuristic discussion. Under Assumptions \ref{assum_supercritical} and \ref{big_gap}, we get from Theorems \ref{thm_outlier}, \ref{thm_eveout} and \ref{thm_noneve} that 
\begin{align*}
& \wt\lambda_{\al(i)}= \theta_1(\wt\sigma_i^a) + \OO_\prec(\phi_n),
\end{align*}
and for $1\le i \le r$, 
\begin{align*}
& |\langle \bv_i^a, \wt\bxi_{k}\rangle|^2 =\mathbf 1(k=\al(i)) \left[\frac{1}{\wt\sigma_i^a}\frac{g_{2c}'(-(\wt{\sigma}_i^a)^{-1})}{\theta_{1}(\wt{\sigma}_i^a)} + \OO_\prec(\phi_n)\right] + \OO_\prec(\phi_n^2).
\end{align*}
\nc Hence, if all the spiked eigenvalues are well-separated, the ratio between $\wt\lambda_{\al(i)}$ and $\wt\lambda_{\al(i+1)}$ are strictly greater than 1. However, for the non-outlier eigenvalues, these ratios will converge to 1 at a rate $\OO_\prec(n^{-2/3}+\phi_n^2)$ by Theorem \ref{thm_eigenvaluesticking} and eigenvalue rigidity, Theorem \ref{thm_largerigidity} in the supplement. Moreover, the (cosine of) the angle $|\langle \bv_i^a, \wt\bxi_{k}\rangle|$ is of order $\OO_\prec(\phi_n)$ except when $k=\al(i)$, in which case we have that $|\langle \bv_i^a, \wt\bxi_{k}\rangle|$ is larger than a constant. Therefore, the ratios between consecutive eigenvalues and the angles will be used as our statistics. 

Formally,  for a given threshold $\omega>0$ and a properly chosen constant $c>0$, we define the statistic $q$ by
\begin{equation}\label{eq_defnqest}
q \equiv q(\omega):=\operatorname*{arg\,min}_ {1 \leq i \leq c(p\wedge n)} \left\{ \frac{\wt\lambda_{i+1}}{\wt\lambda_{i+2}}-1 \leq \omega \right\} , 
\end{equation}
and $q_{a,b} \equiv q_{a,b}(\omega)$ by 
\begin{align*}
& q_a(\omega):=\operatorname*{arg\,min}_ {1 \leq i \leq c(p\wedge n)} \left\{ \max_{1\le k\le c(p\wedge n)}\left| \langle \bv_{i+1}^a, \widetilde{\bxi}_k \rangle \right|^2  \leq \omega \right\}, \\
& q_b(\omega):=\operatorname*{arg\,min}_ {1 \leq \mu \leq c(p\wedge n)} \left\{ \max_{1\le \nu\le c(p\wedge n)}\left| \langle \bv_{\mu+1}^b, \widetilde{\bzeta}_\nu \rangle \right|^2  \leq \omega \right\}.
\end{align*}
As discussed above, $q$ is used to estimate the total number of spikes, whereas $q_a$ and $q_b$ are used to estimate the number of spikes for $\wt A$ and $\widetilde B$, respectively. With Theorems \ref{thm_outlier},  \ref{thm_eigenvaluesticking}, \ref{thm_eveout}, \ref{thm_noneve}, \ref{thm_rightout} and \ref{thm_rightbulk}, it is easy to show that they are consistent estimators for carefully chosen threshold $\omega$.  
Denote the event $\Omega \equiv \Omega(\omega)$ by
\begin{equation*}
\Omega:=\{q=r+s, q_a=r, q_b=s\}.
\end{equation*} 
\begin{theorem}\label{thm_estispike} 
Suppose $X$ has bounded support $\phi_n$ such that $ n^{-{1}/{2}} \leq \phi_n \leq n^{- c_\phi} $ for some constant $c_\phi>0$. Suppose that the Assumptions \ref{assm_big1}, \ref{ass:unper}, \ref{assum_supercritical} and \ref{big_gap} hold. Then if $\omega$ satisfies that for some constant $\e>0$,
\begin{equation}\label{eq_omegaassu}
{  \omega \rightarrow 0, \quad \frac{\omega}{ n^{ \e} (n^{-2/3}+\phi_n^2)} \rightarrow \infty,}
\end{equation}
then we have that $\Omega$ holds with high probability for large enough $n$. 
\end{theorem}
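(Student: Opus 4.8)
The plan is to prove that each of the three events $\{q=r+s\}$, $\{q_a=r\}$ and $\{q_b=s\}$ holds with high probability and then intersect them. Since $q$, $q_a$ and $q_b$ are each defined as the smallest index at which a threshold test succeeds, for each of them it suffices to check (i) that the test \emph{fails} at every index strictly below the target value, and (ii) that it \emph{succeeds} at the target value. I would first record the relevant consequences of the hypotheses: Assumption \ref{assum_supercritical} implies Assumption \ref{ass:spike} with $r^+=r$, $s^+=s$, so $\mathcal{O}^+=\mathcal{O}$ is the set of the $r+s$ largest eigenvalue indices; by Assumption \ref{big_gap} and Lemma \ref{lem_derivativeprop} of the supplement the $r+s$ classical outlier locations $\{\theta_1(\wt\sigma_i^a)\}_{i=1}^{r}\cup\{\theta_2(\wt\sigma_\mu^b)\}_{\mu=1}^{s}$ are pairwise separated by distances of order one and each lies an order-one distance above $\lambda_+$, so $\Delta_1(\wt\sigma_i^a)\asymp\Delta_2(\wt\sigma_\mu^b)\asymp1$ and every gap $\delta_{\mathfrak{a}}(S)$ entering the bounds below is $\gtrsim1$; finally, combining \eqref{assm_gap} with \eqref{real_stiel} and Lemma \ref{lambdar} gives $|\wt\sigma_j^a+m_{2c}^{-1}(\lambda_+)|\asymp1$ for every $j>r$ and $|\wt\sigma_\nu^b+m_{1c}^{-1}(\lambda_+)|\asymp1$ for every $\nu>s$.

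For $q$ I would invoke only Theorem \ref{thm_outlier}: it places $\wt\lambda_k$ within $\OO_\prec(n^{-1/2}+\phi_n)$ of the $k$-th largest classical outlier location for $1\le k\le r+s$, while \eqref{eq_nonspike} (with $\varpi=r+s+2$) gives $\wt\lambda_{r+s+1},\wt\lambda_{r+s+2}=\lambda_++\OO_\prec(n^{-2/3}+\phi_n^2)$. Hence for $1\le i\le r+s-1$ the consecutive pair $(\wt\lambda_{i+1},\wt\lambda_{i+2})$ straddles a gap of order one while $\wt\lambda_{i+2}\asymp1$, so $\wt\lambda_{i+1}/\wt\lambda_{i+2}-1\asymp1>\omega$ with high probability; at $i=r+s$ one gets $\wt\lambda_{r+s+1}/\wt\lambda_{r+s+2}-1=\OO_\prec(n^{-2/3}+\phi_n^2)\le\omega$ with high probability by \eqref{eq_omegaassu}. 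This gives $q=r+s$. For the ``fail'' half of $q_a$ (and, symmetrically, of $q_b$), fix $1\le i\le r-1$ and apply Theorem \ref{thm_eveout} with $S=\{\al(i+1)\}$ and $\bv=\bv_{i+1}^a$ (cf.\ Example \ref{exam_nondege}): the leading coefficient of \eqref{defn_ZA} is $(\wt\sigma_{i+1}^a)^{-1}g_{2c}'(-(\wt\sigma_{i+1}^a)^{-1})/g_{2c}(-(\wt\sigma_{i+1}^a)^{-1})\asymp\wt\sigma_{i+1}^a+m_{2c}^{-1}(\lambda_+)\asymp1$, and all surviving error terms in \eqref{eq_spikePA} carry a factor $\phi_n$, $\phi_n^2$ or $n^{-1/2}$ over gaps $\gtrsim1$, so $|\langle\bv_{i+1}^a,\wt\bxi_{\al(i+1)}\rangle|^2\ge c>0$ with high probability and the maximum in the definition of $q_a$ exceeds $\omega$ at this $i$.

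It remains to prove the ``succeed'' half of $q_a$ at $i=r$, i.e.\ $\max_{1\le k\le c(p\wedge n)}|\langle\bv_{r+1}^a,\wt\bxi_k\rangle|^2\le\omega$ with high probability ($q_b$ being symmetric). Since $\bv_{r+1}^a$ is non-spiked it is orthogonal to $\bv_1^a,\dots,\bv_r^a$, whence $\langle\bv_{r+1}^a,\mathcal{Z}_{\{k\}}\bv_{r+1}^a\rangle=0$ for every outlier index $k\in\mathcal{O}^+$; Theorem \ref{thm_eveout} then leaves only error terms, and the order-one gaps together with $|\wt\sigma_{r+1}^a+m_{2c}^{-1}(\lambda_+)|\asymp1$ make them $\OO_\prec(\phi_n^2)$, so $|\langle\bv_{r+1}^a,\wt\bxi_k\rangle|^2=\OO_\prec(\phi_n^2)$ for $k\in\mathcal{O}^+$. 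For a non-outlier index $k$ I would split the range: for $k$ near the edge, $k\le n^{\delta}$ with $\delta<1/4$ a fixed small constant, Theorem \ref{thm_noneve} with $\bv=\bv_{r+1}^a$ gives $|\langle\bv_{r+1}^a,\wt\bxi_k\rangle|^2\prec(n^{-1}+\eta_k\sqrt{\kappa_k}+\phi_n^3)\,/\,(|\wt\sigma_{r+1}^a+m_{2c}^{-1}(\lambda_+)|^2+\phi_n^2+\kappa_k)=\OO_\prec(n^{-1}+\phi_n^2)$, since the denominator is $\gtrsim1$ and $\eta_k\sqrt{\kappa_k}=\oo(n^{-1})$ uniformly in $k\le n^\delta$; for $k$ deeper in the bulk, $n^\delta<k\le c(p\wedge n)$, the complete delocalization of the non-outlier eigenvectors of $\ctQ_1$ in the eigenbasis of $A$ (which follows from the anisotropic local law, the spike being a finite-rank perturbation that affects only the edge; cf.\ Lemma \ref{delocal_rigidity} of the supplement for the non-spiked case) gives $|\langle\bv_{r+1}^a,\wt\bxi_k\rangle|^2\prec n^{-1}$. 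As the stochastic domination is uniform in $k$, taking the maximum over the $\OO(n)$ indices is absorbed by a union bound, so $\max_{1\le k\le c(p\wedge n)}|\langle\bv_{r+1}^a,\wt\bxi_k\rangle|^2=\OO_\prec(n^{-1}+\phi_n^2)$, which is $\le\omega$ with high probability by \eqref{eq_omegaassu}. Hence $q_a=r$, and likewise $q_b=s$ after exchanging $(\wt A,X)\leftrightarrow(\wt B,X^*)$ and using Theorems \ref{thm_rightout} and \ref{thm_rightbulk}; intersecting the three events yields $\P(\Omega)\ge1-n^{-D}$ for every $D>0$ and all large $n$.

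The step I expect to be the main obstacle is the ``succeed'' direction for $q_a$ and $q_b$: obtaining a bound on $\max_{1\le k\le c(p\wedge n)}|\langle\bv_{r+1}^a,\wt\bxi_k\rangle|^2$ that is $\oo(\omega)$ \emph{uniformly} over the entire non-outlier range. Near the edge this relies on the biased-delocalization estimate of Theorem \ref{thm_noneve}---whose content is precisely that the non-outlier eigenvectors are biased only in the directions of \emph{subcritical} spikes, of which there are none under Assumption \ref{assum_supercritical}, so the overlap with the non-spiked, order-one-separated direction $\bv_{r+1}^a$ decays at the optimal rate---while deeper in the bulk it relies on complete delocalization from the anisotropic local law. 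By contrast, the ``fail'' bounds follow at once from the cone concentration in Theorem \ref{thm_eveout} under supercriticality, and the treatment of $q$ is an elementary consequence of the rigidity of the outlier locations in Theorem \ref{thm_outlier}.
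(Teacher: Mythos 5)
Your route is the same as the paper's: the paper disposes of this theorem in one sentence as a direct consequence of Theorems \ref{thm_outlier}, \ref{thm_eigenvaluesticking}, \ref{thm_eveout}, \ref{thm_noneve}, \ref{thm_rightout} and \ref{thm_rightbulk}, and your unpacking of it is correct in all but one place. The treatment of $q$ via \eqref{eq_spike}--\eqref{eq_nonspike}, the ``fail'' direction for $q_a,q_b$ via cone concentration under Assumptions \ref{assum_supercritical} and \ref{big_gap}, and the bounds $\OO_\prec(\phi_n^2)$ and $\OO_\prec(n^{-1}+\phi_n^2)$ for the overlaps of $\bv_{r+1}^a$ with the outlier and near-edge non-outlier eigenvectors are all fine. (A minor slip: $\eta_k\sqrt{\kappa_k}=\oo(n^{-1})$ is not literally true uniformly for $k\le n^{\delta}$ when $\phi_n$ is large, but since $n^{-3/4}\phi_n\le \phi_n^2$ the bound $\OO_\prec(n^{-1}+\phi_n^2)$ you actually need still holds, so this is harmless.)

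The step that is not justified is the bulk range $n^{\delta}<k\le c(p\wedge n)$ of the ``succeed'' direction, where you assert $|\langle \bv_{r+1}^a,\wt\bxi_k\rangle|^2\prec n^{-1}$ by ``complete delocalization \ldots cf.\ Lemma \ref{delocal_rigidity}''. In the general case Lemma \ref{delocal_rigidity} part (1) gives only $\prec n^{-1}+\eta_l(\gamma_k)\bigl[(k/n)^{1/3}+\phi_n\bigr]$, which for $k\asymp n$ is of order $n^{-1/2}$; the clean $n^{-1}$ bound is part (2) and requires vanishing third moments or diagonal $A$ or $B$, neither of which is assumed in Theorem \ref{thm_estispike}. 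Theorem \ref{thm_noneve} degrades in exactly the same way through the term $\eta_i\sqrt{\kappa_i}$ in \eqref{eq_evebulka}, and this is not an artifact of citation: with bounded support $\phi_n$ and general entries the paper's local laws on $\wt S$ do not reach $\eta\sim n^{-1}$ in the bulk, so an $n^{-1}$ delocalization bound there is simply not among the available results (nor is the transfer of delocalization from $\mathcal Q_1$ to $\ctQ_1$ automatic, though that is the easier half). The gap bites precisely when $\phi_n\ll n^{-1/4}$ and $\omega$ is taken near its lower admissible limit $n^{\e}(n^{-2/3}+\phi_n^2)$: then the uniform bulk bound you can actually cite, of order $n^{-1/2}$, does not beat $\omega$, and the event $\{q_a=r\}$ is not established. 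To be fair, the paper's own one-line proof glosses over the same point; under hypotheses (a)/(b) the strong estimate \eqref{eq_evebulka_strong} makes Theorem \ref{thm_noneve} suffice for the whole range $k\le\tau p$ with no splitting, but in the general case your bulk step rests on a reference that does not deliver the bound you use, so either an improved bulk delocalization estimate or an additional restriction on $\omega$ (or on $\phi_n$) is needed to close the argument.
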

\begin{proof} This theorem is an easy consequence of Theorems \ref{thm_outlier},  \ref{thm_eigenvaluesticking}, \ref{thm_eveout}, \ref{thm_noneve}, \ref{thm_rightout} and \ref{thm_rightbulk}. 
\end{proof}

For the practical implementation, we employ a resampling procedure to choose the threshold $\omega$ for the statistic $q$ using a reference matrix.  Such procedure has been used in estimating the number of spikes for spiked covariance matrix \cite{PY14}.  We consider the case where the entries of $X$ have finite $(12+\e)$-th moments, such that we can take $\phi_n \ll n^{-1/3}$ by Corollary \ref{main_cor}. Then by Theorem \ref{thm_eigenvaluesticking}, the extreme non-outlier eigenvalues of $\wt {\cal Q}_1$ have the same limiting distribution as those of the non-spiked matrix $\cal Q_1$,  
which, by the edge universality result  \cite[Theorem 2.7]{yang2018}, fluctuate on the scale $n^{-2/3}$.  Since the edge eigenvalues of Wishart matrix 
satisfy the Tracy-Widom distribution up to an $n^{-2/3}$ rescaling, the edge eigenvalue ratios of $\mathcal Q_{1}$ should be close to those of the Wishart matrix. More precisely, we can use Wishart matrix as the reference matrix and take the following steps to choose $\omega$.

\vspace{5pt}

\noindent{\bf Step (i)}: Generate a sequence of $N$, say $N=10^4$, {$p \times p$} Wishart matrices $X_iX_i^*$ and  the associated  sequence of statistics $\{\mathcal{T}_i\}_{i=1}^N,$
\begin{equation*}
\mathcal{T}_i:=\max_{1\leq k \leq c (p\wedge n)} 
\left\{ {\lambda^{(i)}_k}/{\lambda^{(i)}_{k+1}} \right\},
\end{equation*}
where $\{\lambda_k^{(i)}\}_{k=1}^{p \wedge n}$ are the eigenvalues of $X_iX_i^*$ arranged in descending order.

\vspace{5pt}

\noindent{\bf Step (ii)}: Given the nominal level $\e$ (say $\e=0.05$), we choose $\omega$ such that 
\begin{equation*}
\frac{\#\{\mathcal{T}_i \leq 1+ \omega \}}{N}\ge 1-\e.
\end{equation*}



 In Figure \ref{fig_missrate}, we consider the estimation of the number of spikes of $\widetilde B$ and analyze the frequency (over $10^4$ simulations) of misestimation as a function of the value of $x$ under different combinations of $p$ and $n.$ We make use of the statistic $q_b$ and choose  $\omega$ according to the above steps (i) and (ii). Specifically, we report the frequency of misestimation of the setting
\begin{equation*}
\widetilde A=\text{diag}(4,1,\cdots, 1), \quad \widetilde B=\text{diag}(x+2, x,1,\cdots,1), \quad x \geq 1. 
\end{equation*}
We can see that our estimator performs quite well for $x$ above some threshold. 
\begin{figure}[ht]
 \captionsetup{width=1\linewidth}
 \begin{center}
\includegraphics[width=8cm,height=6cm]{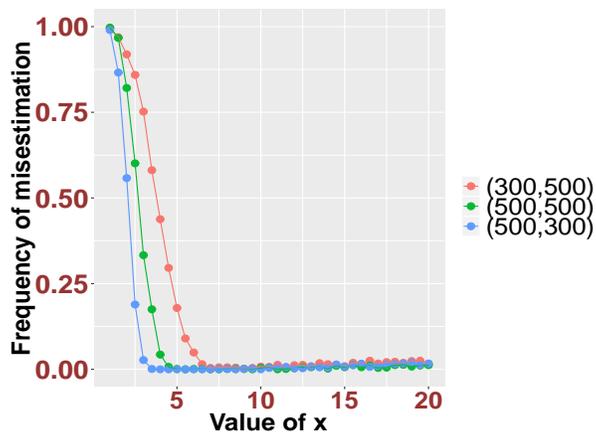} 
\caption{Frequency of misestimation for different values of $x$. }\label{fig_missrate}
\end{center}
\end{figure}


{ Before concluding this subsection, we provide some insights on the choices of $\omega.$ In general, the choice of $\omega$ should depend on both $A$ and $B,$ denoted as $\omega_{A,B}$.  Even though in the above procedure we have used $\omega_{I_p, I_n},$ such a simple choice is usually sufficient for our purpose. In Section  \ref{simu_threshold} of the supplement \cite{dysupple}, we show by simulations to verify our findings.  On one hand, as illustrated in Figure \ref{fig_thresholddiff},  the difference $|\omega_{I_p, I_n}-\omega_{A,B}|$ is already very small for $n=200$ and the difference decreases when  $n$ increases. Moreover,  empirically  we see from the simulations that $|\omega_{A,B}-\omega_{I_p,I_n}| \leq 0.008$ when $n \geq 300$ for a variety  of $d_n.$ On the other hand, for different choices of $A$ and $B,$ when the spiked eigenvalues are reasonably large, the frequency of misestimation will not be influenced if we simply use the threshold $\omega_{I_p, I_n}$. In Section \ref{simu_threshold} of our supplement \cite{dysupple}, we record such simulation results in Figure \ref{fig_estthresdiff}.

For smaller spikes, an accurate estimation of $A$ and $B$ can lead to more prudential choices of $\omega_{A,B}.$  As discussed in Remark \ref{remark_generalestimation},  there does not exist any method to estimate general $A$ and $B.$ Even though the construction of such estimators are out of the scope of this paper, when either $A$ or $B$ is identity, it reduces to estimating the spectrum of a sample covariance matrix. In this case, we can use many state-of-the-art algorithms to estimate the spectrum, for instance, \cite{elkaroui2008, kong2017, LW2015}. 
In \cite[Section \ref{simu_threshold}]{dysupple},  assuming that $B=I_n,$ we first use the numerical method as described in \cite{LW2017} to find an estimator of $A,$ denoted as $\widehat{A},$ and then use $\omega_{\widehat{A}, I_n}$ as our threshold. The results are recorded in Tables \ref{table_smallerspikes1}--\ref{table_smallerspikes3}. We see that it will reduce the frequency of misestimation for smaller spikes. 
}

\subsection{Adaptive optimal shrinkage for spiked separable covariance matrices}

In most of the real applications, we have no a priori information on the true eigenvectors of $\widetilde A$ or $\widetilde B$. Then the natural choice for us is to use the sample eigenvectors $\{\wt\bxi_i\}_{1\le i \le p}$  and $\{\wt{\bzeta}_\mu\}_{1\le \mu \le n}.$ Consider similar setting as in Johnstone's spiked  covariance model \cite{donoho2018, spikedmodel} with $A=I_p$ and $B=I_n$. Suppose we know the number of spikes $r+s$. Then we want to estimate 
$$\widetilde A= \sum_{i=1}^r  \wt\sigma_i^a \bv_i^a(\bv_i^a)^*+\sum_{i=r+1}^p \bv_i^a(\bv_i^a)^*,\quad \widetilde B= \sum_{\mu=1}^s  \wt\sigma_\mu^b \bv_i^a(\bv_i^a)^*+\sum_{\mu=s+1}^n \bv_\mu^b(\bv_\mu^b)^*,$$ 
using the estimators
\begin{equation}\label{eq_rieop}
\begin{split}
& \widehat A= \sum_{i=1}^{r+s}  \varrho_a (\wt\lambda_i) \wt{\bxi}_i \wt{\bxi}_i^*+\sum_{i=r+s+1}^p \wt{\bxi}_i \wt{\bxi}_i^*,   \\
& \widehat B= \sum_{\mu=1}^{r+s}  \varrho_b (\wt\lambda_i) \wt{\bzeta}_\mu \wt{\bzeta}_\mu^*+\sum_{i=r+s+1}^n \wt{\bzeta}_\mu \wt{\bzeta}_\mu^*,
\end{split}
\end{equation}  
where $\varrho^a(\cdot)$ and $\varrho^b(\cdot)$ are some shrinkage functions characterized by the minimizers  of certain loss functions:
 \begin{equation*}
\wh A:= \operatorname*{arg\,min}_{\mathcal A} \mathcal{L}_a(\mathcal A, \widetilde A), \quad  \wh B:=\operatorname*{arg\,min}_{\mathcal B}\mathcal{L}_b(\mathcal B, \widetilde B).
\end{equation*}   
In \cite{donoho2018}, the authors consider this problem for spiked covariance matrices for a variety of loss functions assuming that $r,s$ are known. In this section, we study this problem for spiked separable covariance matrices using the Frobenius norm as the loss functional. We will also prove the optimal convergent rate for such estimators. The other loss functions as discussed in \cite{donoho2018} can be studied in a similar way.  

We shall only consider $\varrho_a(\wt\lambda_i)$, while $\varrho_b(\wt\lambda_i)$ can be handled with the same argument by symmetry. 
We calculate that
\begin{equation}\label{eq_normerror}
\norm{\widehat A-\widetilde A}_F^2=\norm{T}_F^2, \quad T:=\sum_{i=1}^{r+s} \left[ (\varrho(\wt\lambda_i)-1) \wt\bxi_i \wt\bxi_i^*-(\wt{\sigma}_i^a-1)\bv_i^a (\bv_i^a)^* \right].
\end{equation}
We expand $T$ to get 
\begin{align*}
\norm{T}_F^2 =&\sum_{i=1}^{r+s} \left[ (\varrho_a(\wt\lambda_i)-1)^2+(\tsig_i^a-1)^2 - 2|\langle \bv_i^a, \wt\bxi_i \rangle|^2(\varrho_a(\wt\lambda_i)-1)(\tsig_i^a-1) \right] \\
& -2 \sum_{ i \ne j }^{r+s} (\varrho_a(\wt\lambda_i)-1)(\wt{\sigma}_j^a-1) |\langle \wt{\bv}_j^a,\wt\bxi_i  \rangle|^2.
\end{align*}
Therefore,  (\ref{eq_normerror}) is minimized if 
\begin{equation*}
\varrho_a(\wt\lambda_i)= 1+\sum_{ j=1}^{r+s} (\wt{\sigma}_j^a-1) |\langle \bv_j^a,\wt\bxi_i  \rangle|^2.
\end{equation*}
Under Assumptions \ref{assum_supercritical} and \ref{big_gap},  by Theorems \ref{thm_eveout} and \ref{thm_noneve} we find that for $\wt\sigma_k^a:= d_k^a +1$,
\begin{equation*}
\varrho_a(\wt\lambda_i)=\mathbf 1(i=\al(k) \text{ for some } k=1,\cdots, r)\frac{d_k^a}{\wt\sigma_k^a}\frac{g'_{2c}(-(\wt\sigma_k^a)^{-1})}{g_{2c}(-(\wt\sigma_k^a)^{-1})}+\OO_{\prec}(\phi_n).
\end{equation*} 
 
Under the setting with $A=I_p$ and $B=I_n$, $m_{2c}(z)$ is the Stieltjes transform of the standard Marchenko-Pastur (MP) law. Then it is known that $g_{2c}$ is given by \cite[Section 2.2]{Knowles2017}
\begin{equation*}
g_{2c}(x)=-\frac{1}{x}+d_n \frac{1}{x+1},
\end{equation*}
where we recall that $d_n=p/n$. Therefore, we can calculate that 
\begin{equation*}
\varrho_a(\wt\lambda_i)=\frac{(d_k^a)^2 - d_n}{d_k^a+d_n}+\OO_{\prec}(\phi_n), \quad i=\al(k).
\end{equation*}
For $d_k^a$, we can use Theorem \ref{thm_outlier} to get that $d_k^a = -m_{2c}^{-1}(\lambda_i) - 1 + \OO_\prec(\phi_n)$ for $i=\al(k)$. 
We have the following explicit form for $m_{2c}$ (see e.g. (4.10) of \cite{ding20171}):
\begin{equation*}
m_{2c}(x)=\frac{d_n-1-x+\sqrt{(x-\lambda_+)(x-\lambda_-)}}{2 x}, \quad \lambda_{\pm}=(1\pm d^{1/2}_n)^2,
\end{equation*} 
when $x>\lambda_+$. Thus we can define the following shrinkage function
\begin{equation*}
\widehat{\varrho}_a(\wt\lambda_{i})=\mathbf 1(i=\al(k) \text{ for } k\in \{1,\cdots, r\})\frac{(\widehat{d}_k^a)^2-d_n}{\widehat{d}_k^a + d_n},\quad \wh d_k^a = -m_{2c}^{-1}(\wt\lambda_{\al(k)}) - 1 ,
\end{equation*} 
which satisfies that
\begin{equation*}
\varrho_a(\wt\lambda_i)=\widehat{\varrho}_a(\wt\lambda_i)+\OO_{\prec}(\phi_n). 
\end{equation*}

\begin{remark}\label{cone cond}
Note that the definition of the shrinkage function depends on  a priori knowledge of the indices of the outliers caused by the spikes of $\wt A$, which may not be available in applications. Moreover, the methods in Section \ref{sec:estspike} cannot be used since we have no information on the eigenvectors of $\wt A$ and $\wt B$. However, this kind of information is still possible to obtain by exploring the ``cone condition" in Example \ref{exam_nondege}, that is, we can project the left and right outlier-singular vectors onto some suitably chosen directions and take average over many samples. To have a rigorous theory, it is necessary to establish the second order asymptotics of the outlier eigenvectors. Both of these topics will be explored elsewhere. 
\end{remark}

 We then present the results of some Monte-Carlo simulations designed to illustrate the finite-sample properties of the shrinkage estimator $\widehat A$. We study the improvement of $\widehat A$ over the separable covariance matrix $\widetilde{\mathcal{Q}}_1$, which also uses the sample eigenvectors. Denote $\overline{A}$ as in (\ref{eq_rieop}) but with $\varrho_a(\wt{\lambda}_i)$ replaced by $\widehat{\varrho}_a(\wt{\lambda}_i).$ In Figure \ref{fig_PRIAL}, we report the Percentage Relative Improvement in Average Loss (PRIAL) \cite[Section 1.3]{Ledoit2011} for $\overline{A}$: 
\begin{equation}\label{eq_prial}
\text{PRIAL}:=100 \times \left\{1-\frac{\mathbb{E} \norm{\overline{A}-\widehat{A}}_F^2}{\mathbb{E} \norm{\widetilde{\mathcal{Q}}_1-\widehat{A}}_F^2} \right\} \%,
\end{equation}
where $\mathbb{E}(\cdot)$ denotes the average over $10^4$ Monte-Carlo simulations. We can see that our estimators perform better than sample separable covariance matrix even for ``not so large" matrix dimensions. 

\begin{figure}[ht]
 \captionsetup{width=1\linewidth}
 \begin{center}
\includegraphics[width=8cm,height=6cm]{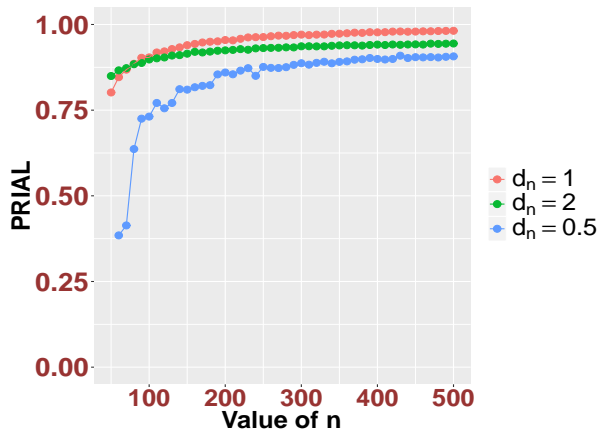}
\caption{PRIAL against matrix dimension $n$. We consider the setting $\wt{A}=\text{diag}(8,5,1,\cdots, 1)$ and $\wt{B}=\text{diag}(3,1,\cdots, 1).$ }\label{fig_PRIAL}
\end{center}
\end{figure} 


Before concluding this section,  we provide a useful result for the estimation of spikes. By Theorem \ref{thm_outlier}, we need to know the form of $m_{2c}$ in order to estimate the spikes of $\wt A$. However, thanks to 
the anisotropic local law in \cite{yang2018} (see also Theorem \ref{LEM_SMALL} and Theorem \ref{lem_localout} in the supplement), it is possible to have an adaptive estimator for the spikes of $\wt{A}$ based only on the data matrices $\widetilde{\mathcal Q}_{2}$ if $\wt{B}$ is a small-rank perturbation of the identity matrix. We define 
$$\widehat{\sigma}^a_i:=-\left( \frac{1}{n} \sum_{\nu=r+s+1}^{n} \frac{1}{\wt\lambda_\nu(\wt{\mathcal Q}_2)-\wt\lambda_{\al(i)}} \right)^{-1},\quad 1\le i \leq r+s.$$ 
Similarly, if $A$ is a small-rank perturbation of the identity matrix, then we have the following estimator for the spikes of $\wt{B}$:
$$\widehat{\sigma}^b_\mu:=-\left( \frac{1}{n} \sum_{k=r+s+1}^{p} \frac{1}{\wt\lambda_k(\wt{\mathcal Q}_1)-\wt\lambda_{\beta(\mu)}} \right)^{-1}, \quad 1\le \mu \leq r+s.$$ 
We claim the following result.
 
 \begin{theorem}\label{thm_adaptiveest}
Suppose that the Assumptions \ref{assm_big1}, \ref{ass:unper} and \ref{assum_supercritical} hold. 
Suppose $\wt B=I_n + \cal M_n$, where $\cal M_n$ is a matrix of rank $l_n$. 
Then we have that for $1\le  i \leq r$,
\begin{equation}
\tsig_i^a=\widehat{\sigma}^a_i+\OO_{\prec}(n^{-1}l_n + \phi_n). \label{finite rankB}
\end{equation}
Similarly, if $\wt A$ is an $l_n$-rank perturbation of the identity matrix, then for $1\le \mu \le s$,
 \begin{equation}
\tsig_\mu^b=\widehat{\sigma}_\mu^b+\OO_{\prec}(n^{-1}l_n + \phi_n). \label{finite rankA}
\end{equation}
 \end{theorem}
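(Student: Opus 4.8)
The plan is to read off $\tsig_i^a$ from the empirical Stieltjes-transform-type sum that appears in the definition of $\widehat{\sigma}^a_i$. By the symmetry of the model under exchanging $(\wt A,X)$ and $(\wt B,X^{*})$ (which swaps $\wt{\mathcal Q}_1$ and $\wt{\mathcal Q}_2$), it suffices to prove \eqref{finite rankB}. Fix $1\le i\le r$ and set $z_0:=\wt\lambda_{\al(i)}$, so that $-1/\widehat{\sigma}^a_i=\frac1n\sum_{\nu>r+s}(\wt\lambda_\nu(\wt{\mathcal Q}_2)-z_0)^{-1}$. Under Assumption \ref{assum_supercritical} the spike $\tsig_i^a$ is supercritical, so Theorem \ref{thm_outlier} (with $\Delta_1(\tsig_i^a)\asymp 1$) gives $z_0=\theta_1(\tsig_i^a)+\OO_\prec(\phi_n)$, and $\theta_1(\tsig_i^a)=g_{2c}(-(\tsig_i^a)^{-1})$ sits a distance $\gtrsim 1$ to the right of $\supp\rho_c$; hence with high probability $z_0$ is at macroscopic distance from $\supp\rho_c\cup\{0\}$, and in particular $|\wt\lambda_\nu(\wt{\mathcal Q}_2)-z_0|\gtrsim 1$ for all $\nu>r+s$. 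The goal is then the single estimate
\begin{equation*}
\frac1n\sum_{\nu=r+s+1}^{n}\frac{1}{\wt\lambda_\nu(\wt{\mathcal Q}_2)-z_0}=m_{2c}(z_0)+\OO_\prec\!\left(\frac{l_n}{n}+\phi_n\right),
\end{equation*}
after which \eqref{finite rankB} follows: since $m_{2c}$ is analytic with bounded derivative near $z_0$ and $m_{2c}(\theta_1(\tsig_i^a))=-(\tsig_i^a)^{-1}$, the right side equals $-(\tsig_i^a)^{-1}+\OO_\prec(l_n/n+\phi_n)$; as $\tsig_i^a\asymp 1$ (Assumption \ref{assum_supercritical} and Lemma \ref{lambdar}) this is bounded away from $0$ and $\infty$, forcing $\widehat{\sigma}^a_i\asymp 1$ and $|\widehat{\sigma}^a_i-\tsig_i^a|=\widehat{\sigma}^a_i\,\tsig_i^a\,|(\widehat{\sigma}^a_i)^{-1}-(\tsig_i^a)^{-1}|=\OO_\prec(l_n/n+\phi_n)$.

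To establish the displayed estimate I would pass from the spiked matrix $\wt{\mathcal Q}_2$ to the reference matrix $\mathcal Q_2$. One cannot evaluate $\tr\wt{\mathcal G}_2$ directly at $z_0$, since $z_0$ is an eigenvalue of $\wt{\mathcal Q}_2$; but under Assumption \ref{assum_supercritical} all spikes are supercritical ($r^+=r$, $s^+=s$, $\alpha_+\gtrsim 1$), so the eigenvalue sticking of Theorem \ref{thm_eigenvaluesticking} applied to $\wt{\mathcal Q}_2$, together with bulk rigidity (Theorem \ref{thm_largerigidity} of the supplement) for the range of indices not covered by that theorem, shows that $\wt\lambda_{k+r+s}(\wt{\mathcal Q}_2)$ is close to $\lambda_k(\mathcal Q_2)$ with $\frac1n\sum_k|\wt\lambda_{k+r+s}(\wt{\mathcal Q}_2)-\lambda_k(\mathcal Q_2)|\prec n^{-1/2}$. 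Since all these eigenvalues are a distance $\gtrsim 1$ from $z_0$, this yields
\begin{equation*}
\frac1n\sum_{\nu=r+s+1}^{n}\frac{1}{\wt\lambda_\nu(\wt{\mathcal Q}_2)-z_0}=\frac1n\tr\mathcal G_2(z_0)+\OO_\prec\!\left(n^{-1/2}\right),
\end{equation*}
the $\OO(1/n)$ from the mismatch in the number of summands being harmless. Next I would use the exact identity $\frac1n\tr\mathcal G_2(z)=d_n\,m^{(n)}(z)+(1-d_n)/(-z)$ (valid because $\mathcal Q_1$ and $\mathcal Q_2$ share the same nonzero spectrum), the averaged local law $m^{(n)}(z_0)=m_c(z_0)+\OO_\prec(\phi_n)$ in the region away from the spectrum from \cite{yang2018}, and the elementary identity $d_n m_c(z)+(1-d_n)/(-z)=m_{2c}(z)$, which one checks directly from the self-consistent system \eqref{separa_m12} when $\pi_B=\delta_1$.

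Finally, the hypothesis $\wt B=I_n+\mathcal M_n$ with $\operatorname{rank}\mathcal M_n=l_n$ is handled by the decomposition of Remark \ref{generaladdrmk}: the non-spiked companion $B$ has ESD $\pi_B$ that differs from $\delta_1$ by at most $l_n$ atoms of total mass $\le l_n/n$, and consequently
\begin{equation*}
d_n m_c(z_0)+\frac{1-d_n}{-z_0}-m_{2c}(z_0)=\int\frac{(1-x)\,\pi_B(\dd x)}{-z_0\,(1+x\,m_{1c}(z_0))}=\OO\!\left(\frac{l_n}{n}\right),
\end{equation*}
using that all of $m_{1c},m_{2c},m_c$ are smooth and bounded, with $|1+x\,m_{1c}(z_0)|\gtrsim 1$, at $z_0$. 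Combining the three displays, and absorbing $n^{-1/2}$ and $1/n$ into $\phi_n$ (which is always at least $n^{-1/2}$), gives the required estimate and hence \eqref{finite rankB}; \eqref{finite rankA} follows by the same argument after the $\wt A\leftrightarrow\wt B$, $X\leftrightarrow X^{*}$ exchange.

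The step I expect to be the main obstacle is the second reduction, replacing $\frac1n\sum_{\nu>r+s}(\wt\lambda_\nu(\wt{\mathcal Q}_2)-z_0)^{-1}$ by $\frac1n\tr\mathcal G_2(z_0)$: because $z_0$ coincides with the outlier eigenvalue $\wt\lambda_{\al(i)}$ one has to remove that pole (hence the detour through the reference matrix) and then control the accumulated discrepancy over all $\asymp n$ non-outlier eigenvalues, and keeping this below the target $\phi_n$ forces one to use the sticking bound of Theorem \ref{thm_eigenvaluesticking} in its sharp index-dependent form supplemented by bulk rigidity. A secondary point requiring care is the bookkeeping for the rank-$l_n$ reduction of $\wt B$ — identifying the non-spiked companion and quantifying the resulting $\OO(l_n/n)$ perturbation of $\pi_B$ (and of $m_{1c},m_c$) at the point $z_0$.
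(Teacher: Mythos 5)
Your overall route coincides with the paper's: locate $z_0=\wt\lambda_{\al(i)}$ by Theorem \ref{thm_outlier} (so that $z_0-\lambda_+\gtrsim 1$ under Assumption \ref{assum_supercritical}), invert using the regularity of $m_{2c}$ at macroscopic distance from the edge, replace the empirical sum by a resolvent trace of the reference matrix, apply the local law outside the spectrum, and absorb the rank-$l_n$ deviation of $\wt B$ from $I_n$ into an $\OO_\prec(l_n/n)$ error. Your bookkeeping through $\frac1n\tr\mathcal G_2(z)=d_n m^{(n)}(z)+(1-d_n)/(-z)$ and the companion identity $d_n m_c+(1-d_n)/(-z)=\int\frac{1}{-z(1+xm_{1c})}\pi_B(\dd x)$ (which indeed reduces to $m_{2c}$ when $\pi_B=\delta_1$, with discrepancy $\OO(l_n/n)$ for an $l_n$-rank perturbation) is a correct variant of the paper's shortcut, which works instead with $m_2=\frac1n\tr(B\mathcal G_2)$, Theorem \ref{lem_localout}, and the bound $\frac1n\tr((B-I)\mathcal G_2(z_0))=\OO(l_n/n)$ coming from $\mathrm{rank}(B-I)\lesssim l_n$ and $\|\mathcal G_2(z_0)\|\lesssim 1$.

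There is, however, a genuine flaw in the step you yourself flag as the main obstacle. You propose to bound $\frac1n\sum_k|\wt\lambda_{k+r+s}(\wt{\mathcal Q}_2)-\lambda_k(\mathcal Q_2)|$ by eigenvalue sticking (Theorem \ref{thm_eigenvaluesticking}) for $k\le\tau n$ and by ``bulk rigidity'' (Theorem \ref{thm_largerigidity}) for the remaining indices. But rigidity in this paper is an edge result: it is proved only for indices $j$ with $\lambda_+-\varsigma\le\gamma_j\le\lambda_+$, i.e.\ a subset of the indices already covered by sticking, and since no regularity of the bulk density away from the rightmost edge is assumed, no individual-eigenvalue control deep in the bulk is available (or even true in general). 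So your claimed $\prec n^{-1/2}$ control is unjustified for $k>\tau n$. The repair is simpler and sharper, and is what the paper's terse argument rests on: by the interlacing Lemma \ref{lem_weylmodi} applied to $\wt{\mathcal Q}_2$ versus $\mathcal Q_2$, one has $\lambda_k\le\wt\lambda_k\le\lambda_{k-r-s}$, so telescoping gives $\sum_k|\wt\lambda_{k+r+s}-\lambda_k|\le\sum_k(\lambda_k-\lambda_{k+r+s})\le(r+s)\lambda_1=\OO_\prec(1)$; combined with $|\,\cdot\,-z_0|\gtrsim1$ for all non-outlier eigenvalues, the replacement error is $\OO_\prec(n^{-1})$, comfortably below $\phi_n$. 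A minor secondary caveat: at the real point $z_0$ the averaged law \eqref{aver_out1} does not apply directly (it requires $n\eta\sqrt{\kappa+\eta}\ge n^\e$); use instead Theorem \ref{lem_localout}, which holds down to $\eta=0$, and average diagonal entries to get $|m^{(n)}(z_0)-m_c(z_0)|\prec\phi_n$ (or, as the paper does, $|m_2(z_0)-m_{2c}(z_0)|\prec\phi_n$). With these two repairs your argument goes through and matches the paper's conclusion.
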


The proof of Theorem \ref{thm_adaptiveest} will be given in the supplement. Here we use some Monte-Carlo simulations to illustrate the accuracy of the above estimators. We set 
\begin{equation*}
\widetilde A=\text{diag}( \widetilde{\sigma}^a, 1,\cdots, 1),  \quad \widetilde B=\text{diag}(3, 1,\cdots, 1). 
\end{equation*} 
In Table \ref{table_estperformance}, we give the estimation of $\widetilde{\sigma}^a$ using $\widehat{\sigma}^a$ for various combinations of $p$ and $n.$ Each value is recorded by taking an average over 2,000 simulations. We find that our estimator is quite accurate even for a small sample size.

\begin{table}[ht]
\def\arraystretch{1.3}
\begin{center}
\begin{tabular}{ccccccc}

$\widetilde{\sigma}^a/(p,n)$ & $(100, 200)$  & $(200, 400)$  & $(300,400)$  & $(400,300)$  & $(500, 400)$  \\ \hline
$4$ & 3.67 & 3.58 & 3.83  & 4.61  &  4.43\\
$5$ & 4.78 & 4.65 & 4.84 &5.49  & 5.37 \\ 
$8$ & 7.75 & 7.62 & 7.86 & 8.47 & 8.33  \\ 
$10$ & 9.83 &  9.65 & 9.88  & 10.51 &  10.37\\ 
$15$ & 14.95 &  14.86 & 14.93  & 15.56 &  15.42\\ \hline
\end{tabular}
\caption{The value of $\widehat{\sigma}^a$. We record the average of $\widehat{\sigma}^a$ over 2,000 simulations. }\label{table_estperformance}
\end{center}
\end{table}

 \section*{Acknowledgements}
 The authors would like to thank Zhou Fan and Edgar Dobriban for helpful discussions. We also want to thank the editor, the associated editor and two anonymous referees for their helpful comments, which have improved the paper significantly.

\begin{center}
{\Large Supplementary material}
\end{center}
This supplementary material contains further explanation,  auxiliary lemmas and technical proofs and additional simulations for the main results of the paper.

\begin{appendix}

\renewcommand{\theequation}{S.\arabic{equation}}
\renewcommand{\thetable}{S.\arabic{table}}
\renewcommand{\thefigure}{S.\arabic{figure}}
\renewcommand{\thesection}{S.\arabic{section}}
\renewcommand{\thelemma}{S.\arabic{lemma}}


\section{Numerical simulations}\label{simu_extra} In this section, we report additional results of the  numerical simulations of the paper. 


\subsection{Discussion on the choices of $\omega$} \label{simu_threshold} In this subsection, we report the empirical results on the choices of $\omega.$ Recall that $\omega_{I_p, I_n}$ is the value of $\omega$ generated by the two-step procedure described in Section \ref{sec:estspike} and $\omega_{A,B}$ is generated by replacing $I_p$ and $I_n$ with $A$ and $B.$ We consider the setting 
\begin{equation*}
A=\operatorname{diag}(\underbracket{1, \cdots,1}_{p/2 \  \text{times}}, \underbracket{2, \cdots,2}_{p/2 \ \text{times}}), \  B=\operatorname{diag}(\underbracket{3, \cdots,3}_{n/2 \  \text{times}}, \underbracket{4, \cdots,4}_{n/2 \ \text{times}}).
\end{equation*} 
In Figure \ref{fig_thresholddiff}, we record the differences between $\omega_{I_p, I_n}$ and $\omega_{A,B}$, i.e., $|\omega_{I_p, I_n}-\omega_{A,B}|$ for different values of $n$ and $d_n.$ We find that the difference is small even for not so large $n.$ It also decreases when $n$ increases. 

\begin{figure}[H]
 \captionsetup{width=1\linewidth}
 \begin{center}
\includegraphics[width=8cm,height=6cm]{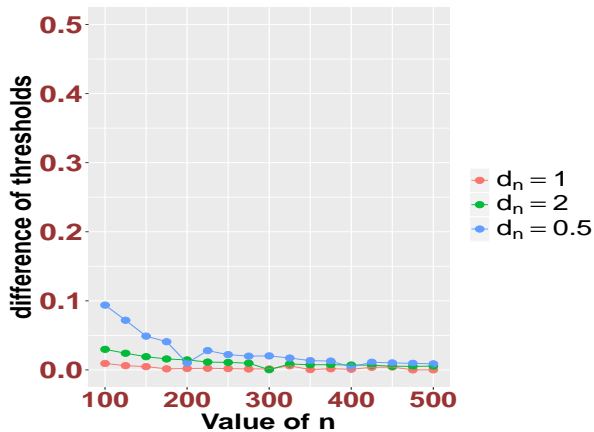} 
\caption{Threshold difference $|\omega_{I_p, I_n}-\omega_{A,B}|$ under the nomial level $0.95$ with $10^4$ simulations.  }\label{fig_thresholddiff}
\end{center}
\end{figure} 

Moreover, this simple choice of $\omega_{I_p, I_n}$ will not influence the frequency of misestimation especially when the spikes are reasonable large. In Figure \ref{fig_estthresdiff}, we record the frequency of misestimation of the setting
\begin{equation*}
\widetilde{A}=\operatorname{diag}(x, \underbracket{1, \cdots,1}_{p/2-1 \  \text{times}}, \underbracket{2, \cdots,2}_{p/2 \ \text{times}}), \  \widetilde{B}=\operatorname{diag}(5,\underbracket{3, \cdots,3}_{n/2-1 \  \text{times}}, \underbracket{4, \cdots,4}_{n/2 \ \text{times}}).
\end{equation*} 
We conclude that when $x$ is above some level, the frequencies of misestimation stay the same no matter we use $\omega_{I_p, I_n}$ or $\omega_{A,B}$. 

\begin{figure}
 \captionsetup{width=0.8\linewidth}
 \begin{center}
\begin{subfigure}{0.4\textwidth}
\includegraphics[width=6.7cm,height=5cm]{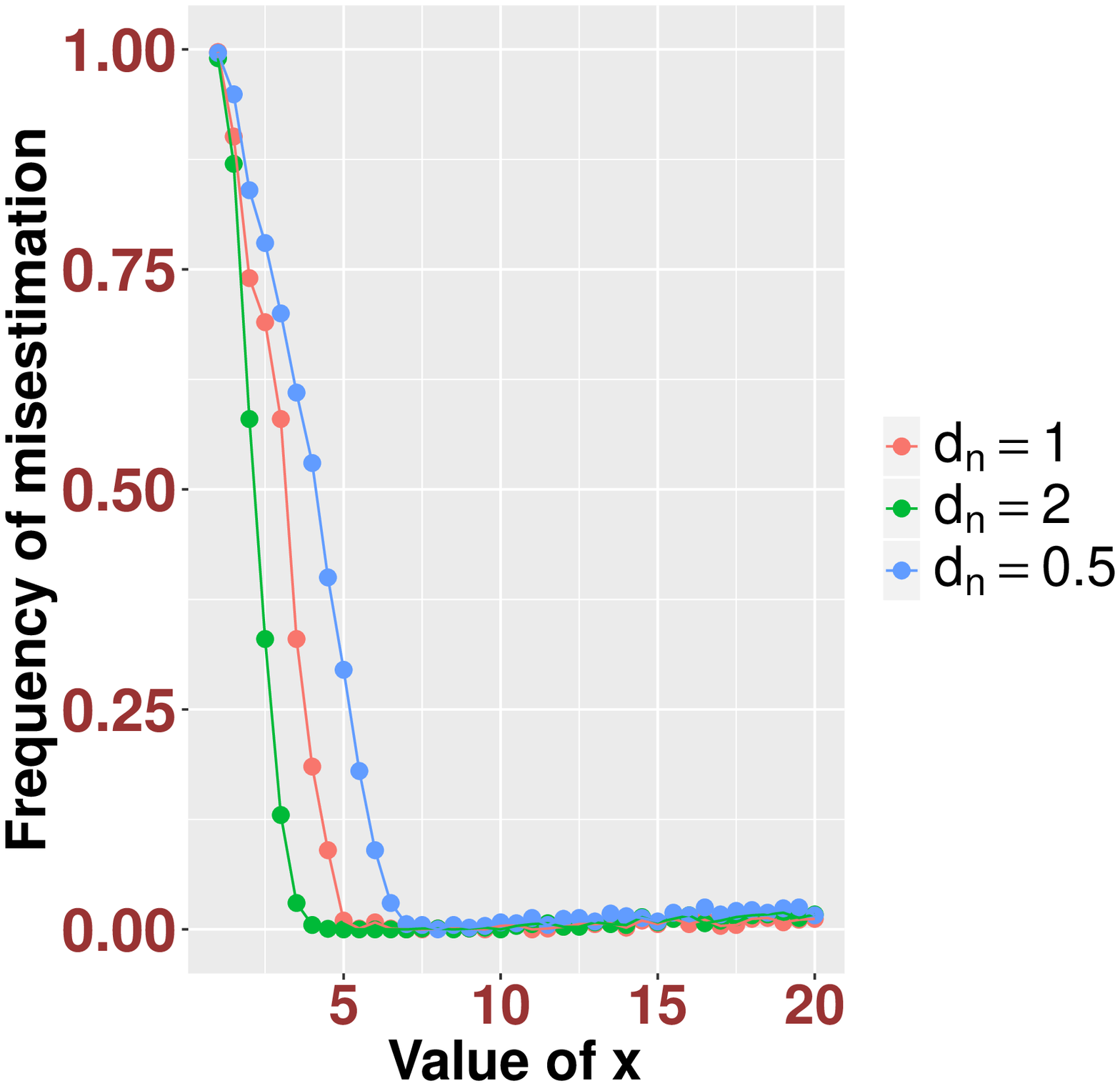}
\subcaption{
Frequency using $\omega_{A,B}.$  }\label{fig_true}
\end{subfigure}
\begin{subfigure}{0.4\textwidth}
\includegraphics[width=6.7cm,height=5cm]{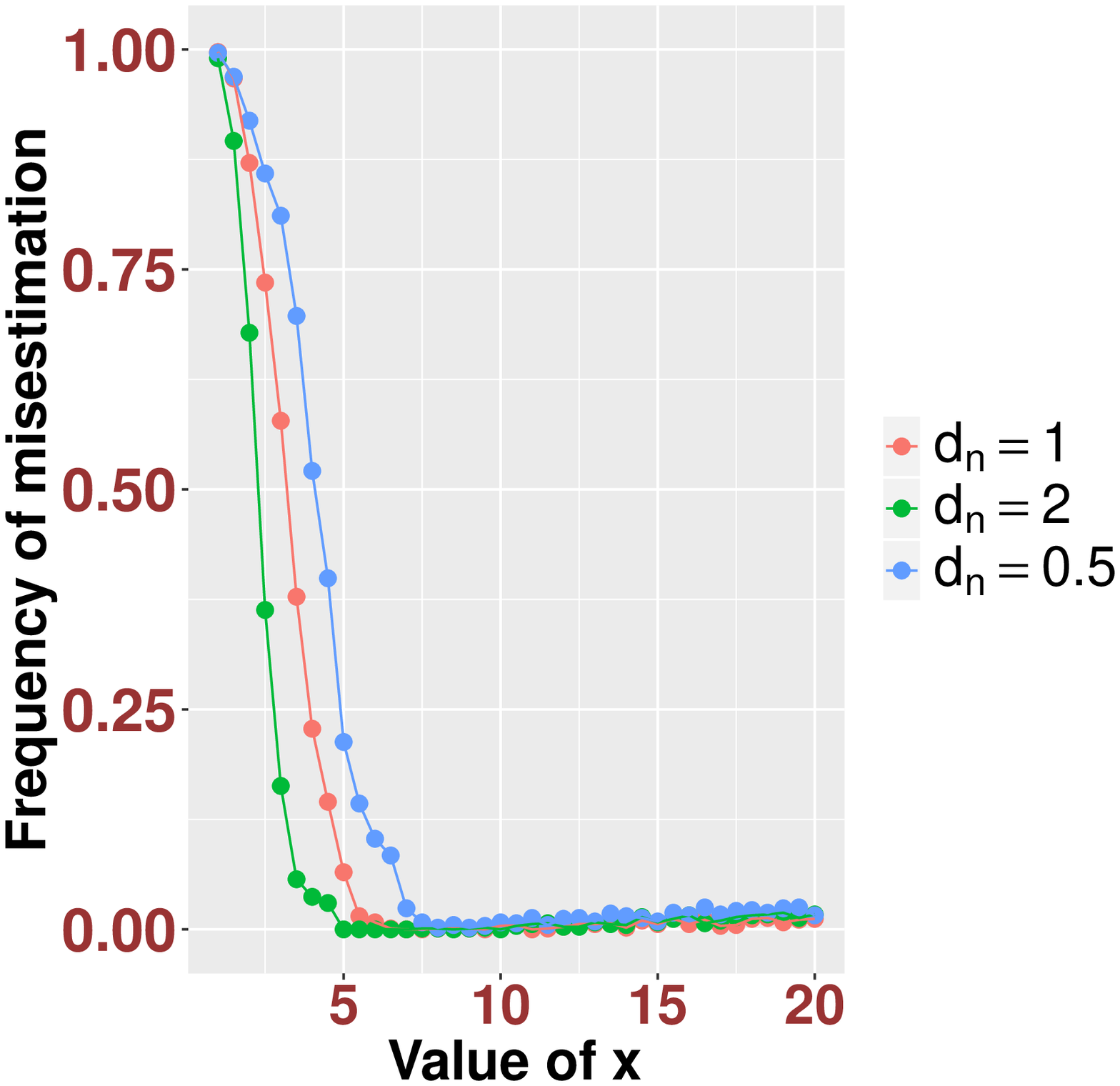}
\subcaption{
Frequency using $\omega_{I_p, I_n}.$ }\label{fig_simple}
\end{subfigure}
\caption[]{Frequencey of misestimation for different values of $x$ using $\omega_{A, B}$ and $\omega_{I_p, I_n}$ respectively. We choose the nominal level 0.95 and report the results for 2,000 simulations. Here $p=300.$ } \label{fig_estthresdiff}
\end{center}
\end{figure} 

Finally, we find that for smaller values of $x,$ an accurate estimation of $A$ and $B$ could potentially reduce the frequency of misestimation. In the literature, there exist some efficient algorithms on estimating $A$ and $B$ when one of them is identity, for instance,  \cite{elkaroui2008, kong2017, LW2015}. In the following numerical simulations, we take $B=I_n$ and use the algorithm developed in  \cite{LW2017}, which is essentially the implementation of \cite{LW2015}. We make use of the R package \texttt{nlshrink}.  We consider the setting
\begin{equation*}
\widetilde{A}=\operatorname{diag}(x, \underbracket{1, \cdots,1}_{p/2-1 \  \text{times}}, \underbracket{2, \cdots,2}_{p/2 \ \text{times}}), \  \widetilde{B}=\operatorname{diag}(3,\underbracket{1, \cdots,1}_{n/2-1 \  \text{times}}, \underbracket{1, \cdots,1}_{n/2 \ \text{times}}).
\end{equation*} 
We first use  the numerical method as described in \cite{LW2017} to find an estimator $\widehat{A}$ of $A,$  and then use $\omega_{\widehat{A}, I_n}$ as our threshold. We conclude that it will reduce the frequencies of misestimation for smaller spikes compared to the case which simply uses $\omega_{I_p, I_n}.$ In Tables \ref{table_smallerspikes1}--\ref{table_smallerspikes3}, uner the nominal level $0.95,$ we record the frequencies of misestimation using 2,000 simulations with the values $\omega_{I_p, I_n}, \omega_{\widehat{A}, I_n}$ and $\omega_{A, I_n}$ for $d_n=0.5,\ 1,\ 2$. Based on these numerical results, instead of simply using $\omega_{I_p, I_n},$  we suggest the use of $\omega_{\widehat{A}, I_n}$ for smaller $x.$

\begin{table}[H]
\def\arraystretch{1.2}
\begin{center}
\begin{tabular}{cccccccccccc}

$x$ & $1$  & $1.5$  & $2$  & $2.5$  & $3$  & $3.5$ & $4$ & $4.5$ & $5$ & $5.5$ & $6$ \\ \hline
$\omega_{I_p, I_n}$ & 0.998 & 0.935 & 0.885  & 0.731  &  0.63 & 0.51 & 0.421 & 0.31 & 0.19 & 0.06 & 0.009\\
$\omega_{\widehat{A}, I_n}$ & 0.998 & 0.92 & 0.83 &0.71  & 0.625 & 0.492 & 0.395 & 0.3 & 0.19 & 0.06 & 0.008 \\ 
$\omega_{A,I_n}$ & 0.997 & 0.915 & 0.813 & 0.694 & 0.596 & 0. 478 & 0.39 & 0.291 & 0.17 & 0.03 & 0.008\\  \hline
\end{tabular}
\caption{Frequency of misestimation using different values of thresholds. Here $n=300, d_n=0.5.$  }\label{table_smallerspikes1}
\end{center}
\end{table}

\begin{table}[H]
\def\arraystretch{1.2}
\begin{center}
\begin{tabular}{cccccccccccc}

$x$ & $1$  & $1.5$  & $2$  & $2.5$  & $3$  & $3.5$ & $4$ & $4.5$ & $5$ & $5.5$ & $6$ \\ \hline
$\omega_{I_p, I_n}$ & 0.997 & 0.856 & 0.784 & 0.693 & 0.523 & 0.371 & 0.231 & 0.11 & 0.02 & 0.007 & 0.005\\
$\omega_{\widehat{A}, I_n}$ & 0.998  & 0.85 & 0.74 & 0.654 & 0.5 & 0.351 & 0.187 & 0.1 & 0.009 & 0.007& 0.005\\ 
$\omega_{A,I_n}$ & 0.997 & 0.837 & 0.721 & 0.65 & 0.5 & 0.33 & 0.18 & 0.087 & 0.007 & 0.005 & 0.005\\  \hline
\end{tabular}
\caption{Frequency of misestimation using different values of thresholds. Here $n=300, d_n=1.$  }\label{table_smallerspikes2}
\end{center}
\end{table}

\begin{table}[H]
\def\arraystretch{1.2}
\begin{center}
\begin{tabular}{cccccccccccc}

$x$ & $1$  & $1.5$  & $2$  & $2.5$  & $3$  & $3.5$ & $4$ & $4.5$ & $5$ & $5.5$ & $6$ \\ \hline
$\omega_{I_p, I_n}$ & 0.997 & 0.81 & 0.67 & 0.48 & 0.286 & 0.11 & 0.06 & 0.008 & 0.005 & 0.005 & 0.006 \\
$\omega_{\widehat{A}, I_n}$ & 0.997 & 0.81 & 0.62 & 0.42 & 0.27 & 0.1 & 0.05 & 0.007 & 0.006 & 0.005 & 0.005  \\ 
$\omega_{A,I_n}$ & 0.997 &0.793 & 0.62 & 0.417 & 0.24 & 0.1 & 0.02 & 0.005 & 0.005 & 0.006 & 0.004 \\  \hline
\end{tabular}
\caption{Frequency of misestimation using different values of thresholds. Here $n=300, d_n=2.$  }\label{table_smallerspikes3}
\end{center}
\end{table}


\subsection{Additive spiked model} \label{additiveapp}
We consider the following example:
\begin{equation}\label{eq_additive}
A=U\Sigma^A U^*, \quad \Delta=x \bm{u} \bm{u}^*, \quad B=I_n,
\end{equation}
where $\bm{u}=p^{-1/2}\mathbf{1}_p$ and 
\begin{equation*}
\Sigma^A=\operatorname{diag}( \underbracket{30, \cdots,30}_{p/2 \  \text{times}}, \underbracket{1, \cdots,1}_{p/2 \ \text{times}}). \ 
\end{equation*}
Here we generate $U$ as orthogonal matrix from the R package \texttt{pracma} and set $x=35, d_n=1/3.$ In terms of eigenvalues, $\widetilde{A}=A+\Delta$ is a rank-one additive spiked model (recall Remark \ref{generaladdrmk}). 
However, 
we find that it actually generates two outlier eigenvalues as recorded in Figure \ref{fig_general}.

\begin{figure}[ht]
 \captionsetup{width=1\linewidth}
 \begin{center}
\includegraphics[width=8cm,height=6cm]{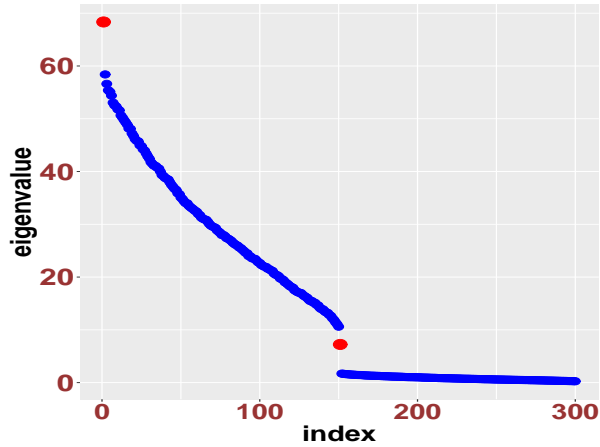}
\caption{General additive model (\ref{eq_additive}). Here $p=300.$ We can see that there exist two outlier eigenvalues associated with each bulk component.  }\label{fig_general}
\end{center}
\end{figure}

\section{Proof of Corollary 3.19}\label{sec_supp_right} 
Fix any sufficiently small constant $\e>0$. We then choose $\phi_n= n^{-c_\phi+\e}$ with $c_\phi=1/2-2/a $. Then we introduce the following truncation  
$$\wt X :=\mathbf 1_{\Omega} X, \quad \Omega :=\left\{\max_{i,j} |x_{ij}|\le \phi_n \right\}.$$
By the moment conditions (\ref{condition_4e}) and a simple union bound, we have
\begin{equation}\label{XneX}
\mathbb P(\wt X \ne X ) =\OO ( n^{-a\e}).
\end{equation}
Using (\ref{condition_4e}) and integration by parts, it is easy to verify that 
\begin{align*}
\mathbb E  \left|x_{ij}\right|1_{|x_{ij}|> \phi_n} =\OO(n^{-2-\e}), \quad \mathbb E \left|x_{ij}\right|^2 1_{|x_{ij}|> \phi_n} =\OO(n^{-2-\e}),
\end{align*}
which imply that
$$|\mathbb E  \tilde x_{ij}| =\OO(n^{-2-\e}), \quad  \mathbb E |\tilde x_{ij}|^2 = n^{-1} + \OO(n^{-2-\e}).$$
Moreover, we trivially have
$$\mathbb E  |\tilde x_{ij}|^4 \le \mathbb E  |x_{ij}|^4 =\OO(n^{-2}).$$
Hence $\wt X$ satisfies Assumptions \ref{assm_big1}, and we can apply Theorems \ref{thm_outlier}, \ref{thm_eigenvaluesticking}, \ref{thm_eveout}, \ref{thm_noneve}, \ref{thm_rightout} and \ref{thm_rightbulk}  to it with $\phi_n= n^{2/a-1/2-\e}$. Since $\e$ can be arbitrarily small, we conclude the proof. 


\section{Basic tools and proof of Theorem 4.5}\label{sec:tools}
In this section, we collect some tools that will be used in the proof. We introduce the following quantities:
\begin{equation}\label{defn_m1m2}
 m_1^{(n)}(z):= \frac{1}n\tr \left(A \mathcal G_1(z)\right) ,\quad m_2^{(n)}(z):=\frac{1}{n}\tr\left( B\mathcal G_2(z)\right). 
\end{equation}
First, the following lemma collects some basic properties of stochastic domination (Definition \ref{stoch_domination} of the paper), which will be used tacitly in the proof.

\begin{lemma}[Lemma 3.2 in \cite{isotropic}]\label{lem_stodomin}
Let $\xi$ and $\zeta$ be families of nonnegative random variables.

(i) Suppose that $\xi (u,v)\prec \zeta(u,v)$ uniformly in $u\in U$ and $v\in V$. If $|V|\le n^C$ for some constant $C$, then $\sum_{v\in V} \xi(u,v) \prec \sum_{v\in V} \zeta(u,v)$ uniformly in $u$.

(ii) If $\xi_1 (u)\prec \zeta_1(u)$ and $\xi_2 (u)\prec \zeta_2(u)$ uniformly in $u\in U$, then $\xi_1(u)\xi_2(u) \prec \zeta_1(u)\zeta_2(u)$ uniformly in $u$.

(iii) Suppose that $\Psi(u)\ge n^{-C}$ is deterministic and $\xi(u)$ satisfies $\mathbb E\xi(u)^2 \le n^C$ for all $u$. Then if $\xi(u)\prec \Psi(u)$ uniformly in $u$, we have $\mathbb E\xi(u) \prec \Psi(u)$ uniformly in $u$.
\end{lemma}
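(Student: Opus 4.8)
The final statement to prove is Lemma \ref{lem_stodomin}, a collection of three elementary arithmetic properties of stochastic domination, attributed to Lemma 3.2 in \cite{isotropic}. Since this is a standard result whose proof is routine, the plan is to reproduce the short direct argument from the definition of $\prec$ (Definition \ref{stoch_domination}).

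\textbf{Part (i).} Fix $\epsilon>0$ and $D>0$. By hypothesis, for each $v\in V$ there is $n_0(\epsilon,D)$, uniform in $u$ and $v$, such that $\sup_{u}\bbP(\xi(u,v)>n^{\epsilon}\zeta(u,v))\le n^{-D'}$ for $n\ge n_0$, where I would take $D'=D+C+1$ to absorb the cardinality bound $|V|\le n^C$. On the complement event, $\xi(u,v)\le n^{\epsilon}\zeta(u,v)$ simultaneously for all $v\in V$ with probability at least $1-|V|n^{-D'}\ge 1-n^{-D}$ by a union bound; summing over $v$ on that event gives $\sum_{v}\xi(u,v)\le n^{\epsilon}\sum_{v}\zeta(u,v)$. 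Since $\epsilon,D$ were arbitrary, this is exactly $\sum_{v}\xi(u,v)\prec\sum_{v}\zeta(u,v)$, uniformly in $u$.

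\textbf{Part (ii).} Fix $\epsilon>0$ and $D>0$, and apply the hypotheses with $\epsilon/2$ in place of $\epsilon$ and $D+1$ in place of $D$. Outside an event of probability $\le 2n^{-(D+1)}\le n^{-D}$ (union bound over the two bad events, uniformly in $u$), we have both $\xi_1(u)\le n^{\epsilon/2}\zeta_1(u)$ and $\xi_2(u)\le n^{\epsilon/2}\zeta_2(u)$; multiplying (both sides nonnegative) yields $\xi_1(u)\xi_2(u)\le n^{\epsilon}\zeta_1(u)\zeta_2(u)$. Hence $\xi_1\xi_2\prec\zeta_1\zeta_2$ uniformly in $u$.

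\textbf{Part (iii).} Fix $\epsilon>0$. Write $\mathbb E\xi(u)=\mathbb E[\xi(u)\mathbf 1_{\xi(u)\le n^{\epsilon}\Psi(u)}]+\mathbb E[\xi(u)\mathbf 1_{\xi(u)>n^{\epsilon}\Psi(u)}]$. The first term is $\le n^{\epsilon}\Psi(u)$. For the second, apply Cauchy--Schwarz: it is bounded by $(\mathbb E\xi(u)^2)^{1/2}\,\bbP(\xi(u)>n^{\epsilon}\Psi(u))^{1/2}\le n^{C/2}\cdot n^{-D/2}$, using $\mathbb E\xi(u)^2\le n^C$ and the stochastic domination bound with an exponent $D$ chosen so that, together with $\Psi(u)\ge n^{-C}$, the contribution is $\le \Psi(u)n^{\epsilon}$ (e.g. take $D\ge 3C+2\epsilon$, so $n^{C/2-D/2}\le n^{-C}\le\Psi(u)\le n^{\epsilon}\Psi(u)$). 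Therefore $\mathbb E\xi(u)\le 2n^{\epsilon}\Psi(u)$ for all large $n$, uniformly in $u$; since $\epsilon$ is arbitrary this gives $\mathbb E\xi(u)\prec\Psi(u)$.

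There is no serious obstacle here: the only points requiring a little care are the bookkeeping of how the exponents $\epsilon,D$ in the conclusion relate to those one feeds into the hypotheses (choosing $D'$ large enough to beat the polynomial number of terms in (i) and the $n^{C/2}$ second-moment factor in (iii)), and noting that all quantities are nonnegative so that summing and multiplying inequalities is legitimate. I would simply remark that the statement is Lemma 3.2 of \cite{isotropic} and include the three-paragraph argument above.
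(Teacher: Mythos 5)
Your proof is correct: all three parts follow exactly the standard union-bound/Cauchy--Schwarz argument from Definition \ref{stoch_domination}, and your bookkeeping of the exponents ($D'=D+C+1$ in (i), $\epsilon/2$ in (ii), $D\ge 3C+2\epsilon$ in (iii)) is sound. The paper itself offers no proof of this lemma but simply cites Lemma 3.2 of \cite{isotropic}, and your argument is precisely the routine one given there, so there is nothing further to compare.
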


Till the end of this supplement, we will make use of the following conventions.
The fundamental large parameter is $n$ and we always assume that $p$ is comparable to and depends on $n$. We use $C$ to denote a generic large positive constant, whose value may change from one line to the next. Similarly, we use $\epsilon$, $\tau$, $c$, etc. to denote generic small positive constants. If a constant depend on a quantity $a$, we use $C(a)$ or $C_a$ to indicate this dependence. For two quantities $a_n$ and $b_n$ depending on $n$, the notation $a_n = \OO(b_n)$ means that $|a_n| \le C|b_n|$ for some constant $C>0$, and $a_n=\oo(b_n)$ means that $|a_n| \le c_n |b_n|$ for some positive sequence $c_n\downarrow 0$ as $n\to \infty$. We also use the notations $a_n \lesssim b_n$ if $a_n = \OO(b_n)$, and $a_n \sim b_n$ if $a_n = \OO(b_n)$ and $b_n = \OO(a_n)$. For a matrix $A$, we use $\|A\|:=\|A\|_{l^2 \to l^2}$ to denote the operator norm; 
for a vector $\mathbf v=(v_i)_{i=1}^n$, $\|\mathbf v\|\equiv \|\mathbf v\|_2$ stands for the Euclidean norm. 
For a matrix $A$ and a number $a>0$, we write $A=\OO(a)$ if $\|A\|=\OO(a)$. In this paper, we often write an identity matrix of any dimension as $I$ or $1$ without causing any confusions.

We record the following lemma for matrix perturbation, which follows from a simple algebraic calculation.

\begin{lemma} [Woodbury matrix identity] \label{lem_woodbury} For $\mathcal{A},S,\mathcal{B},T$ of conformable dimensions, we have 
\begin{equation*}
(\mathcal A+S\mathcal BT)^{-1}=\mathcal A^{-1}-\mathcal A^{-1}S(\mathcal B^{-1}+T\mathcal A^{-1}S)^{-1}T\mathcal A^{-1}.
\end{equation*}
as long as all the operations are legitimate. As a special case, we have the following Hua's identity:
\begin{equation}\label{Huaineq}
\mathcal A-\mathcal A(\mathcal A+\mathcal B)^{-1}\mathcal A=\mathcal B-\mathcal B(\mathcal A+\mathcal B)^{-1}\mathcal B
\end{equation}
if $\mathcal A+\mathcal B$ is non-singular.
\end{lemma}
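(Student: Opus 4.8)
The plan is to establish both identities by direct algebraic verification, using nothing beyond the invertibility of the matrices that appear inverted; there is no probabilistic or spectral content here, so the ``proof'' is a short bookkeeping exercise.

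For the Woodbury identity I would check that the claimed right-hand side is indeed a two-sided inverse of $\mathcal A + S\mathcal B T$ by multiplying it on the left by $\mathcal A + S\mathcal B T$ and simplifying. Expanding the product yields
\[
I + S\mathcal B T\mathcal A^{-1} - \big(S + S\mathcal B T\mathcal A^{-1}S\big)\big(\mathcal B^{-1} + T\mathcal A^{-1}S\big)^{-1} T\mathcal A^{-1}.
\]
The crucial step is to factor $S + S\mathcal B T\mathcal A^{-1}S = S\mathcal B\big(\mathcal B^{-1} + T\mathcal A^{-1}S\big)$, after which the inner inverse telescopes and the subtracted term collapses to $S\mathcal B T\mathcal A^{-1}$, exactly cancelling the second term and leaving $I$. (One can alternatively read the identity off from the two block-inversion formulas for the $(1,1)$ block of $\begin{pmatrix}\mathcal A & S\\ -T & \mathcal B^{-1}\end{pmatrix}$; either way one records that every step is legitimate precisely when $\mathcal A$, $\mathcal B$, and $\mathcal B^{-1}+T\mathcal A^{-1}S$ are invertible, which is the meaning of ``all operations are legitimate''.)

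For Hua's identity I would argue directly rather than specialize Woodbury, since here only $\mathcal A + \mathcal B$ is assumed invertible. Factoring $\mathcal A(\mathcal A+\mathcal B)^{-1}$ on the left of the left-hand side and using $(\mathcal A+\mathcal B) - \mathcal A = \mathcal B$ gives
\[
\mathcal A - \mathcal A(\mathcal A+\mathcal B)^{-1}\mathcal A = \mathcal A(\mathcal A+\mathcal B)^{-1}\mathcal B ;
\]
writing instead the leftmost factor in this last expression as $\mathcal A = (\mathcal A+\mathcal B) - \mathcal B$ gives $\mathcal A(\mathcal A+\mathcal B)^{-1}\mathcal B = \mathcal B - \mathcal B(\mathcal A+\mathcal B)^{-1}\mathcal B$, which is exactly the right-hand side.

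Since both statements are formal identities, there is essentially no obstacle. The only care needed is to keep track of the order of multiplication (the matrices need not commute, so one must always factor on the correct side) and to note the invertibility hypotheses under which each manipulation is valid.
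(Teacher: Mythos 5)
Your verification is correct, and it is exactly the ``simple algebraic calculation'' the paper invokes without writing out: the paper states Lemma \ref{lem_woodbury} with no further proof, so there is nothing to diverge from. Proving Hua's identity \eqref{Huaineq} directly by the two factorizations, rather than as a literal specialization of Woodbury, is the right choice since it only uses the invertibility of $\mathcal A+\mathcal B$, which is all the lemma assumes.
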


We also need the following eigenvalue interlacing result for our spiked separable covariance model (\ref{eq_sepamodel}) of the paper. It is an analog of Corollary 4.2 in \cite{principal} for spiked covariance matrices. 

\begin{lemma} [Eigenvalue interlacing]  \label{lem_weylmodi} 
Recall that the eigenvalues of $\ctQ_1$ and $\mathcal{Q}_1$ are denoted by $\{\wt\lambda_i\}$ and $\{\lambda_i\}$, respectively. Then we have 
\begin{equation}\label{interlacing_eq0}
\wt\lambda_i \in [\lambda_{i}, \lambda_{i-r-s}],
\end{equation}
where we adopt the convention that  $\lambda_{i}=\infty$ if $i<1$ and $\lambda_i = 0$ if $i>p$. 
\end{lemma}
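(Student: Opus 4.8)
\textbf{Proof proposal for Lemma \ref{lem_weylmodi} (Eigenvalue interlacing).}

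The plan is to realize $\ctQ_1$ as a finite-rank multiplicative perturbation of $\mathcal Q_1$ and then reduce to the classical Weyl interlacing inequalities for rank-$k$ additive perturbations. First I would use the factorization $\wt A = (I_p + V_o^a D^a (V_o^a)^*) A$ from \eqref{AOBO}, and similarly for $\wt B$, to write
\begin{equation*}
\ctQ_1 = \wt A^{1/2} X \wt B X^* \wt A^{1/2}.
\end{equation*}
Since $\ctQ_1$ and $\wt A^{1/2} X \wt B X^* \wt A^{1/2}$ have the same nonzero spectrum as $X^* \wt A X \wt B$ (up to zeros), and one may instead work with the symmetrized object, the cleanest route is: the nonzero eigenvalues of $\ctQ_1$ coincide with those of $\wt B^{1/2} X^* \wt A X \wt B^{1/2} = \ctQ_2$, and both $\wt A$ and $\wt B$ differ from $A$ and $B$ by a perturbation of rank at most $r$ and $s$ respectively. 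So I would interpolate in two steps: compare $\ctQ_1$ to the intermediate matrix $\mathcal Q_1' := \wt A^{1/2} X B X^* \wt A^{1/2}$ (only the $\wt B \to B$ change), and then compare $\mathcal Q_1'$ to $\mathcal Q_1 = A^{1/2} X B X^* A^{1/2}$ (only the $\wt A \to A$ change).

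For the first step, note that $\wt B = B + B^{1/2}(\text{rank } s)B^{1/2}$, or more precisely $X \wt B X^* = X B X^* + (XB^{1/2}V_o^b)(D^b)(XB^{1/2}V_o^b)^*$ — wait, one needs $\wt B = B^{1/2}(I + \cdots)B^{1/2}$; using \eqref{AOBO} one has $\wt B = B + \Delta_B$ with $\mathrm{rank}(\Delta_B) \le s$, hence $X\wt B X^* - XBX^*$ has rank at most $s$. Therefore $\ctQ_1$ and $\mathcal Q_1'$ (both conjugated by the same $\wt A^{1/2}$) differ by a Hermitian matrix of rank at most $s$, and Weyl's inequality gives
\begin{equation*}
\lambda_{i+s}(\mathcal Q_1') \le \lambda_i(\ctQ_1) \le \lambda_{i-s}(\mathcal Q_1').
\end{equation*}
For the second step, work with $\ctQ_2$-type representations: the nonzero eigenvalues of $\mathcal Q_1' = \wt A^{1/2} X B X^* \wt A^{1/2}$ equal those of $B^{1/2} X^* \wt A X B^{1/2}$, and since $\wt A = A + \Delta_A$ with $\mathrm{rank}(\Delta_A) \le r$, the matrix $B^{1/2}X^* \wt A X B^{1/2} - B^{1/2} X^* A X B^{1/2}$ has rank at most $r$; another application of Weyl's inequality (interpreting eigenvalue indices beyond the matrix size as $0$, and using that $\mathcal Q_1$ and $B^{1/2}X^*AXB^{1/2}$ share nonzero spectrum) yields $\lambda_{i+r}(\mathcal Q_1) \le \lambda_i(\mathcal Q_1') \le \lambda_{i-r}(\mathcal Q_1)$. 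Chaining the two bounds gives $\lambda_{i+r+s}(\mathcal Q_1) \le \lambda_i(\ctQ_1) \le \lambda_{i-r-s}(\mathcal Q_1)$, and relabeling $i \mapsto i$ in the upper bound while shifting appropriately produces exactly \eqref{interlacing_eq0}, i.e.\ $\wt\lambda_i \in [\lambda_i, \lambda_{i-r-s}]$ — here the lower bound $\wt\lambda_i \ge \lambda_i$ follows from the chain with indices shifted, and one uses the conventions $\lambda_i = \infty$ for $i < 1$, $\lambda_i = 0$ for $i > p$ to handle the boundary cases.

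The main obstacle I anticipate is purely bookkeeping: making sure the passage between $\mathcal Q_1$-type matrices ($p\times p$) and $\mathcal Q_2$-type matrices ($n\times n$) via the shared-nonzero-spectrum identity is handled consistently with the index conventions, so that the rank-$r$ and rank-$s$ perturbations are genuinely additive Hermitian perturbations of the right dimension and Weyl's inequality applies verbatim. One must be careful that conjugating by $\wt A^{1/2}$ (which is invertible) does not change eigenvalue counts, and that the low-rank differences are indeed Hermitian (they are, being differences of two Hermitian matrices). Once the reduction is set up correctly, the rest is a one-line invocation of Weyl interlacing, exactly as in the proof of Corollary 4.2 in \cite{principal} for the $B = I$ case.
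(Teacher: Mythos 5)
Your decomposition into two interpolation steps (first $\wt B\to B$ inside the same $\wt A^{1/2}$-conjugation, then $\wt A\to A$ after passing to the $n\times n$ representation $B^{1/2}X^*\,\cdot\,XB^{1/2}$) is sound, and it correctly yields the upper bound $\wt\lambda_i\le \lambda_{i-r-s}$ from two applications of the rank-based Weyl inequality. However, there is a genuine gap at the last step: the lower half of \eqref{interlacing_eq0}. Your chain gives $\lambda_{i+r+s}\le\wt\lambda_i\le\lambda_{i-r-s}$, and the assertion that ``$\wt\lambda_i\ge\lambda_i$ follows from the chain with indices shifted'' is false: shifting $i\mapsto i-r-s$ in the inequality $\lambda_{i+r+s}\le\wt\lambda_i$ produces a statement about $\wt\lambda_{i-r-s}$, not about $\wt\lambda_i$. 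Rank-based Weyl alone can never give the one-sided conclusion $\wt\lambda_i\ge\lambda_i$; indeed, for perturbations with negative $d$'s that conclusion is simply wrong, so some use of positivity is unavoidable. The missing ingredient is that in the paper's setting the perturbations are positive semidefinite: by \eqref{AOBO}, $\wt A-A=\sum_{i\le r}\sigma_i^a d_i^a\,\bv_i^a(\bv_i^a)^*$ and $\wt B-B=\sum_{\mu\le s}\sigma_\mu^b d_\mu^b\,\bv_\mu^b(\bv_\mu^b)^*$, and the standing assumptions force $d_i^a,d_\mu^b>0$ (a spike satisfying \eqref{spikes} exceeds $-m_{2c}^{-1}(\lambda_+)>\max_i\sigma_i^a$). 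Hence at each of your two interpolation steps the Hermitian difference is PSD, and Weyl monotonicity gives $\lambda_i(\ctQ_1)\ge\lambda_i(\mathcal Q_1')\ge\lambda_i(\mathcal Q_1)$ for every $i$; combined with your rank argument for the upper bound this proves \eqref{interlacing_eq0}. So the route is salvageable, but as written the lower bound is unjustified.

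For comparison, the paper proves the lemma by a different, self-contained argument: for a rank-one deformation it applies the Woodbury identity to $\langle\bv^a,\CTG_1\bv^a\rangle$, rewrites the eigenvalue equation as the intersection of a decreasing rational function (with $p$ zeros at the $\lambda_k$ and $p-1$ poles) with the level $-zd^a<0$, and reads off the one-step interlacing, then concludes by induction on the rank; the positivity of $d^a$ enters there at exactly the analogous point (the sign of $-zd^a$). Your Weyl-plus-monotonicity argument, once the PSD observation is added, is a more standard and arguably shorter alternative, at the price of the dimension bookkeeping between the $p\times p$ and $n\times n$ representations that you already flagged.
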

\begin{proof}
We first consider the rank one deformation with $r=1$ and $s=0$: $\wt A= (1+d^a\bv^a(\bv^a)^*)A$ with $d^a> 0$ and $\bv^a$ being an eigenvector of $A$. Then we have
\be\label{slG1}
\begin{split}
\CTG_1&= \left(\cal P^{1/2}A^{1/2}XBX^*A^{1/2}\cal P^{1/2} - z\right)^{-1} \\
&= \cal P^{-1/2} \left[\mathcal G_1^{-1}  + \bv^a  \frac{zd_a}{d_a+1} (\bv^a)^*\right]^{-1}  \cal P^{-1/2} ,
\end{split}
\ee
where $\cal P:=1+d^a\bv^a(\bv^a)^*$. 
Then applying Lemma \ref{lem_woodbury} to \eqref{slG1}, we obtain that
\begin{equation*}
(\CTG_1)_{\bv^a\bv^a}=\frac{(\mathcal G_1)_{\bv^a\bv^a}}{d^a+1}-\frac{(\mathcal G_1)_{\bv^a\bv^a}^2}{d^a+1}  \frac{z}{(d^a)^{-1}+1+z(\mathcal G_1)_{\bv^a\bv^a}},
\end{equation*}
where we used the following short-hand notations
\begin{equation}\label{eq_shorhand}
(\CTG_1)_{\bv^a \bv^a}=\langle \bv^a, \CTG_1 \bv^a \rangle, \quad \left( \mathcal G_1\right)_{\bv^a \bv^a}=\langle \bv^a, \mathcal G_1 \bv^a \rangle.
\end{equation}
Thus we get
\begin{equation}\label{eq_pf_interlacing}
\frac{1}{(\CTG_1)_{\bv^a \bv^a}}=\frac{1+d^a}{(\mathcal G_1)_{\bv^a \bv^a}}+zd^a .
\end{equation}
We denote the eigenvectors of $\mathcal{Q}_1$ and $\wt{\mathcal{Q}}_1$ as $\{{\bm \xi}_k\}_{k=1}^p$ and $\{\wt{\bm \xi}_k\}_{k=1}^p$, respectively. Then writing \eqref{eq_pf_interlacing} in spectral decomposition gives
\begin{equation}\label{interlacing_eq}
(d^a+1)\left(\sum_k\frac{|\langle \mathbf{v}^a, {\bm \xi}_k \rangle|^2}{\lambda_k-z}\right)^{-1} =\left(\sum_k \frac{|\langle \mathbf{v}^a, \wt{\bm \xi}_k \rangle|^2}{\wt\lambda_k-z}\right)^{-1}- zd^a . 
\end{equation}

By adding a small perturbation to $\mathcal Q_1$, we may assume without loss of generality that (i) $\lambda_1,\cdots, \lambda_p$ are all positive and distinct, and (ii) all $\langle \mathbf{v}^a,{\bm \xi}_k \rangle$ and $\langle \mathbf{v}^a, \wt{\bm \xi}_k \rangle$ are nonzero. Note that since eigenvalues and eigenvectors depend continuously on the matrix entries, we can remove the arbitrarily small perturbation and obtain the corresponding result for the original matrices $\mathcal Q_1$ and $\wt{\mathcal Q}_1$. Moreover, it is always possible to choose such perturbation. For example, we can add a matrix $\epsilon H$, where the entries of $H$ are bounded and have absolutely continuous densities. Then (i) and (ii) hold with probability 1 for any $\epsilon>0$. Thus there must exist a realization of $H$ such that (i) and (ii) hold for $\mathcal Q_1+\e H$ and $\wt{\mathcal Q}_1+\e H$.

By (i) and (ii), the left-hand side of (\ref{interlacing_eq}) defines a function of $z \in (0, \infty)$ with $(p-1)$ poles and $p$ zeros. The function is smooth and decreasing away from the singularities, and its zeros are $\lambda_1,\cdots,\lambda_p$. Now using the fact that $z$ is an eigenvalue of $\ctQ_1$ if and only if the left-hand side of \eqref{interlacing_eq} is equal to $-zd^a<0$, we obtain the interlacing property \eqref{interlacing_eq0} for $r=1$ and $s=0$.

Next, for the case $r=0$ and $s=1$, we conclude the proof easily by applying \eqref{interlacing_eq0} to $\wt{\mathcal Q}_2$ and using the fact that $\wt{\mathcal Q}_2$ have the same nonzero eigenvalues as $\wt{\mathcal Q}_1$. Note that the above arguments are purely deterministic. They work for any non-negative definite matrix $A^{1/2}XBX^*A^{1/2}$ and any rank one deformation of the form $\wt A^{1/2}XBX^* \wt A^{1/2}$ or $A^{1/2}X\wt BX^* A^{1/2}$, where 
$$\widetilde A = A\Big(1+d^a\bv^a(\bv^a)^*\Big)\quad \text{or} \quad \widetilde B= B\Big(1+d^b \bv^b(\bv^b)^*\Big),
$$
with $d^a> 0$, $d^b> 0$, and $\bv^a$ and $\bv^b$ being eigenvectors of $A$ and $B$, respectively. Then the general case \eqref{interlacing_eq0} with any finite $r, s=\OO(1)$ follows from a simple induction argument. \end{proof}

\subsection{Properties of limiting laws}
First of all, we report the properties of the limiting spectral distribution.
\begin{lemma}[Lemma 2.6 of \cite{yang2018}]\label{lambdar_sqrt}
Under the assumptions \eqref{eq_ratio}, \eqref{assm3} and \eqref{assm_gap}, there exist constants $a_{1,2}>0$ such that
\be\label{sqroot3}
\rho_{1,2c}(\lambda_+ - x) = a_{1,2} x^{1/2} + \OO(x), \quad x\downarrow 0,
\ee
and
\be\label{sqroot4}
\quad m_{1,2c}(z) = m_{1,2c}(\lambda_+) + \pi a_{1,2}(z-\lambda_+)^{1/2} + \OO(|z-\lambda_+|), \quad z\to \lambda_+  .
\ee
The estimates \eqref{sqroot3} and \eqref{sqroot4} also hold for $\rho_c$ and $m_c$ with different constants. 
\end{lemma}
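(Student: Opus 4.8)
The plan is to read off the square-root behaviour directly from the self-consistent equation, using the edge characterization in Lemma~\ref{lambdar} together with the regularity built into Assumption~\ref{ass:unper}. Recall that $m_{2c}(z)$ is the unique solution in $\mathbb{C}_+$ of $f(z,m)=0$, with $f$ as in \eqref{separable_MP}, and that by Lemma~\ref{lambdar} the rightmost edge $\lambda_+=e_1$ satisfies simultaneously $f(\lambda_+,m_{2c}(\lambda_+))=0$ and $\partial_m f(\lambda_+,m_{2c}(\lambda_+))=0$; thus $m\mapsto f(\lambda_+,m)$ has an (at least) double zero at $m_\star\deq m_{2c}(\lambda_+)$. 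A preliminary observation is that $f$ is jointly analytic in a neighbourhood of $(\lambda_+,m_\star)$: the inner integrand $t/(1+tm)$ has denominator $\ge\tau$ on $\supp\pi_A$ when $m=m_\star$, since $m_\star<0$ and $1+m_\star\max_i\sigma_i^a\ge\tau$ by \eqref{assm_gap}, while the outer denominator, evaluated on the curve $m=m_{2c}(z)$ at $z=\lambda_+$, equals $-\lambda_+\bke{1+x m_{1c}(\lambda_+)}\le-\lambda_+\tau<0$ on $\supp\pi_B$, again by \eqref{assm_gap}.

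First I would establish the \emph{non-degeneracy} of the double zero, namely $\partial_m^2 f(\lambda_+,m_\star)\ne 0$ and $\partial_z f(\lambda_+,m_\star)\ne 0$, with $\partial_z f(\lambda_+,m_\star)/\partial_m^2 f(\lambda_+,m_\star)<0$. Both derivatives are finite by the analyticity just noted and are given by integrals of the form $\int x^k\bke{1+x m_\star}^{-j}\pi_A(\dd x)$ and $\int x^k\bke{1+x m_{1c}(\lambda_+)}^{-j}\pi_B(\dd x)$ with small $j,k$; their signs are pinned down once one uses, from Lemma~\ref{lambdar}, that $m_{1c}(\lambda_+)$ and $m_{2c}(\lambda_+)$ lie in the negative intervals specified there, so that all these integrands have a definite sign. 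This is the step I expect to be the main obstacle: it is exactly where the ``regular edge / no outlier'' hypothesis \eqref{assm_gap} is used, it requires careful sign bookkeeping and control of the location of $m_{1c}(\lambda_+),m_{2c}(\lambda_+)$, and it is what rules out a degenerate (e.g.\ cube-root) edge.

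Granting the non-degenerate double zero, I would Taylor-expand $0=f(z,m_{2c}(z))$ about $(\lambda_+,m_\star)$, obtaining
\begin{equation*}
0=\partial_z f(\lambda_+,m_\star)\,(z-\lambda_+)+\tfrac12\,\partial_m^2 f(\lambda_+,m_\star)\,\bke{m_{2c}(z)-m_\star}^2+\OO\!\bke{|z-\lambda_+|^2+|m_{2c}(z)-m_\star|^3},
\end{equation*}
solve the quadratic, and pick the branch with $\Im m_{2c}(z)>0$ for $z\in\mathbb{C}_+$ (equivalently $m_{2c}(z)>m_\star$ for real $z>\lambda_+$, which holds by \eqref{real_stiel}). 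This yields $m_{2c}(z)=m_\star+\pi a_2\,(z-\lambda_+)^{1/2}+\OO(|z-\lambda_+|)$ with $a_2\deq\pi^{-1}\bke{-2\,\partial_z f(\lambda_+,m_\star)/\partial_m^2 f(\lambda_+,m_\star)}^{1/2}>0$, which is \eqref{sqroot4} for $\al=2$; taking $z=\lambda_+-x+\ii 0^+$ and using $(-x)^{1/2}=\ii x^{1/2}$ gives $\rho_{2c}(\lambda_+-x)=\pi^{-1}\Im m_{2c}(\lambda_+-x)=a_2 x^{1/2}+\OO(x)$, which is \eqref{sqroot3} for $\al=2$.

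Finally I would transfer the singularity. By \eqref{separa_m12} and \eqref{def_mc}, both $m_{1c}(z)$ and $m_c(z)$ are jointly analytic functions of $(z,m_{2c}(z))$ near $(\lambda_+,m_\star)$ — for the same reason as in the first paragraph — so composing with the expansion of $m_{2c}$ yields $m_{1c}(z)=m_{1c}(\lambda_+)+\pi a_1(z-\lambda_+)^{1/2}+\OO(|z-\lambda_+|)$ and the analogue for $m_c$, where $a_1$ (resp.\ the constant for $\rho_c$) equals $a_2$ times the value at the edge of the partial derivative of $m_{1c}$ (resp.\ $m_c$) with respect to $m_{2c}$. Positivity of $a_1$ follows either from this explicit expression together with the monotonicity of $m_{1c},m_{2c}$ on $(\lambda_+,\infty)$ from \eqref{real_stiel}, or, more robustly, from Lemma~\ref{lambdar}, which forces $\rho_{1c}$ and $\rho_c$ to share the edge $\lambda_+$ with $\rho_{2c}$ and hence prevents their leading edge coefficients from vanishing. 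Passing to imaginary parts as in the previous step then gives \eqref{sqroot3}--\eqref{sqroot4} for $\al=1$ and for $\rho_c,m_c$.
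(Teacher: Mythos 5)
The paper does not actually prove this lemma: it is quoted verbatim from Lemma 2.6 of \cite{yang2018}, and the supplement only re-uses its consequences (e.g.\ in the proof of Lemma \ref{lem_derivativeprop}, where $z''(m_+)\sim 1$ is again attributed to \cite{yang2018}). Your argument is essentially the standard proof behind that cited result: locate the double zero of $m\mapsto f(\lambda_+,m)$ via \eqref{equationEm2}, use \eqref{assm_gap} and the location of $m_{1c}(\lambda_+),m_{2c}(\lambda_+)$ from Lemma \ref{lambdar} to keep both denominators of $f$ bounded away from zero (hence $f$ jointly analytic, $\partial_z f(\lambda_+,m_\star)>0$ and $\partial_m^2 f(\lambda_+,m_\star)<0$ by inspection of the sign-definite integrands), and then invert the resulting quadratic with the branch fixed by $\Im m_{2c}>0$; the transfer to $m_{1c}$ and $m_c$ through \eqref{separa_m12} and \eqref{def_mc} is also how one gets the remaining constants. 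Three small points: (i) your Taylor remainder omits the cross term $\partial_z\partial_m f\cdot(z-\lambda_+)(m-m_\star)$, but since $m_{2c}(z)-m_\star=\OO(|z-\lambda_+|^{1/2})$ this is absorbed into the $\OO(|z-\lambda_+|)$ error, so nothing breaks; (ii) you tacitly use that $m_{2c}(z)\to m_{2c}(\lambda_+)$ as $z\to\lambda_+$, which should be flagged as coming from the structural results quoted in Lemma \ref{lambdar}; (iii) your ``more robust'' fallback for $a_1>0$ — that $\rho_{1c}$ shares the edge $\lambda_+$ with $\rho_{2c}$ — does not by itself rule out a faster vanishing rate of $\rho_{1c}$ at the edge, so it should be dropped; the explicit computation $a_1=a_2\,\partial_m F(\lambda_+,m_\star)$ with $\partial_m F(\lambda_+,m_\star)=d_n\int x^2\,[\lambda_+(1+xm_\star)^2]^{-1}\pi_A(\dd x)>0$ is the correct (and sufficient) justification, and likewise for $m_c$.
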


For any constants $\varsigma_1, \varsigma_2>0,$ we denote a domain of the spectral parameter $z$ as 
\begin{equation}\label{eq_paraset}
S(\varsigma_1,\varsigma_2):=\{z=E+\mathrm{i} \eta: \lambda_+-\varsigma_1 \leq E \leq \varsigma_2 \lambda_+, \ 0< \eta \leq 1\}.
\end{equation} 
 For $ z=E+\mathrm{i} \eta,$ we define the distance to the rightmost edge as
 \begin{equation}\label{KAPPA}
 \kappa \equiv \kappa_E:=|E-\lambda_+|.
 \end{equation}
Then we have the following lemma, which summarizes some basic properties of $m_{1,2c}$ and $\rho_{1,2c}$.

\begin{lemma}\label{lem_mplaw} 
Suppose Assumptions \ref{assm_big1} and \ref{ass:unper} of the paper hold. Then there exists sufficiently small constant $\varsigma_1>0$ such that the following estimates hold: 
\begin{itemize}
\item[(i)]
\begin{equation}
\rho_{1,2c}(x) \sim \sqrt{\lambda_+-x}, \quad \ \ \text{ for } x \in \left[\lambda_+ - 2\varsigma_1,\lambda_+ \right];\label{SQUAREROOT}
\end{equation}
\item[(ii)] for $z =E+\ii \eta\in S(\varsigma_1,\varsigma_2)$, 
\begin{equation} \label{eq_estimm} 
\vert m_{1,2c}(z) \vert \sim 1,  \quad  \im m_{1,2c}(z) \sim \begin{cases}
    \frac{\eta}{\sqrt{\kappa+\eta}}, & \text{ if } E\geq \lambda_+ \\
    \sqrt{\kappa+\eta}, & \text{ if } E \le \lambda_+\\
  \end{cases},
\end{equation}
and
\begin{equation}\label{eq_realestimate}
|\operatorname{Re} m_{1,2c}(z)-m_{1,2c}(\lambda_+) | 
\sim
\begin{cases}
\sqrt{\kappa+\eta}, &  \ \text{if} \ E \geq \lambda_+ \\
\frac{\eta}{\sqrt{\kappa+\eta}}+\kappa, & \ \text{if} \ E \le \lambda_+
\end{cases};
\end{equation}

\item[(iii)] there exists constant $\tau'>0$ such that
\begin{equation}\label{Piii}
\min_{ \mu } \vert 1 + m_{1c}(z)\sigma_\mu^b \vert \ge \tau', \quad \min_{i } \vert 1 + m_{2c}(z)\sigma_i^a  \vert \ge \tau',
\end{equation}
for any $z \in S(\varsigma_1,\varsigma_2)$.
\end{itemize}
The above estimates (i)-(iii) also hold for $z$ on the real axis, i.e., $z \in \overline{S(\varsigma_1, \varsigma_2)} $. Finally, the estimates \eqref{SQUAREROOT}-\eqref{eq_realestimate} also hold for $\rho_c$ and $m_c$. 
\end{lemma}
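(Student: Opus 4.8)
The plan is to deduce all three parts from the square-root edge asymptotics of Lemma~\ref{lambdar_sqrt}, the structural facts of Lemma~\ref{lambdar}, and the gap condition~\eqref{assm_gap} of Assumption~\ref{ass:unper}, through routine Stieltjes-transform estimates. First I would fix $\varsigma_1>0$ small enough that $[\lambda_+-2\varsigma_1,\lambda_+]$ contains no spectral edge other than $\lambda_+=e_1$; this is possible since by Lemma~\ref{lambdar} there are only $L=\OO(1)$ edges. On this interval Lemma~\ref{lambdar_sqrt} gives $\rho_{1,2c}(\lambda_+-x)=a_{1,2}x^{1/2}+\OO(x)$ with $a_{1,2}>0$, and after shrinking $\varsigma_1$ the leading term dominates; this is part~(i), and the identical argument covers $\rho_c$.

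For part~(ii), $|m_{1,2c}(z)|\lesssim 1$ is immediate because $m_{1,2c}$ is the Stieltjes transform of a finite compactly supported measure, and $|m_{1,2c}(z)|\gtrsim 1$ follows because $m_{1,2c}$ is continuous and nowhere vanishing on the compact set $\overline{S(\varsigma_1,\varsigma_2)}$: it has strictly positive imaginary part in the open upper half plane and at real points inside the support, and at $E=\lambda_+$ it equals the negative constant $m_{1,2c}(\lambda_+)$, which is bounded away from $0$ (indeed $|m_{1,2c}(\lambda_+)|\ge\lambda_+^{-1}\int\rho_{1,2c}(x)\,\dd x\gtrsim 1$ since $\lambda_+=\OO(1)$ and, by~\eqref{assm3}, $\pi_{A,B}$ do not concentrate at the origin). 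For the imaginary- and real-part estimates I would estimate $\im m_{1,2c}(E+\ii\eta)=\int\frac{\eta\,\rho_{1,2c}(x)}{(x-E)^2+\eta^2}\,\dd x$ and the corresponding principal-value integral for $\re m_{1,2c}$ directly, feeding in the square-root profile near $\lambda_+$ from part~(i) and the boundedness of $\rho_{1,2c}$ away from $\lambda_+$; near $\lambda_+$ one may equivalently expand via~\eqref{sqroot4}, $m_{1,2c}(z)=m_{1,2c}(\lambda_+)+\pi a_{1,2}(z-\lambda_+)^{1/2}+\OO(|z-\lambda_+|)$. This gives $\im m_{1,2c}(z)\sim\eta/\sqrt{\kappa+\eta}$ for $E\ge\lambda_+$ and $\sim\sqrt{\kappa+\eta}$ for $E\le\lambda_+$, together with the stated behaviour of $\re m_{1,2c}-m_{1,2c}(\lambda_+)$; when $\kappa\sim 1$ (i.e.\ $E$ a distance of order one into the gap above $\lambda_+$), one instead invokes that $m_{1,2c}$ is real, negative and strictly increasing on $(\lambda_+,\infty)$ by~\eqref{real_stiel} with derivative bounded below.

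For part~(iii) I would argue by continuity from $\lambda_+$. By~\eqref{assm_gap}, $1+m_{1c}(\lambda_+)\max_\mu\sigma_\mu^b\ge\tau$ and $1+m_{2c}(\lambda_+)\max_i\sigma_i^a\ge\tau$; since $m_{1c}(\lambda_+),m_{2c}(\lambda_+)<0$ and $0\le\sigma_\mu^b,\sigma_i^a\le\tau^{-1}$, the quantities $1+m_{1c}(\lambda_+)\sigma_\mu^b$ and $1+m_{2c}(\lambda_+)\sigma_i^a$ all lie in $[\tau,1]$. On $(\lambda_+,\varsigma_2\lambda_+]$ they only increase because $m_{1c},m_{2c}$ are negative and increasing towards $0$ there; and on the part of $\overline{S(\varsigma_1,\varsigma_2)}$ with $E\le\lambda_+$, uniform continuity of $m_{1,2c}$ on that compact set (shrinking $\varsigma_1$ once more if needed) keeps $|1+m_{1c}(z)\sigma_\mu^b|,|1+m_{2c}(z)\sigma_i^a|\ge\tau':=\tau/2$. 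Restricting to $\eta=0$ yields the versions for $z\in\overline{S(\varsigma_1,\varsigma_2)}$, and the $m_c,\rho_c$ statements are identical since Lemma~\ref{lambdar_sqrt} provides the same square-root behaviour. I expect the only genuinely delicate point to be the edge computation in part~(ii): one must check that the square-root density profile, rather than any finer feature of $\rho_{1,2c}$, controls $\im m_{1,2c}$ and $\re m_{1,2c}-m_{1,2c}(\lambda_+)$ uniformly over all scales $0<\eta\le1$, and that the $\OO(|z-\lambda_+|)$ error in~\eqref{sqroot4} (which is why the $\re m_{1,2c}$ bound for $E\le\lambda_+$ carries the extra additive $\kappa$) does not spoil the lower bounds; everything else is bookkeeping around Lemmas~\ref{lambdar} and~\ref{lambdar_sqrt} and Assumption~\ref{ass:unper}.
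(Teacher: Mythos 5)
Your overall route differs from the paper's: the paper disposes of \eqref{SQUAREROOT}, \eqref{eq_estimm} and \eqref{Piii} by citing \cite[Lemma 3.4]{yang2018} and only derives \eqref{eq_realestimate} from \eqref{sqroot4}, whereas you attempt a self-contained derivation from Lemma \ref{lambdar_sqrt}, Lemma \ref{lambdar} and Assumption \ref{ass:unper}. That plan is sound in outline (it is essentially how the cited result is proved), and your treatment of part (i), of \eqref{eq_realestimate} via \eqref{sqroot4}, and of the regime $E\ge\lambda_+$ for part (iii) (where monotonicity of $m_{1,2c}$ indeed gives $1+\sigma\,\re m_{1,2c}(z)\ge 1+\sigma m_{1,2c}(\lambda_+)\ge\tau$ even for $\eta>0$) is fine.

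There are, however, two concrete gaps. First, your lower bounds rest on soft compactness arguments: ``$m_{1,2c}$ is continuous and nowhere vanishing on the compact set $\overline{S(\varsigma_1,\varsigma_2)}$'' and, for part (iii), ``uniform continuity keeps $|1+m_{1c}(z)\sigma_\mu^b|\ge\tau/2$.'' But $m_{1,2c}\equiv m_{1,2c}^{(n)}$ and $\rho_{1,2c}$ depend on $n$ through $\pi_A^{(p)},\pi_B^{(n)},d_n$, so a per-$n$ compactness argument does not produce the $n$-uniform constants that the $\sim$ statements and $\tau'$ in the lemma require (and that all later applications use). Moreover, for part (iii) with $E\le\lambda_+$ the domain contains points with $\eta$ of order one, where $m_{1,2c}(z)$ is not close to $m_{1,2c}(\lambda_+)$, so continuity near $\lambda_+$ says nothing there; you need a separate quantitative argument (e.g.\ for $|z-\lambda_+|$ small use \eqref{sqroot4} with $|m_{1,2c}(z)-m_{1,2c}(\lambda_+)|\lesssim|z-\lambda_+|^{1/2}$, for $\eta\gtrsim 1$ use $\sigma\,\im m_{1,2c}(z)\gtrsim\sigma\eta$ when $\sigma$ is bounded below and $|1+\sigma m_{1,2c}(z)|\ge 1-C\sigma$ when $\sigma$ is small), and similarly a three-regime patching for $|m_{1,2c}(z)|\gtrsim 1$ starting from your (correct, quantitative) bound at $\lambda_+$. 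Second, the claim that $|m_{1,2c}(z)|\lesssim 1$ is ``immediate because $m_{1,2c}$ is the Stieltjes transform of a finite compactly supported measure'' is false as stated: the real part of such a transform can diverge logarithmically as $\eta\downarrow 0$ at an edge where the density does not vanish (take $\rho=\mathbf 1_{[0,1]}$ at $E=0$), and a triangle-inequality bound on $\int\rho_{1,2c}(t)|t-z|^{-1}\,\dd t$ also fails for $E$ inside the bulk with $\eta$ tiny. Boundedness here genuinely uses the square-root vanishing at $\lambda_+$, i.e.\ the expansion \eqref{sqroot4} near the edge (which you do mention as an alternative) together with the trivial bound away from the support; you should make that the primary argument rather than a fallback. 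With these repairs the proof goes through and would constitute an honest self-contained substitute for the paper's citation.
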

\begin{proof}
The estimates \eqref{SQUAREROOT}, \eqref{eq_estimm} and \eqref{Piii} have been proved in \cite[Lemma 3.4]{yang2018}. The estimate \eqref{eq_realestimate} follows directly from (\ref{sqroot4}). 
\end{proof}

The next lemma contains some basic estimates for $\theta_{1,2}$ in (\ref{g12c})  and the derivatives of $m_{1,2c}$ and $g_{1,2c}$.

\begin{lemma} \label{lem_derivativeprop}

Suppose that Assumptions \ref{assm_big1} and \ref{ass:unper}  of the paper hold. For $\sigma_1 \ge -m_{1c}^{-1}(\lambda_+)$ and $\sigma_2 \ge -m_{2c}^{-1}(\lambda_+)$, we have  
\be\label{eq_derivativebound}
\begin{split}
& \theta_1(\sigma_2) - \lambda_+ =g_{2c}(-\sigma_2^{-1}) - \lambda_+  \sim (\sigma_2+m_{2c}^{-1}(\lambda_+))^2, \\
&  \theta_2(\sigma_1) - \lambda_+ = g_{1c}(-\sigma_1^{-1}) - \lambda_+   \sim (\sigma_1+m_{1c}^{-1}(\lambda_+))^2.
\end{split}
\ee
For $x > \lambda_+$ and $m_{1,2} > m_{1,2c}(\lambda_+)$, we have 
\begin{align}
m_{2c}'(x) \sim \kappa_x^{-1/2},\quad  & m_{1c}'(x) \sim \kappa_x^{-1/2},  \label{eq_mderivative0}\\
g_{2c}'(m_2) \sim (m_2 - m_{2c}(\lambda_+)), \quad & g_{1c}'(m_1) \sim (m_1 - m_{1c}(\lambda_+)).\label{eq_gderivative0}
\end{align}
Moreover, the above estimates imply that
\begin{align}
m_{2c}'(\theta_1(\sigma_2)) \sim \frac{1}{\sigma_2+m_{2c}^{-1}(\lambda_+)},\quad & m_{1c}'(\theta_2(\sigma_1)) \sim \frac1{\sigma_1+m_{1c}^{-1}(\lambda_+) },\label{eq_mderivative}\\
g_{2c}'(-\sigma_2^{-1}) \sim \sigma_2+m_{2c}^{-1}(\lambda_+), \quad & g_{1c}'(-\sigma_1^{-1}) \sim \sigma_1+m_{1c}^{-1}(\lambda_+).\label{eq_gderivative}
\end{align}
\end{lemma}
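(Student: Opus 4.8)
## Proof proposal for Lemma (properties of $\theta_{1,2}$ and derivatives of $m_{1,2c}$, $g_{1,2c}$)

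The plan is to derive everything from the square-root edge behavior recorded in Lemma \ref{lambdar_sqrt} together with the inverse-function relations between $m_{1,2c}$ and $g_{1,2c}$. Recall from \eqref{real_stiel} that on $(\lambda_+,\infty)$ the functions $m_{\al c}$ are real analytic and strictly increasing, with $m_{\al c}(\lambda_+^+)=m_{\al c}(\lambda_+)\in(-(\max\sigma)^{-1},0)$ by Lemma \ref{lambdar}, so $g_{\al c}$ is well-defined as their inverse on the stated ranges. The key analytic input is that by \eqref{sqroot4} (read on the real axis, $z=x>\lambda_+$, which is legitimate since the estimates in Lemma \ref{lambdar_sqrt} extend to $z\to\lambda_+$ from the right),
\begin{equation}\label{eq_plan_sqrt}
m_{\al c}(x) = m_{\al c}(\lambda_+) + \pi a_\al (x-\lambda_+)^{1/2} + \OO(x-\lambda_+),\quad x\downarrow\lambda_+,
\end{equation}
for some constants $a_\al>0$, $\al=1,2$.

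\textbf{Step 1: the estimates \eqref{eq_mderivative0}.} Differentiating \eqref{real_stiel} gives $m_{\al c}'(x)=\int_0^{\lambda_+}\rho_{\al c}(t)(t-x)^{-2}\,\dd t>0$. To get the sharp order I would instead differentiate \eqref{eq_plan_sqrt}: since $m_{\al c}$ is analytic on $(\lambda_+,\infty)$ and the expansion \eqref{eq_plan_sqrt} is an asymptotic expansion of an analytic function whose leading correction is a half-integer power, term-by-term differentiation is justified (alternatively, use the integral formula together with the density bound \eqref{SQUAREROOT} to get matching upper and lower bounds on $m_{\al c}'(x)$ directly). This yields $m_{\al c}'(x)=\tfrac{\pi a_\al}{2}(x-\lambda_+)^{-1/2}+\OO(1)\sim \kappa_x^{-1/2}$ for $x$ in a right-neighborhood of $\lambda_+$; for $x$ bounded away from $\lambda_+$ both $m_{\al c}'(x)$ and $\kappa_x^{-1/2}$ are $\sim 1$ by compactness and monotonicity, so \eqref{eq_mderivative0} holds on all of $(\lambda_+,\infty)$ (restricted to the relevant compact range of $\sigma$'s via \eqref{assm33}).

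\textbf{Step 2: the estimates \eqref{eq_gderivative0} and \eqref{eq_derivativebound}.} By the inverse function theorem, $g_{\al c}'(m)=1/m_{\al c}'(g_{\al c}(m))$. Writing $x=g_{\al c}(m)$, so that $m=m_{\al c}(x)$ and $m-m_{\al c}(\lambda_+)=\pi a_\al(x-\lambda_+)^{1/2}+\OO(x-\lambda_+)$ by \eqref{eq_plan_sqrt}, we get $(x-\lambda_+)^{1/2}\sim (m-m_{\al c}(\lambda_+))$, hence $\kappa_x=x-\lambda_+\sim (m-m_{\al c}(\lambda_+))^2$; combined with Step 1, $g_{\al c}'(m)\sim \kappa_x^{1/2}\sim (m-m_{\al c}(\lambda_+))$, which is \eqref{eq_gderivative0}. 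Specializing $m=-\sigma_\al^{-1}$ and noting $-\sigma_\al^{-1}-m_{\al c}(\lambda_+) = \sigma_\al^{-1}(\sigma_\al + m_{\al c}^{-1}(\lambda_+))\cdot(-m_{\al c}(\lambda_+))\sim \sigma_\al+m_{\al c}^{-1}(\lambda_+)$ (using that $\sigma_\al$ and $-m_{\al c}(\lambda_+)$ are both $\sim 1$ on the admissible range), we obtain $\theta_\al(\sigma)-\lambda_+ = g_{\al'c}(-\sigma^{-1})-\lambda_+ = \kappa_{g_{\al'c}(-\sigma^{-1})}\sim(-\sigma^{-1}-m_{\al'c}(\lambda_+))^2\sim(\sigma+m_{\al'c}^{-1}(\lambda_+))^2$ (with the appropriate index bookkeeping: $\theta_1$ uses $g_{2c}$, $\theta_2$ uses $g_{1c}$), which is \eqref{eq_derivativebound}. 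Likewise $g_{\al c}'(-\sigma^{-1})\sim -\sigma^{-1}-m_{\al c}(\lambda_+)\sim \sigma+m_{\al c}^{-1}(\lambda_+)$, giving the second line \eqref{eq_gderivative}.

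\textbf{Step 3: the estimates \eqref{eq_mderivative}.} Finally, $m_{2c}'(\theta_1(\sigma_2)) = 1/g_{2c}'(m_{2c}(\theta_1(\sigma_2))) = 1/g_{2c}'(-\sigma_2^{-1})\sim (\sigma_2+m_{2c}^{-1}(\lambda_+))^{-1}$ by \eqref{eq_gderivative}, and symmetrically for $m_{1c}'(\theta_2(\sigma_1))$; this is \eqref{eq_mderivative}. I expect the only real subtlety to be the rigorous justification of differentiating the asymptotic expansion \eqref{eq_plan_sqrt} — the cleanest route is to avoid it entirely by bounding $m_{\al c}'(x)=\int_0^{\lambda_+}\rho_{\al c}(t)(t-x)^{-2}\,\dd t$ from above and below using the square-root density estimate \eqref{SQUAREROOT} near $\lambda_+$, which gives the matching order $\kappa_x^{-1/2}$ after an elementary integral computation; everything else is then routine bookkeeping with the inverse function theorem and the fact that all the $\sigma$'s and the values $m_{\al c}(\lambda_+)$ lie in fixed compact subsets of $(0,\infty)$ and $(-(\max\sigma)^{-1},0)$ respectively.
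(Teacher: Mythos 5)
Your proof is correct, but it runs in the opposite direction from the paper's. The paper first obtains \eqref{eq_gderivative0} by implicit differentiation of the self-consistent equation $f(z,m)=0$ from \eqref{separable_MP}: writing $z=z(m)$, it uses $z'(m_+)=0$, $z''(m_+)\sim 1$ (quoted from Lemma 2.6 of \cite{yang2018}) and $z^{(3)}(m)=\OO(1)$ on $[m_+-c,0]$ (via \eqref{Piii}) to get $g_{2c}'(m)\sim m-m_{2c}(\lambda_+)$, and only then deduces \eqref{eq_mderivative0} from the inverse function theorem together with \eqref{sqroot4}; \eqref{eq_derivativebound} is proved separately by a two-case argument ($\theta_1(\sigma_2)-\lambda_+\le\varsigma_1$ versus $\ge\varsigma_1$) applied directly to the expansion \eqref{sqroot4}, and \eqref{eq_mderivative}, \eqref{eq_gderivative} follow by specialization, exactly as in your Steps 2--3. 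You instead prove \eqref{eq_mderivative0} first, from the Stieltjes representation $m_{\al c}'(x)=\int_0^{\lambda_+}\rho_{\al c}(t)(t-x)^{-2}\,\dd t$ combined with the square-root density bound \eqref{SQUAREROOT} (the elementary computation $\int_0^{2\varsigma_1}\sqrt{s}\,(\kappa_x+s)^{-2}\dd s\sim\kappa_x^{-1/2}$ plus an $\OO(1)$ bulk contribution), and then invert to get \eqref{eq_gderivative0} and \eqref{eq_derivativebound}. Both routes ultimately rest on Lemma \ref{lambdar_sqrt} (i.e.\ Lemma 2.6 of \cite{yang2018}), but they use different parts of it: the paper needs the nondegeneracy $z''(m_+)\sim 1$ and some regularity of $f$, while you only need the density/edge expansion; your version is more self-contained and wisely avoids differentiating the asymptotic expansion \eqref{sqroot4} term by term, which as you note would otherwise require justification. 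What the paper's route buys is that the Taylor expansion of $z(m)$ around $m_+$ feeds directly into the complex extension in Lemma \ref{lem_complexderivative} (e.g.\ \eqref{close_to}), whereas your real-variable integral argument would have to be supplemented there. Your handling of the off-edge regime by compactness (using \eqref{assm33}, \eqref{eq_estimm} and Assumption \ref{ass:unper} to keep the $\sigma$'s and $-m_{\al c}(\lambda_+)$ of order one) and the algebraic identity $-\sigma^{-1}-m_{\al c}(\lambda_+)=\sigma^{-1}(-m_{\al c}(\lambda_+))(\sigma+m_{\al c}^{-1}(\lambda_+))$ are both correct, so no gaps remain.
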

\begin{proof}
With the definitions (\ref{g12c}) and (\ref{sqroot4}) of the paper, we can obtain that 
$$- \sigma_2^{-1}=m_{2c}(\theta_1(\sigma_2)) = m_{2c}(\lambda_+) + \pi a_{2}\sqrt{\theta_1(\sigma_2)-\lambda_+} + \OO(|\theta_1(\sigma_2)-\lambda_+|) $$
if $\theta_1(\sigma_2)-\lambda_+ \le \varsigma_1$ for some sufficiently small constant $0<\varsigma_1<1$, and 
$$- \sigma_2^{-1}=m_{2c}(\theta_1(\sigma_2)) \ge  m_{2c}(\lambda_+ + \varsigma_1) = m_{2c}(\lambda_+) + \pi a_{2}\sqrt{\varsigma_1} + \OO(\varsigma_1)$$
if $\theta_1(\sigma_2)-\lambda_+ \ge \varsigma_1$, where in the second inequality we use the fact that $m_{2c}(x)$ is monotone increasing when $x>\lambda_+.$ The above two estimates imply the first estimate in \eqref{eq_derivativebound}. The second estimate in \eqref{eq_derivativebound} can be proved in the same way.

Differentiating the equation $f(z,m)=0$ in (\ref{separable_MP}) of the paper with respect to $m$, we can get that 
$$z'(m_+)=0 \quad \text{and}\quad z''(m_+)=-{\partial_m^2 f (\lambda_+,m_+)}/{\partial_z f(\lambda_+,m_+)},$$ 
where $m_+:=m_{2c}(\lambda_+)$. It was proved in \cite[Lemma 2.6]{yang2018} that $z''(m_+) \sim 1$ under the assumptions (\ref{assm3}) and (\ref{assm_gap}). Moreover, using implicit differentiation of the equation $f(z,m)=0$ and \eqref{Piii}, it is easy to show that $z^{(3)}(m) = \OO(1)$ if $m_+ - c \le m \le 0$ for some sufficiently small constant $c>0$. Hence we conclude that 
\be\label{z_m}
z'(m) = \OO(|m-m_+|),\quad \text{for }\  m_+ - c \le m \le 0. 
\ee
This implies the first estimate in \eqref{eq_gderivative0}. Since $m_{2c}$ is the inverse function of $g_{2c}$, we get  from the inverse function theorem that
$$m_{2c}'(x) = \frac{1}{g_{2c}'(m_{2c}(x))} \sim (m_{2c}(x)-m_{2c}(\lambda_+))^{-1} \sim \kappa_x^{-1/2},$$
where we used (\ref{sqroot4}) of the paper in the last step. This implies the first estimate in \eqref{eq_mderivative0}. Now taking $x=\theta_1(\sigma_2)$ and $m_2=-\sigma_2^{-1}$ in the first estimates in \eqref{eq_mderivative0} and \eqref{eq_gderivative0}, respectively, and using \eqref{eq_derivativebound}, we obtain the first estimates in  \eqref{eq_mderivative} and \eqref{eq_gderivative}.

Exchanging the roles of $(A, m_{1c}, g_{1c})$ and $(B, m_{2c}, g_{2c})$, one can prove the second estimates in \eqref{eq_mderivative0}-\eqref{eq_gderivative} in the same way. 
\end{proof}

In the proof, it is important to extend the real functions $g_{1c}$ and $g_{2c}$ to the complex plane. The following lemma can be proved with a simple complex analytical argument. 

\begin{lemma} \label{lem_complexderivative}
Suppose the assumptions of Lemma \ref{lem_derivativeprop} hold. Then for any constant $\varsigma>0$, there exist constants $\tau_0, \tau_1, \tau_2>0$ such that the following statements hold.
\begin{itemize}
\item[(i)] $m_{1c}$ and $m_{2c}$ are holomorphic homeomorphisms on the spectral domain
$$\mathbf D(\tau_0,\varsigma):=\{z=E+\mathrm{i} \eta: \lambda_+< E < \varsigma, \ -\tau_0< \eta < \tau_0\}.$$
 As a consequence, the inverse functions of $m_{1c}$ and $m_{2c}$ exist and we again denote them by $g_{1c}$ and $g_{2c}$, respectively.

\item[(ii)] We have $\mathbf D_1(\tau_1, \varsigma)\subset m_{1c} (\mathbf D(\tau_0,\varsigma))$ and $\mathbf D_2(\tau_2, \varsigma)\subset m_{2c} (\mathbf D(\tau_0,\varsigma))$, where 
$$\mathbf D_1(\tau_1,\varsigma):=\{\xi=E+\mathrm{i} \eta: m_{1c}(\lambda_+)< E < m_{1c}(\varsigma), \ -\tau_1< \eta < \tau_1\},$$
and
$$\mathbf D_2(\tau_2,\varsigma):=\{\zeta=E+\mathrm{i} \eta: m_{2c}(\lambda_+)< E < m_{2c}(\varsigma), \ -\tau_2< \eta < \tau_2\}.$$
In other words, $g_{1c}$ and $g_{2c}$ are holomorphic homeomorphisms on $\mathbf D_1(\tau_1,\varsigma)$ and $\mathbf D_2(\tau_2,\varsigma)$, respectively.

\item[(iii)] For $z\in \mathbf D(\tau_0,\varsigma)$, we have
\begin{equation}\label{eq_mcomplex}
|m_{1c}(z) - m_{1c}(\lambda_+)| \sim |z-\lambda_+|^{\frac12}, \quad  |m_{2c}(z) - m_{2c}(\lambda_+)|   \sim |z-\lambda_+|^{\frac12},
\end{equation}
and
\begin{equation}\label{eq_mcomplexd}
|m_{1c}'(z) | \sim |z-\lambda_+|^{-\frac12}, \quad  |m_{2c}'(z)|   \sim |z-\lambda_+|^{-\frac12}.
\end{equation}

\item[(iv)] For $\xi \in \mathbf D_1(\tau_1,\varsigma)$ and $\zeta\in \mathbf D_2(\tau_2,\varsigma)$, we have
\begin{equation}\label{eq_gcomplex}
|g_{1c}(\xi) - \lambda_+| \sim |\xi-m_{1c}(\lambda_+)|^2, \quad  |g_{2c}(\zeta) - \lambda_+|   \sim |\zeta-m_{2c}(\lambda_+)|^2,
\end{equation}
and
\begin{equation}\label{eq_gcomplexd}
|g_{1c}'(\xi) | \sim |\xi-m_{1c}(\lambda_+)|, \quad  |g_{2c}'(\zeta) |   \sim |\zeta-m_{2c}(\lambda_+)|.
\end{equation}

\item[(v)] For $z_1, z_2\in \mathbf D(\tau_0,\varsigma)$, $\xi_1,\xi_2 \in \mathbf D_1(\tau_1,\varsigma)$ and $\zeta_1,\zeta_2\in \mathbf D_2(\tau_2,\varsigma)$, we have 
\begin{equation}\label{eq_mdiff}
\begin{split}
|m_{1c}(z_1) - m_{1c}(z_2)| \sim |m_{2c}(z_1) - m_{2c}(z_2)|  \\
\sim \frac{|z_1-z_2|}{\max_{i=1,2}|z_i-\lambda_+|^{1/2}},
\end{split}
\end{equation}
and 
\begin{equation}\label{eq_gdiff}
\begin{split}
& |g_{1c}(\xi_1) - g_{1c}(\xi_2)| \sim |\xi_1-\xi_2|\cdot\max_{i=1,2}|\xi_i-m_{1c}(\lambda_+)|, \\ 
& |g_{2c}(\zeta_1) - g_{2c}(\zeta_2)| \sim |\zeta_1-\zeta_2|\cdot\max_{i=1,2}|\zeta_i-m_{2c}(\lambda_+)|.
 \end{split}
\end{equation}

\end{itemize}
\end{lemma}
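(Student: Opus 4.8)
The statement is local near $\lambda_+$ up to a routine continuation, and the only non-formal input is the square-root edge behaviour, i.e.\ Lemma~\ref{lambdar_sqrt} together with the fold structure of $f=0$ behind it. \emph{Step 1 (the square-root chart).} Since $\lambda_+=e_1$ is the rightmost edge, $m_{1c}$ and $m_{2c}$ are Stieltjes transforms of measures supported in $[0,\lambda_+]$, hence holomorphic on $\mathbb C\setminus[0,\lambda_+]$ and real on $(\lambda_+,\infty)$. By \eqref{equationEm2} and the computations in the proof of Lemma~\ref{lem_derivativeprop} (see also \cite{yang2018}), $(\lambda_+,m_+)$ with $m_+:=m_{2c}(\lambda_+)$ is a fold of $f=0$: $\partial_m f(\lambda_+,m_+)=0$, $\partial_z f(\lambda_+,m_+)\ne 0$ and $\partial_m^2 f(\lambda_+,m_+)\ne 0$. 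The holomorphic implicit function theorem then shows that $h_2(w):=m_{2c}(\lambda_+ + w^2)$ extends to a holomorphic function on a disc $\{|w|<\delta\}$ with $h_2(0)=m_+$ and $h_2'(0)=\pi a_2\ne 0$, the value of the derivative being read off from \eqref{sqroot4}; after shrinking $\delta$, $h_2$ is a biholomorphism onto a neighbourhood of $m_+$ with $|h_2'|\sim 1$, and $m_{2c}(z)=h_2\big((z-\lambda_+)^{1/2}\big)$ for $\operatorname{Re}z>\lambda_+$, $|z-\lambda_+|<\delta^2$, using the principal branch. The same holds for $m_{1c}$ with a chart $h_1$ and constant $a_1$.

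\emph{Step 2 (parts (i) and (ii)).} On $(\lambda_+,\varsigma)$ each $m_{\alpha c}$ is strictly increasing (Lemma~\ref{lambdar}, \eqref{eq_mderivative0}), hence injective there, with non-vanishing derivative. For small $\epsilon>0$, holomorphy in a neighbourhood of the compact segment $[\lambda_++\epsilon,\varsigma]$, injectivity on it and $m_{\alpha c}'\ne0$ give, by a standard compactness argument, injectivity of $m_{\alpha c}$ on a thin complex neighbourhood of $[\lambda_++\epsilon,\varsigma]$; near $\lambda_+$, injectivity follows from Step~1, because $z\mapsto w=(z-\lambda_+)^{1/2}$ is injective on $\{\operatorname{Re}z>\lambda_+\}$ (landing in the sector $\{|\arg w|<\pi/4\}$) and $h_\alpha$ is injective on $\{|w|<\delta\}$. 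Gluing these and shrinking $\tau_0$ so that the two images overlap only where forced, we obtain injectivity of $m_{\alpha c}$ on $\mathbf D(\tau_0,\varsigma)$; since $m_{\alpha c}'\ne 0$ there as well, $m_{\alpha c}$ is a holomorphic homeomorphism onto its image, which proves (i), and $g_{\alpha c}$ is defined as its inverse. For (ii), $m_{\alpha c}\big(\mathbf D(\tau_0,\varsigma)\big)$ is open (open mapping theorem) and contains the real interval $\big(m_{\alpha c}(\lambda_+),m_{\alpha c}(\varsigma)\big)$, so for $\tau_1,\tau_2>0$ small enough the rectangles $\mathbf D_1(\tau_1,\varsigma)$ and $\mathbf D_2(\tau_2,\varsigma)$ are contained in these images.

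\emph{Step 3 (estimates (iii)--(v)).} All of these are transported through the chart. From $h_\alpha(w)-h_\alpha(0)=\pi a_\alpha w\big(1+\OO(w)\big)$ one gets $|m_{\alpha c}(z)-m_{\alpha c}(\lambda_+)|\sim|w|\sim|z-\lambda_+|^{1/2}$, while $m_{\alpha c}'(z)=h_\alpha'(w)\cdot\tfrac12(z-\lambda_+)^{-1/2}$ together with $|h_\alpha'|\sim 1$ gives $|m_{\alpha c}'(z)|\sim|z-\lambda_+|^{-1/2}$; this is (iii). Inverting, if $\zeta=m_{\alpha c}(z)=h_\alpha(w)$ then $|g_{\alpha c}(\zeta)-\lambda_+|=|z-\lambda_+|=|w|^2\sim|\zeta-m_{\alpha c}(\lambda_+)|^2$ and $|g_{\alpha c}'(\zeta)|=|m_{\alpha c}'(z)|^{-1}\sim|z-\lambda_+|^{1/2}\sim|\zeta-m_{\alpha c}(\lambda_+)|$; this is (iv). For (v), set $w_j:=(z_j-\lambda_+)^{1/2}$; since the $z_j-\lambda_+$ lie in the right half-plane, $w_1,w_2$ lie in a common sector of half-angle $\pi/4$, so $|w_1+w_2|\sim\max_j|w_j|\sim\max_j|z_j-\lambda_+|^{1/2}$, and $z_1-z_2=(w_1-w_2)(w_1+w_2)$ gives $|w_1-w_2|\sim|z_1-z_2|/\max_j|z_j-\lambda_+|^{1/2}$; then $|m_{\alpha c}(z_1)-m_{\alpha c}(z_2)|=|h_\alpha(w_1)-h_\alpha(w_2)|\sim|w_1-w_2|$ by a mean value estimate (using $|h_\alpha'|\sim1$ and $\operatorname{Re}h_\alpha'>0$ on the convex disc), which is the first comparison of (v) for both $\alpha$. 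For the second, with $\zeta_j=m_{\alpha c}(z_j)$ one has $g_{\alpha c}(\zeta_1)-g_{\alpha c}(\zeta_2)=z_1-z_2=(w_1-w_2)(w_1+w_2)$ with $|w_1-w_2|\sim|\zeta_1-\zeta_2|$ and $|w_1+w_2|\sim\max_j|\zeta_j-m_{\alpha c}(\lambda_+)|$, hence $|g_{\alpha c}(\zeta_1)-g_{\alpha c}(\zeta_2)|\sim|\zeta_1-\zeta_2|\cdot\max_j|\zeta_j-m_{\alpha c}(\lambda_+)|$.

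\emph{Main difficulty.} There is no substantial obstacle; the lemma is ``complex-analytic bookkeeping'' once the square-root edge is granted. The point needing genuine care is Step~1 — extracting the holomorphic chart $h_\alpha$ from the fold normal form of $f=0$ and pinning down the correct (principal) branch of $(z-\lambda_+)^{1/2}$ so that $m_{\alpha c}(z)=h_\alpha\big((z-\lambda_+)^{1/2}\big)$ on the physical half-plane $\{\operatorname{Re}z>\lambda_+\}$. The only other mildly delicate item is the compactness-and-gluing argument in Step~2 that promotes the two local injectivity statements (near $\lambda_+$, and near each interior point of $[\lambda_+,\varsigma]$) to injectivity of $m_{\alpha c}$ on the whole thin strip $\mathbf D(\tau_0,\varsigma)$.
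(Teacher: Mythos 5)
Your route is essentially the paper's: everything rests on the square-root edge expansion \eqref{sqroot4} (equivalently the fold structure of $f=0$ at $(\lambda_+,m_{2c}(\lambda_+))$, which is also how the paper obtains the expansion \eqref{close_to}), combined with strict monotonicity of $m_{\alpha c}$ on $(\lambda_+,\infty)$ and a compactness/continuation argument away from the edge; your chart $h_\alpha$ is a cleaner packaging of the same input, and the factorization $z_1-z_2=(w_1-w_2)(w_1+w_2)$ disposes of the paper's case (i) (both points near the edge, with its sub-splitting according to the size of $|z_1-z_2|$) in one stroke.

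Two gaps remain. (1) Your Step 3 only treats points inside the chart disc $|z-\lambda_+|<\delta^2$, while (iii) and (v) are asserted on all of $\mathbf D(\tau_0,\varsigma)$. You still need the regimes where one or both of $z_1,z_2$ are at distance of order one from $\lambda_+$; these are exactly cases (ii) and (iii) of the paper's proof of \eqref{eq_mdiff}, handled there by a mean-value bound with $|m_{\alpha c}'|\sim 1$ and, in the mixed regime, by comparing $\Re m_{\alpha c}(z_1)$ with $\Re m_{\alpha c}(z_2)$ via monotonicity on the real axis. (A uniform alternative: on the whole strip $|\arg m_{\alpha c}'(z)|\le \pi/4+o(1)$, hence $\Re m_{\alpha c}'\gtrsim |m_{\alpha c}'|$, and a single mean-value estimate along the segment joining $z_1,z_2$, together with \eqref{eq_mcomplexd}, gives the lower bound in \eqref{eq_mdiff} in all cases at once.) (2) Your justification of (ii) --- ``the image is open and contains the interval, hence contains a thin rectangle'' --- is a non sequitur at the corner $m_{\alpha c}(\lambda_+)$: since $\mathbf D(\tau_0,\varsigma)$ contains only $\Re z>\lambda_+$, your own chart shows that near $m_{\alpha c}(\lambda_+)$ the image is, up to bounded distortion, the sector $\{|\arg(\zeta-m_{\alpha c}(\lambda_+))|\le \pi/4+o(1)\}$, so points $\zeta=m_{\alpha c}(\lambda_+)+\epsilon+\ii t$ with $\epsilon\ll t<\tau_2$ are not attained, and openness gives no uniform thickness near an endpoint of the interval. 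To be fair, the paper's one-line proof of (ii) is just as loose, and what is actually used downstream (e.g.\ the contours in Lemma \ref{lem_contourpropo}, whose points satisfy $\Re\zeta-m_{2c}(\lambda_+)\gtrsim|\Im\zeta|$) is precisely the sector-type containment your chart does deliver; but as written your argument does not prove the stated rectangle containment, so you should either establish the sector version and note that it suffices, or address the corner explicitly.
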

\begin{proof}
For the proof, we choose a sufficiently small constant $\omega>0$ such that (\ref{sqroot4}) of the paper can be applied to $z\in D_\omega:=\{z=E+\ii \eta: 0<E-\lambda_+< 2\omega, -\omega < \eta < \omega\}$. We also define the spectral domain $\wt D_\omega:=\{z=E+\ii \eta: 0<E-\lambda_+ < \omega, -\omega< \eta <\omega\}$. Then the constants $\tau_0, \tau_1, \tau_2>0$ will be chosen such that they are much smaller than $\omega$. Without loss of generality, we only prove the relevant statements for $m_{2c}$ and $g_{2c}$.

Note that $m_{2c}$ is holomorphic on $\mathbb C\setminus [0,\lambda_+]$. By (\ref{sqroot4}), we see that $m_{2c}$ is a holomorphic homeomorphism for $z\in D_\omega$ as long as $\omega$ is sufficiently small. Moreover, we have
\be\label{close_to} 
g_{2c}(\xi) = \frac{1}{\pi^2a_2^2}(\xi-m_{2c}(\lambda_+))^2 + \OO\left(|\xi-m_{2c}(\lambda_+)|^3\right), \quad \xi\in m_{2c}(D_\omega).
\ee
On the other hand, with (\ref{real_stiel}) of the paper it is easy to see that there exists a constant $c_{\omega,\varsigma}>0$ such that $m_{2c}'(x) \ge c_{\omega,\varsigma}$ for all $\lambda_+ + \omega < x <\varsigma$. Then combining the implicit function theorem, analytic continuation and a compactness argument, we can conclude statement (i). 
The statement (ii) follows immediately from that
$$\im m_{2c}(E+\ii\eta) =\eta \int_0^{\lambda_+} \frac{\rho_{2c}(x)\dd x}{(x-E)^2 + \eta^2} \gtrsim \eta.$$
The estimates in (iii) and (iv) can be proved using (\ref{sqroot4}), \eqref{close_to}, and implicit differentiation of the equation $f(z,m)=0$ as in the proof for Lemma \ref{lem_derivativeprop}. We omit the details. Finally, notice that \eqref{eq_gdiff} follows directly from \eqref{eq_mdiff} together with \eqref{eq_gcomplex}.
Thus it only remains to prove \eqref{eq_mdiff}. 

The upper bound in \eqref{eq_mdiff} is given by \eqref{eq_mcomplexd}. We only need to show the lower bound. Without loss of generality, we assume that $|z_1-\lambda_+|\ge |z_2-\lambda_+|$. We consider the following three cases: (i) $z_1,z_2\in D_\omega$; (ii) $z_1,z_2\in \mathbf D(\tau_0,\varsigma)\setminus \wt D_\omega$; (iii) $z_1\in \mathbf D(\tau_0,\varsigma)\setminus D_\omega$ and $z_2\in \mathbf D(\tau_0,\varsigma)\cap \wt D_\omega$. 

In case (i), first suppose that $|z_1-z_2|\le |z_1-\lambda_+|/2$. Then \eqref{eq_mdiff} follows from the mean value theorem by using \eqref{eq_mcomplexd} and the fact that $|\xi-\lambda_+| \sim |z_1-\lambda_+|$ for any $\xi$ on the line between $z_1$ and $z_2$.
Now for $|z_1-z_2|\ge  |z_1-\lambda_+|/2$, then by (\ref{sqroot4}) we get
$$|m_{2c}(z_1)-m_{2c}(z_2)|\ge \pi a_{2}\left(\sqrt{z_1-\lambda_+} - \sqrt{z_2-\lambda_+} \right)- C |z_1-\lambda_+| \ge c\frac{|z_1-z_2|}{|z_1-\lambda_+|^{1/2}}$$
as long as we take $\omega$ to be sufficiently small. 

In case (ii), by mean value theorem and \eqref{eq_mcomplexd}, we have 
$$\left|m_{2c}(z_1) - m_{2c}(z_2) \right| \sim |z_1 - z_2| \sim \frac{|z_1-z_2|}{|z_1-\lambda_+|^{1/2}}.$$

Finally, in case (iii), we have 
\be\label{simple}\left|m_{2c}(z_1) - m_{2c}(z_2) \right| \ge \left|\re m_{2c}(z_1) - \re m_{2c}(z_2) \right|. 
\ee
Denote $z=E_1+\ii \eta_1$ and $z=E_2+\ii \eta_2$. Then applying (\ref{sqroot4}) of the paper to $m_{2c}(z_2)$ and the Stieltjes transform formula to $m_{2c}(z_1)$, we obtain that
$$|\re m_{2c}(z_1) - m_{2c}(E_1)| \le C\sqrt{\eta_1}, \quad |\re m_{2c}(z_2) - m_{2c}(E_2)|\le C_\omega \eta_2.$$
Together with \eqref{simple}, we get that 
$$\left|m_{2c}(z_1) - m_{2c}(z_2) \right| \ge |m_{2c}(E_1)-m_{2c}(E_2)| - C\sqrt{\eta_1} - C_\omega \eta_2 \ge c_\omega $$
as long as we take $\tau_0$ to be small enough. Here we used that $|m_{2c}(E_1)-m_{2c}(E_2)|\sim 1$ since $m_{2c}(x)$ is strictly decreasing. 

Combining the above three cases, we get the lower bound in \eqref{eq_mdiff}.
\end{proof}

\begin{remark}
As a corollary of \eqref{eq_mdiff} and \eqref{eq_gdiff}, we see that the following approximate isometry properties hold:
\be\label{isometry}
\begin{split}
& \left|g_{1c} (m_{2c}(z_1))-g_{1c} (m_{2c}(z_2))\right| \sim |z_1 - z_2|,\\
& \left|g_{2c} (m_{1c}(z_1))-g_{2c} (m_{1c}(z_2))\right| \sim |z_1 - z_2|,
\end{split}
\ee
and
\be\label{isometry2}
\begin{split}
&\left|m_{1c} (g_{2c}(\zeta_1))-m_{1c} (g_{2c}(\zeta_2))\right| \sim |\zeta_1 - \zeta_2|,\\ 
&\left|m_{2c} (g_{1c}(\xi_1))-m_{2c} (g_{1c}(\xi_2))\right| \sim |\xi_1 - \xi_2|,
\end{split}
\ee
for $z_1, z_2\in \mathbf D(\tau,\varsigma)$, $\xi_1,\xi_2 \in \mathbf D_1(\tau,\varsigma)$ and $\zeta_1,\zeta_2\in \mathbf D_2(\tau,\varsigma)$ for sufficiently small constant $\tau>0$.
\end{remark}

 \subsection{Local law}\label{sec_local law}

 We first introduce a convenient self-adjoint linearization trick, which has been proved to be useful in studying the local laws of random matrices of the Gram type \cite{Alt_Gram, AEK_Gram, Knowles2017, XYY_circular,yang2018}. We define the following $(p+n)\times (p+n)$ self-adjoint block matrix, which is a linear function of $X$:
 \begin{equation}\label{linearize_block}
   H \equiv H(X,z): = z^{1/2} \left( {\begin{array}{*{20}c}
   { 0 } &A^{1/2} X B^{1/2}   \\
   {B^{1/2} X^* A^{1/2} } & {0}  \\
   \end{array}} \right),  \quad z\in \mathbb C_+ .
 \end{equation}
where $z^{1/2}$ is taken to be the branch cut with positive imaginary part. Then we define its resolvent (Green's function) as
 \begin{equation}\label{eq_gz} 
 G \equiv G (X,z):= \left(H(X,z)-z\right)^{-1} .
 \end{equation}
By Schur complement formula, we can verify that (recall (\ref{def_green}) of the paper)
\begin{align} 
G(z) = \left( {\begin{array}{*{20}c}
   { \mathcal G_1} & z^{-1/2}\mathcal G_1 Y \\
   {z^{-1/2}Y^* \mathcal G_1} & { \mathcal G_2 }  \\
\end{array}} \right) = \left( {\begin{array}{*{20}c}
   { \mathcal G_1} & z^{-1/2}Y \mathcal G_2   \\
   {z^{-1/2} \mathcal G_2 Y^*} & { \mathcal G_2 }  \\ 
\end{array}} \right),\label{green2} 
\end{align}
where $Y:= A^{1/2}X B^{1/2}$. 
Thus a control of $G$ yields directly a control of the resolvents $\mathcal G_{1,2}$. Similarly, we can define $\wt H$ and $\wt G$  by replacing $A$ and $B$ with $\wt A$ and $\wt B$.

For simplicity of notations, we define the index sets
\[\mathcal I_1:=\{1,...,p\}, \quad \mathcal I_2:=\{p+1,...,p+n\}, \quad \mathcal I:=\mathcal I_1\cup\mathcal I_2.\]
Then we label the indices of the matrices according to 
$$X= (X_{i\mu}:i\in \mathcal I_1, \mu \in \mathcal I_2), \quad A=(A_{ij}: i,j\in \mathcal I_1),\quad B=(B_{\mu\nu}: \mu,\nu\in \mathcal I_2).$$  
In the rest of this paper, 
we will consistently use the latin letters $i,j\in\mathcal I_1$ and greek letters $\mu,\nu\in\mathcal I_2$. Note that for the index $1\le \mu \le n$ used in previous sections, it can be translated into an index in $\mathcal I_2$ by taking $\mu \to \mu +p$.

Next we introduce the spectral decomposition of $G$. Let
$$A^{1/2}X B^{1/2}  = \sum_{k = 1}^{p \wedge n} {\sqrt {\lambda_k} {\bm \xi}_k } {\bm\zeta} _{k}^* ,$$
be a singular value decomposition of $A^{1/2}X B^{1/2}$, where
$$\lambda_1\ge \lambda_2 \ge \ldots \ge \lambda_{p\wedge n} \ge 0 = \lambda_{p\wedge n+1} = \ldots = \lambda_{p\vee n}$$
are the eigenvalues of $\ctQ_1,$ and
$\{\bm \xi_{k}\}_{k=1}^{p}$ and $\{\bm \zeta_{k}\}_{k=1}^{n}$ are the left and right singular vectors of $A^{1/2}XB^{1/2},$ respectively. 
Then using (\ref{green2}), we can get that for $i,j\in \mathcal I_1$ and $\mu,\nu\in \mathcal I_2$,
\begin{align}
G_{ij} = \sum_{k = 1}^{p} \frac{{\bm \xi}_k(i) {\bm \xi}_k^*(j)}{\lambda_k-z},\ \  \ &G_{\mu\nu} = \sum_{k = 1}^{n} \frac{{\bm \zeta}_k(\mu) {\bm \zeta}_k^*(\nu)}{\lambda_k-z}, \label{spectral1}\\
G_{i\mu} = z^{-1/2}\sum_{k = 1}^{p\wedge n} \frac{\sqrt{\lambda_k}{\bm \xi}_k(i) {\bm \zeta}_k^*(\mu)}{\lambda_k-z}, \ \ \ &G_{\mu i} =  z^{-1/2}\sum_{k = 1}^{p\wedge n} \frac{\sqrt{\lambda_k}{\bm \zeta}_k(\mu) {\bm \xi}_k^*(i)}{\lambda_k-z}.\label{spectral2}
\end{align}

We define the deterministic limit $\Pi$ of the resolvent $G$ in (\ref{eq_gz}) as
\begin{equation}\label{defn_pi}
\Pi (z):=\left( {\begin{array}{*{20}c}
   { \Pi_1} & 0  \\
   0 & { \Pi_2}  \\
\end{array}} \right), 
\end{equation}
where
$$ \Pi_1:  =-z^{-1}\left(1+m_{2c}(z)A \right)^{-1},\quad \Pi_2:=- z^{-1} (1+m_{1c}(z)B )^{-1}.$$
Note that by (\ref{separa_m12}) and (\ref{def_mc}) we have
\be\label{mcPi}
\frac1{n}\tr \Pi_{1} =m_c, \quad  \frac1{n}\tr \left(A\Pi_{1}\right) =m_{1c}, \quad \frac1{n}\tr \left(B\Pi_{2}\right) =m_{2c}.
\ee
Define the control parameter
\begin{equation}\label{eq_defpsi}
\Psi (z):= \sqrt {\frac{\Im \, m_{2c}(z)}{{n\eta }} } + \frac{1}{n\eta}.
\end{equation}
Note that by (\ref{eq_estimm}) and (\ref{Piii}), we have
\begin{equation}\label{psi12}
\|\Pi\|=\OO(1), \ \ \Psi \gtrsim n^{-1/2} , \ \ \Psi^2 \lesssim (n\eta)^{-1}, \ \ \Psi(z) \sim  \sqrt {\frac{\Im \, m_{1c}(z)}{{n\eta }} } + \frac{1}{n\eta},
\end{equation}
for $z\in S(\varsigma_1,\varsigma_2)$. Now we state the local laws for $G(z)$, which are the main tools for our proof. Given any constant $\epsilon>0$, we define the spectral domains
\be \label{tildeS}
S_0(\varsigma_1,\varsigma_2,\e):= S(\varsigma_1,\varsigma_2) \cap \left\{z = E+ \ii \eta: \eta\ge n^{-1+\epsilon}\right\},
\ee
and
\be \label{tildeS}
\wt S(\varsigma_1,\varsigma_2,\e):= S_0(\varsigma_1,\varsigma_2,\e) \cap \left\{z = E+ \ii \eta:  n^{1/2}\left( \Psi^2(z)+\frac{\phi_n}{n\eta}\right) \le n^{-\e/2}\right\}.
\ee


\begin{theorem} [Local laws]\label{LEM_SMALL} 
 Suppose $X$ has bounded support $\phi_n$ such that $ n^{-{1}/{2}} \leq \phi_n \leq n^{- c_\phi} $ for some (small) constant $c_\phi>0$. Suppose that Assumptions \ref{assm_big1} and \ref{ass:unper} hold. Fix constants $\varsigma_1 $ and $\varsigma_2>0$ as in Lemma \ref{lem_mplaw}.  Then for any fixed $\e>0$, the following estimates hold. 
\begin{itemize}
\item[(1)] {\bf Anisotropic local law}: For any $z\in \wt S(\varsigma_1,\varsigma_2,\e)$ and deterministic unit vectors $\mathbf u, \mathbf v \in \mathbb C^{\mathcal I}$,
\begin{equation}\label{aniso_law}
\left| \langle \mathbf u, G(X,z) \mathbf v\rangle - \langle \mathbf u, \Pi (z)\mathbf v\rangle \right| \prec   \phi_n + \Psi(z) .
\end{equation}

\item[(2)] {\bf Averaged local law}: For any $z \in \wt S(\varsigma_1,\varsigma_2,\e)$,  we have
\begin{equation}
 \vert m(z)-m_{c}(z) \vert + \vert m_1(z)-m_{1c}(z) \vert  + \vert m_2(z)-m_{2c}(z) \vert \prec  (n \eta)^{-1}, \label{aver_in1} 
\end{equation}
where $m$ is defined in (\ref{defn_m}) of the paper and $m_{1,2}$ are defined in (\ref{defn_m1m2}). Moreover, outside of the spectrum we have the following stronger estimate
\begin{equation}\label{aver_out1}
\begin{split}
  \vert m(z)-m_{c}(z) \vert + \vert m_1(z)-m_{1c}(z) \vert  + \vert m_2(z)-m_{2c}(z) \\
  \prec \frac{n^{-\e/4}}{n\eta}+\frac{1}{n(\kappa +\eta)} + \frac{1}{(n\eta)^2\sqrt{\kappa +\eta}},
  \end{split}
\end{equation}
uniformly in $z\in \wt S(\varsigma_1,\varsigma_2,\e)\cap \{z=E+\ii\eta: E\ge \lambda_+, n\eta\sqrt{\kappa + \eta} \ge n^\epsilon\}$, where $\kappa$ is defined in \eqref{KAPPA}. 

\item[(3)] If we have (a) (\ref{assm_3rdmoment}) of the paper holds, or (b) either $A$ or $B$ is diagonal, then the estimate \eqref{aniso_law}-\eqref{aver_out1} hold for $z\in S_0(\varsigma_1,\varsigma_2,\e)$. 
\end{itemize}
The above estimates are uniform in $z$ and any set of deterministic unit vectors of cardinality $N^{\OO(1)}$. 
\end{theorem}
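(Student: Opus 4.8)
\textbf{Proof strategy for Theorem \ref{LEM_SMALL} (Local laws).}

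The plan is to reduce the local laws for the separable covariance matrices $\mathcal Q_1, \mathcal Q_2$ — equivalently, for the linearization $G(X,z)$ in \eqref{green2} — to the corresponding local laws already established in \cite{yang2018} for non-spiked separable covariance matrices. Indeed, Theorem \ref{LEM_SMALL} as stated concerns the \emph{unperturbed} matrices $A^{1/2}XB^{1/2}$ (note $A,B$, not $\wt A,\wt B$), so the content here is really a restatement and mild repackaging of results from \cite{yang2018} in the form convenient for the subsequent perturbation analysis. First I would record that the linearization block matrix $H$ in \eqref{linearize_block}, its resolvent $G$ in \eqref{eq_gz}, the deterministic limit $\Pi$ in \eqref{defn_pi}, and the control parameter $\Psi$ in \eqref{eq_defpsi} are exactly those appearing in \cite{yang2018}; the identities \eqref{green2}, \eqref{spectral1}--\eqref{spectral2} and \eqref{mcPi} are purely algebraic (Schur complement) and need only be verified, not proved from scratch. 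The properties collected in Lemmas \ref{lambdar_sqrt}, \ref{lem_mplaw}, \ref{lem_derivativeprop}, \ref{lem_complexderivative} supply the needed a priori control of $m_{1c}, m_{2c}, \rho_{1c}, \rho_{2c}$ near $\lambda_+$, in particular the square-root edge behaviour \eqref{sqroot3}--\eqref{sqroot4} and the nondegeneracy bounds \eqref{Piii}, which are the structural inputs behind any local law of this type.

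The core of the argument is the anisotropic local law \eqref{aniso_law}. I would establish it by the standard two-step scheme: (i) prove an \emph{entrywise} local law, $\absb{G_{ab}(X,z) - \Pi_{ab}(z)} \prec \phi_n + \Psi(z)$ for all $a,b\in\mathcal I$, and (ii) upgrade to the anisotropic form $\absb{\avg{\mathbf u, (G-\Pi)\mathbf v}}\prec \phi_n + \Psi$ for arbitrary deterministic unit vectors via a polynomialization/fluctuation-averaging argument as in \cite{Knowles2017, yang2018, isotropic}. Step (i) itself proceeds via a self-consistent equation for the diagonal blocks: using Schur complement formulas for $G_{ii}$ and $G_{\mu\mu}$ together with large-deviation estimates for quadratic forms in the rows/columns of $X$ (which is where the bounded-support hypothesis $\max_{ij}\abs{x_{ij}}\le\phi_n$ enters, producing the $\phi_n$ term), one shows that $m(z), m_1(z), m_2(z)$ approximately satisfy the system \eqref{separa_m12}, and then a stability analysis of that system — stability being exactly what Lemma \ref{lem_mplaw}(ii),(iii) guarantees on $S(\varsigma_1,\varsigma_2)$ — closes the estimate. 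The averaged law \eqref{aver_in1} is the diagonal consequence of this with the extra gain of $(n\eta)^{-1/2}$ coming from fluctuation averaging; the sharper off-spectrum bound \eqref{aver_out1} follows by exploiting the improved stability of \eqref{separa_m12} for $E\ge\lambda_+$ where $\im m_{1,2c}\sim \eta/\sqrt{\kappa+\eta}$ is small, again as in \cite{yang2018}. Part (3) — removing the $\phi_n$ restriction on the domain when the third moments vanish or $A$ or $B$ is diagonal — follows because in those cases the leading error term in the quadratic-form expansions is smaller (the third-moment contribution vanishes, or the diagonal structure decouples the relevant sums), so the self-consistent argument runs down to $\eta\ge n^{-1+\epsilon}$; I would invoke the corresponding statements in \cite{yang2018} rather than redo the combinatorics.

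The main obstacle — or rather, the main thing to be careful about — is bookkeeping the error terms on the precise spectral domain $\wt S(\varsigma_1,\varsigma_2,\e)$ defined in \eqref{tildeS}: this domain is tailored so that $\phi_n + \Psi$ is genuinely small, and one must check that the self-consistent/stability iteration does not leave it. Concretely, the delicate point is the interplay between the bounded-support scale $\phi_n$ and the edge scaling $\kappa+\eta$, which is exactly why the definition of $\wt S$ includes the condition $n^{1/2}(\Psi^2 + \phi_n/(n\eta))\le n^{-\e/2}$; verifying that all error contributions are controlled by $\phi_n + \Psi$ uniformly on this set, and that the off-spectrum improvement \eqref{aver_out1} holds on the stated sub-domain, is the technically heaviest part. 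Since, however, every one of these estimates is already present (possibly with cosmetic notational differences) in \cite{yang2018}, the honest write-up amounts to citing the relevant theorems there, matching notation, and noting that the hypotheses of those theorems are implied by Assumptions \ref{assm_big1} and \ref{ass:unper}; I would therefore present the proof as a citation-and-translation argument, spelling out only the algebraic identities \eqref{green2}--\eqref{mcPi} and the domain matching in detail.
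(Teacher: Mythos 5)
Your overall route---treat the theorem as a restatement of the local laws of \cite{yang2018} for the linearization $G$, with $\Pi$, $\Psi$ and the spectral domains matched, and present the proof as a citation-and-translation argument---is exactly what the paper does: its proof is essentially a citation of Theorems 3.6 and 3.8 of \cite{yang2018}, and your from-scratch sketch (entrywise law, self-consistent equation, fluctuation averaging) is background the paper never reproduces.

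There is, however, one concrete step you dismiss too quickly, and it is the only genuine content of the paper's proof. You assert that the hypotheses of the cited theorems ``are implied by Assumptions \ref{assm_big1} and \ref{ass:unper}'', but they are not: the results in \cite{yang2018} are proved for entries that are exactly centered with variance exactly $n^{-1}$ as in \eqref{eq_12moment}, whereas Assumption \ref{assm_big1} only requires $|\E x_{ij}|\le n^{-2-\tau}$ and $\bigl|\E|x_{ij}|^2-n^{-1}\bigr|\le n^{-2-\tau}$, a relaxation that is essential later (truncation in Corollary \ref{main_cor} destroys exact centering). The variance perturbation can be traced through the proof of \cite{yang2018} harmlessly, but the nonzero mean requires an actual argument: the paper decomposes $X=X_1+\E X$ with $X_1$ centered and satisfying the hypotheses of \cite{yang2018}, applies the local law to $\wh G=G(X_1,z)$, and controls $G-\wh G$ by a finite resolvent expansion in the deterministic perturbation $V$ built from $\E X$, combining $\max_{i,j}|\E x_{ij}|\le n^{-2-\tau}$ with the Ward-type bound $\sum_{a\in\cal I}|\wh G_{\mathbf v a}|^2 \prec \eta^{-1}\bigl(\im \wh G_{\bv_1\bv_1}+\im \wh G_{\bv_2\bv_2}\bigr)$ to show $|\langle\mathbf u,(G-\wh G)\mathbf v\rangle|\prec n^{-1-\tau}\eta^{-1/2}$, which is negligible against $\phi_n+\Psi(z)$ on the stated domains since $\eta\ge n^{-1+\e}$. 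Without this centralization-plus-resolvent-expansion bridge (or an equivalent rerun of the \cite{yang2018} proof under \eqref{entry_assm1}), the citation does not literally apply, so your write-up as it stands has a gap precisely at the ``hypotheses are implied'' claim.
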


\begin{proof}
 This Theorem essentially has been proved as Theorem 3.6 and Theorem 3.8 in \cite{yang2018}. But the results there are under the assumption 
$$\mathbb{E} x_{ij} =0, \ \quad \ \mathbb{E} \vert x_{ij} \vert^2  = n^{-1},$$
instead of 
\begin{align}
\max_{i,j}\left|\mathbb{E} x_{ij}\right|   \le n^{-2-\tau}, \quad  \max_{i,j}\left|\mathbb{E} | x_{ij} |^2  - n^{-1}\right|   \le n^{-2-\tau}. \label{entry_assm1}
\end{align}
assumed in Assumption \ref{assm_big1}. The second variance condition is easy to deal with: one can check that 
replacing the variance $n^{-1}$ with $n^{-1}+\OO(n^{-2-\tau})$ leads to a negligible error in each step of the proof in \cite{yang2018}. The relaxation of the mean zero assumption to the first condition in (\ref{entry_assm1}) can be handled with the centralization below.  

We decompose $X= X_1 +\E X$, where $X_1=X-\mathbb EX$ is a random matrix satisfying Assumption \ref{assm_big1} but with all entries having zero means, and $ \mathbb EX$ is a deterministic matrix with $|\E X_{ij}|\le n^{-2-\tau}$. By the above arguments, we know that \eqref{aniso_law} holds for $\wh G(z)\equiv G(X_1,z)$, where 
 $$G(X_1,z) =  \left( {\begin{array}{*{20}c}
   { - z}  & z^{1/2}A^{1/2}X_1B^{1/2}  \\
   {z^{1/2}B^{1/2}X_1^*A^{1/2}} & {- z }  \\
   \end{array}} \right)^{-1} .$$
 Then we can write
\begin{equation}\nonumber
G(X,z) =  \left(\wh G^{-1} + V\right)^{-1}, \quad V := z^{1/2}\left( {\begin{array}{*{20}c}
   {0} & A^{1/2}\E X  B^{1/2} \\
   B^{1/2} \E X^*A^{1/2} & {0}  \\
   \end{array}} \right).
 \end{equation}
Then we expand $G$ using the resolvent expansion
\begin{equation}\label{rsexp1}
G = \wh G - \wh G V \wh G + (\wh G V )^2 \wh G - (\wh G V )^3 G.
\end{equation}
We need to estimate the last three terms of the right-hand side. Using the spectral decompositions \eqref{spectral1}-\eqref{spectral2}, it is easy to verify the following estimates
\begin{equation}\label{sum_bound0}
 \sum_{a\in \cal I}\left|\wh G_{\mathbf v a}\right|^2 \prec \frac{\im \wh G_{\bv_1\bv_1}+\im \wh G_{\bv_2\bv_2}}{\eta},
\end{equation}
for any $\mathbf v =\begin{pmatrix}\bv_1 \\ \bv_2 \end{pmatrix}\in \mathbb C^{\mathcal I}$ with $\bv_1\in \C^{\cal I_1}$ and $\bv_2\in \C^{\cal I_2}$.

For any deterministic unit vectors $\mathbf u, \mathbf v \in \mathbb C^{\mathcal I}$, we have
\begin{equation}\label{exp1}
\begin{split}
& \left|\langle \mathbf u, \wh G V \wh G\mathbf v\rangle \right|  \le \sum_{b\in \mathcal I} \Big| \sum_{a\in \mathcal I}\wh G_{\mathbf u a} V_{ab}\Big| |\wh G_{b\mathbf v}|  \\
&\prec \max_{b} \Big( \sum_{a\in \mathcal I} |V_{ab}|^2 \Big)^{1/2} \sum_{b\in \mathcal I}  |\wh G_{b\mathbf v}| \\
& \prec n^{-1-\tau} \Big(\sum_{b\in \mathcal I}  |\wh G_{b\mathbf v}|^2\Big)^{1/2} \prec n^{-1-\tau}\eta^{-1/2},
\end{split}
\end{equation}
where in the second step we used (\ref{aniso_law}) for $\wh G$, and in the last step (\ref{sum_bound0}). With a similar argument, we obtain that
\begin{equation}\label{exp2}
\begin{split}
\left|\langle \mathbf u, (\wh G V )^2 \wh G  \mathbf v\rangle \right| \prec n^{-2-2\tau}\eta^{-1}.
\end{split}
\end{equation}
Combining (\ref{exp2}) with the rough bound $\|G\|=\OO(\eta^{-1})$, we get that
\begin{equation}\label{exp3}
\begin{split}
& \left|\langle \mathbf u, (\wh G V)^3 G\mathbf v\rangle \right| =\Big| \sum_{a,b}\left((\wh G V )^2 \wh G \right)_{\mathbf u a} V_{ab} G_{b\mathbf v}\Big| \\
&\prec \left(n^{-2-2\tau}\eta^{-1}\right) \eta^{-1} \sum_a\Big(\sum_{b} |V_{ab}|^2\Big)^{1/2} \le Cn^{-3/2-3\tau}\eta^{-1},\end{split}
\end{equation}
where we used $\eta \ge n^{-1}$ for $z$ in the domain $\wt S(\varsigma_1,\varsigma_2,\e)$ or $ S_0(\varsigma_1,\varsigma_2,\e)$. Plugging the estimates (\ref{exp1})-(\ref{exp3}) into (\ref{rsexp1}), we conclude that
\begin{equation}\label{truncate_compare}
\left|\langle \mathbf u, G \mathbf v\rangle - \langle \mathbf u, \wh G \mathbf v\rangle\right| \prec n^{-1-\tau}\eta^{-1/2} .
\end{equation}
for all deterministic unit vectors $\mathbf u,\mathbf v \in \mathbb C^{\mathcal I}$. This shows that \eqref{aniso_law}-\eqref{aver_out1} hold for $G$, as long as they hold for $\wh G$.
\end{proof}

As a corollary of the averaged local law, the so-called eigenvalue rigidity holds for $\mathcal Q_1$. We first define the classical locations of eigenvalues.

\begin{definition} [Classical locations of eigenvalues]
The classical location $\gamma_j$ of the $j$-th eigenvalue of $\mathcal Q_1$ is defined as
\begin{equation}\label{gammaj}
\gamma_j:=\sup_{x}\left\{\int_{x}^{+\infty} \rho_{c}(x)dx > \frac{j-1}{n}\right\}.
\end{equation}
In particular, we have $\gamma_1 = \lambda_+$.
\end{definition}

Note that for any fixed $E\le \lambda_+$, $\Psi^2(E+\ii\eta)$ is monotonically decreasing with respect to $\eta$. Hence there is a unique $\eta_l(E)$ such that 
$$n^{1/2} \left[\Psi^2(E+\ii\eta_l(E))+ \frac{\phi_n}{n\eta_l(E)}\right] =1.$$ 
 Note that by \eqref{eq_estimm} and \eqref{eq_defpsi}, we have 
\be\label{etalE}
 \eta_l(E)\sim n^{-3/4} + n^{-1/2} \left(\sqrt{\kappa_E}+\phi_n\right) ,  \quad \text{ for } \ \ E\le \lambda_+. 
\ee
For $E>\lambda_+$, we define $\eta_l(E):=\eta_l(\lambda_+)=\OO(n^{-3/4} +  n^{-1/2}\phi_n)$. 

\begin{theorem}[Rigidity of eigenvalues] \label{thm_largerigidity}
Suppose that (\ref{aver_in1}) and \eqref{aver_out1} hold. Then we have the following estimates
for any fixed constant $0<\varsigma<\varsigma_1$.
\begin{itemize}
\item[(1)] For any $E\ge \lambda_+ - \varsigma $, we have
\begin{equation}
 \vert \mathfrak n(E)-\mathfrak n_{c}(E) \vert \prec n^{-1} + (\eta_l(E))^{3/2} + \eta_l(E)\sqrt{\kappa_E} ,  \label{Kdist}
\end{equation}
where 
\begin{equation}\label{ncE}
\mathfrak n(E):=\frac{1}{N} \# \{ \lambda_j \ge E\}, \quad \mathfrak n_{c}(E):=\int^{+\infty}_E \rho_{2c}(x)dx.
\end{equation}

\item[(2)] 
For any $j$ such that $\lambda_+ - \varsigma \le \gamma_j \le \lambda_+$, we have for any fixed $\e>0$,  
\begin{equation}\label{rigidity}
\begin{split}
\vert \lambda_j - \gamma_j \vert &\prec n^{-2/3}\left( j^{-1/3}+\mathbf 1(j \le n^{1/4} \phi_n^{3/2})\right)  \\
&+ \eta_{l}(\gamma_j) + n^{2/3} j^{-2/3} \eta_l^2(\gamma_j),
\end{split}
\end{equation}
where $\eta_{l}(\gamma_j)=\OO( n^{-3/4} + n^{-5/6}j^{1/3}+\phi_n n^{-1/2})$.  

\item[(3)] If we have (a) (\ref{assm_3rdmoment}) of the paper holds, or (b) either $A$ or $B$ is diagonal, then 
\begin{equation}\label{rigidity2}
\vert \lambda_j - \gamma_j \vert \prec n^{-2/3} j^{-1/3} .
\end{equation}
\end{itemize}
\end{theorem}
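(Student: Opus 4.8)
The plan is to deduce the counting-function estimate (1) from the averaged local laws \eqref{aver_in1} and \eqref{aver_out1} by a Helffer--Sjöstrand argument, and then to obtain the eigenvalue rigidity estimates (2) and (3) by inverting (1), using the square-root vanishing of $\rho_c$ at $\lambda_+$ recorded in Lemma \ref{lambdar_sqrt} and Lemma \ref{lem_mplaw}. Note that since \eqref{aver_in1} and \eqref{aver_out1} are assumed, no self-consistent bootstrap is needed; the argument is a quantitative comparison of $m$ with $m_c$ against the counting function.

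\textbf{Step 1 (counting function).} Fix $E\ge\lambda_+-\varsigma$ and choose a reference point $E_0$ at a fixed macroscopic distance to the right of $\lambda_+$; the outside-the-spectrum estimate \eqref{aver_out1} forces $\mathfrak n(E_0)=\mathfrak n_c(E_0)=0$ with high probability (there can be no eigenvalue of $\mathcal Q_1$ at macroscopic distance beyond $\lambda_+$), so it suffices to control $\mathfrak n(E)-\mathfrak n_c(E)$ itself. Writing $\mathfrak n(E)-\mathfrak n(E_0)$ as the integral of a smooth approximation $f$ of $\mathbf 1_{[E,E_0]}$, regularized at scale $\eta_l(E)$, against the empirical eigenvalue measure of $\mathcal Q_1$, and applying the Helffer--Sjöstrand formula, we express $\mathfrak n(E)-\mathfrak n_c(E)$ as a two-dimensional integral of $(\partial_{\bar z}\tilde f)(x+\ii y)$ against $n[m(x+\ii y)-m_c(x+\ii y)]$, where $\tilde f$ is an almost-analytic extension of $f$. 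Split the $y$-range at $|y|=\eta_l$: on $|y|\ge\eta_l$ use \eqref{aver_in1} for the part of the contour inside the bulk and the stronger \eqref{aver_out1} for the part with $x\ge\lambda_+$; on $|y|\le\eta_l$, below the validity range of the local law, use the deterministic monotonicity bound $\im m(x+\ii y)\le(\eta_l/|y|)\,\im m(x+\ii\eta_l)$ together with $\im m_c(x+\ii y)\lesssim\sqrt{\kappa_x+|y|}$. Collecting the contributions and inserting \eqref{etalE} yields (1): the $n^{-1}$ term comes from the bulk portion of the integral, while $(\eta_l(E))^{3/2}$ and $\eta_l(E)\sqrt{\kappa_E}$ arise from integrating $\im m_c$ across the strip $|y|\le\eta_l$ and from the horizontal boundary of the contour at $|y|=\eta_l$.

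\textbf{Steps 2 and 3 (rigidity).} By Lemma \ref{lem_mplaw}, $\rho_c(x)\sim\sqrt{\lambda_+-x}$ near $\lambda_+$, hence $\mathfrak n_c(E)\sim(\lambda_+-E)^{3/2}$ there, so the classical location satisfies $\kappa_{\gamma_j}=\lambda_+-\gamma_j\sim(j/n)^{2/3}$ and $\rho_c(\gamma_j)\sim(j/n)^{1/3}$; in particular $\eta_l(\gamma_j)\sim n^{-3/4}+n^{-5/6}j^{1/3}+n^{-1/2}\phi_n$. Feeding this into (1), one first shows that $|\lambda_j-\gamma_j|$ is small enough that $\rho_c$ is essentially constant between $\lambda_j$ and $\gamma_j$, and then converts the bound on $\mathfrak n$ into a bound on $\lambda_j$ by dividing by $\rho_c(\gamma_j)$; this gives (2), with the indicator term $\mathbf 1(j\le n^{1/4}\phi_n^{3/2})$ marking the regime where the $\eta_l$-strip contribution dominates $n^{-2/3}j^{-1/3}$. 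For (3), hypotheses (a) or (b) let Theorem \ref{LEM_SMALL}(3) supply the averaged local law on the larger domain $S_0(\varsigma_1,\varsigma_2,\e)$, i.e.\ down to $|y|\ge n^{-1+\e}$; repeating Steps 1--2 with $\eta_l$ replaced by $n^{-1+\e}$ removes all $\eta_l$-dependent terms and leaves exactly $n^{-2/3}j^{-1/3}$.

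\textbf{Main obstacle.} The delicate part is the bookkeeping in Step 1: one must track all three error terms in \eqref{aver_out1}, handle correctly the crossover between the inside-the-bulk region (where only the weaker \eqref{aver_in1} is available) and the region $x\ge\lambda_+$, and match the sub-scale ($|y|\le\eta_l$) contributions so that the final bound has precisely the stated shape rather than something merely of comparable size; the inversion in Step 2 near the very edge, where $\rho_c$ degenerates, also requires a short separate argument.
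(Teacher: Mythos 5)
Your outline (Helffer--Sj\"ostrand for the counting function, then inversion of $\mathfrak n_c$ using the square-root edge behavior) is the standard argument and is sound, but it is worth noting that it is \emph{not} what the paper itself does: the paper quotes \eqref{Kdist} and \eqref{rigidity2} directly from Theorem 3.8 of \cite{yang2018}, and for \eqref{rigidity} it follows the proof of Theorem 2.13 in \cite{EKYY1} to obtain the intermediate bound \eqref{complicated}, with the indicator $\mathbf 1\bigl(j\le n^\e(1+n\eta_l^{3/2}(\gamma_j))\bigr)$, and then finishes by the elementary estimate $n\eta_l^{3/2}(\gamma_j)\lesssim n^{-1/8}+j^{1/2}n^{-1/4}+n^{1/4}\phi_n^{3/2}$ together with $\kappa_{\gamma_j}\sim (j/n)^{2/3}$. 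So your proposal in effect reproves the imported inputs, which is more self-contained; the paper only does the final bookkeeping. Two caveats on your sketch. First, the passage from (1) to (2) is not merely ``divide by $\rho_c(\gamma_j)$'': away from the edge that division (plus Young's inequality to absorb $\eta_l^{3/2}/\rho_c(\gamma_j)$ into $\eta_l+n^{2/3}j^{-2/3}\eta_l^2$) does give the right terms, but the extra $n^{-2/3}\mathbf 1(j\le n^{1/4}\phi_n^{3/2})$ term and its threshold come from the EKYY-type case analysis near the degenerate edge combined with the explicit bound on $n\eta_l^{3/2}(\gamma_j)$, not from comparing the $\eta_l$-strip contribution with $n^{-2/3}j^{-1/3}$ as you describe; you flag that a separate edge argument is needed, but that is where essentially all the content of (2) lies, so it should be carried out rather than deferred. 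Second, to anchor $\mathfrak n(E_0)=\mathfrak n_c(E_0)=0$ you also need a high-probability bound $\lambda_1\le C$ (or you must restrict to the window $E\le \varsigma_2\lambda_+$ where \eqref{aver_out1} applies); this operator-norm input does not follow formally from \eqref{aver_in1}--\eqref{aver_out1} alone, though it is a standard part of the local-law package in \cite{yang2018}.
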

\begin{proof}
The bounds \eqref{Kdist} and \eqref{rigidity2} were proved in Theorem 3.8 of \cite{yang2018}. With \eqref{Kdist}, we follow the proof of Theorem 2.13 in \cite{EKYY1} to get that 
\be\label{complicated}
\begin{split}
\vert \lambda_j - \gamma_j \vert \prec n^{-2/3}\left[ j^{-1/3} + \mathbf 1\left( j\le n^\e \left(1+n\eta_l^{3/2}(\gamma_j)\right)\right) \right] \\
+ n^{2/3}j^{-2/3} \eta_l^2(\gamma_j)+ \eta_l(\gamma_j) .
\end{split}
\ee
 With \eqref{etalE} and $\kappa_{\gamma_j} \sim (j/n)^{2/3}$, it is easy to show that
$$ n\eta_l^{3/2}(\gamma_j)\lesssim n^{-1/8} + j^{1/2}n^{-1/4} + n^{1/4} \phi_n^{3/2}.$$
 Together with \eqref{complicated}, we get \eqref{rigidity} since $\e$ can be arbitrarily small.
\end{proof}

Away from the support of $\rho_{c}$, i.e. for $\re z>\lambda_+$, the anisotropic local law can be strengthened as follows. 

\begin{theorem}[Anisotropic local law outside of the spectrum]\label{lem_localout}  
Suppose that Assumptions  \ref{assm_big1} and \ref{ass:unper} hold. Fix any $\epsilon>0$. Then for any 
\begin{equation}\label{eq_paraout}
z\in S_{out}(\varsigma_2,\epsilon):=\left\{ E+ \ii\eta: \lambda_+ + n^{-2/3+\e} + n^{-1/3+\e}  \phi_n^2  \le E \le \varsigma_2 \lambda_+,  \eta\in [0,1]\right\},
\end{equation}
and any deterministic unit vectors $\bu, \bv \in \mathbb{C}^{\mathcal I}$, we have the anisotropic local law 
\begin{equation}\label{aniso_outstrong}
\begin{split}
\left| \langle \mathbf u, G(X,z) \mathbf v\rangle - \langle \mathbf u, \Pi (z)\mathbf v\rangle \right| & \prec \phi_n + \sqrt{\frac{\im m_{2c}(z)}{n \eta}} \\
&\asymp \phi_n + n^{-1/2}(\kappa+\eta)^{-1/4} .
\end{split}
\end{equation}
\end{theorem}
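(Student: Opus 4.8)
The plan is to deduce Theorem~\ref{lem_localout} from the anisotropic local law on $\wt S(\varsigma_1,\varsigma_2,\e)$ (Theorem~\ref{LEM_SMALL}(1)) together with the rigidity of eigenvalues (Theorem~\ref{thm_largerigidity}), pushing the spectral parameter all the way down to the real axis by a resolvent--transfer argument. First I would reduce, via the block representation \eqref{green2} and polarization, to controlling $\langle\bu,\mathcal G_\al(z)\bu\rangle-\langle\bu,\Pi_\al(z)\bu\rangle$ for $\al=1,2$ and the off-diagonal quantity $z^{-1/2}\langle\bu_1,\mathcal G_1(z)Y\bu_2\rangle$, whose deterministic limit vanishes since $\Pi$ is block-diagonal; here $Y=A^{1/2}XB^{1/2}$. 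By Theorem~\ref{thm_largerigidity} one has $\lambda_1\le\lambda_++\OO_\prec(n^{-2/3}+n^{-1/3}\phi_n^2)$, so for every $z=E+\ii\eta\in S_{out}(\varsigma_2,\e)$ we have $\operatorname{dist}(E,\operatorname{spec}(\mathcal Q_1))\ge E-\lambda_1\ge\frac12\kappa$ outside an event of probability $\le n^{-D}$; all estimates below are carried out on this event.

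Next, for fixed $E$ with $\kappa:=E-\lambda_+\ge n^{-2/3+\e}+n^{-1/3+\e}\phi_n^2$ I would introduce a reference height $\eta_*\equiv\eta_*(E)\sim n^{-1/2}\kappa^{1/4}+n^{-1/2}\phi_n$ (up to a suitable factor $n^{\e/4}$). Using $\im m_{2c}(E+\ii\eta)\sim\eta(\kappa+\eta)^{-1/2}$ and $|m_{2c}'(E+\ii t)|\sim\kappa^{-1/2}$ from Lemmas~\ref{lem_mplaw} and~\ref{lem_complexderivative}, together with the monotonicity in $\eta$ of the control parameter $n^{1/2}\big(\Psi^2(z)+\phi_n/(n\eta)\big)$, one checks that the lower bound on $\kappa$ forces $E+\ii\eta\in\wt S(\varsigma_1,\varsigma_2,\e/2)$ for all $\eta\in[\eta_*,1]$. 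Hence Theorem~\ref{LEM_SMALL}(1) applies on this range, and since $\Psi(z)=\sqrt{\im m_{2c}(z)/(n\eta)}+(n\eta)^{-1}\lesssim\sqrt{\im m_{2c}(z)/(n\eta)}+n^{-1/2}(\kappa+\eta)^{-1/4}$ there, it yields \eqref{aniso_outstrong} for $\eta\ge\eta_*$. In particular, evaluating at $z_*:=E+\ii\eta_*$ and taking imaginary parts (with $\im m_{\al c}(z_*)\sim\eta_*\kappa^{-1/2}$) gives $\langle\bu,\mathcal G_\al(z_*)\bu\rangle=\langle\bu,\Pi_\al(z_*)\bu\rangle+\OO_\prec(\phi_n+n^{-1/2}\kappa^{-1/4})$ and $\im\langle\bu,\mathcal G_\al(z_*)\bu\rangle=\OO_\prec(\eta_*\kappa^{-1/2}+\phi_n)$.

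It then remains to transfer these bounds to $0\le\eta<\eta_*$. I would use the resolvent identity $\mathcal G_\al(E+\ii\eta)-\mathcal G_\al(z_*)=\ii(\eta_*-\eta)\mathcal G_\al(E+\ii\eta)\mathcal G_\al(z_*)$ and Cauchy--Schwarz on the spectral decomposition, so that $\big|\langle\bu,(\mathcal G_\al(E+\ii\eta)-\mathcal G_\al(z_*))\bu\rangle\big|\le(\eta_*-\eta)\big(\frac{\im\langle\bu,\mathcal G_\al(E+\ii\eta)\bu\rangle}{\eta}\cdot\frac{\im\langle\bu,\mathcal G_\al(z_*)\bu\rangle}{\eta_*}\big)^{1/2}$. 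Since $\eta_*\le\frac12\operatorname{dist}(E,\operatorname{spec}(\mathcal Q_1))$, the ratio $\im\langle\bu,\mathcal G_\al(E+\ii t)\bu\rangle/t=\sum_k|\langle\bu,\bm\xi_k\rangle|^2\big((\lambda_k-E)^2+t^2\big)^{-1}$ differs by at most a factor $5/4$ between $t=\eta$ and $t=\eta_*$; combined with the previous step this gives $\im\langle\bu,\mathcal G_\al(E+\ii\eta)\bu\rangle/\eta=\OO_\prec(\kappa^{-1/2}+\phi_n\eta_*^{-1})$ and hence $\big|\langle\bu,(\mathcal G_\al(E+\ii\eta)-\mathcal G_\al(z_*))\bu\rangle\big|=\OO_\prec(\eta_*\kappa^{-1/2}+\phi_n)=\OO_\prec(n^{-1/2}\kappa^{-1/4}+\phi_n)$ by the choice of $\eta_*$. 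The matching variation of the deterministic limit, $\big|\langle\bu,(\Pi_\al(E+\ii\eta)-\Pi_\al(z_*))\bu\rangle\big|\le\eta_*\sup_{t\le\eta_*}\|\Pi_\al'(E+\ii t)\|\lesssim\eta_*(1+\kappa^{-1/2})$, is of the same order. Summing and using $\kappa\sim\kappa+\eta$ for $\eta<\eta_*\ll\kappa$ closes the estimate for the diagonal blocks; the off-diagonal block is handled by the same transfer argument applied with the vectors $\bu_1$ and $Y\bu_2$, using $\im\langle Y\bu_2,\mathcal G_1(z)Y\bu_2\rangle=\im\big(z\langle\bu_2,\mathcal G_2(z)\bu_2\rangle\big)$ to reduce its norm bound to the previous step, and noting that the scalar factor $z^{-1/2}$ is Lipschitz with an $\OO(1)$ constant on $S_{out}(\varsigma_2,\e)$.

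The main obstacle is exactly the balancing in the last step: the naive estimate $\|\mathcal G_\al\|^2$ per unit $\eta$ on the variation of the resolvent is too lossy by a full power of $\kappa$, so one must instead extract from the local law at the reference height the sharper pointwise control $\im\langle\bu,\mathcal G_\al(E+\ii t)\bu\rangle/t\lesssim\kappa^{-1/2}$, and then choose $\eta_*\sim n^{-1/2}(\kappa^{1/4}+\phi_n)$ so as to equate the transfer error $\eta_*\kappa^{-1/2}$ with the local-law error $n^{-1/2}\kappa^{-1/4}$ while keeping $E+\ii\eta_*$ inside the domain $\wt S$ on which Theorem~\ref{LEM_SMALL} is available. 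A secondary technical point is bookkeeping the $\phi_n$-dependence: the $n^{-1/2}\phi_n$ summand in $\eta_*$ is precisely what is needed for the membership $E+\ii\eta_*\in\wt S$ to survive for the largest admissible values of $\phi_n$, and one verifies directly that it never worsens the final bound $\phi_n+n^{-1/2}(\kappa+\eta)^{-1/4}$.
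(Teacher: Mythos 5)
Your proposal is correct and follows essentially the same route as the paper's proof: apply the anisotropic local law of Theorem \ref{LEM_SMALL} down to a reference height $\eta_*\sim n^{-1/2}\kappa^{1/4}+n^{-1/2+\e}\phi_n$ (the paper's $\eta_0$), then transfer to $0\le\eta<\eta_*$ using rigidity to guarantee $\operatorname{dist}(E,\operatorname{spec}(\mathcal Q_1))\gg\eta_*$, bounding the resolvent variation by $\im G$ at the reference point and the variation of $\Pi$ via the square-root behavior of $m_{1c},m_{2c}$. The only differences are cosmetic (Cauchy--Schwarz on the resolvent identity and the identity $Y^*\mathcal G_1Y=1+z\mathcal G_2$ for the off-diagonal block, versus the paper's direct spectral-sum estimates), so no gap.
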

\begin{proof}
The second step of \eqref{aniso_outstrong} follows from \eqref{eq_estimm}. Moreover, for $\eta\ge \eta_0:=n^{-1/2}\kappa^{1/4} + n^{-1/2+\e}\phi_n$ and $\kappa \ge n^{-2/3+\e}+ n^{-1/3+\e}  \phi_n^2 $, it is easy to verify that
$$n^{1/2} \left( \Psi^2(z)+\frac{\phi_n}{n\eta}\right)\le  n^{-\e}, \quad \frac{1}{n\eta}\lesssim \sqrt{\frac{\im m_{2c}(z)}{n \eta}} .$$
Then by \eqref{aniso_law}, we see that \eqref{aniso_outstrong} holds for $z\in S_{out}(\varsigma_2,\epsilon)$ with $\eta \ge \eta_0$. Hence it remains to prove that for $z\in S_{out}(\varsigma_2,\epsilon)$ with $0\le \eta \le \eta_0$, we have 
\begin{equation}\label{aniso_outstrong2}
\left| \langle \mathbf v, G(X,z) \mathbf v\rangle - \langle \mathbf v, \Pi (z)\mathbf v\rangle \right|  \prec \phi_n + n^{-1/2}\kappa^{-1/4},
\end{equation}
for any deterministic unit vector $\bv \in \mathbb{C}^{p+n}$. Note that \eqref{aniso_outstrong2} implies \eqref{aniso_outstrong} by polarization identity.

Now fix any $z = E+\ii \eta \in S_{out}(\varsigma_2,\epsilon)$ with $\eta \le \eta_0$. We denote $z_0:= E+ \ii \eta_0$. With \eqref{aniso_outstrong2} at $z_0$, it suffices to prove that
\be\label{prof_m}
\langle \mathbf v, \left(\Pi (z)-\Pi(z_0)\right)\mathbf v\rangle \prec \phi_n + n^{-1/2}\kappa^{-1/4},
\ee
and 
\be\label{prof_G}
\langle \mathbf v, \left(G (z)-G(z_0)\right)\mathbf v\rangle \prec \phi_n + n^{-1/2}\kappa^{-1/4}.
\ee
With \eqref{Piii}, to prove \eqref{prof_m} it is enough to show that 
\be\label{prof_m2}
|m_{1c}(z)-m_{1c}(z_0)|+|m_{2c}(z)-m_{2c}(z_0)| \prec \phi_n + n^{-1/2}\kappa^{-1/4}.
\ee
Using \eqref{eq_mdiff}, 
we obtain that
$$ |m_{1c}(z) - m_{1c}(z_0)| \lesssim \frac{z-z_0}{|z_0-\lambda_+|^{1/2}} \le  \frac{n^{-1/2+\e}}{\sqrt{\kappa}}\phi_n  + \frac{n^{-1/2}}{\kappa^{1/4}} \le \phi_n  + n^{-1/2}\kappa^{-1/4} .$$
We can deal with the $m_{2c}$ term in the same way. This proves \eqref{prof_m}.

For \eqref{prof_G}, we write $\mathbf v=\begin{pmatrix} \bv_1 \\ \bv_2\end{pmatrix}$ and use \eqref{spectral1}-\eqref{spectral2}. The upper left block gives that 
\be\label{zz0}
\begin{split}
&\left|\langle \bv_1, \left(G(z) - G(z_0)\right) \bv_1\rangle\right| \\
&\le \sum_{k = 1}^{p} \frac{\eta_0  |\langle \bv_1,{\bm \xi}_k\rangle|^2}{\left[(E-\lambda_k)^2 + \eta^2 \right]^{1/2}\left[(E-\lambda_k)^2 + \eta_0^2 \right]^{1/2}}.
\end{split}
\ee
Here and throughout the rest of this paper, we will always identify vectors $\bv_1$ and $\bv_2$ with their embeddings $\begin{pmatrix} \bv_1 \\ 0\end{pmatrix}$ and $\begin{pmatrix} 0 \\ \bv_2\end{pmatrix}$, respectively.  By \eqref{rigidity}, we have for any $k$, $E-\lambda_k\ge E - \lambda_1 \gg \eta_0$ with high probability. Using the notations in (\ref{eq_shorhand}), we can bound \eqref{zz0} by
\begin{align*}
\left|\langle \bv_1, \left(G(z) - G(z_0)\right) \bv_1\rangle\right|  \lesssim \sum_{k = 1}^{p} \frac{\eta_0  |\langle \bv_1,{\bm \xi}_k\rangle|^2}{(E-\lambda_k)^2 + \eta_0^2} = \im \mathcal G_{\mathbf v_1\mathbf v_1} (z_0)  \\
 \prec \phi_n + n^{-1/2}\kappa^{-1/4} + \im \Pi_{\mathbf v_1\mathbf v_1}(z_0)  \lesssim \phi_n + n^{-1/2}\kappa^{-1/4},
\end{align*}
where in the third step we used \eqref{aniso_law}, and in the last step we used \eqref{defn_pi}, \eqref{Piii} and \eqref{eq_estimm} to get
$$\im \Pi_{\mathbf v_1\mathbf v_1}(z_0) \lesssim \frac{\eta_0}{\sqrt{\kappa+\eta_0}}  \lesssim \phi_n +  n^{-1/2}\kappa^{-1/4}.$$
 Similarly, for the upper right block we have
\begin{align*}
& \left|\langle \bv_1, \left(G(z) - G(z_0)\right) \bv_2\rangle\right| \\
&\prec  \left|1- (zz_0^{-1})^{1/2}\right| \left|\langle \bv_1, G(z)\bv_2\rangle\right| + \sum_{k = 1}^{p\wedge n} \frac{\eta_0\left|\langle \bv_1 ,{\bm \xi}_k\rangle \langle {\bm \zeta}_k,\bv_2\rangle\right|}{|\lambda_k-z||\lambda_k-z_0|}\\
& \prec \eta_0 + \sum_{k = 1}^{p\wedge n} \frac{\eta_0\left|\langle \bv_1 ,{\bm \xi}_k\rangle \right|^2 }{|\lambda_k-z_0|^2} +   \sum_{k = 1}^{p\wedge n} \frac{\eta_0\left| \bv_2,{\bm \zeta}_k\rangle\right|^2}{|\lambda_k-z_0|^2} \\
&=\eta_0+ \im G_{\mathbf v_1\mathbf v_1}(z_0) +\im G_{\mathbf v_2\mathbf v_2}(z_0) \prec  \phi_n + n^{-1/2}\kappa^{-1/4}.
\end{align*}
The lower left and lower right blocks can be handled in the same way. This proves \eqref{prof_G}, which completes the proof. 
\end{proof}


The anisotropic local law (\ref{aniso_law}) implies the following delocalization properties of eigenvectors. 

\begin{lemma}[Isotropic delocalization of eigenvectors] \label{delocal_rigidity}
Suppose \eqref{aniso_law} and \eqref{rigidity} hold. Then we have the following estimates for any fixed constant $0<\varsigma<\varsigma_1$.
\begin{itemize}
\item[(1)] For any deterministic unit vectors $\mathbf u \in \mathbb C^{\mathcal I_1}$ and $\mathbf v  \in \mathbb C^{\mathcal I_2}$, we have 
\begin{equation}\label{delocal_claim}
 \left|\langle \mathbf u,{\bm \xi}_k\rangle \right|^2+\left|\langle \mathbf v ,{\bm \zeta}_k\rangle \right|^2\prec  n^{-1} + \eta_l(\gamma_k) \left( \frac{k}{n}\right)^{1/3} +\eta_l(\gamma_k) \phi_n, 
\end{equation}
for all $k$ such that $\lambda_+ - \varsigma \le \gamma_k \le \lambda_+$, where $\eta_l(\gamma_k)=\OO(n^{-3/4} + n^{-5/6}k^{1/3}+\phi_n n^{-1/2})$. 


\item[(2)] If we have (a) (\ref{assm_3rdmoment}) of the paper holds, or (b) either $A$ or $B$ is diagonal, then we have
\begin{equation}
\max_{k: \lambda_+ - \varsigma \le \gamma_k \le \lambda_+} \left\{ \left|\langle \mathbf u,{\bm \xi}_k\rangle \right|^2+\left|\langle \mathbf v ,{\bm \zeta}_k\rangle \right|^2\right\} \prec n^{-1}.\label{delocals}
\end{equation}
\end{itemize}
\end{lemma}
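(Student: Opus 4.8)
The statement to prove is Lemma~\ref{delocal_rigidity}, the isotropic delocalization of eigenvectors of $\mathcal Q_1$ (equivalently the left/right singular vectors of $A^{1/2}XB^{1/2}$), assuming the anisotropic local law \eqref{aniso_law} and the rigidity estimate \eqref{rigidity}.

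\textbf{Overall strategy.} The plan is the standard spectral-decomposition argument: bound $|\langle \mathbf u,\bm\xi_k\rangle|^2$ by the imaginary part of the Green's function evaluated at a spectral parameter $z_k=\gamma_k+\ii\eta_k$ with a carefully chosen scale $\eta_k$, then control that imaginary part via the anisotropic local law and the deterministic estimate $\im\Pi_{\mathbf u\mathbf u}(z_k)$. Concretely, from \eqref{spectral1} one has, for $z=E+\ii\eta$,
\begin{equation*}
\im \langle\mathbf u,\mathcal G_1(z)\mathbf u\rangle = \sum_{j=1}^p \frac{\eta\,|\langle\mathbf u,\bm\xi_j\rangle|^2}{(\lambda_j-E)^2+\eta^2}\ \ge\ \frac{\eta\,|\langle\mathbf u,\bm\xi_k\rangle|^2}{(\lambda_k-E)^2+\eta^2},
\end{equation*}
and similarly $\im\langle\mathbf v,\mathcal G_2(z)\mathbf v\rangle \ge \eta\,|\langle\mathbf v,\bm\zeta_k\rangle|^2/((\lambda_k-E)^2+\eta^2)$ from the second identity in \eqref{spectral1}. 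Choosing $E=\gamma_k$ and $\eta=\eta_k$ slightly larger than the rigidity fluctuation of $\lambda_k$, so that $|\lambda_k-\gamma_k|\lesssim \eta_k$ with high probability, yields $|\langle\mathbf u,\bm\xi_k\rangle|^2 + |\langle\mathbf v,\bm\zeta_k\rangle|^2 \prec \eta_k\big(\im\langle\mathbf u,\mathcal G_1(z_k)\mathbf u\rangle + \im\langle\mathbf v,\mathcal G_2(z_k)\mathbf v\rangle\big)$.

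\textbf{Key steps in order.} First, I would fix $k$ with $\lambda_+-\varsigma\le\gamma_k\le\lambda_+$ and set $\eta_k := \eta_l(\gamma_k) + n^{-2/3}\big(k^{-1/3}+\mathbf 1(k\le n^{1/4}\phi_n^{3/2})\big) + n^{2/3}k^{-2/3}\eta_l^2(\gamma_k)$, i.e.\ the right-hand side of \eqref{rigidity}, so that by the rigidity bound $|\lambda_k-\gamma_k|\prec\eta_k$; one checks directly from \eqref{etalE} and $\kappa_{\gamma_k}\sim(k/n)^{2/3}$ that $\eta_k\gtrsim n^{-1+\epsilon}$ for some $\epsilon>0$, so $z_k=\gamma_k+\ii\eta_k$ lies in $\wt S(\varsigma_1,\varsigma_2,\epsilon)$ (the condition $n^{1/2}(\Psi^2+\phi_n/(n\eta))\le n^{-\epsilon/2}$ holds by the very definition of $\eta_l$; this needs a short verification). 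Second, apply the anisotropic local law \eqref{aniso_law} at $z_k$: $\im\langle\mathbf u,\mathcal G_1(z_k)\mathbf u\rangle \prec \phi_n+\Psi(z_k)+\im\Pi_{1,\mathbf u\mathbf u}(z_k)$, and similarly for $\mathcal G_2$. Third, bound the deterministic term: from \eqref{defn_pi}, \eqref{Piii} and \eqref{eq_estimm}, $\im\Pi_{1,\mathbf u\mathbf u}(z_k), \im\Pi_{2,\mathbf v\mathbf v}(z_k)\lesssim \eta_k/\sqrt{\kappa_{\gamma_k}+\eta_k}$, and $\Psi(z_k)\lesssim \sqrt{\im m_{2c}(z_k)/(n\eta_k)}+1/(n\eta_k)\lesssim \sqrt{\sqrt{\kappa_{\gamma_k}+\eta_k}/(n\eta_k)}+1/(n\eta_k)$. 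Fourth, multiply through by $\eta_k$ and simplify: the leading contributions are $\eta_k\cdot\eta_k/\sqrt{\kappa_{\gamma_k}+\eta_k}$, $\eta_k\phi_n$, $\sqrt{\eta_k\sqrt{\kappa_{\gamma_k}+\eta_k}/n}$, and $1/n$. Using $\kappa_{\gamma_k}\sim(k/n)^{2/3}$ and $\eta_l(\gamma_k)=\OO(n^{-3/4}+n^{-5/6}k^{1/3}+\phi_n n^{-1/2})$ one collects these into $n^{-1}+\eta_l(\gamma_k)(k/n)^{1/3}+\eta_l(\gamma_k)\phi_n$, which is \eqref{delocal_claim}. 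Fifth, for part~(2): under the vanishing-third-moment or diagonal hypothesis, \eqref{rigidity2} gives $|\lambda_k-\gamma_k|\prec n^{-2/3}k^{-1/3}$, so one may instead take $\eta_k\sim n^{-2/3}k^{-1/3}\asymp\kappa_{\gamma_k}^{1/2}/\,?$—more precisely one checks $\eta_k\lesssim\kappa_{\gamma_k}$, hence $\eta_k/\sqrt{\kappa_{\gamma_k}+\eta_k}\lesssim\sqrt{\eta_k}\lesssim n^{-1/3}k^{-1/6}$ and $\sqrt{\eta_k\sqrt{\kappa_{\gamma_k}}/n}\lesssim\sqrt{n^{-2/3}k^{-1/3}\cdot(k/n)^{1/3}/n}=n^{-1}$; also under (a) or (b) Theorem~\ref{LEM_SMALL}(3) allows $\phi_n$ to be replaced effectively and the local law holds on the larger domain $S_0$, so the $\phi_n$ term is absent. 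Taking a union bound over the $O(1)$ or $n^{O(1)}$ relevant $k$ (stochastic domination is uniform over index sets of polynomial size, by Definition~\ref{stoch_domination}) gives \eqref{delocals}.

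\textbf{Main obstacle.} The routine part is the Green's-function lower bound; the delicate part is the bookkeeping in the fourth step — verifying that the combination of $\Psi(z_k)$, $\im\Pi(z_k)$ and the error $\phi_n$, after multiplication by the chosen $\eta_k$, collapses exactly to $n^{-1}+\eta_l(\gamma_k)(k/n)^{1/3}+\eta_l(\gamma_k)\phi_n$ and no worse. This requires handling the three regimes of $\eta_k$ (the $n^{-2/3}k^{-1/3}$ term dominant for small $k$, the $\eta_l(\gamma_k)$ term, and the $n^{2/3}k^{-2/3}\eta_l^2$ correction term) and checking in each that $z_k\in\wt S(\varsigma_1,\varsigma_2,\epsilon)$, which is where the precise definition of $\eta_l(E)$ via $n^{1/2}(\Psi^2+\phi_n/(n\eta_l))=1$ is used; one also has to be slightly careful that $\gamma_k$ could lie at the edge ($E=\lambda_+$), where $\eta_l(E)=\eta_l(\lambda_+)$ and $\kappa_{\gamma_k}$ is comparable to $\eta_l$, but the same bounds go through. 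A secondary technical point is that to apply \eqref{aniso_law} one needs unit vectors, so $\mathbf u$ and $\mathbf v$ should first be normalized, and the final estimate is stated homogeneously (which is consistent with the statement, where $\mathbf u,\mathbf v$ are unit vectors).
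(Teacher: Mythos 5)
Your overall strategy (bounding the eigenvector overlap by $\eta\,\im\langle\cdot,G\,\cdot\rangle$ and invoking the anisotropic local law) is the right family of argument, but the specific execution has a genuine gap. You center the spectral parameter at the classical location, $z_k=\gamma_k+\ii\eta_k$, which forces you to take $\eta_k$ at least as large as the rigidity fluctuation in \eqref{rigidity}, i.e.\ $\eta_k\gtrsim n^{-2/3}k^{-1/3}+\eta_l(\gamma_k)+n^{2/3}k^{-2/3}\eta_l^2(\gamma_k)$. After multiplying the local-law error by $\eta_k$, the term $\eta_k\phi_n$ then contains $n^{-2/3}k^{-1/3}\phi_n$, which is \emph{not} dominated by the claimed bound $n^{-1}+\eta_l(\gamma_k)(k/n)^{1/3}+\eta_l(\gamma_k)\phi_n$: for instance with $k=\OO(1)$ and $\phi_n=n^{-1/4}$ your bound is of order $n^{-11/12}$ while the lemma claims $\OO_\prec(n^{-1})$, so you lose a power $n^{1/12}$ throughout the regime $n^{-1/3}\ll\phi_n\ll n^{-1/6}$. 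The paper avoids this by centering at the \emph{random} eigenvalue itself: with $E=\lambda_k$ the inequality $|\langle\mathbf v,\bm\zeta_k\rangle|^2\le C\eta\,\im\langle\mathbf v,G(\lambda_k+\ii\eta)\mathbf v\rangle$ holds for \emph{every} $\eta>0$, so one may take the much smaller scale $\eta=n^{\e}\eta_l(\gamma_k)$; rigidity is used only to show $\kappa_{\lambda_k}\lesssim\kappa_{\gamma_k}+{\rm errors}$, hence $\eta_l(\lambda_k)\lesssim\eta_l(\gamma_k)$ and $z_k\in\wt S(\varsigma_1,\varsigma_2,\e)$, and to bound $\im m_{2c}(z_k)\prec (k/n)^{1/3}+n^{\e/2}\phi_n$. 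With that choice all error terms collapse to exactly $n^{-1}+\eta_l(\gamma_k)(k/n)^{1/3}+\eta_l(\gamma_k)\phi_n$.

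Two secondary slips: at $E=\gamma_k\le\lambda_+$ you are \emph{inside} the spectrum, so by \eqref{eq_estimm} one has $\im m_{2c}\sim\sqrt{\kappa+\eta}$, not $\eta/\sqrt{\kappa+\eta}$ as you wrote for $\im\Pi$ (you used the $E\ge\lambda_+$ branch; this error happens to make your bound look smaller, masking the issue above). And in part (2) the $\phi_n$ error in the local law does not disappear under hypothesis (a) or (b); what changes is that \eqref{aniso_law} holds on the larger domain $S_0$ and \eqref{rigidity2} holds, so one may take $z_k=\lambda_k+\ii n^{-1+\e}$, and then $\eta\,\phi_n\le n^{-1+\e}\phi_n$ is harmless — again this requires centering at $\lambda_k$, since with your $\eta_k\sim n^{-2/3}k^{-1/3}$ the term $\eta_k\phi_n$ would exceed $n^{-1}$ whenever $\phi_n\gg n^{-1/3}k^{1/3}$.
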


\begin{proof}
 Fix any $k$ such that $\lambda_+ - \varsigma \le \gamma_k \le \lambda_+$. By (\ref{rigidity}), we have 
\be\label{kappalambdak}\kappa_{\lambda_k} \le \kappa_{\gamma_k}+\OO_\prec\left(n^{-2/3} + \eta_{l}(\gamma_k) + n^{2/3} k^{-2/3} \eta_l^2(\gamma_k) \right). \ee
Together with \eqref{etalE}, we can verify that
\begin{align} \label{etallambdak}
\eta_l(\lambda_k) \lesssim n^{-3/4} + n^{-1/2} \left(\sqrt{\kappa_{\lambda_k}}+\phi_n\right)  \lesssim \eta_l(\gamma_k).
\end{align}
For simplicity, we denote $\eta_k:= \eta_l(\gamma_k)$. Then $z_k:=\lambda_k + \ii n^\e \eta_k \in \wt S(\varsigma_1,\varsigma_2,\e)$ with high probability for every $k$ such that $\lambda_+ - \varsigma \le \gamma_k \le \lambda_+$. 
Then using the spectral decomposition (\ref{spectral1}), we get
\begin{equation}\label{spectraldecomp}
\sum_{k=1}^{n} \frac{n^\e \eta_k  \vert \langle \mathbf v, {\bm \zeta}_k\rangle \vert^2}{(\lambda_k-E)^2+n^{2\e}\eta^2_k }  = \im\, \langle \mathbf v, {G}(z_k)\mathbf v\rangle . 
\end{equation}
Plugging into $E=\lambda_k$ and using \eqref{aniso_law}, we obtain that
\begin{equation}\label{iso_delocal_pf}
\begin{split}
\vert \langle \mathbf v, {\bm \zeta}_k\rangle \vert^2 &\le C n^\e \eta_k \im\, \langle \mathbf v, {G}(z_k)\mathbf v\rangle \\
&\prec  n^\e \eta_k \left[\im m_{2c}(\lambda_k + \ii n^{\e}\eta_k ) + \frac{1}{n^{1+\e}\eta_k} +\phi_n\right]. 
\end{split}
\end{equation} 
With \eqref{eq_estimm}, \eqref{kappalambdak} and $\kappa_{\gamma_k}\sim (k/n)^{2/3}$, we can bound that 
\begin{align*}
\im m_{2c}(\lambda_k + \ii n^{\e}\eta_k ) &\prec \left( \left( \frac{k}{n}\right)^{2/3} + n^\e\eta_k+ n^{-2/3} +  \left(\frac{n}{k}\right)^{2/3} \eta_k^2 \right)^{1/2} \\
&\lesssim \left( \frac{k}{n}\right)^{1/3} + n^{\e/2} \phi_n^{1/2}n^{-1/4} + n^{-1/6}\phi_n \lesssim \left( \frac{k}{n}\right)^{1/3}+n^{\e/2} \phi_n.
\end{align*}
where we used $\phi_n \ge n^{-1/2}$ in the last step. Plugging it into \eqref{iso_delocal_pf}, we obtain that 
\begin{equation}\nonumber
\vert \langle \mathbf v, {\bm \zeta}_k\rangle \vert^2 \prec n^{-1} + n^{3\e/2} \eta_k \left[\phi_n+\left( \frac{k}{n}\right)^{1/3}\right].
\end{equation}
 Since $\epsilon$ is arbitrary, we get \eqref{delocal_claim} for $\vert \langle \mathbf v, {\bm \zeta}_k\rangle \vert^2 $. In a similar way, we can prove \eqref{delocal_claim} for $\left|\langle \mathbf u,{\bm \xi}_k\rangle \right|^2 $. The proof for \eqref{delocals} is the same, except that we can take $z_k:=\lambda_k + \ii n^{-1+\e} \in S_0(\varsigma_1,\varsigma_2,\e)$ in this case. 
\end{proof}

Before concluding this section, we give the proof of Theorem \ref{thm_adaptiveest} of the paper.
\begin{proof}[Proof of Theorem \ref{thm_adaptiveest}] 
By Theorem \ref{thm_outlier} of the paper, under Assumption \ref{assum_supercritical},   we have that 
\begin{equation}\label{rigid_large}
\wt \lambda_{\al(i)}={g}_{2c}(-(\wt\sigma_i^a)^{-1})+\OO_{\prec}(\phi_n).
\end{equation}
Moreover, this shows that $\wt \lambda_{\al(i)} - \lambda_+ \gtrsim 1$ with high probability. 
Together with (\ref{eq_mderivative}) and Theorem \ref{lem_localout}, we obtain from \eqref{rigid_large} that 
\begin{equation}\label{eq_331}
\tsig_i^a=-m_{2c}^{-1}(\wt\lambda_{\al(i)})+\OO_{\prec}(\phi_n) = -m_2^{-1}(\wt\lambda_{\al(i)})+\OO_{\prec}(\phi_n).
\end{equation}
Since $\wt{B}$ is an $l_n$-rank perturbation of the identity matrix, with Theorem \ref{lem_localout} and (\ref{defn_m1m2}) of the paper, we obtain that 
\be\label{local again}
m_2(\wt\lambda_{\al(i)}) = \frac{1}{n}\tr \mathcal G_2(\wt\lambda_{\al(i)}) + \OO_\prec(n^{-1}l_n).
\ee
Finally, using Theorem \ref{thm_outlier} of the paper and the fact that $|\wt\lambda_\nu - \wt\lambda_{\al(i)}| \gtrsim 1$ with high probability for all $\nu\ge r+s+1$, we obtain that 
\be\label{local again2}
\frac{1}{n}\tr \mathcal G_2(\wt\lambda_{\al(i)}) = \frac{1}{n} \sum_{\nu=r+s+1}^{n} \frac{1}{\wt\lambda_\nu(\wt{\mathcal Q}_2)-\wt\lambda_{\al(i)}} + \OO_\prec(n^{-1}).
\ee
Comibing (\ref{eq_331})-(\ref{local again2}), we conclude (\ref{finite rankB}) of the paper. The estimate (\ref{finite rankA}) of the paper can be proved in the same way.
\end{proof}

\section{Outlier eigenvalues}\label{sec:ev}

In this section, we prove Theorems \ref{thm_outlier} and \ref{thm_eigenvaluesticking} of the paper. The argument is an extension of the ones in \cite[Section 4]{principal} and \cite[Section 6]{KY2013}. The proof consists of the following three steps.
\begin{itemize}
\item[(i)] We first find the permissible regions which contain all the eigenvalues of $\ctQ_1$ with high probability. 

\item[(ii)] Then we apply a counting argument to a special case, and show that each connected component of the permissible region contains the right number of eigenvalues of $\ctQ_1$. 

\item[(iii)] Finally we use a continuity argument to extend the result in (ii) to the general case using the gaps in the permissible regions. 
\end{itemize}
Our proof is more complicated than the ones in \cite[Section 4]{principal} and \cite[Section 6]{KY2013}, since we need to keep track of two types of outliers from the spikes of  $\wt A$ and $\wt B$.

\subsection{Outlier locations}\label{sec_strag1}

As in \eqref{linearize_block}, we introduce the following linearization of the spiked separable covariance matrices $\ctQ_{1,2}$:
\begin{equation*}
\widetilde{H}(X,z)= z^{1/2}\begin{pmatrix} 
0 & \wt A^{1/2} X \wt B^{1/2} \\
\wt B^{1/2} X^* \wt A^{1/2} & 0 
\end{pmatrix}, \quad z\in \mathbb C_+ \cup \mathbb R.
\end{equation*}
Note that the non-zero eigenvalues of $z^{-1/2}\wt H$ is given by 
$$\pm \sqrt{\lambda_1(\wt Q_1)}, \ \pm \sqrt{\lambda_2(\ctQ_1)}, \ \cdots \ , \ \pm \sqrt{\lambda_{p\wedge n}(\ctQ_1)}.$$
Hence it is easy to see that $x>0$ is an eigenvalue of $\ctQ_1$ if and only if
\be\label{detH}
\det\left(\wt H(X,x) - x\right)=0.
\ee
With the notations in (\ref{AOBO}) and (\ref{eq_sepamodel}) of the paper, we can write
\begin{equation}\label{Pepsilon0}
\widetilde{H}(X,z) = P H(X,z)P, 
\end{equation}
where
$$P=\begin{pmatrix} 
\left(1+ V_o^a D^a (V_o^a)^*\right)^{1/2}  & 0  \\
0 & \left(1+ V_o^b D^b (V_o^b)^*\right)^{1/2} 
\end{pmatrix}.$$
We introduce the $ (p+n) \times (r+s)$ matrix $\mathbf U$ and the  $(r+s) \times (r+s)$ diagonal matrix $\mathcal{D}$ as 
\begin{equation}\label{Depsilon0}
\mathbf{U}=
\begin{pmatrix}
V_{o}^a & 0 \\
0 & V_{o}^b
\end{pmatrix}, \quad \mathcal{D}=
\begin{pmatrix}
D^a(D^a+1)^{-1} & 0 \\
0 & D^b(D^b+1)^{-1}
\end{pmatrix}.
\end{equation}
The next lemma gives the master equation for the locations of the outlier eigenvalues. 

\begin{lemma}\label{lem_pertubation} 
If $x\ne 0$ is not an eigenvalue of $\mathcal Q_1$, then it is an eigenvalue of $\widetilde{Q}_1$ if and only if
\begin{equation}\label{masterx}
\det\left(\mathcal{D}^{-1}+x \mathbf U^* G(x) \mathbf U\right)=0.
\end{equation} 
\end{lemma}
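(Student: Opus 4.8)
The plan is to start from the characterization \eqref{detH}, namely that $x>0$ is an eigenvalue of $\widetilde{\mathcal Q}_1$ if and only if $\det(\widetilde H(X,x)-x)=0$, and then unravel the factorization $\widetilde H = PHP$ from \eqref{Pepsilon0}. First I would write $\widetilde H(X,x)-x = PH(X,x)P - x$. Since $P$ is a positive-definite (hence invertible) block matrix, $\det P \neq 0$, and the key algebraic observation is that $P^2 = I + \mathbf U D \mathbf U^*$ where $D=\mathrm{diag}(D^a,D^b)$, so that $P^2 - I$ has the low-rank form $\mathbf U D \mathbf U^*$ with $\mathbf U$ the $(p+n)\times(r+s)$ matrix from \eqref{Depsilon0}. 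The idea is to move everything onto $H$ and a small $(r+s)\times(r+s)$ correction. Concretely, $PH(X,x)P - x = P\big(H(X,x) - xP^{-2}\big)P = P\big(H(X,x) - x(I+\mathbf U D\mathbf U^*)^{-1}\big)P$, using $P^{-2}=(P^2)^{-1}=(I+\mathbf U D\mathbf U^*)^{-1}$.

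Next I would apply the Woodbury identity (Lemma \ref{lem_woodbury}) to $(I+\mathbf U D\mathbf U^*)^{-1} = I - \mathbf U(D^{-1}+\mathbf U^*\mathbf U)^{-1}\mathbf U^*$; since the columns of $V_o^a$ and $V_o^b$ are orthonormal, $\mathbf U^*\mathbf U = I_{r+s}$, so this simplifies to $I - \mathbf U(D^{-1}+I)^{-1}\mathbf U^* = I - \mathbf U \mathcal D \mathbf U^*$, recalling that $\mathcal D = \mathrm{diag}(D^a(D^a+1)^{-1},\,D^b(D^b+1)^{-1})$ from \eqref{Depsilon0}. Therefore $H(X,x) - xP^{-2} = (H(X,x)-x) + x\mathbf U\mathcal D\mathbf U^*$. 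Taking determinants and using $\det P \neq 0$, the eigenvalue equation becomes $\det\big((H(X,x)-x) + x\mathbf U\mathcal D\mathbf U^*\big)=0$. Now, since $x$ is assumed not to be an eigenvalue of $\mathcal Q_1$, the matrix $H(X,x)-x$ is invertible with inverse $G(x)$ (by the block structure \eqref{green2}, the nonzero eigenvalues of $H$ are $\pm\sqrt{\lambda_k(\mathcal Q_1)}$ paired appropriately, so $\det(H(X,x)-x)\neq 0$ precisely when $x$ avoids the spectrum of $\mathcal Q_1$ — this deserves a careful line). Factoring out $H(X,x)-x$ from the left gives $\det(H(X,x)-x)\cdot\det\big(I + x G(x)\mathbf U\mathcal D\mathbf U^*\big)=0$, so the condition reduces to $\det(I + xG(x)\mathbf U\mathcal D\mathbf U^*)=0$.

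Finally I would use the Sylvester determinant identity $\det(I_{p+n} + xG(x)\mathbf U\mathcal D\mathbf U^*) = \det(I_{r+s} + x\mathbf U^*G(x)\mathbf U\mathcal D)$ to pass to the small $(r+s)\times(r+s)$ block, and then multiply by $\det(\mathcal D^{-1})\neq 0$ (here $\mathcal D$ is invertible because each $d_i^a, d_\mu^b > 0$, so $d/(d+1)\in(0,1)$ is nonzero) on the left to obtain $\det(\mathcal D^{-1} + x\mathbf U^*G(x)\mathbf U)=0$, which is exactly \eqref{masterx}. The main obstacle — really the only non-bookkeeping point — is justifying cleanly that $\det(H(X,x)-x)\neq 0$ under the hypothesis that $x\neq 0$ is not an eigenvalue of $\mathcal Q_1$; this follows from the spectral decomposition \eqref{spectral1}--\eqref{spectral2} (or directly from $\det(z^{-1/2}H - z^{1/2}) = \pm z^{-(p+n)/2}\prod_k(\lambda_k - z)\cdot z^{\#\{\text{zero modes}\}/2}$ type identities, being careful about the branch of $z^{1/2}$ for real $z$, which one handles by taking $z\in\mathbb C_+$ and letting $\eta\downarrow 0$, or by a direct Schur-complement computation). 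Everything else is a routine chain of the Woodbury, Sylvester, and multiplicativity-of-determinant identities already recorded in the excerpt.
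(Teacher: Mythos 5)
Your proposal is correct and follows essentially the same route as the paper: write $\widetilde H - x = P\bigl(H - xP^{-2}\bigr)P$, use $1-P^{-2}=\mathbf U\mathcal D\mathbf U^*$, factor out the invertible $H-x$, and finish with $\det(1+AB)=\det(1+BA)$ and $\det(\mathcal D)\neq 0$. The only cosmetic difference is that you derive $P^{-2}=I-\mathbf U\mathcal D\mathbf U^*$ via Woodbury and flag the invertibility of $H(x)-x$ explicitly (which follows at once from \eqref{green2}, since $x\neq 0$ avoids the spectra of both $\mathcal Q_1$ and $\mathcal Q_2$), whereas the paper uses these facts tacitly.
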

\begin{proof}
Since $P$ is always invertible, by \eqref{detH} $x \ne 0$ is an eigenvalue of $\wt H=PHP$ if and only if    
\begin{align*}
0&=\det(PHP-x)=\det\Big(P(H-P^{-2}x)P\Big)\\
&=\det(P^2) \det(G(x))\det \left(1+xG(x)(1-P^{-2}) \right) \\
&=\det(P^2) \det(G(x))\det \left(1+xG(x) \mathbf{U} \mathcal{D} \mathbf{U}^* \right) \\
&=\det(P^2) \det(G(x)) \det \Big(1+x \mathbf{U}^* G(x) \mathbf{U} \mathcal{D}  \Big) \nonumber \\
& =\det(P^2) \det(G(x))\det(\mathcal{D})\det(\mathcal{D}^{-1}+x \mathbf{U}^* G(x) \mathbf{U}),
\end{align*}
where in the second step we used $\det(1+AB)=\det(1+BA)$. The claim then follows.
\end{proof}

Heuristically, by \eqref{masterx}, \eqref{aniso_outstrong} and \eqref{defn_pi}, an outlier location $x>\lambda_+$ almost satisfies the equation $\det(\mathcal{D}^{-1}+x \mathbf U^* \Pi(x) \mathbf U)=0 $, which is equivalent to
\begin{align*}
\prod_{i=1}^r \left( \frac{d_i^a + 1}{d_i^a} - \frac{1}{1+m_{2c}(x)\sigma_i^a} \right) \prod_{\mu=1}^s\left( \frac{d_\mu^b + 1}{d_\mu^b} - \frac{1}{1+m_{1c}(x)\sigma_\mu^b} \right)  =0.
\end{align*}
Since $(1+m_{2c}(x)\sigma_i^a)^{-1}$ is a monotonically decreasing function in $x$ for $x>\lambda_+$, the equation 
$$1+(d_i^a)^{-1}- (1+m_{2c}(x)\sigma_i^a)^{-1}=0$$ has a solution on the right of $\lambda_+$ if and only if
$$\frac{d_i^a + 1}{d_i^a} < \frac{1}{1+m_{2c}(\lambda_+)\sigma_i^a} \Leftrightarrow \wt\sigma_i^a > -m_{2c}^{-1}(\lambda_+).$$
We can do a similar calculation for $\wt \sigma_\mu^b$. This explains the conditions in (\ref{spikes}) of the paper.

\begin{proof}[Proof of Theorem \ref{thm_outlier}]
By Theorem \ref{LEM_SMALL}, Theorem \ref{thm_largerigidity} and Theorem \ref{lem_localout}, for any fixed $\epsilon>0$ we can choose a high-probability event $\Xi$ in which the following estimates hold: 
\begin{equation}\label{aniso_lawev}
\mathbf{1}(\Xi) \norm{\mathbf U^* (G(z)-\Pi(z)) \mathbf U} \le n^{\e/2}\left(\phi_n +\Psi(z)\right),\quad \text{for }z\in \wt S(\varsigma_1,\varsigma_2,\e),
\end{equation}
\begin{equation} \label{eq_bound1ev}
\mathbf{1}(\Xi) \norm{\mathbf U^* (G(z)-\Pi(z)) \mathbf U} \leq n^{\epsilon/2}\left(\phi_n + n^{-1/2}\kappa^{-1/4}\right) ,\quad \text{ for } z\in S_{out}(\varsigma_2,\epsilon),
\end{equation}
and
\begin{equation} \label{eq_bound2ev}
\mathbf{1}(\Xi)\left|\lambda_i(\mathcal Q_1)-\lambda_+\right| \leq n^{\e}\left(n^{-1/3}\phi_n^2 + n^{-2/3 }\right), \quad  \text{ for }1\le i \le \varpi . 
\end{equation}
We remark that the randomness of $X$ only comes into play to ensure that $\Xi$ holds with high probability. The rest of the proof is restricted to $\Xi$ only, and will be entirely deterministic. 

For any fixed constant $\e>0$, we define the index sets
\begin{equation} \label{eq_otau}
\mathcal{O}^{(a)}_{\e}:=\left\{i: \wt{\sigma}_i^a+m_{2c}^{-1}(\lambda_+) \geq n^\e (\phi_n + n^{-{1}/{3}}) \right\}, \quad \mathcal{O}^{(b)}_{\e}:=\{\mu: \mu \le \mu_\e\},
\end{equation}
where 
$$ \mu_\e:= \sup\left\{1\le \mu-p\le s^+: \theta_2(\wt\sigma_{\mu}^b)\ge \inf_{i\in \cal O^{(a)}_\e}\theta_1(\wt\sigma_i^a) \right\}.$$ 
Notice that we have
$$ \sup_{\mu\notin \cal O^{(b)}_\e}\left(\wt\sigma_\mu^{b} + m_{1c}^{-1}(\lambda_+) \right) \lesssim n^\e (\phi_n + n^{-{1}/{3}}),$$ 
and
$$\inf_{\mu\in \cal O^{(b)}_\e}\left(\wt\sigma_\mu^{b} + m_{1c}^{-1}(\lambda_+)\right) \gtrsim n^\e (\phi_n + n^{-{1}/{3}}). $$
Here we have defined the set of indices such that 
$$\sup_{i\notin \cal O^{(a)}_\e}\theta_1(\wt\sigma_i^a) \le  \inf_{\mu\in \cal O^{(b)}_\e}\theta_2(\wt\sigma_\mu^b),\quad  \sup_{\mu\notin \cal O^{(b)}_\e}\theta_2(\wt\sigma_\mu^b) \le \inf_{i\in \cal O^{(a)}_\e}\theta_1(\wt\sigma_i^a) .$$
This will simplify the labelling of indices: we can label the largest outliers of the $\ctQ_1$ according to the indices $i\in \cal O_\e^{(a)}$ and $\mu\in \cal O_\e^{(b)}$---the other spikes will only give smaller outliers. 

One can see that to prove Theorem \ref{thm_outlier}, it suffices to prove that for arbitrarily small constant $\e>0$, there exists a constant $C>0$ such that
\begin{equation}\label{eq_spikepf}
\begin{split}
&\mathbf 1(\Xi)\left|\wt\lambda_{\alpha(i)}-\theta_1(\wt\sigma_i^a)\right| \le Cn^{2\e}\left[\phi_n\Delta_1^2(\widetilde{\sigma}_i^a) +n^{-1/2 }\Delta_1(\widetilde{\sigma}_i^a)\right], \\ 
&\mathbf 1(\Xi)\left|\wt\lambda_{\beta(\mu)}-\theta_2(\wt\sigma_\mu^b)\right| \le Cn^{2\e}\left[ \phi_n\Delta_2^2(\widetilde{\sigma}_\mu^b)+n^{-1/2} \Delta_2(\widetilde{\sigma}_\mu^b)\right], 
\end{split}
\end{equation}
for all $i\in \cal O_{4\e}^{(a)}$ and $\mu \in  \cal O_{4\e}^{(b)}$, and 
\begin{equation}\label{eq_nonspikepf}
\begin{split}
& |\wt \lambda_{\al(i)}-\lambda_+| \le Cn^{12\e}\left(\phi_n^2 +n^{-2/3}\right), \\
& |\wt \lambda_{\beta(\mu)}-\lambda_+| \le Cn^{12\e}\left(\phi_n^2 +n^{-2/3}\right),
 \end{split}
\end{equation}
for all $i\in \{1,\cdots, r\}\setminus \cal O_{4\e}^{(a)}$ and $\mu  \in \{p+1,\cdots, p+s\}\setminus \cal O_{4\e}^{(b)}$.

\vspace{10pt}

%
\noindent{\bf Step 1:} Our first step is to show that on $\Xi$, there exist no eigenvalues outside the neighborhoods of the classical outlier locations $\theta_1(\wt\sigma_i^a)$ and $\theta_2(\wt\sigma_\mu^b)$. 
For each $1\le i \le r^+,$ we define the permissible interval  
\begin{equation*}
\begin{split}
& \rI_i^{(a)} \equiv \rI_i^{(a)}(D^a, D^b) :=\left[\theta_1(\wt\sigma_i^a)-n^{\e}w_1 (\widetilde{\sigma}_i^a), \theta_1(\wt\sigma_i^a)+n^{\e}w_1 (\widetilde{\sigma}_i^a)\right].
\end{split}
\end{equation*} 
where for simplicity we denote $w_1 (\widetilde{\sigma}_i^a):= \phi_n\Delta_1^2(\widetilde{\sigma}_i^a) +n^{-1/2 }\Delta_1(\widetilde{\sigma}_i^a)$.    
Similarly for each $1\le \mu-p \le s^+$, we define the permissible interval  
\begin{equation*}
\rI_\mu^{(b)} \equiv \rI_\mu^{(b)}(D^a, D^b):=\left[\theta_2(\wt\sigma_\mu^b)-n^{ \epsilon}w_2 (\widetilde{\sigma}_\mu^b), \theta_2(\wt\sigma_\mu^b)+n^{ \epsilon}w_2 (\widetilde{\sigma}_\mu^b)\right].
\end{equation*} 
where we denote $w_2 (\widetilde{\sigma}_\mu^b):= \phi_n\Delta_2^2(\widetilde{\sigma}_\mu^b)+n^{-1/2} \Delta_2(\widetilde{\sigma}_\mu^b)$. We then define 
\begin{equation}\label{I0}
\rI\equiv \rI({D}^a, D^b):=\rI_0 \cup \Big(\bigcup_{i \in \mathcal{O}^{(a)}_{\epsilon}}\rI_i^{(a)}\Big) \cup  \Big(\bigcup_{\mu \in \mathcal{O}^{(b)}_{\epsilon}}\rI_\mu^{(b)}\Big) ,
\end{equation}
where
$$ \rI_0:=\left[0, \lambda_+ + n^{3\e}\phi_n^2+ n^{-2/3+3\epsilon}\right].$$
We claim the following result.
\begin{lemma}\label{lem_gapI}
The complement of $\rI({D}^a, D^b)$ contains no eigenvalue of $\ctQ_1.$
\end{lemma}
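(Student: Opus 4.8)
\textbf{Proof plan for Lemma \ref{lem_gapI}.}

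The plan is to show that any $x \notin \rI(D^a,D^b)$ cannot satisfy the master equation \eqref{masterx}, hence is not an eigenvalue of $\ctQ_1$. First, the trivial part: for $x$ in the interval $[0,\lambda_+ + n^{3\e}\phi_n^2 + n^{-2/3+3\e}]$ there is nothing to prove since these $x$ are \emph{inside} $\rI_0 \subset \rI$. So I restrict to $x > \lambda_+ + n^{3\e}\phi_n^2 + n^{-2/3+3\e}$, which puts $x$ (together with a suitable $\ii\eta$, $\eta \downarrow 0$) inside the domain $S_{out}(\varsigma_2,\e)$ where the sharp outside-the-spectrum anisotropic law \eqref{eq_bound1ev} applies on $\Xi$. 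For $x$ outside $\rI$ but $\le \varsigma_2\lambda_+$ and not in any $\rI_i^{(a)}$ or $\rI_\mu^{(b)}$, I want to conclude that $\det(\mathcal D^{-1} + x\,\mathbf U^* G(x)\mathbf U) \neq 0$; for $x > \varsigma_2\lambda_+$ one uses the crude norm bound $\|G(x)\| = \OO(1)$ together with $\|x \mathbf U^* G(x)\mathbf U\|$ being small relative to $\|\mathcal D^{-1}\| \sim 1$, or a separate standard argument bounding the largest eigenvalue of $\ctQ_1$ (e.g. via the interlacing Lemma \ref{lem_weylmodi} and the rigidity/largest-eigenvalue bound for $\mathcal Q_1$).

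The core of the argument is the middle range $\lambda_+ + n^{3\e}\phi_n^2 + n^{-2/3+3\e} < x \le \varsigma_2\lambda_+$, $x \notin \bigcup \rI_i^{(a)} \cup \bigcup \rI_\mu^{(b)}$. Here I replace $x\,\mathbf U^* G(x)\mathbf U$ by its deterministic limit $x\,\mathbf U^*\Pi(x)\mathbf U$, which by \eqref{defn_pi} and \eqref{Depsilon0} is block-diagonal with diagonal blocks $-(1+m_{2c}(x)\sigma_i^a)^{-1}$ and $-(1+m_{1c}(x)\sigma_\mu^b)^{-1}$; thus $\mathcal D^{-1} + x\,\mathbf U^*\Pi(x)\mathbf U$ is diagonal with entries $d_i^a{}^{-1}+1 - (1+m_{2c}(x)\sigma_i^a)^{-1}$ and $d_\mu^b{}^{-1}+1 - (1+m_{1c}(x)\sigma_\mu^b)^{-1}$. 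Using the monotonicity and the derivative estimates of Lemma \ref{lem_derivativeprop} (and their complex extensions in Lemma \ref{lem_complexderivative}), the $i$-th diagonal entry vanishes exactly at $x = \theta_1(\wt\sigma_i^a)$ and behaves like a nonzero quantity of size $\gtrsim m_{2c}'(x)\cdot|x-\theta_1(\wt\sigma_i^a)|$ — which I will lower-bound by the ``gap size'' $w_1(\wt\sigma_i^a)$ (up to $n^\e$) precisely when $x$ lies outside $\rI_i^{(a)}$, and similarly for the $b$-entries. Hence $\mathcal D^{-1} + x\,\mathbf U^*\Pi(x)\mathbf U$ is invertible with the smallest singular value bounded below by something like $n^{\e}(\phi_n + n^{-1/2}\Delta_1^{-1})\cdot\Delta_1$ type quantities; comparing this lower bound against the error $\|x\,\mathbf U^*(G(x)-\Pi(x))\mathbf U\| \le n^{\e/2}(\phi_n + n^{-1/2}\kappa^{-1/4})$ from \eqref{eq_bound1ev}, and using $\kappa = |x-\lambda_+| \gtrsim \Delta_1^2$ (so $\kappa^{-1/4} \lesssim \Delta_1^{-1/2}$, which needs a careful bookkeeping against $\Delta_1$), shows the perturbation is strictly smaller than the lower bound on the invertibility scale of the deterministic matrix. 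A standard perturbation argument ($\det(M+E)\neq 0$ if $\|M^{-1}\|\|E\| < 1$) then gives $\det(\mathcal D^{-1} + x\,\mathbf U^* G(x)\mathbf U)\neq 0$, so $x$ is not an eigenvalue.

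The main obstacle I anticipate is the bookkeeping in the middle range: one must carefully match the three competing scales — the distance $|x - \theta_1(\wt\sigma_i^a)|$ (or to $\theta_2(\wt\sigma_\mu^b)$), the size $\Delta_1(\wt\sigma_i^a)$ (resp.\ $\Delta_2$) of the relevant spike gap, and the local-law error $\phi_n + n^{-1/2}\kappa^{-1/4}$ — uniformly over all spikes $i \in \mathcal O_\e^{(a)}$, $\mu\in\mathcal O_\e^{(b)}$ and over all $x$ between consecutive permissible intervals, including near the edge of $\rI_0$ where $\Delta_1$ may be as small as $n^{\e/2}(\phi_n^{1/2} + n^{-1/6})$. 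One has to check that the definition of $w_{1,2}$ and the $n^{\e}$ enlargement factors are chosen so that the deterministic lower bound genuinely dominates the random error in every regime; this is essentially the content of the three-step strategy of \cite{KY2013}, and it is where the estimates of Lemma \ref{lem_derivativeprop} (translating $x$-distances into $m$-distances into gap sizes) and Lemma \ref{lem_mplaw} (the square-root edge behavior controlling $\kappa^{-1/4}$) are used repeatedly. The argument on the complement of $[\lambda_+-\varsigma_1,\varsigma_2\lambda_+]$ (far from the spectrum, or $x$ very large) is comparatively routine and handled by the crude resolvent bound plus a standard operator-norm estimate on $\ctQ_1$.
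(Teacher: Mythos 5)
Your plan follows essentially the same route as the paper's proof: the master equation of Lemma \ref{lem_pertubation}, replacement of $x\,\mathbf U^*G(x)\mathbf U$ by the diagonal matrix $x\,\mathbf U^*\Pi(x)\mathbf U$ via the outside-the-spectrum law \eqref{eq_bound1ev}, a lower bound on the diagonal entries (which vanish at $\theta_1(\wt\sigma_i^a)$, $\theta_2(\wt\sigma_\mu^b)$) for $x\notin \rI$, and a smallest-singular-value perturbation argument. The scale bookkeeping you flag is exactly what the paper settles by the dichotomy $\theta_1(\wt\sigma_i^a)\notin[x-c\kappa_x,x+c\kappa_x]$ versus $\theta_1(\wt\sigma_i^a)\in[x-c\kappa_x,x+c\kappa_x]$ (using $\theta_1(\wt\sigma_i^a)-\lambda_+\sim\Delta_1^4(\wt\sigma_i^a)$ and $|m_{2c}'|\sim\Delta_1^{-2}(\wt\sigma_i^a)$ in the latter case), together with the observation that spikes not in $\mathcal O_\e^{(a)}\cup\mathcal O_\e^{(b)}$ have classical locations inside $\rI_0$, so their diagonal entries are also bounded below for $x\notin\rI_0$.
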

\begin{proof}
By \eqref{masterx}, \eqref{eq_bound2ev} and \eqref{eq_bound1ev}, we see that $x \notin \rI_0$ is an eigenvalue of $\ctQ_1$  if and only if 
\begin{equation}\label{eq_pertubhold}
\begin{split}
&\mathbf{1}(\Xi)(\mathcal{D}^{-1}+x\mathbf U^*G(x) \mathbf U)\\
&=\mathbf{1}(\Xi)\left(\mathcal{D}^{-1}+x\bU^* \Pi(x) \bU+ \OO(\kappa^{-1/4} n^{-1/2+\epsilon/2}+n^{\e/2}\phi_n  ) \right),
\end{split}
\end{equation}
is singular. To prove the claim, it suffices to show that if $x \notin \rI$, then
\be\label{diag_big}
\begin{split}
\min\left\{ \min_{1\le i \le r}  \left| \frac{d_i^a + 1}{d_i^a} - \frac{1}{1+m_{2c}(x)\sigma_i^a} \right| ,\min_{1\le \mu - p \le s}\left| \frac{d_\mu^b + 1}{d_\mu^b} - \frac{1}{1+m_{1c}(x)\sigma_\mu^b} \right|\right\} \\
\gg n^{\e/2} (\phi_n +n^{-1/2 }\kappa_x^{-1/4} ).
\end{split}
\ee
If \eqref{diag_big} holds, then the smallest singular value of $(\mathcal{D}^{-1}+x\bU^* \Pi(x) \bU)$ is much larger than $n^{\e/2}(\phi_n + n^{-1/2 }\kappa_x^{-1/4} )$, and the matrices in \eqref{eq_pertubhold} has to be non-singular. Note that for $x >\lambda_+$, we have
\begin{align*} \left|\frac{d_i^a + 1}{d_i^a} - \frac{1}{1+m_{2c}(x)\sigma_i^a}\right| &= \left|\frac{1}{1+m_{2c}(\theta_1(\wt\sigma_i^a))\sigma_i^a}- \frac{1}{1+m_{2c}(x)\sigma_i^a}\right| \\
&\gtrsim |m_{2c}(x)-m_{2c}(\theta(\wt\sigma_i^a))| .  
\end{align*}

For any $1\le i \le r$, we claim that
\be\label{xnotinI}
\left|x - \theta_1(\wt\sigma_i^a)\right| \ge n^\e w_1(\wt\sigma_i^a) \quad \text{ for all } x\notin \rI .
\ee
 In fact, \eqref{xnotinI} is true for $i\in \cal O_\e^{(a)}$ by definition.  For $i\notin  \cal O_\e^{(a)}$, we have $\wt\sigma_i^a + m_{2c}^{-1}(\lambda_+)\le n^\e (\phi_n + n^{-1/3})$ and by \eqref{eq_derivativebound},
$$\theta_1(\wt\sigma_i(a)) - \lambda_+ \lesssim n^{ 2\e}\phi_n^2 +n^{-2/3+2\e} \ll n^{ 3\e}\phi_n^2 +n^{-2/3+3\e}.$$ 
 Moreover, by the definition of $w_1 (\widetilde{\sigma}_i^a)$ we have
$$w_1 (\widetilde{\sigma}_i^a) \lesssim n^\e \phi_n^2 + n^{-2/3+\e},\quad i\notin  \cal O_\e^{(a)} .$$
The above estimates give \eqref{xnotinI} for $i\notin  \cal O_\e^{(a)}$ by the definition of $\rI_0$.  

Now to prove \eqref{diag_big}, we first assume that there exists a constant $c>0$ such that $\theta_1(\wt\sigma_i^a) \notin [x-c\kappa_x, x+ c\kappa_x]$. Then since $m_{2c}$ is monotonically increasing on $(\lambda_+,+\infty)$, we have that
\be\nonumber
\begin{split}
|m_{2c}(x)-m_{2c}(\theta_1(\wt\sigma_i^a))| &\ge |m_{2c}(x) - m_{2c}(x\pm c\kappa_x)| \sim \kappa_x^{1/2} \\
& \gg n^{\e/2}\phi_n +n^{-1/2+\epsilon/2}\kappa_x^{-1/4},
\end{split}
\ee
where we used \eqref{eq_mderivative0} in the second step, and $\kappa_x\ge n^{3\e}\phi_n^2 + n^{-2/3+3\e}$ for $x\notin \rI_0$ in the last step. On the other hand, suppose $\theta_1(\wt\sigma_i^a) \in [x-c\kappa_x, x+ c\kappa_x]$ such that $\theta_1(\wt\sigma_i^a)-\lambda_+ \sim \kappa_x$. With \eqref{eq_derivativebound} and $\wt{\sigma}_i^a+m_{2c}^{-1}(\lambda_+) \geq n^\e \phi_n + n^{-{1}/{3}+\e}$, it is easy to show that
$$\theta_1(\wt{\sigma}_i^a) - \lambda_+ \sim \left[\Delta_1(\wt{\sigma}_i^a )\right]^4 .$$
Together with \eqref{eq_mderivative}, we have that
$$|m_{2c}'(\xi)| \sim |m_{2c}'(\theta_1(\wt{\sigma}_i^a))| \sim   \left[\Delta_1(\wt{\sigma}_i^a)\right]^{-2}$$
for $\xi \in \rI_i^{(a)}$. Since $m_{2c}$ is monotonically increasing on $(\lambda_+,+\infty)$, we get that for $x\notin \rI_i^{(a)}$,
\be\nonumber
\begin{split}
&|m_{2c}(x)-m_{2c}(\theta_1(\wt\sigma_i^a))| \ge |m_{2c}(\theta(\wt\sigma_i^a)\pm n^{ \epsilon}w_1(\wt\sigma_i^a))-m_{2c}(\theta(\wt\sigma_i^a))| \\
&\gtrsim n^\e \phi_n+n^{-1/2+\epsilon}\left[\Delta_1(\wt{\sigma}_i^a)\right]^{-1} \gtrsim n^\e \phi_n+n^{-1/2+\epsilon}\left(\theta_1(\wt{\sigma}_i^a)-\lambda_+\right)^{-1/4} \\
&\gg  n^{\e/2} \phi_n+n^{-1/2+\epsilon/2}\kappa_x^{-1/4},
\end{split}
\ee
where we used \eqref{eq_derivativebound} in the third step. The $d_\mu^b$ term can be handled in the same way. This proves \eqref{diag_big}.
\end{proof}

\noindent{\bf Step 2:} In this step we will show that each $\rI_i^{(a)}$, $i \in  \mathcal{O}_\epsilon^{(a)}$, or $\rI_\mu^{(b)}$, $\mu\in \cal O_\e^{(b)}$, contains the right number of eigenvalues of $\ctQ_1$, under a {\it special case}; see \eqref{eq_multione} below. For simplicity, we relabel the indices in $\mathcal{O}_\epsilon^{(a)}\cup \mathcal{O}_\epsilon^{(b)}$ as $\wt\sigma_1, \cdots, \wt\sigma_{r_\e}$, and call them $\e$-{\it{spikes}}. Moreover, we assume that they correspond to classical locations of outliers as $x_1,\cdots, x_{r_\e}$ (some of which are determined by $\theta_1$, while others are given by $\theta_2$), such that  
\be
x_1 \ge x_2 \ge \cdots \ge x_{r_\e}.
\ee
The corresponding permissible intervals $\rI_i^{(a)}$ and $\rI_\mu^{(b)}$ are relabelled as $\rI_i$, $1\le i \le r_\e$. In this step, we consider a special configuration $\mathbf x\equiv \mathbf x(0):=(x_1, x_2 , \cdots , x_{r_\e})$ of the outliers that is {\it independent of $n$} and satisfies 
\begin{equation}\label{eq_multione}
x_1>x_2 > \cdots > x_{r_\e}>\lambda_+.
\end{equation}

In this step, we claim that each $\mathbf{\rI}_i(\mathbf x)$, $1\le  i \le r_\e$, contains precisely one eigenvalue of $\ctQ_1$. Fix any $1\le i \le r_\e$ and choose a small $n$-independent positively oriented closed contour $\mathcal{C} \subset \mathbb{C}/[0, \lambda_+]$ that encloses $x_i$ but no other point of the set $\{x_i\}_{i=1}^{r_\e}.$ Define two functions
\begin{equation*}
h(z):=\det(\mathcal{D}^{-1}+z \bU^* G(z) \bU), \quad  l(z)=\det(\mathcal{D}^{-1}+z \bU^*\Pi(z) \bU).
\end{equation*}  
The functions $h,l$ are holomorphic on and inside $\mathcal{C}$ when $n$ is sufficiently large by \eqref{eq_bound2ev}. Moreover, by the construction of $\mathcal{C},$ the function $l$ has precisely one zero inside $\mathcal{C}$ at $x_i.$ By \eqref{eq_bound1ev}, we have  
\begin{equation*}
\min_{z \in \mathcal{C}}|l(z)| \gtrsim 1, \quad |h(z)-l(z)|=\OO(n^{\e/2}\phi_n  ).
\end{equation*}
The claim then follows from Rouch{\' e}'s theorem as long as $\e$ is taken sufficiently small. 

\vspace{10pt}

\noindent{\bf Step 3:} In order to extend the results in Step 2 to arbitrary $n$-dependent configuration $\mathbf x_n$, we shall
employ a continuity argument as in \cite[Section 6.5]{KY2013}. We first choose an $n$-independent $\bx(0)$ that satisfies \eqref{eq_multione}. We then choose a continuous ($n$-dependent) path of the eigenvalues of $D^a$ and $D^b$, which gives a continuous path of the configurations $(\bx(t):0\le t \le 1)$ that connects $\bx(0)$ and $\bx(1)=\bx_n$. Correspondingly, we have a continuous path of eigenvalues $\{\wt \lambda_i(t)\}_{i=1}^n$. We require that $\bx(t)$ satisfies the following properties.
\begin{itemize}
\item[(i)] For all $t\in [0,1]$, the eigenvalues of $D^a(t)$ and $D^b(t)$ are all non-negative.

\item[(ii)] For all $t\in [0,1]$, the number $r_\e$ of $\e$-spikes is unchanged and we denote them by $\wt \sigma_1(t),\cdots, \wt \sigma_{r_\e}(t)$. Moreover, we always have the following order of the outliers: $x_1(t) \ge x_2(t) \ge \cdots \ge x_{r_\e}(t)$.

\item[(iii)] For all $t\in [0,1]$, we denote the permissible intervals as $\rI_i(t)$. If $\rI_i(1)\cap \rI_j(1)=\emptyset$ for $1\le i < j\le r_\e$, then $\rI_i(t)\cap \rI_j(t)=\emptyset$ for all $t\in [0,1]$. The interval $\rI_0$ in \eqref{I0} is unchanged along the path. 
\end{itemize}
It is easy to see that such a path $\bx(t)$ exists. With a bootstrap argument along the path $\bx(t)$, we can prove the following lemma. 

\begin{lemma}\label{lem_cont} 
On the event $\Xi$, the estimate \eqref{eq_spikepf} holds for the configuration $\bx(1)$.
\end{lemma}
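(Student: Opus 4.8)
The plan is to carry out the continuity (bootstrap) argument along the path $\mathbf x(t)$, $t\in[0,1]$, constructed just before the statement of Lemma \ref{lem_cont}. The starting point $\mathbf x(0)$ is the $n$-independent configuration for which Step 2 already gives the conclusion: each permissible interval $\rI_i(0)$ contains exactly one eigenvalue $\wt\lambda_{\mathfrak a_i}(0)$ of $\ctQ_1$, where $\mathfrak a_i$ is the correct label (determined by $\alpha$ or $\beta$ according to whether $x_i$ comes from $\theta_1$ or $\theta_2$), and Lemma \ref{lem_gapI} shows there are no eigenvalues outside $\rI({D}^a,D^b)$. The goal is to propagate ``$\rI_i(t)$ contains exactly one eigenvalue, with the right label'' from $t=0$ to $t=1$.

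First I would fix a realization in the high-probability event $\Xi$ (so the estimates \eqref{aniso_lawev}, \eqref{eq_bound1ev}, \eqref{eq_bound2ev} hold uniformly), noting that $\Xi$ can be chosen uniformly in the path parameter $t$ since the path only changes the deterministic matrices $D^a(t), D^b(t)$ and the local laws of Theorems \ref{LEM_SMALL}, \ref{thm_largerigidity}, \ref{lem_localout} concern only $X$ (and $A,B$, which are fixed along the path). Then the entire argument is deterministic in $t$. The key structural facts I would use are: (a) the eigenvalues $\wt\lambda_j(t)$ depend continuously on $t$ (they are continuous functions of the entries of $D^a(t),D^b(t)$); (b) by property (iii) of the path, intervals that are disjoint at $t=1$ stay disjoint for all $t$, and $\rI_0$ is fixed; (c) Lemma \ref{lem_gapI} applies at every $t$, so no eigenvalue can ever lie in the complement of $\rI({D}^a(t),D^b(t))$. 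Define $N_i(t):=\#\{j: \wt\lambda_j(t)\in \rI_i(t)\}$ for each $\e$-spike index $i$, and $N_0(t):=\#\{j:\wt\lambda_j(t)\in\rI_0\}$. Each $N_i(t)$ is integer-valued; I want to show it is constant. By continuity of eigenvalues and the fact (from Lemma \ref{lem_gapI}) that no eigenvalue sits on the boundary between a gap and a permissible interval — more precisely, by the quantitative separation \eqref{diag_big}, the smallest singular value of $\mathcal D^{-1}+x\,\mathbf U^*\Pi(x)\mathbf U$ is bounded below by a power of $n$ times the error in \eqref{eq_pertubhold} on the boundary $\partial\rI_i(t)$, so no eigenvalue lies within an $n^{\e/2}$-neighborhood of $\partial\rI_i(t)$ — $N_i(t)$ cannot jump: a jump would force an eigenvalue to cross a region where \eqref{masterx} has no solution, contradicting \eqref{diag_big}. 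Hence $N_i$ is locally constant, therefore constant on $[0,1]$, so $N_i(1)=N_i(0)=1$. This gives \eqref{eq_spikepf}, since the unique eigenvalue in $\rI_i(1)$ lies within $n^\e w_1$ (resp.\ $n^\e w_2$) of $\theta_1(\wt\sigma_i^a)$ (resp.\ $\theta_2(\wt\sigma_\mu^b)$) by the definition of $\rI_i$. For the non-$\e$-spike indices one argues similarly that the ``missing'' eigenvalues are absorbed into $\rI_0$, giving \eqref{eq_nonspikepf}; a counting check ($\sum_i N_i + N_0$ equals the total number of eigenvalues in the relevant range, which is preserved) pins down the labels.

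The main obstacle I expect is the bookkeeping of labels — matching the eigenvalue found in $\rI_i(t)$ to the correct index $\alpha(i)$ or $\beta(\mu)$ in Definition \ref{defn_relabelling}, especially when intervals $\rI_i(1)$ and $\rI_j(1)$ overlap (the degenerate or near-degenerate case), so that property (iii) does not force disjointness. In that situation one cannot track individual eigenvalues through the overlapping cluster, and must instead argue that the \emph{union} of the overlapping permissible intervals contains exactly the right total number of eigenvalues (by a Rouch\'e count analogous to Step 2 applied to a contour enclosing the whole cluster of classical locations), then deduce the per-interval statement \eqref{eq_spikepf} directly from the fact that every eigenvalue of $\ctQ_1$ in that cluster must lie within $n^\e \max_i w(\wt\sigma_i)$ of \emph{some} $x_i$ in the cluster, combined with the interlacing bound of Lemma \ref{lem_weylmodi} to assign the indices in order. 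I would also need to check that the continuity argument survives the case where, along the path, an $\e$-spike's classical location merges into or emerges from $\rI_0$ — this is precisely why the set of $\e$-spikes is held fixed (property (ii)) and why $\rI_0$ is held fixed (property (iii)): one chooses $\mathbf x(0)$ so that this never happens, which is possible because $r_\e$ is determined by $\wt\sigma(1)$ alone. Once these labelling issues are handled, the remaining steps are the routine substitutions of the derivative estimates from Lemma \ref{lem_derivativeprop} to convert $|x-\theta_1(\wt\sigma_i^a)|\le n^\e w_1(\wt\sigma_i^a)$ into the stated bound $\prec n^{-1/2}\Delta_1(\wt\sigma_i^a)+\phi_n\Delta_1^2(\wt\sigma_i^a)$, and taking $\e\downarrow 0$.
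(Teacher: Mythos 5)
Your overall skeleton (restrict to the event $\Xi$, argue deterministically, use continuity of $\wt\lambda_i(t)$ together with Lemma \ref{lem_gapI} at every $t$, handle disjoint intervals by constancy of the counting functions) is the same as the paper's, and that part is fine. The genuine gap is in the overlapping case, which is where the real work of the paper's proof lies. Knowing that $\wt\lambda_{\al(i)}(1)$ lies in the union $\bigcup_{j\in B_i}\rI_j(1)$, i.e.\ within $n^{\e}\max_{j\in B_i} w(\wt\sigma_j)$ of \emph{some} classical location $x_j$ of the cluster, does not give \eqref{eq_spikepf}: that estimate demands closeness to $\theta_1(\wt\sigma_i^a)$ at the specific scale $w_1(\wt\sigma_i^a)=\phi_n\Delta_1^2(\wt\sigma_i^a)+n^{-1/2}\Delta_1(\wt\sigma_i^a)$ attached to the index $i$ itself. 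A priori the widths $w(\wt\sigma_j)$ and the spread of the centers $x_j$ inside one block could be much larger than $w_1(\wt\sigma_i^a)$. The paper closes exactly this hole with the chain estimate \eqref{deltajj-1} and the diameter bound \eqref{diamBi}: for adjacent members of a block one shows $\Delta_1(\wt\sigma_{j-1})\lesssim \Delta_1(\wt\sigma_j)$ (and the mixed-type analogues), using the derivative estimates \eqref{eq_gderivative}, \eqref{eq_derivativebound} of Lemma \ref{lem_derivativeprop} together with the cross-type comparison \eqref{sim12}, $\Delta_1(-m_{2c}^{-1}(x))\sim\kappa_x^{1/4}\sim\Delta_2(-m_{1c}^{-1}(x))$, which is the only way to compare an $a$-type spike with a $b$-type spike sharing an overlapping outlier location. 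Only after this two-sided comparability of all $\Delta$'s (hence all $w$'s) within a block does ``contained in the cluster'' translate into the rate stated in \eqref{eq_spikepf}. Your proposal to ``assign the indices in order'' via Lemma \ref{lem_weylmodi} addresses labelling but not this rate, and the extra Rouch\'e count around the cluster at $t=1$ is unnecessary once continuity from $t=0$ is used (and would anyway need the same lower bound on the cluster boundary that Lemma \ref{lem_gapI} already provides).

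A second, smaller omission: to run the continuity argument at the cluster level you must verify that the block $\bigcup_{j\in B_i}\rI_j(1)$ stays separated from $\rI_0$ in \eqref{I0}; otherwise the tracked eigenvalue could be absorbed into the bulk region and \eqref{eq_spikepf} would fail. The paper checks this explicitly for $i$ in the $4\e$-sets by comparing $\theta_1(\wt\sigma_i)-\lambda_+\gtrsim \Delta_1^4(\wt\sigma_i)$ (from \eqref{eq_derivativebound}) with the cluster diameter from \eqref{diamBi}, obtaining a gap $\gg n^{3\e}\phi_n^2+n^{-2/3+3\e}$; note this again uses the diameter bound, so it cannot be sidestepped. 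Your remark that one ``chooses $\mathbf x(0)$ so that merging never happens'' does not substitute for this quantitative check at $t=1$, since the obstruction is about the given configuration $\mathbf x(1)$, not the reference one.
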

\begin{proof}
Along the path, we denote the corresponding separable covariance matrices as $\ctQ_1(t)$, with eigenvalues $\{\wt\lambda_i(t)\}$. We define $\rI(t):= \rI_0\cup (\cup_{1\le i \le r_\e}\rI_i(t))$. Combining Step 1 and Step 2 above, we obtain that on $\Xi$, 
\begin{equation}\label{eq_outliert}
\wt\lambda_i(0) \in \rI_i(0), \quad 1\le i \le r_\e, \quad \text{and} \quad \wt\lambda_i(0) \in \rI_0, \quad  i \geq r_\e .
\end{equation}
To apply a continuity argument, recall that we have shown that all the eigenvalues of $\ctQ_1(t)$ lie in $\rI(t)$ for all $t \in [0,1]$. 
Moreover, since $t \mapsto \ctQ_1(t)$ is continuous, we find that $\wt\lambda_i(t)$ is continuous in $t \in [0,1]$ for all $i$. During the proof, we shall call $i\in \{1, \cdots, r_\e\}$ a type-$a$ index if $\wt \sigma_i= \wt\sigma_{k_i}^{a}$ for some $k_i$. Otherwise, we shall call $i$ a type-$b$ index.
Note that if the $r_{\e}$ intervals are disjoint when $t=1$, then they are disjoint for all $t \in [0,1]$ by property (iii). Together with (\ref{eq_outliert}) and the continuity of $\wt\lambda_i(t),$ we conclude that 
\begin{equation*}
\wt\lambda_i(t) \in \rI_i(t), \quad 1\le i \le r_{\e}, 
\end{equation*}
for all $t \in [0,1]$. 

Now we consider the general case where some of the intervals are not disjoint. Let $\mathcal{B}$ denote the finest partition of $\{1,\cdots, r_{\e}\}$ such that $i$ and $j$ belong to the same block of $\cal B$ if $\rI_i (1) \cap \rI_j(1) \neq  \emptyset$. Denote by $B_i$ the block of $\mathcal{B}$ that contains  $i$. Note that elements of $B_i$ are sequences of consecutive integers. We now pick any $1\le i \le r_{4\e}$ and let $j \in B_i$ such that it is not the smallest index in $B_i$. Our first task is to estimate $x_{j-1}(1)-x_{j}(1)$. We claim that there exists a constant $C>0$ such that
\be\label{deltajj-1} 
x_{j} - x_{j-1} \le Cn^\e w(\wt\sigma_j) ,
\ee
where
$$ w(\wt\sigma_j):= \begin{cases}
w_1(\wt\sigma_{j}) , \ & \text{ if ${j}$ is of type-$a$} \\
w_2(\wt\sigma_{j}) , \ & \text{ if ${j}$ is of type-$b$}
\end{cases}.$$
To prove the claim, without loss of generality, we assume that $j$ is a type-$a$ index. Let $\xi \ge \wt\sigma_j$ be a value such that $\theta_1(\xi)\equiv g_{2c}(-\xi^{-1})\in \rI_j(1)$. Then we have
$$ \min_{\zeta \in [ \wt\sigma_j,\xi]}g'_{2c}(-\zeta^{-1})\left(\tsig_j^{-1}-\xi^{-1}\right) \leq g_{2c}(-\xi^{-1})-g_{2c}(-\wt\sigma_j^{-1}) \leq  Cn^{ \epsilon}w_1(\wt\sigma_j) .$$
By \eqref{eq_gderivative}, this implies that 
\begin{equation}\nonumber
\xi-\wt\sigma_j \lesssim n^\e \phi_n + n^{-1/2+\epsilon} \left( \wt\sigma_j + m_{2c}^{-1}(\lambda_+)\right)^{-1/2}.
\end{equation}
Thus we get that 
\begin{equation*}
\begin{split}
\Delta_1(\xi) &=\Delta_1(\wt\sigma_j)\left(1+\frac{\xi-\wt\sigma_j}{ \wt\sigma_j +m_{2c}^{-1}(\lambda_+)}\right)^{1/2} \\
& \le  \Delta_1(\wt\sigma_j)\left(1+\frac{n^\e \phi_n}{\wt\sigma_j +m_{2c}^{-1}(\lambda_+)}+\frac{n^{-1/2+\e}}{\left(\wt\sigma_j +m_{2c}^{-1}(\lambda_+)\right)^{3/2}}\right)  \lesssim  \Delta_1(\wt\sigma_j), 
\end{split}
\end{equation*}
where in the last step we used that that $\tsig_j \in \mathcal{O}^{(a)}_{\epsilon}$ defined in (\ref{eq_otau}). With the same arguments, we can also prove that for $\xi \le \wt\sigma_{j-1}$,
$$\begin{cases}
\Delta_1(\wt\sigma_{j-1}) \lesssim \Delta_1(\xi) , \ & \text{ if $\wt\sigma_{j-1}$ is of type-$a$ and } \theta_1(\xi)\in I_{j-1}(1),\\
\Delta_2(\wt\sigma_{j-1})  \lesssim \Delta_2(\xi) , \ & \text{ if $\wt\sigma_{j-1}$ is of type-$b$ and } \theta_2(\xi)\in I_{j-1}(1).
\end{cases}$$
Now we pick $x\in I_{j}(1)\cap I_{j-1}(1)$, and denote $\xi_1:=-m_{2c}^{-1}(x)$ and $\xi_2:=-m_{1c}^{-1}(x)$. Note that we have $x=\theta_1(\xi_1)=\theta_2(\xi_2)$, and
\be\label{sim12}
\begin{split}
\Delta_1(\xi_1)&= (m_{2c}^{-1}(\lambda_+)-m_{2c}^{-1}(x))^{1/2}  \sim \kappa_x^{1/4} \\
&\sim  (m_{1c}^{-1}(\lambda_+)-m_{1c}^{-1}(x))^{1/2}=\Delta_2(\xi_2),
\end{split}
\ee
where we used \eqref{eq_mderivative0} in the second and third steps. Then if $(j-1)$ is of type-$a$, we have
$$\Delta_1(\wt\sigma_{j-1}) \lesssim \Delta_1(\xi_1) \lesssim \Delta_1(\wt\sigma_j) .$$
If $(j-1)$ is of type-$b$, then using \eqref{sim12} we can obtain that
$$\Delta_2(\wt\sigma_{j-1}) \lesssim \Delta_2(\xi_2) \lesssim \Delta_1(\xi_1) \lesssim \Delta_1(\wt\sigma_j)  .$$
This proves the claim \eqref{deltajj-1}.

Repeating the estimate \eqref{deltajj-1} for all the remaining $j \in B_i$, since $|B_i|$ is trivially bounded by $r+s$, we obtain that
\begin{equation}\label{diamBi}
\text{diam} \left( \bigcup_{j \in B_i} \rI_j(1) \right) \leq C n^{ \epsilon} w(\wt\sigma_{\max\{j:j\in B_i\} }) \le  C n^{ \epsilon} w(\wt\sigma_i).
\end{equation}
On the other hand, since $i\in \mathcal O_{4\e}^{(a)}\cup \mathcal O_{4\e}^{(b)}$, by \eqref{eq_derivativebound} we have that 
\begin{align*}
\theta_1(\wt\sigma_i)-\lambda_+- \text{diam} \left( \bigcup_{j \in B_i} \rI_j(1) \right)&\ge c\Delta(\wt\sigma_i)^4 - C n^{\epsilon} w(\wt\sigma_i) \\
&\gg n^{ 3\e}\phi_n^2+n^{-2/3+3\e}.
\end{align*}
Hence there is a gap between the right of $\rI_0$ and the left of $ \bigcup_{j \in B_i} \rI_j(1)$. Then by (\ref{eq_outliert}), property (iii) of the path and the continuity of the eigenvalues along the path, we obtain that
\begin{equation}\label{Bit}
\wt\lambda_i(t) \in \bigcup_{j \in B_i} \rI_j(t), \quad 1\le i \le r_{4\e} ,
\end{equation}
for all $t \in [0,1]$. This proves \eqref{eq_spikepf} by \eqref{diamBi}.
\end{proof}

\noindent{\bf Step 4:} Finally, we consider the non-outlier eigenvalues, i.e. eigenvalues corresponding to $i \notin \mathcal O_{\e}^{(a)}\cup \mathcal O_{\e}^{(b)}$. 
First, we fix a configuration $\bx(0)$ satisfying (\ref{eq_multione}). By Step 2, (\ref{eq_bound2ev}) and Lemma \ref{lem_weylmodi}, we have
\begin{equation}\label{eq_upper}
\wt\lambda_i(0) \in \rI_0,\quad \text{and}\quad \wt\lambda_i (0) \geq \lambda_+ - n^\e (n^{-1/3}\phi_n^2 + n^{-2/3}).
\end{equation} 
The above two estimates give that 
$$|\wt\lambda_i (0)-\lambda_+|\le n^{3\e}\phi_n^2 + n^{-2/3+3\e}.$$ 
Next we employ a similar continuity argument as in Step 3. 
For $t \in [0,1],$ by (\ref{eq_bound2ev})  and Lemma \ref{lem_weylmodi}, we always have that 
\begin{equation}\label{iterlacing_t}
\lambda_i(t) \geq \lambda_+-n^\e (n^{-1/3}\phi_n^2 + n^{-2/3}), \quad i \geq r^++s^++1.
\end{equation}
As in the proof of Lemma \ref{lem_cont}, if $\rI_0$ is disjoint from the other $\rI_j$'s, then by the continuity of $\wt\lambda_i(t)$ and Lemma \ref{lem_gapI}, we can conclude that $\wt\lambda_i(t) \in \rI_0(t)$ for all $t \in [0,1]$. Otherwise, we again consider the partition $\mathcal B$ as in the proof of Lemma \ref{lem_cont}, and let $B_0$ be the block of $\mathcal B$ that contains $i$. With the same arguments as in the proof of Lemma \ref{lem_cont}, we can prove that 
$$\rI_0(1) \cup \left( \bigcup_{j\in B_0} \rI_{j}(1)\right)\subset [0, \lambda_+ + Cn^{3\e}(\phi_n^2 +n^{-2/3 })].$$
Then using (\ref{eq_upper}), \eqref{iterlacing_t} and the continuity of the eigenvalues along the path, we obtain that
\begin{equation}\nonumber
\left|\wt\lambda_i(t) - \lambda_+\right| \le Cn^{3\e}(\phi_n^2 + n^{-2/3}), \quad r_{\e}< i \le r+s ,
\end{equation}
for all $t \in [0,1]$. Obviously, we can apply the same arguments to $r_{4\e}< i \le r+s$ by replacing $\rI_0(1)$ with $[0, \lambda_+ + n^{-2/3 + 12\e}]$, and hence conclude \eqref{eq_nonspikepf}. This finishes the proof of Theorem \ref{thm_outlier}.
\end{proof}

\subsection{Eigenvalue Sticking}\label{sec_pf_sticking}
 
In this section, we prove the eigenvalue sticking result, i.e. Theorem \ref{thm_eigenvaluesticking} of the paper. By Theorem \ref{thm_outlier}, Theorem \ref{LEM_SMALL}, Theorem \ref{thm_largerigidity}, Theorem \ref{lem_localout} and Lemma \ref{delocal_rigidity}, for any small constants $\tau>0$ and $\e>0$, we can choose the high-probability event $\Xi$ in which \eqref{aniso_lawev}-\eqref{eq_bound2ev} and the following estimates hold:
\begin{equation}\label{eq_stickingrigi}
\mathbf{1}(\Xi)|\wt\lambda_{i}-\lambda_+| \leq n^{\epsilon/2}\left(\phi_n^2 + n^{-2/3}\right), \quad \text{ for } \ r^++s^+ +1 \le i \le \varpi, 
\end{equation}
for some fixed large integer $\varpi\ge r+s$;  
\begin{equation}\label{eq_stickingrigi2}
\begin{split}
 \mathbf{1}(\Xi)|\lambda_i-\gamma_i| &\leq  n^{-2/3+\e/2}\left( i^{-1/3}+\mathbf 1(i \le n^{1/4} \phi_n^{3/2})\right)  + n^{\e/2} \eta_l(\gamma_i)\\
&+ n^{2/3+\e/2} i^{-2/3} \eta_l^2(\gamma_i);
\end{split}
\end{equation}
for $ i \leq \tau p;$ 
\be\label{delocalav2} \left|\langle \mathbf u,{\bm \xi}_k\rangle \right|^2+\left|\langle \mathbf v ,{\bm \zeta}_k\rangle \right|^2\le n^{\e/20}\left[ n^{-1} + \eta_l(\gamma_k) \left( \frac{k}{n}\right)^{1/3} +\eta_l(\gamma_k) \phi_n\right],\ee
for $ k \leq \tau p$ and $\bu,\bv$ in some given set of deterministic unit vectors of cardinality $n^{\OO(1)}$.  Again the randomness of $X$ only comes into play to ensure that $\Xi$ holds with high probability. The rest of the proof is restricted to $\Xi$ only, and will be entirely deterministic.

Our strategy is similar to the one described at the beginning of Section \ref{sec:ev}. We first find the permissible region. For any $i$, we define the set 
\begin{equation}\label{eq_omega}
\begin{split}
\Omega_i :=
&\Big\{x \in [\lambda_{i-r-s-1}, \lambda_+ + c_0n^{2\epsilon} (\phi_n^2 +n^{-2/3})]: \\
 &\text{dist} \Big (x, \text{Spec}(\mathcal{Q}_1) \Big )>n^{-1+\epsilon} \alpha_+^{-1} + n^{\e}\eta_l(x) \Big\},
\end{split}
\end{equation}
where $\text{Spec}(\mathcal{Q}_1)$ stands for the spectrum of $\mathcal{Q}_1$ and $c_0>0$ is some small constant.

\begin{lemma}\label{prop_eigensticking} 
For $\alpha_+ \geq n^\e (\phi_n + n^{-1/3})$ and $i \leq n^{1-2\epsilon} \alpha_+^3,$ there exists a constant $c_0>0$ such that the set $\Omega_i$ contains no eigenvalue of $\ctQ_1.$
\end{lemma}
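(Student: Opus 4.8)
\textbf{Proof proposal for Lemma \ref{prop_eigensticking}.}

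The plan is to run the same three-step strategy as in the proof of Theorem \ref{thm_outlier}, but now tracking the behaviour of the master equation \eqref{masterx} near the bulk edge rather than near the outlier locations. First I would use Lemma \ref{lem_pertubation}: a point $x\notin \mathrm{Spec}(\mathcal Q_1)$ is an eigenvalue of $\ctQ_1$ iff the $(r+s)\times(r+s)$ matrix $\mathcal D^{-1}+x\,\mathbf U^*G(x)\mathbf U$ is singular. On the event $\Xi$ the anisotropic local laws \eqref{aniso_lawev} and \eqref{eq_bound1ev} let me replace $\mathbf U^*G(x)\mathbf U$ by its deterministic approximation $\mathbf U^*\Pi(x)\mathbf U$, which is block-diagonal with diagonal entries $-x^{-1}(1+m_{2c}(x)\sigma_i^a)^{-1}$ and $-x^{-1}(1+m_{1c}(x)\sigma_\mu^b)^{-1}$; thus singularity forces one of the scalar quantities
$$
\left|\frac{d_i^a+1}{d_i^a}-\frac{1}{1+m_{2c}(x)\sigma_i^a}\right|
\quad\text{or}\quad
\left|\frac{d_\mu^b+1}{d_\mu^b}-\frac{1}{1+m_{1c}(x)\sigma_\mu^b}\right|
$$
to be at most the size of the error term, which for $x$ close to $\lambda_+$ is of order $n^{\e/2}(\phi_n+(n\eta)^{-1}+\Psi)$ evaluated appropriately — more precisely, since $x$ is real I should work at $z=x+\ii\eta_l(x)$ and use the bound $n^{\e}(\phi_n+\eta_l(x)^{-1}n^{-1})$ coming from $\Psi(z)^2\lesssim(n\eta)^{-1}$ together with $\eta_l$ being chosen so that this is $\lesssim n^{-\e/2}$.

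The key estimate is then a lower bound: for $x\in\Omega_i$ I claim that each of the two displayed quantities is $\gg$ the error term, so that $\mathcal D^{-1}+x\mathbf U^*G(x)\mathbf U$ is nonsingular and $\Omega_i$ contains no eigenvalue. Writing $d_i^a=\wt\sigma_i^a/\sigma_i^a-1$, a short computation gives
$$
\frac{d_i^a+1}{d_i^a}-\frac{1}{1+m_{2c}(x)\sigma_i^a}
=\frac{\sigma_i^a\big(m_{2c}(x)+(\wt\sigma_i^a)^{-1}\big)}{(d_i^a/(d_i^a+1))(1+m_{2c}(x)\sigma_i^a)},
$$
and by \eqref{Piii} the denominator is bounded below, so the quantity is comparable to $|m_{2c}(x)+(\wt\sigma_i^a)^{-1}|$. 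Now by Assumption \ref{assm_big1}, \eqref{spikes} fails for the non-outlier spikes we are tracking, so $\wt\sigma_i^a+m_{2c}^{-1}(\lambda_+)\le 0$ or more generally $-(\wt\sigma_i^a)^{-1}\le m_{2c}(\lambda_+)$; combined with $m_{2c}(\lambda_+)$-estimates from Lemma \ref{lem_mplaw} and the definition \eqref{alpha+} of $\alpha_+$, I get $|m_{2c}(x)+(\wt\sigma_i^a)^{-1}|\gtrsim \alpha_+ + |\re m_{2c}(x)-m_{2c}(\lambda_+)|\gtrsim \alpha_+$ (using that $\alpha_+$ measures exactly the distance of the spikes from the critical threshold, and that $\re m_{2c}$ moves monotonically away from $m_{2c}(\lambda_+)$ to the right of the edge). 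On the other hand, the constraint $\mathrm{dist}(x,\mathrm{Spec}(\mathcal Q_1))>n^{-1+\e}\alpha_+^{-1}+n^\e\eta_l(x)$ in \eqref{eq_omega}, together with the spectral-decomposition bound $|\langle\mathbf v_o^a,G(x)\mathbf v_o^a\rangle|\le \sum_k |\langle\mathbf v_o^a,\bm\xi_k\rangle|^2/|x-\lambda_k|$ and the delocalization bound \eqref{delocalav2}, controls the fluctuation term: on the region $x\le\lambda_++c_0 n^{2\e}(\phi_n^2+n^{-2/3})$ with $i\le n^{1-2\e}\alpha_+^3$ one checks that the error term is at most $n^{\e/2}\phi_n + n^{-1+\e}\cdot n\alpha_+\cdot\alpha_+^{-2}\cdot\ldots \ll \alpha_+$ under the hypotheses $\alpha_+\ge n^\e(\phi_n+n^{-1/3})$ and $i\le n^{1-2\e}\alpha_+^3$. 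Picking $c_0$ small enough makes the comparison hold uniformly.

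The main obstacle I anticipate is the careful bookkeeping in the last estimate: one has to show that the error $n^{\e/2}\big(\phi_n+\Psi(x+\ii\eta_l(x))\big)$ in the local law — which near the edge degrades because $\im m_{2c}$ and hence $\Psi$ blow up — stays strictly below $\alpha_+$ on all of $\Omega_i$, and this is precisely where the two quantitative constraints defining $\Omega_i$ (the distance condition and the upper endpoint $\lambda_++c_0n^{2\e}(\phi_n^2+n^{-2/3})$) and the range restriction $i\le n^{1-2\e}\alpha_+^3$ must be used in a balanced way. Concretely, $\eta_l(x)\sim n^{-3/4}+n^{-1/2}(\sqrt{\kappa_x}+\phi_n)$ and $\kappa_x\lesssim n^{2\e}(\phi_n^2+n^{-2/3})$ on $\Omega_i$, so $\eta_l(x)\lesssim n^{-1/2+\e}\phi_n+n^{-2/3+\e/3}$ there; plugging this in and using $\alpha_+\ge n^\e(\phi_n+n^{-1/3})$ one verifies $n^\e\eta_l(x)\ll\alpha_+$ and, via $n^{-1+\e}\alpha_+^{-1}\cdot(\text{counting of eigenvalues in a window})\ll\alpha_+$ for $i\le n^{1-2\e}\alpha_+^3$, the remaining term. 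I would also need the deterministic input that there is actually a gap between $\lambda_{i-r-s-1}$ (the lower endpoint of $\Omega_i$ in \eqref{eq_omega}) and the region where the master equation could vanish — this is where the interlacing Lemma \ref{lem_weylmodi} enters, exactly as in Step 4 of the proof of Theorem \ref{thm_outlier} — so that $\Omega_i$ is nonempty and the continuity/counting argument that follows (in the subsequent lemmas) can be anchored.
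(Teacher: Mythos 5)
Your overall strategy (master equation from Lemma \ref{lem_pertubation}, replacement of $\mathbf U^*G\mathbf U$ by $\mathbf U^*\Pi\mathbf U$, lower bound of order $\alpha_+$ on the diagonal entries versus an error $\ll\alpha_+$) is the same as the paper's, but the quantitative heart of the argument is wrong in two places. First, you assert $\kappa_x\lesssim n^{2\e}(\phi_n^2+n^{-2/3})$ on all of $\Omega_i$; this only describes the part of $\Omega_i$ above $\lambda_+$. By \eqref{eq_omega} the set extends down to $\lambda_{i-r-s-1}$, and for $i$ up to $n^{1-2\e}\alpha_+^3$ rigidity puts this point at depth $\kappa_x\sim(i/n)^{2/3}$, which can be as large as $n^{-4\e/3}\alpha_+^2$ (nearly order one when $\alpha_+\sim 1$). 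The restriction $i\le n^{1-2\e}\alpha_+^3$ is used precisely here: it guarantees $|m_{2c}(x)-m_{2c}(\lambda_+)|\lesssim\sqrt{\kappa_x}\lesssim n^{-2\e/3}\alpha_+\ll\alpha_+$ on the whole window, which is needed both to keep the deterministic diagonal entries bounded below by $c\,\alpha_+$ and to keep the error contribution $\im m_{2c}(z_x)\sim\sqrt{\kappa_x+\eta_x}$ below $\alpha_+$; your "counting of eigenvalues in a window" explanation does not capture this mechanism. Second, your lower bound $|m_{2c}(x)+(\wt\sigma_j^a)^{-1}|\gtrsim\alpha_+ +|\re m_{2c}(x)-m_{2c}(\lambda_+)|$, justified by "$\re m_{2c}$ moves monotonically away from $m_{2c}(\lambda_+)$", is false for the supercritical spikes, over which the minimum in the master equation also runs: for those, $-(\wt\sigma_j^a)^{-1}>m_{2c}(\lambda_+)$ and $m_{2c}(x)$ drifts \emph{toward} $-(\wt\sigma_j^a)^{-1}$ as $x$ increases past $\lambda_+$ (that is exactly how outliers are created). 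The correct argument is a triangle inequality, with the drift shown to be $\le C\sqrt{c_0}\,\alpha_+$ on the piece $x>\lambda_+$ (this is where the constant $c_0$ in the statement must be taken small, via the $\sqrt{\cdot}$ behaviour \eqref{sqroot4} on a window of width $c_0n^{2\e}(\phi_n^2+n^{-2/3})$) and $\ll\alpha_+$ on the piece $x\le\lambda_+$ via the rigidity bound above.

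There is also a gap in how you pass from the local law at complex $z$ to the real point $x$. Working at $z=x+\ii\eta_l(x)$ is not sufficient: the $(n\eta)^{-1}$ part of the error is then up to $n^{-1/4}$, which is not $\ll\alpha_+$ when $\alpha_+$ is near its minimal allowed size $n^{\e}(\phi_n+n^{-1/3})$. The paper instead works at $\eta_x=n^{-1+\e}\alpha_+^{-1}+n^\e\eta_l(x)$ — matched to the distance condition defining $\Omega_i$ — so that $(n\eta_x)^{-1}\le n^{-\e}\alpha_+$, and controls $G(x)-G(z_x)$ by $\im G(z_x)$ terms using that every eigenvalue is at distance at least $\eta_x$ from $x$. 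Your alternative of bounding $\langle\mathbf v,G(x)\mathbf v\rangle$ directly by the spectral sum plus delocalization handles only the nearby eigenvalues (and \eqref{delocalav2} is stated only for $k\le\tau p$); to recover the deterministic term $\Pi(x)$ from the far eigenvalues you would still need a comparison of the type the paper makes, so as written this step is incomplete.
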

\begin{proof}
In the proof, we always use the following parameters
\begin{equation}\label{etax2}
\eta_x:=n^{-1+ \epsilon} \alpha_+^{-1} + n^{\e}\eta_l(x),\quad z_x= x+\ii \eta_x  .
\end{equation}
Suppose $x \in \Omega_i$. We now apply a similar argument as in \eqref{diag_big}. 
We first claim that for any $\bu=\begin{pmatrix}\bu_1\\ \bu_2\end{pmatrix}$ and $\bv=\begin{pmatrix}\bv_1\\ \bv_2\end{pmatrix}$ with $\bu_1,\bv_1\in \mathbb C^{\mathcal I_1}$ and $\bu_2,\bv_2\in \mathbb C^{\mathcal I_2}$, we have
\begin{align}\label{eq_bound2}
|G_{\bu \bv}(z_x)-G_{\bu \bv}(x)| & \lesssim \sum_{i=1}^2 \left[\im G_{\bu_i \bu_i}(z_x)+\im G_{\bv_i \bv_i}(z_x)\right], \ \ x\in \Omega_i.
\end{align}
As in the proof for Theorem \ref{lem_localout}, we identify vectors $\bu_i$ and $\bv_i$ with their natural embeddings in $\mathbb C^{\mathcal I}$.

We prove \eqref{eq_bound2} using \eqref{spectral1} and \eqref{spectral2}. For the terms with $G_{\bu_1\bv_2}(\cdot)$, we have
\begin{align*}
& |G_{\bu_1 \bv_2}(z_x)-G_{\bu_1 \bv_2}(x)|   \\
&\lesssim \eta_x|G_{\bu_1 \bv_2}(z_x)| + \sum_{k = 1}^{p\wedge n} \sqrt{\lambda_k}\left|\langle \bu_1,{\bm \xi}_k\rangle \langle {\bm \zeta}_k,\bv_2\rangle\right| \left|\frac{\eta_x}{(\lambda_k-x-\ii\eta_x)(\lambda_k - x)}\right|\\
&\lesssim \sum_{k} \left(\left|\langle \bu_1,{\bm \xi}_k\rangle \right|^2+\left|\langle {\bm \zeta}_k,\bv_2\rangle\right|^2\right) \frac{\eta_x}{(\lambda_k-x)^2+(\eta_x)^2} \\
&= \operatorname{Im} G_{\bu_1 \bu_1}(z_x)+\operatorname{Im} G_{\bv_2 \bv_2}(z_x),
\end{align*}
where in the second step we used $|x-\lambda_k|\ge \eta_x$ for $x\in \Omega_i$. For the rest of the cases with $G_{\bu_1\bv_1}(\cdot)$, $G_{\bu_2\bv_1}(\cdot)$ and $G_{\bu_2\bv_2}(\cdot)$, the proof is similar. 

Now using (\ref{eq_estimm}), (\ref{aniso_lawev}) and (\ref{eq_bound2}), we obtain that 
\begin{align}
& \mathcal{D}^{-1}+x \mathbf{U}^* G(x) \mathbf{U}  \label{bound_im}\\
=& \mathcal{D}^{-1}+z_x \bU^* G(z_x) \bU+x \bU^*(G(x)-G(z_x)) \bU-\ri \eta_x \bU^* G(z_x) \bU \nonumber\\
=& \mathcal{D}^{-1}+z_x \bU^* \Pi(z_x) \bU+ \OO\left(\eta_x + n^{\e/2}\Psi(z_x)+ n^{\e/2}\phi_n + \im m_{2c}(z_x) \right) \nonumber\\
= &\mathcal{D}^{-1}+z_x \bU^* \Pi(z_x) \bU+ \OO\left( n^{\e/2} \im m_{2c}(z_x) +\frac{n^{\e/2}}{n\eta_x}+ n^{\e/2}\phi_n\right) ,  \nonumber
\end{align}
where in the second step we also used that 
\be\nonumber
\max\left\{ \max_{1\le i \le r}\im \Pi_{\bv^a_i \bv^a_i},\max_{p+1\le \mu \le p+s} \im \Pi_{\bv^b_\mu \bv^b_\mu}\right\}\sim \im m_{2c}(z_x)
\ee
due to \eqref{Piii}, and in the last step we used
$$\Psi(z_x) \lesssim \im m_{2c}(z_x) + ({n\eta_x})^{-1}.$$
Therefore, by Lemma \ref{lem_pertubation}, we conclude that $x$ is not an eigenvalue of $\ctQ_1$ if 
\begin{align} 
\min\left\{ \min_{1\le j \le r}  \left| \frac{d_j^a + 1}{d_j^a} - \frac{1}{1+m_{2c}(x)\sigma_j^a} \right| ,\min_{1\le \mu - p \le s}\left| \frac{d_\mu^b + 1}{d_\mu^b} - \frac{1}{1+m_{1c}(x)\sigma_\mu^b} \right|\right\}  \nonumber \\
\gg n^{\e/2} \im m_{2c}(z_x)  +\frac{n^{\e/2}}{n\eta_x}+ n^{\e/2}\phi_n.\label{diag_big2}
\end{align}

For any $1\le j\le r$, we have
\be\label{sticking_dj}
\frac{d_j^a + 1}{d_j^a} - \frac{1}{1+m_{2c}(x)\sigma_j^a}=\frac{1+m_{2c}(x)\wt\sigma_j^a}{d_j^a(1+m_{2c}(x)\sigma_j^a)} . 
\ee
Since $i \leq n^{1-2\epsilon} \alpha_+^3$, by \eqref{eq_stickingrigi2} we have
\be\label{kappax2}
\begin{split}
 &-c_0n^{2\e}(\phi_n^2 +n^{-2/3}) \le \lambda_+-x \\
 &\lesssim \left(\frac{i}{n}\right)^{2/3} + n^{-2/3+\e/2} + n^{\e/2}\eta_l(\gamma_i) + \frac{n^{2/3+\e/2}}{ i^{2/3}} \eta_l^2(\gamma_i)\lesssim   n^{-4\e/3}\al_+^2  
 \end{split}
 \ee
for $x\in \Omega_i$, where we also used $\gamma_i\sim (i/n)^{2/3}$ and $\al_+\ge n^\e(\phi_n +n^{-1/3})$. Then by (\ref{sqroot4}) of the paper, we have
$$|m_{2c}(x)-m_{2c}(\lambda_+)| \le C n^{-2\e/3}{\al_+} 
\ll \al_+, \quad x\in \Omega_i \cap \{x: x\le \lambda_+\}.$$
and 
$$ |m_{2c}(x)-m_{2c}(\lambda_+)| \le C\sqrt{c_0}n^{ \e}\left(n^{-1/3 }+\phi_n\right) \le C\sqrt{c_0} \al_+, \ \ x\in \Omega_i \cap \{x: x> \lambda_+\}$$
for some constant $C$ independent of $c_0$. Plugging the above two estimates into \eqref{sticking_dj} and using $|\wt\sigma_j^a + m_{2c}^{-1}(\lambda_+)|\ge \al_+$, we obtain that
\be \nonumber 
\left|\frac{d_j^a + 1}{d_j^a} - \frac{1}{1+m_{2c}(x)\sigma_j^a}\right|\gtrsim \al_+ 
\ee
as long as $c_0$ is sufficiently small. On the other hand, using \eqref{eq_estimm}, \eqref{etax2} and \eqref{kappax2}, we can verify that for $x\in \Omega_i$ and $x\le \lambda_+$,
$$n^{\e/2} \im m_{2c}(z_x) +\frac{n^{\e/2}}{n\eta_x} +  n^{\e/2}\phi_n\lesssim n^{\e/2}\sqrt{\kappa_x+\eta_x} +\frac{n^{\e/2}}{n\eta_x} + n^{\e/2}\phi_n\ll \al_+,$$ 
and for $x\in \Omega_i$ and $x> \lambda_+$,
$$n^{\e/2} \im m_{2c}(z_x) +\frac{n^{\e/2}}{n\eta_x}+  n^{\e/2}\phi_n \lesssim n^{\e/2}\frac{\eta_x}{\sqrt{\kappa_x+\eta_x}} +\frac{n^{\e/2}}{n\eta_x} +  n^{\e/2}\phi_n\ll \al_+ .$$
The $d_\mu^b$ terms can be handled in the same way. This proves \eqref{diag_big2}, which further concludes the proof of Lemma \ref{prop_eigensticking}.
\end{proof}

Now we perform a counting argument for a special case. More precisely, we have the following lemma. We postpone its proof until we complete the proof of Theorem \ref{thm_eigenvaluesticking} of the paper.

\begin{lemma}\label{lem_counting} We fix a configuration $\bx\equiv \mathbf x(0):=(x_1, x_2 , \cdots , x_{r^+ + s^+})$ of the outliers that is {\it independent of $n$} and satisfies 
\begin{equation}\label{eq_multione2}
x_1>x_2 > \cdots > x_{r^+ + s^+}>\lambda_+.
\end{equation}
Moreover, we assume $d_i^a=0$ for $r^+ < i \le r$ and $d_\mu^b=0$ for $s^+ < \mu \le s$ (recall (\ref{eq_defnsigmab}) of the paper), so they will not give rise to outliers. 
Then for $\phi_n \le n^{-1/6-20\e}$ and $i \leq n^{1-4\epsilon} \alpha_+^3(0)$, we have   
\begin{equation}\label{eq_counting}
|\wt\lambda_{i+r^++s^+}-\lambda_i| \leq  n^{-1+2\epsilon} \alpha_+^{-1} + n^{3\e}\eta_l(\gamma_i) 
\end{equation}
where $\al_+(0)$ is defined for the configuration $ \mathbf x(0)$.
\end{lemma}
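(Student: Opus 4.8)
\textbf{Proof proposal for Lemma \ref{lem_counting}.} The plan is to run a counting argument in the special ($n$-independent, well-separated) configuration $\mathbf x(0)$, comparing the number of eigenvalues of $\ctQ_1$ in a small interval $J_i$ around $\lambda_i$ with the number of eigenvalues of $\mathcal Q_1$ there, and then invoking the permissible-region statement of Lemma \ref{prop_eigensticking} together with the interlacing bound of Lemma \ref{lem_weylmodi}. Concretely, for $i\le n^{1-4\e}\al_+^3(0)$ set $\eta_i := n^{-1+2\e}\al_+^{-1}+n^{3\e}\eta_l(\gamma_i)$ and take the interval $J_i:=[\lambda_i-\eta_i,\lambda_i+\eta_i]$. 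By Lemma \ref{prop_eigensticking}, in the high-probability event $\Xi$ the set $\Omega_i$ contains no eigenvalue of $\ctQ_1$; since $\Omega_i$ is (up to the $n^{-1+\e}\al_+^{-1}+n^\e\eta_l(x)$-exclusion around $\mathrm{Spec}(\mathcal Q_1)$) the whole interval $[\lambda_{i-r-s-1},\lambda_++c_0n^{2\e}(\phi_n^2+n^{-2/3})]$, every eigenvalue $\wt\lambda_{j}$ of $\ctQ_1$ lying in this range must sit within distance $n^{-1+\e}\al_+^{-1}+n^\e\eta_l(\lambda_k)$ of some $\lambda_k$. Using eigenvalue rigidity \eqref{eq_stickingrigi2} to convert $\eta_l(\lambda_k)$ into $\eta_l(\gamma_i)$ (these are comparable in the relevant range, cf.\ \eqref{etallambdak}), this places each such $\wt\lambda_j$ within $\eta_i$ of some $\lambda_k$.

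The heart of the argument is then to show the bijection $j\mapsto j+r^++s^+$ between indices. First I would use the Woodbury/master-equation machinery of Lemma \ref{lem_pertubation}: $\wt\lambda$ outside $\mathrm{Spec}(\mathcal Q_1)$ is an eigenvalue of $\ctQ_1$ iff $\det(\mathcal D^{-1}+x\,\mathbf U^*G(x)\mathbf U)=0$, and the continuity/monotonicity of the scalar functions $x\mapsto (d_j^a+1)/d_j^a - (1+m_{2c}(x)\sigma_j^a)^{-1}$ between consecutive $\lambda_k$'s shows that exactly $r^++s^+$ of the eigenvalues of $\ctQ_1$ lie above $\lambda_+$ plus a buffer (these are the outliers located by Theorem \ref{thm_outlier}, using that $d_i^a=0$ for $i>r^+$ and $d_\mu^b=0$ for $\mu>s^+$). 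Below that buffer, interlacing \eqref{interlacing_eq0}, $\wt\lambda_i\in[\lambda_i,\lambda_{i-r-s}]$, forces the ordering: $\wt\lambda_{i+r^++s^+}\le\lambda_{i+r^++s^+-r-s}$ and $\wt\lambda_{i+r^++s^+}\ge\lambda_{i+r^++s^+}$, so $\wt\lambda_{i+r^++s^+}$ is sandwiched in $[\lambda_{i+r^++s^+},\lambda_i]$. Combined with the ``each $\wt\lambda_j$ is $\eta_i$-close to some $\lambda_k$'' statement and the rigidity gap estimate $\lambda_i-\lambda_{i+r^++s^+}\lesssim \eta_i$ (which follows from \eqref{eq_stickingrigi2} and the square-root edge behaviour, using $i\le n^{1-4\e}\al_+^3$ so that we are in the edge regime where $r^++s^+=\OO(1)$ consecutive classical locations $\gamma_k$ are within $\eta_l(\gamma_i)$-scale of one another), this pins $\wt\lambda_{i+r^++s^+}$ to within $\eta_i$ of $\lambda_i$, which is \eqref{eq_counting}.

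For the $n$-dependent configuration $\mathbf x(1)=\mathbf x_n$ one then runs the same continuity/bootstrap argument along the path $\mathbf x(t)$ used in Step 3 of the proof of Theorem \ref{thm_outlier}: the permissible-region statement Lemma \ref{prop_eigensticking} holds uniformly along the path, the eigenvalue count in the bulk interval is topologically invariant, and $\wt\lambda_i(t)$ depends continuously on $t$, so the bound \eqref{eq_counting} (with $2\e$ absorbed into a slightly larger exponent) propagates from $t=0$ to $t=1$; the outlier eigenvalues $\wt\lambda_1,\dots,\wt\lambda_{r^++s^+}$ stay separated from the bulk interval throughout by the gap estimates already established in Step 4 of Theorem \ref{thm_outlier}'s proof. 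Finally I would feed \eqref{eq_counting}, with $\al_+(0)$ replaced by $\al_+$, into the triangle inequality together with the sizes $\eta_l(\gamma_i)\sim n^{-3/4}+n^{-5/6}i^{1/3}+n^{-1/2}\phi_n$ and $n^{-1+2\e}\al_+^{-1}$, and then add the weaker rigidity error $i^{-2/3}n^{-1/3}\phi_n^2$ coming from \eqref{rigidity} when the third-moment/diagonal hypotheses fail, to recover both \eqref{eq_stickingeq} and the sharper \eqref{eq_stickingeq_strong} (where the $\phi_n^2$ and $n^{-3/4}$-type terms are absent because \eqref{rigidity2} replaces \eqref{rigidity}).

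\textbf{Main obstacle.} The delicate point is the bookkeeping that turns ``each $\ctQ_1$-eigenvalue is near \emph{some} $\lambda_k$'' into the exact index shift $j\mapsto j+r^++s^+$ with a uniform error: one must simultaneously control (i) that precisely $r^++s^+$ eigenvalues escape above the edge buffer — which requires the master equation and the monotonicity of $m_{1c},m_{2c}$ on $(\lambda_+,\infty)$ to be handled carefully near criticality — and (ii) that below the buffer the interlacing sandwich is tight, i.e.\ that the $\OO(1)$ consecutive classical locations $\gamma_i,\dots,\gamma_{i+r^++s^+}$ really do collapse to within the claimed $\eta_i$ scale; this last point is where the constraint $i\le\tau n$ (or $i\le n^{1-4\e}\al_+^3$) and the precise form of $\eta_l(\gamma_i)$ via \eqref{etalE}–\eqref{rigidity} are essential, and getting the $\al_+^{-1}$ factor (rather than a cruder $\al_+^{-1/2}$ or worse) in the final bound is the part that demands the sharp resolvent estimate $\im m_{2c}(z_x)+1/(n\eta_x)\ll\al_+$ used in the proof of Lemma \ref{prop_eigensticking}.
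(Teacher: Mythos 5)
There is a genuine gap at the step you call the ``heart of the argument''. Your claim that $\lambda_i-\lambda_{i+r^++s^+}\lesssim \eta_i$ (with $\eta_i= n^{-1+2\e}\al_+^{-1}+n^{3\e}\eta_l(\gamma_i)$) is false precisely in the regime the lemma is designed for. Near the edge the classical spacing is $\gamma_i-\gamma_{i+1}\sim n^{-2/3}i^{-1/3}$, while $\eta_l(\gamma_i)\sim n^{-3/4}+n^{-5/6}i^{1/3}+n^{-1/2}\phi_n$ and, for the $n$-independent configuration $\mathbf x(0)$, $\al_+\sim 1$ so $n^{-1}\al_+^{-1}\sim n^{-1}$; hence for $i\lesssim n^{1/4}$ (and $\phi_n\le n^{-1/6-20\e}$) one has $\lambda_i-\lambda_{i+r^++s^+}\sim n^{-2/3}i^{-1/3}\gg\eta_i$. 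Consequently the combination of interlacing (Lemma \ref{lem_weylmodi}), which only sandwiches $\wt\lambda_{i+r^++s^+}$ in a window of width comparable to $(r+s)$ local spacings $\sim n^{-2/3}$, with the permissible-region statement (Lemma \ref{prop_eigensticking}), which says each $\wt\lambda$ is $\eta$-close to \emph{some} $\lambda_k$, cannot identify \emph{which} $\lambda_k$ it sticks to: several distinct $\lambda_k$'s lie in the interlacing window and are separated by gaps much larger than the claimed error. The whole content of eigenvalue sticking is that the error is far below the local spacing, so no argument that only localizes $\wt\lambda_{i+r^++s^+}$ up to the spacing can close the proof. (This is also why the paper disposes of $j>\min\{n^{1/4-4\e},n^{-1/2-5\e}\phi_n^{-3}\}$ trivially and reserves the real work for smaller indices.)

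What is missing is a precise eigenvalue count at the fine scale. The paper partitions $\{1,\dots,p\}$ into blocks $A_k$ of eigenvalues of $\mathcal Q_1$ lying within $\delta(\cdot)\sim n^{-1+7\e/6}\al_+^{-1}+n^{7\e/6}\eta_l$ of each other, shows $|A_k|\le Cn^{3\e/4}$, surrounds each block by an interval $[x_0^k,x_1^k]$ whose endpoints keep a definite distance from $\operatorname{Spec}(\mathcal Q_1)$, and then proves the key lower bound (Proposition \ref{prop_atleat}): after adding an arbitrarily small random perturbation $\omega\Delta$ to $\mathcal D^{-1}$, the master matrix $M^{\omega}(x)=\mathcal D^{-1}+\omega\Delta+x\,\mathbf U^*G(x)\mathbf U$ becomes singular at least $|A_k|$ times in $[x_0^k,x_1^k]$; this uses the splitting $G=P_{A_k}G+P_{A_k^c}G$, the isotropic delocalization bound \eqref{delocalav2} to make the block part $\OO(n^{-\e/3})$, and a continuity/topological argument in $x$ (following Knowles--Yin), with the perturbation $\omega\Delta$ needed to rule out exceptional events. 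Combined with interlacing and the fact that exactly $r^++s^+$ eigenvalues lie above the edge buffer, each interval contains \emph{exactly} $|A_k|$ eigenvalues of $\ctQ_1$, and only then does the index shift $i\mapsto i+r^++s^+$ follow with the error $n^{-1+2\e}\al_+^{-1}+n^{3\e}\eta_l(\gamma_i)$. Your proposal contains the ``at most'' ingredients (master equation, outlier count, interlacing) but no mechanism for the ``at least'' direction, so the bijection you assert is not established; your final paragraph about propagating along the path $\mathbf x(t)$ also belongs to the proof of Theorem \ref{thm_eigenvaluesticking} rather than of this lemma, which concerns only the fixed $n$-independent configuration.
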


\begin{proof}[Proof of Theorem \ref{thm_eigenvaluesticking}]
 We first consider the case $\phi_n > n^{-1/6-20\e}$. For $i> r+s$, using Lemma \ref{lem_weylmodi} and \eqref{eq_stickingrigi2} we obtain that
\begin{align*}
|\wt\lambda_i - \lambda_{i-r^+-s^+}| &\le  n^{-2/3+\e} + n^{\e} \eta_l(\gamma_i)+  i^{-2/3}n^{-1/3+\e}\phi_n^2 \\
&\le n^{21\e}\eta_l(\gamma_i)+i^{-2/3}n^{-1/3+\e}\phi_n^2 . 
\end{align*}
For $r^+ + s^+ < i\le r+s$, we can use Lemma \ref{lem_weylmodi} and \eqref{eq_stickingrigi2} to obtain a lower bound: 
\begin{align*}
\wt\lambda_i - \lambda_{i-r^+-s^+} &\ge - \left(n^{21\e}\left( n^{-3/4} + n^{-1/2}\phi_n\right)+ n^{-1/3+\e}\phi_n^2\right) . 
\end{align*}
For the upper bound, we use \eqref{eq_stickingrigi2} and Proposition \ref{prop_eigensticking} to get
\begin{align*}
\wt\lambda_i - \lambda_{i-r^+-s^+} &\le (\lambda_1 - \lambda_{i-r^+-s^+}) + n^{-1+\epsilon} \alpha_+^{-1} + n^{\e} \left( n^{-3/4} + n^{-1/2}\phi_n\right) \\
& \le  n^{-1+\epsilon} \alpha_+^{-1} + n^{21\e} \left( n^{-3/4} + n^{-1/2}\phi_n\right)  + n^{-1/3+\e}\phi_n^2  .
\end{align*}
Later we will take $\e$ to be arbitrarily small, hence the above three estimates conclude the proof for the case $\phi_n > n^{-1/6-20\e}$. 

For the rest of the proof, we always assume that $\phi_n \le n^{-1/6-20\e}$. First, we consider the case with $\alpha_+ \geq n^{2\e}(\phi_n + n^{-1/3})$ and $i \leq n^{1-4\epsilon} \alpha_+^3$. We shall apply a similar continuity argument as in Step 4 of the proof  in Section \ref{sec_strag1}. We define 
\begin{align*}
\wt {\rI}_0 :=&\left\{x\in [0, \lambda_+ + c_0n^{2\e} (\phi_n^2+n^{-2/3})]:\right. \\
&\left. \text{dist}\left(x,\text{Spec}(\mathcal{Q}_1)\right) \le n^{-1+\epsilon} \alpha_+^{-1} + n^{\e}\eta_l(x)\right\}.
\end{align*} 
Note that $\wt {\rI}_0$ is a union of connected intervals. We again define a continuous path of configurations $\bx(t)$ such that $\bx(0)$ satisfies \eqref{eq_multione2} and $\bx(1)$ is the configuration we are interested in. Moreover, we can choose the path such that 
$$\inf_{t\in [0,1]}\al_+(t) \ge \al_+\equiv \al_+(1),$$
where $\al_+(t)$ is defined for the configuration $\mathbf x(t)$ at time $t$. 
Note that by interlacing, Lemma \ref{lem_weylmodi}, we have
\begin{equation}\label{interlacing_t2}
\lambda_i\le \wt\lambda_{i}(t) \leq \lambda_{i-r-s}. 
\end{equation}
By Lemma \ref{lem_counting} and Lemma \ref{prop_eigensticking}, we know
\begin{equation}\nonumber
|\wt\lambda_{i+r^++s^+}(0)-\lambda_i| \leq n^{-1+2\epsilon} \alpha_+^{-1}(0) + n^{3\e}\eta_l(\gamma_i),
\ee
and 
\be
\text{dist}\left(\wt \lambda_{i+r^++s^+}(t),\text{Spec}(\mathcal{Q}_1)\right) \le Cn^{-1+\epsilon} \alpha_+^{-1}(1) + n^{3\e/2}\eta_l(\gamma_i),
\end{equation}
where we used that $\al_+(t)\ge \al_+(1)$ and 
\be\label{etali} 
\eta_l(\wt \lambda_{i+r^++s^+}(t)) \ll n^{\e/2} \eta_l(\gamma_i)
\ee
since $\wt \lambda_{i+r^++s^+}(t)$ satisfies \eqref{interlacing_t2} and $\lambda_i$ satisfies \eqref{eq_stickingrigi2}.  In addition, by continuity of the eigenvalues, we know that $\wt \lambda_{i+r^++s^+}(t)$ is in the same connected component of $\wt {\rI}_0$ as $\wt \lambda_{i+r^++s^+}(0)$. Let $B_{i}$ be the set of $1\le j \le p$ such that $\lambda_i$ and $\lambda_j$ are in the same connected component of $\wt {\rI}_0$. Then we conclude that for all $t\in [0,1]$,
\begin{align*}
& \wt \lambda_{i+r^++s^+}(t) \\
& \in \bigcup_{j\in B_{i }:|i +r^++s^+ - j | \le r+s} \left[\lambda_j -\left( n^{-1+2\epsilon} \alpha_+^{-1} + n^{3\e}\eta_l(\gamma_j)\right) , \lambda_j + \left( n^{-1+2\epsilon} \alpha_+^{-1} + n^{3\e}\eta_l(\gamma_j)\right)\right]\\
& \subset  \bigcup_{j\in B_{i }:|i +r^++s^+ - j | \le r+s} \left[\lambda_j -\left( n^{-1+2\epsilon} \alpha_+^{-1} + n^{4\e}\eta_l(\gamma_i)\right) , \lambda_j + \left( n^{-1+2\epsilon} \alpha_+^{-1} + n^{4\e}\eta_l(\gamma_i)\right)\right],
\end{align*}
where we again used estimates that are similar to \eqref{etali}. This gives that
\be \left|\wt \lambda_{i+r^++s^+}(1)-\lambda_i\right| \le 2(r+s)  \left( n^{-1+2\epsilon} \alpha_+^{-1} + n^{4\e}\eta_l(\gamma_i)\right) .\label{case1_extreme}
\ee
when $\alpha_+ \geq n^{2\e}(\phi_n + n^{-1/3})$ and $i \leq n^{1-4\epsilon} \alpha_+^3$.


Finally we consider the cases: $\alpha_+ < n^{2\e}(\phi_n + n^{-1/3})$, or $i >n^{1-4\epsilon} \alpha_+^3$. Suppose first that $\alpha_+<n^{2\e}(\phi_n + n^{-1/3}).$ Then by the assumption $\al_+ \ge n^{c_0} \phi_n$ in Theorem \ref{thm_eigenvaluesticking} of the paper, as long as $\e<c_0/4$ we get
$$ \phi_n =\OO(n^{-1/3-2\e}),\quad \al_+ =\OO(n^{-1/3+2\e}).$$
Then by \eqref{eq_stickingrigi}, (\ref{eq_stickingrigi2}) and Lemma \ref{lem_weylmodi}, we obtain that 
\begin{equation}\label{case1_extreme2}
\begin{split}
|\wt\lambda_{i+r^++s^+}-\lambda_i|  &\leq (r+s)\left( n^{-2/3+\epsilon} + n^{\e}\eta_l(\gamma_i)\right) \\
&\leq C\left( n^{-1+3\e}\al_+^{-1} + n^{\e}\eta_l(\gamma_i)\right). 
\end{split}
\end{equation}
On the other hand, suppose $i >n^{1-4\epsilon} \alpha_+^3$ with $\alpha_+ \ge n^{2\e}(\phi_n + n^{-1/3})$. Then obviously we have $\al_+ > r+ s$, and we can apply (\ref{eq_stickingrigi2}) and Lemma \ref{lem_weylmodi} to get that
\begin{equation}\label{case1_extreme3}
\begin{split}
|\wt\lambda_{i+r^++s^+}-\lambda_i| & \leq C\left( i^{-1/3} n^{-2/3+\epsilon/2} + n^{\e/2}\eta_l(\gamma_i)  \right) \\
&\leq C\left( n^{-1+2\e}\al_+^{-1} + n^{\e/2}\eta_l(\gamma_i)\right). 
\end{split}
\end{equation}
Here in the application of (\ref{eq_stickingrigi2}), we used $\phi_n \le n^{-1/6-20\e}$ to simplify the expression.
Combining \eqref{case1_extreme}-\eqref{case1_extreme3}, we conclude the proof of (\ref{eq_stickingeq}) of the paper for the case $\phi_n \le n^{-1/6-20\e}$.

For (\ref{eq_stickingeq_strong}) of the paper, the proof is exactly the same, except that we can set $\eta_l(E)=n^{-1}$ by using the stronger anisotropic local law \eqref{aniso_law} for $z\in S_0(\varsigma_1, \varsigma_2, \e)$ and the stronger rigidity estimate \eqref{rigidity2}.
\end{proof}

The strategy for the proof of Lemma \ref{lem_counting} is an extension of the one for the proof of \cite[Proposition 6.8]{KY2013}. We remark that in \cite{KY2013}, the results are only proved for the eigenvalues near the edge with $i \leq (\log n)^{C \log \log n},$ for some constant $C>0.$  Here we will prove that the same results hold further into the bulk. 

\begin{proof}[Proof of Lemma \ref{lem_counting}]
 Note that under the condition $\phi_n \le n^{-1/6-20\e}$, \eqref{eq_stickingrigi2} reduces to
\begin{equation}\label{eq_stickingrigi2add}
\begin{split}
 \mathbf{1}(\Xi)|\lambda_i-\gamma_i| &\leq  i^{-1/3} n^{-2/3+\e/2} + 2n^{\e/2} \eta_l(\gamma_i).
\end{split}
\end{equation}
%
First suppose $j$ is large enough such that 
$$j > \min\{n^{1/4-4\e}, n^{-1/2-5\e} \phi_n^{-3} \}. $$
Then by \eqref{eq_stickingrigi2add}, we have for $|i-j|=\OO(1)$,
$$ |\lambda_i-\gamma_i| \leq   i^{-1/3} n^{-2/3+\e/2}+ 2n^{\e/2} \eta_l(\gamma_i) \le n^{5\e/2}\eta_l(\gamma_j). $$ 
Together with interlacing, Lemma \ref{lem_weylmodi}, we immediately obtain \eqref{eq_counting}. Hence in the following proof, we assume that 
\be\label{assum_ind1} 
j \le j_0 \equiv \min\{n^{1/4-4\e}, n^{-1/2-5\e} \phi_n^{-3} \}. 
\ee
Note that for this lemma, we have $\al_+\equiv \al_+(0) \sim 1$.

 In the first step, we group together the eigenvalues that are close to each other. More precisely, let $\mathcal{A}=\{A_k\}$ be the finest partition of $\{1,\cdots, p\}$ such that $i<j$ belong to the same block of $\mathcal A$ if 
$$|\lambda_{i}-\lambda_{j}| \leq \delta(j):=n^{- 1 + 7\epsilon/6} \alpha_+^{-1} + n^{7\e/6}\eta_l(\gamma_j)  . $$ 
Note that each block $A_k$ of $\mathcal A$ consists of a sequence of consecutive integers. We order the blocks in the descending order, i.e. if $k<l$  then $\lambda_{i_k}>\lambda_{i_l}$ for all $i_k \in A_k$ and $i_l \in A_l$. 

We first derive a bound on the sizes of the blocks near the edge with $i\le j_0$. 
We define $k^*$ such that $ j_0\in A_{k^*}$. For any $k\le k^*$, we take $i < j $ such that $i$ and $j$ both belong to the block $A_k$. Then by (\ref{eq_stickingrigi2add}) and Lemma \ref{lem_weylmodi}, we find that for some constants $c,C>0$,
\begin{align*}\nonumber
& c \left[ (j/n)^{2/3}-(i/n)^{2/3}  \right] - C\left(i^{-1/3} n^{-2/3+\epsilon/2} + n^{\e/2}\eta_l(\gamma_j) \right)   \\
&\leq \lambda_{i}-\lambda_{j}  \leq C(j-i)\left(n^{- 1 + 7\epsilon/6} \alpha_+^{-1} + n^{7\e/6}\eta_l(\gamma_j) \right).
\end{align*}
 With the elementary inequalities
\be\label{elementary}
j^{-1/3}(j - i)\le j^{2/3}-i^{2/3} \le i^{-1/3}(j-i), \quad 1\le i\le j,
\ee
we obtain that 
\begin{equation}\nonumber
\left(j^{-1/3}-C \left(n^{- 1/3 + 7\epsilon/6} \alpha_+^{-1} + n^{2/3+7\e/6}\eta_l(\gamma_j) \right)\right) (j-i) \leq C i^{-1/3} n^{\epsilon/2}  .
\end{equation}
Now using \eqref{etalE}, we conclude that if $i$ and $j$ satisfy 
\be\label{extraij}  1\le i \le j \le n^{\e/4}j_0,\ee
then we have
\be\label{boundij} j-i \le C(j/i)^{1/3}n^{\e/2} .\ee
  With this estimate, we claim that 
\begin{equation}\label{eq_sizeeigen}
|A_k| \leq Cn^{3\e/4}  \quad \text{for } \ k=1,\cdots, k^*,
\end{equation}
and for any given $i_k \in A_k$,
\begin{equation}\label{rigidity_size}
|\lambda_{i}-\gamma_{i_k}|\leq i^{-1/3} n^{-2/3+ \epsilon} + n^{\e}\eta_l(\gamma_i)  \quad \text{for all }\ i \in A_k .
\end{equation}
 To prove these two estimates, we first assume that \eqref{extraij} holds. We denote
$$\quad m_k :=\max_{i\in A_k} i, \quad l_k :=\min_{i\in A_k} i .$$
If $i\in A_k$ satisfies $i\ge m_k/2$, then \eqref{boundij} gives that
$m_k-i\le Cn^{\e/2}$. Using \eqref{elementary}, we get that
$$|\gamma_i-\gamma_{m_k}|\le Cn^{\e/2}i^{-1/3}n^{-2/3}.$$
On the other hand, if $i\in A_k$ satisfies $i\le m_k/2$, then \eqref{boundij} gives that
$m_k - i\le m_k\le Cn^{3\e/4}$. Thus we get
$$|\gamma_i-\gamma_{m_k}|\le |\gamma_1-\gamma_{m_k}| \le Cn^{-2/3+\e/2} \le Ci^{-1/3}n^{-2/3+3\e/4}.$$
Together with (\ref{eq_stickingrigi2add}) and \eqref{assum_ind1}, we obtain that
\begin{align*}
& |\lambda_{i}-\gamma_{i_k}|  \le |\lambda_{i}-\gamma_{i}| +|\gamma_{i}-\gamma_{m_k}|+ |\gamma_{m_k}-\gamma_{i_k}|  \\
&\le  C\left[n^{\e/2}\eta_l(\gamma_i) + n^{3\e/4}i^{-1/3}n^{-2/3}\right] \le i^{-1/3}n^{-2/3+\e} + n^{\e}\eta_l(\gamma_i) .
\end{align*}
Combining the two cases, we obtain \eqref{eq_sizeeigen} and \eqref{rigidity_size}. It remains to prove that \eqref{extraij} holds for $i,j \in A_{k^*}$. In fact, if there is $j\in A_{k^*}$ such that $j\ge n^{\e/4}j_0$, then we can find $j'\in A_{k^*}$ such that  $n^\e \le j'- j_0 \le 2n^{\e}$. In other words, we have that $j'$ and $\al$ both satisfy \eqref{extraij}, but $| j'- j_0|\ge n^\e$ which contradicts \eqref{eq_sizeeigen}.  

We are now ready to give the main argument. For any $1\le k \le k^*$, we denote 
\begin{equation*}
a^k:=\min_{i \in A_k} \lambda_{i} = \lambda_{m_k}, \quad b^k:=\max_{i \in A_k} \lambda_{i}=\lambda_{l_k}. 
\end{equation*}
We introduce a continuous path as 
\begin{equation}\label{eq_defnx0x1}
x_t^k=(1-t)\left(a^k - \delta(m_k)/3\right)+t\left(b^k  + \delta(l_k)/3\right)t, \quad t \in [0,1].
\end{equation}
Note that $x_0^k= a^k - \delta(m_k)/3$ and $x_1^k= b^k  + \delta(l_k)/3$. The interval $[x_0^k, x_1^k]$ contains precisely the eigenvalues of $\mathcal Q_1$ that are in $A_k$, and the endpoint $x_0^k$ (or $x_1^k$) is at a distance at least of the orders $\delta(m_k)/3$ (or $\delta(l_k)/3$) from any eigenvalue of $\mathcal Q_1$. 

In order to avoid problems with exceptional events, we add some randomness to $D^a$ and $D^b$. Recall that their eigenvalues satisfy \eqref{eq_multione2}. 
Let $\Delta$ be an $(r+s) \times (r+s)$ Hermitian random matrix, which only has nonzero entries in the upper left $r\times r$ block and the lower right $s\times s$ block. Moreover, we assume the upper triangular entries of $\Delta$ are independent and have an absolutely continuous law supported in the unit disk. Following the notations in \eqref{Pepsilon0} and \eqref{Depsilon0}, for any $\omega>0$, we define $D^{a,\omega}$ and $D^{b,\omega}$ such that
\begin{equation*}
(\widetilde{\mathcal{D}}^{\omega})^{-1}:=\mathcal{D}^{-1}+\omega \Delta .
\end{equation*}
Correspondingly, we define $\ctQ^{\omega}_{1,2}$ and 
$$\wt H^{\omega} = P^\omega HP^\omega, \quad P=\begin{pmatrix} 
\left(1+ V_o^a D^{a,\omega} (V_o^a)^*\right)^{1/2}  & 0  \\
0 & \left(1+ V_o^b D^{b,\omega} (V_o^b)^*\right)^{1/2} 
\end{pmatrix}.$$
We shall take $\omega$ to be sufficiently small, say $\omega\le \wt\e e^{-n}$ for some $\wt\e \to 0$. From now on, we use ``almost surely" to mean almost surely with respect to the randomness of $\Delta.$ Our main goal is to prove the following proposition. 

\begin{proposition} \label{prop_atleat} For each $\omega>0,$ almost surely, there are at least $|A_k|$ eigenvalues of $\ctQ^{\omega}_1$ in $[x_0^k, x_1^k]\setminus \operatorname{Spec}(\mathcal Q_1)$. 
\end{proposition}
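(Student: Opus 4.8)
\textbf{Proof proposal for Proposition \ref{prop_atleat}.}

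The plan is to use a counting argument via the master equation \eqref{masterx}, combined with the monotonicity and continuity of the relevant determinant along the path $x_t^k$. First I would note that since $\omega$ is extremely small (say $\omega\le \wt\e e^{-n}$), all the resolvent and rigidity estimates collected in $\Xi$ continue to hold for $\ctQ_1^\omega$, because perturbing $\mathcal{D}^{-1}$ by $\omega\Delta$ changes $\mathbf U^* G \mathbf U$-type quantities by at most $\OO(\omega)$, which is negligible compared to all error terms appearing in \eqref{aniso_lawev}--\eqref{eq_bound2ev}; the only role of $\omega\Delta$ is to ensure, almost surely, that $x_t^k$ never coincides with an eigenvalue of $\ctQ_1^\omega$ for any $t$, and that certain determinants have only simple zeros. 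Then, for each fixed $k\le k^*$, I would consider the function
$$
t\mapsto \det\left((\widetilde{\mathcal D}^\omega)^{-1}+x_t^k\,\mathbf U^* G(x_t^k)\,\mathbf U\right),
$$
which by Lemma \ref{lem_pertubation} vanishes exactly at those $t$ for which $x_t^k$ is an eigenvalue of $\ctQ_1^\omega$ and which is not an eigenvalue of $\mathcal Q_1$ (the latter being excluded by the construction of the endpoints and the insertion of $\omega\Delta$). Since the endpoints $x_0^k$ and $x_1^k$ lie at distance $\gtrsim \delta(m_k)/3$, respectively $\delta(l_k)/3$, from $\operatorname{Spec}(\mathcal Q_1)$, the matrix $\mathbf U^* G(x_t^k)\mathbf U$ is well-controlled there via \eqref{aniso_lawev} (with $z = x_t^k + \ii\eta$, $\eta\to 0$, using that $\eta_l(x)$ is of the right size), and one can check as in the proof of Lemma \ref{prop_eigensticking} that the determinant is nonzero at the endpoints.

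The core of the argument is to count sign changes, or equivalently to track the eigenvalues of the Hermitian pencil. Following the strategy of \cite[Section 6.5]{KY2013}, I would argue as follows. On the interval $(x_0^k,x_1^k)$, the matrix $\mathcal Q_1$ has exactly $|A_k|$ eigenvalues, say $\lambda_{l_k}\ge\cdots\ge\lambda_{m_k}$. Between consecutive eigenvalues $\lambda_{i+1}<\lambda_i$ of $\mathcal Q_1$ (and on the two outer sub-intervals down to the endpoints), the resolvent entry $x\,\bU^* G(x)\bU$ runs monotonically from $-\infty$-type behaviour to $+\infty$-type behaviour in the appropriate matrix sense: more precisely, each diagonal-type quantity $\langle \mathbf u, x G(x)\mathbf u\rangle$ has a pole at each $\lambda_i$ and is monotone in between, so the real symmetric (or Hermitian) matrix $(\widetilde{\mathcal D}^\omega)^{-1}+x\bU^* G(x)\bU$ has, generically, at least one eigenvalue crossing zero in each of the $|A_k|+1$ open sub-intervals determined by $\{x_0^k,\lambda_{l_k},\ldots,\lambda_{m_k},x_1^k\}$ minus possible cancellations at the two ends; a careful bookkeeping — which is where I would invoke the almost-sure simplicity of zeros guaranteed by $\omega\Delta$, together with the sign of $x G(x)$ just inside each pole — shows that the net number of zeros of the determinant strictly inside $[x_0^k,x_1^k]$, excluding the points of $\operatorname{Spec}(\mathcal Q_1)$, is at least $|A_k|$. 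Each such zero corresponds, by Lemma \ref{lem_pertubation}, to an eigenvalue of $\ctQ_1^\omega$ in $[x_0^k,x_1^k]\setminus \operatorname{Spec}(\mathcal Q_1)$, which is exactly the claim. An equivalent, perhaps cleaner, route is a Rouché/degree argument: on a thin rectangle around $[x_0^k,x_1^k]$ one compares $\det((\widetilde{\mathcal D}^\omega)^{-1}+z\bU^* G(z)\bU)$ with its deterministic counterpart $\det((\widetilde{\mathcal D}^\omega)^{-1}+z\bU^*\Pi(z)\bU)\cdot$ (a product over the poles of $G$), using \eqref{aniso_lawev} on the boundary away from the poles and a separate local analysis near each $\lambda_i$; this counts eigenvalues with multiplicity.

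I expect the main obstacle to be the bookkeeping near the poles $\lambda_i$ and at the endpoints: one must ensure that the $|A_k|$ promised zeros really are distinct from the $|A_k|$ points of $\operatorname{Spec}(\mathcal Q_1)$, and that no zeros are ``lost'' at $x_0^k$ or $x_1^k$. This is precisely why the endpoints are placed at distance $\sim\delta(m_k)/3$ and $\sim\delta(l_k)/3$ from the spectrum of $\mathcal Q_1$ — large enough that the error term $n^{\e/2}(\phi_n+\Psi)$ in \eqref{aniso_lawev} is dominated by the leading $\frac{1}{\lambda_i - x}$ singular behaviour, so that the sign of the determinant at the endpoints is determined by the deterministic part — and why the partition $\mathcal A$ was chosen with gaps exactly of size $\delta(\cdot)$. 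The local analysis near each pole of $G$ uses the delocalization bound \eqref{delocalav2} to control the residue $|\langle\mathbf u,\bm\xi_k\rangle|^2$, so that the rank-one spike at $\lambda_i$ contributes in a controlled way to the Hermitian pencil; the near-edge blocks having bounded size \eqref{eq_sizeeigen} keeps this local analysis finite and uniform. Once Proposition \ref{prop_atleat} is established for every $k\le k^*$, a matching upper bound (no more than $|A_k|$ eigenvalues, obtained from Lemma \ref{prop_eigensticking} together with interlacing, Lemma \ref{lem_weylmodi}) will pin down each $\wt\lambda_{i+r^++s^+}$ to the correct block, and letting $\omega\to 0$ and using continuity of eigenvalues will remove the auxiliary randomness, yielding \eqref{eq_counting}.
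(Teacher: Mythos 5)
Your overall strategy is the same as the paper's: characterize the eigenvalues of $\ctQ_1^{\omega}$ off $\operatorname{Spec}(\mathcal Q_1)$ as zeros of $\det M^\omega(x)$ with $M^{\omega}(x)=(\widetilde{\mathcal D}^{\omega})^{-1}+x\,\mathbf U^* G(x)\mathbf U$ via Lemma \ref{lem_pertubation}, split $G$ into the part carried by the eigenvalues in $A_k$ (the poles) and the rest, show that the rest is a small, real, continuous perturbation of a fixed deterministic matrix, and finish with a continuity/counting argument in which $\omega\Delta$ only rules out exceptional coincidences. This is precisely the decomposition \eqref{eq_m1}, with the $A_k^c$ part compared to $G(z_x)$ as in \eqref{eq_bound22} and the pole residues controlled by delocalization as in \eqref{eq_rxorder}; the paper then delegates the final step to \cite[Section 6.4]{KY2013}.

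The gap is in that final counting step. The claim that there is ``at least one eigenvalue crossing zero in each of the $|A_k|+1$ open sub-intervals'' is false for a matrix pencil: across an interior gap $(\lambda_{j+1},\lambda_j)$ one branch enters from $-\infty$ and another escapes to $+\infty$, but these can be different branches (the entering branch may stay negative, the escaping one may have been positive throughout), so an interior gap may contain no zero of the determinant — consistent with the fact that a finite-rank perturbation can leave such a gap empty of eigenvalues of $\ctQ_1^\omega$. What actually closes the count, and is missing from your sketch, is the inertia of $M^\omega$ at the two endpoints; ``the determinant is nonzero at the endpoints'' is not sufficient. One needs that at $x_0^k$ and $x_1^k$ the matrix $M^\omega$ is negative definite with eigenvalues $\lesssim -\alpha_+$: there the $A_k$-pole part is $\OO(n^{-\e/3})$ (the endpoints are at distance $\gtrsim\delta(\cdot)/3$ from $\operatorname{Spec}(\mathcal Q_1)$, and the residues are controlled by \eqref{delocalav2} and \eqref{eq_sizeeigen}), the remainder $R(x)$ is $\OO(n^{-\e/3})$, and the deterministic matrix $\mathcal D^{-1}+\lambda_+\mathbf U^*\Pi(\lambda_+)\mathbf U$ is diagonal with entries $\frac{1+m_{2c}(\lambda_+)\wt\sigma_i^a}{d_i^a(1+m_{2c}(\lambda_+)\sigma_i^a)}<0$ (and the analogous $b$-entries), of size $\sim\alpha_+\sim1$, precisely because in the special configuration of Lemma \ref{lem_counting} every active spike is supercritical. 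Given this, the number $s(x)$ of positive eigenvalues of $M^\omega(x)$ is $0$ at both endpoints, drops by exactly one at each of the $|A_k|$ pole crossings (the diverging branch is at $+\infty$ just left and at $-\infty$ just right of each $\lambda_j$, $j\in A_k$), and otherwise changes by $\pm1$ only at simple zeros of $\det M^\omega$ (simplicity is the role of $\omega\Delta$); balancing the total change yields at least $|A_k|$ such zeros in $[x_0^k,x_1^k]\setminus\operatorname{Spec}(\mathcal Q_1)$, which is the claim. With this endpoint-inertia input your argument coincides with the paper's; without it, the ``careful bookkeeping'' you defer to cannot produce the bound $|A_k|$.
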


Before proving Proposition \ref{prop_atleat}, we first show how to use it to conclude Lemma \ref{lem_counting}. By taking $\omega \rightarrow 0$ and using a standard perturbation argument, we deduce that 
\be\label{counting_omega}
\text{$\ctQ_1$ has at least $|A_k|$ eigenvalues in $[x_0^k, x_1^k]$ for $1\le k \le k^*$}.
\ee 
Next, we will use the standard interlacing argument to show that $\ctQ_1$ has at most $|A_k|$ eigenvalues  in $[x_0^k, x_1^k]$.  By Lemma \ref{lem_weylmodi}, we find that there are at most $|A_1|+r^++s^+$ eigenvalues of $\ctQ_1$ in $[x_0^1, \infty)$ (recall that by the assumption of  Lemma \ref{lem_counting}, we have a rank $(r^+ + s^+)$ perturbation).  Hence, by Theorem \ref{thm_outlier} and \eqref{counting_omega}, there are exactly $|A_1|$ eigenvalues of $\ctQ_1$ in $[x_0^1, x_1^1].$ Repeating this argument, we can show that $\ctQ_1$ has exact $|A_k|$ eigenvalues in $[x_0^k, x_1^k]$ for all $k=2,\cdots, k^*$. Moreover, by (\ref{eq_sizeeigen}), we find that for any $i \in A_k$, 
\begin{align*}
\sup\Big\{ |x-\lambda_{i}|: i \in A_k, x \in [x_0^k, x_1^k]\Big\} & \leq Cn^{3\e/4} \left(n^{- 1 + 7\epsilon/6} \alpha_+^{-1} + n^{7\e/6}\eta_l(\gamma_{m_k})\right)  \\
& \le n^{- 1 + 2\e} \alpha_+^{-1} + n^{2\e}\eta_l(\gamma_{m_k}). 
\end{align*}
Together with $\eta_l(\gamma_{m_k}) \le n^{\e}\eta_l(\gamma_{i})$, we conclude the proof of Lemma \ref{lem_counting}. 
\end{proof}

The proof of Proposition \ref{prop_atleat} is very similar to the argument in \cite[Section 6.4]{KY2013}. We only prove the part that is different from the proof there, and omit the rest of the details. 

\begin{proof}[Proof of Proposition \ref{prop_atleat}] 
For $x \notin \text{spec}(\mathcal Q_1),$ we define
\begin{equation*}
M^{\omega}(x):={\mathcal{D}}^{-1}+\omega\Delta+x \Ub^* G(x) \Ub.
\end{equation*} 
By Lemma \ref{lem_pertubation}, we know that $x \in \operatorname{Spec}(\ctQ_1^{\omega})\setminus\operatorname{Spec}(\mathcal Q_1)$ if and only if $M^\omega (x)$ is singular. 


We split $G$ into ${P}_{A_k} G + {P}_{A_k^c} G$ according to whether 
$i \in A_k$ or $i \notin A_k$ in the spectral decompositions \eqref{spectral1} and \eqref{spectral2}. For example, the upper left blocks of ${P}_{A_k} G$ and ${P}_{A_k^c} G$ are defined as
$$ {P}_{A_k} G_{ij}(x) := \sum_{l \in A_k} \frac{ {\bm \xi}_l(i) {\bm \xi}_l^*(j)}{\lambda_l-x},\ \quad \ {P}_{A_k^c} G_{ij}(x) := \sum_{l \notin A_k} \frac{{\bm \xi}_l(i) {\bm \xi}_l^*(j)}{\lambda_l-x}.$$
Similarly, we can define the other three blocks of ${P}_{A_k} G$ and ${P}_{A_k^c} G$. Let $x \in [x_0^k, x_1^k]$ and
$$z_x= x+ \ii \eta_x, \quad \eta_x:=n^{- 1 + 7\epsilon/6} \alpha_+^{-1} + n^{7\e/6}\eta_l(x).$$ 
Then given any deterministic vectors $\bu=\begin{pmatrix}\bu_1\\ \bu_2\end{pmatrix}$ and $\bv=\begin{pmatrix}\bv_1\\ \bv_2\end{pmatrix}$, similar to (\ref{eq_bound2}) we have
\begin{align}\label{eq_bound22}
|P_{A_k^c}G_{\bu \bv}(z_x)-P_{A_k^c}G_{\bu \bv}(x)| & \lesssim \sum_{i=1}^2 \left[\im G_{\bu_i \bu_i}(z_x)+\im G_{\bv_i \bv_i}(z_x)\right].
\end{align}
For example, for the terms with $G_{\bu_1\bv_2}(\cdot)$, we have
\begin{align*}
& |P_{A_k^c}G_{\bu_1 \bv_2}(z_x)-P_{A_k^c}G_{\bu_1 \bv_2}(x)|   \\
&\lesssim \eta_x|G_{\bu_1 \bv_2}(z_x)| +\sum_{l \notin A_k}\sqrt{\lambda_l}\left|\langle \bu_1,\bxi_l\rangle \langle \bzeta_l,\bv_2\rangle\right| \left|\frac{\eta_x}{(\lambda_l-x-\ii\eta_x)(\lambda_l - x)}\right|\\
&\lesssim \sum_{l\notin A_k} \left(\left|\langle \bu_1,\xi_l\rangle \right|^2+\left|\langle \zeta_l,\bv_2\rangle\right|^2\right) \frac{\eta_x}{(\lambda_l-x)^2+(\eta_x)^2} \\
&\le \operatorname{Im} G_{\bu_1 \bu_1}(z_x)+\operatorname{Im} G_{\bv_2 \bv_2}(z_x),
\end{align*}
where in the second step we used that $|x-\lambda_l| \gtrsim \eta_x$ for any $x \in [x_0^k, x_1^k]$ and $l\notin A_k$. For the rest of the cases with $G_{\bu_1\bv_1}(\cdot)$, $G_{\bu_2\bv_1}(\cdot)$ and $G_{\bu_2\bv_2}(\cdot)$, the proof of \eqref{eq_bound22} is similar. 
Moreover, we claim that 
\begin{equation}\label{eq_rxorder}
\left| P_{A_k}G_{\bu\bv}(z_x) \right| \le n^{-\epsilon/3}.
\end{equation}
 For example, we have
\begin{equation}
\begin{split}
&\left| \sum_{j \in A_k} \frac{\langle {\mathbf{u}}_1, {\bm \xi}_j \rangle \langle {\bm \xi}_j, {\mathbf{v}}_{1} \rangle}{\lambda_{j}-z_x} \right| \\
&\leq  \eta_x^{-1} n^{\e/20}\sum_{j\in A_k}\left[ n^{-1} + \eta_l(\gamma_j) \left( \frac{j}{n}\right)^{1/3} +\eta_l(\gamma_j) \phi_n\right] \ll n^{ - \epsilon/3},
\end{split}
\end{equation}
where in the first step we used \eqref{delocalav2}, and in the second step we used \eqref{eq_sizeeigen} and \eqref{assum_ind1} such that
$$ \eta_l(\gamma_j) \left( {j}/{n}\right)^{1/3} +\eta_l(\gamma_j) \phi_n\le n^{-1/6}\eta_x.$$
For the rest of the cases with $G_{\bu_1\bv_2}(\cdot)$, $G_{\bu_1\bv_2}(\cdot)$ and $G_{\bu_2\bv_2}(\cdot)$, the proof of \eqref{eq_rxorder} is similar. Then by a discussion similar to \eqref{bound_im}, we have
\begin{align}
&M^{\omega}(x)=x\mathbf U^* P_{A_k}G(x) \mathbf U + x \Ub^* (P_{A_k^c}G(x)-P_{A_k^c}G(z_x)) \Ub \nonumber  \\
&\quad + (z_x+(x-z_x))\Ub^* G(z_x)\Ub-x\Ub^* P_{A_k}G(z_x)\Ub +\mathcal D^{-1}+\omega \Delta \nonumber \\
& = x\mathbf U^* P_{A_k}G(x) \mathbf U  +\mathcal D^{-1}+\omega \Delta +z_x \bU^* \Pi(z_x) \bU+ R_0(x) \nonumber\\
& = x\mathbf U^* P_{A_k}G(x) \mathbf U  +\mathcal D^{-1}+\omega \Delta + \lambda_+ \bU^* \Pi(\lambda_+) \bU+ R(x), \label{eq_m1}
\end{align}
where 
$$R_0(x)=\OO\left(\eta_x + n^{\e/2}\Psi(z_x)  + n^{\e/2}\phi_n + \im m_{2c}(z_x) + n^{-\e/3}\right) =\OO\left(n^{-\e/3}\right)$$
and 
$$R(x)=R_0(x) + \OO(\sqrt{\kappa_x + \eta_x}) =\OO\left(n^{-\e/3}\right).$$
Moreover, $R(x)$ is real (since all the other terms in the line \eqref{eq_m1} are real), continuous in $x$ on the extended real line $ \overline{\mathbb R}$, and independent of $\Delta$.

The rest of the proof follows from a continuity argument, which is exactly the same as the proof in \cite[Section 6.4]{KY2013} between (6.27) and (6.28). We remark that the small $\omega \Delta$ is used only in this proof to avoid some problems with exceptional events. We omit the details. This completes the proof of Proposition \ref{prop_atleat}.
\end{proof}

\section{Outlier eigenvectors}\label{sec:eveout}

In this section, we study the outlier eigenvectors. More precisely, we prove Theorem \ref{thm_eveout} of the paper under the following stronger assumption. 

\begin{assumption}\label{assu_strong} For some fixed small constant $\tau>0,$ we assume that for $\al( i) \in S $ and $\beta(\mu)\in S$, 
\begin{equation}\label{eq_spikeassuastr}
\wt{\sigma}^a_{i}+{m_{2c}^{-1}(\lambda_+)} \geq n^{-1/3+\tau}+n^\tau \phi_n,\quad \wt{\sigma}^b_{\mu}+{m_{1c}^{-1}(\lambda_+)} \geq n^{-1/3+\tau}+n^\tau \phi_n.
\end{equation}
\end{assumption}  
 The necessary argument to remove this assumption will be given in Section \ref{sec_nonoutliereve} after we complete the proof of Theorem \ref{thm_noneve}, since we need the delocalization bounds there. Thus the main goal of this section is to prove the following weaker proposition.

\begin{proposition}\label{prop_outev}
Suppose the assumptions in Theorem \ref{thm_eveout} of the paper hold. Then under Assumption \ref{assu_strong}, we have that for all $i,j=1, \cdots, p$,  
\be\label{eq_spikedvector}
\begin{split}
&\left| \langle \bv_i^a, \cal P_S\bv_j^a\rangle- \delta_{ij}\mathbf 1(\al(i)\in S)\frac{1}{\wt{\sigma}_i^a} \frac{g_{2c}'(-(\wt{\sigma}_i^a)^{-1})}{g_{2c}(-(\wt{\sigma}_i)^{-1})} \right|  \prec  \sqrt{\Upsilon(i,S) \Upsilon(j,S)}  \\
&+  \mathbf 1(\al(i)\in S,\al(j)\notin S) \Delta_1(\widetilde{\sigma}_i^a)\left[  \frac{\phi_n}{ \delta^{1/2}_{\al(j)}(S)}+\frac{\psi_{1}(\wt\sigma_j^a)\Delta_1(\widetilde{\sigma}_j^a) }{\delta_{\al(j)}(S)} \right]  \\
&  + (i\leftrightarrow j),
\end{split}
\ee
where $(i\leftrightarrow j)$ denotes the same terms but with $i$ and $j$ interchanged, and \nc
\begin{align*}
  \Upsilon(i,S) := \mathbf 1(\al(i)\in S) \psi_{1}(\wt\sigma_i^a) + \mathbf 1(\al(i)\notin S)   \frac{\phi_n^2}{\delta_{\al(i)}(S)}  +  \frac{\psi_{1}^2(\wt\sigma_i^a)\Delta_1^2(\widetilde{\sigma}_i^a) }{\delta^2_{\al(i)}(S)} .
\end{align*}
\end{proposition}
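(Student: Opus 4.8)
\textbf{Proof strategy for Proposition \ref{prop_outev}.}
The plan is to represent the generalized components of $\cal P_S$ by a contour integral of the resolvent $\wt{\cal G}_1$ and then replace $\wt{\cal G}_1$ by its deterministic equivalent via the anisotropic local law. Concretely, once Theorem \ref{thm_outlier} is available we know that all outliers indexed by $S$ lie in small neighborhoods of the classical locations $\{\theta_1(\wt\sigma_i^a)\}_{i:\al(i)\in S}\cup\{\theta_2(\wt\sigma_\mu^b)\}_{\mu:\beta(\mu)\in S}$, and these are separated from the rest of the spectrum by distances of order $\delta_{\mathfrak a}(S)\Delta^2$ (up to the non-overlapping quantities in \eqref{eq_nu}--\eqref{eq_nu2}), while the spectral edge sits at $\lambda_+$. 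First I would fix a contour $\Gamma$ (a union of small circles or one rectangle) enclosing exactly the outliers in $S$ and nothing else, staying at distance comparable to $\delta_{\mathfrak a}(S)\Delta^2$ from all other eigenvalues and from $\lambda_+$; then
$$\langle \bv_i^a,\cal P_S\bv_j^a\rangle=-\frac{1}{2\pi\ii}\oint_{\Gamma}\langle\bv_i^a,\wt{\cal G}_1(z)\bv_j^a\rangle\,\dd z.$$
The next step is to relate $\wt{\cal G}_1$ to $G$ of the non-spiked model. Using the factorization $\wt H=PHP$ of \eqref{Pepsilon0} together with Woodbury's identity (Lemma \ref{lem_woodbury}), one expresses $\langle\bv_i^a,\wt{\cal G}_1(z)\bv_j^a\rangle$ in terms of $G(z)$, the low-rank data $\mathbf U,\mathcal D$ of \eqref{Depsilon0}, and the $(r+s)\times(r+s)$ matrix $\mathcal D^{-1}+z\,\mathbf U^*G(z)\mathbf U$ whose inverse governs the resonances; this is exactly the master-equation structure behind Lemma \ref{lem_pertubation}.

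The heart of the computation is then to plug in the anisotropic local law: on a high-probability event $\Xi$, Theorem \ref{lem_localout} gives $\mathbf U^*G(z)\mathbf U=\mathbf U^*\Pi(z)\mathbf U+\OO(\phi_n+n^{-1/2}\kappa^{-1/4})$ for $z$ on $\Gamma$ (which lies in $S_{out}$ thanks to Assumption \ref{assu_strong}, which forces $\theta_1(\wt\sigma_i^a)-\lambda_+\gg n^{-2/3}+n^{-1/3}\phi_n^2$), and similarly $\langle\bv_i^a,G(z)\bv_j^a\rangle=\langle\bv_i^a,\Pi(z)\bv_j^a\rangle+\OO(\phi_n+n^{-1/2}\kappa^{-1/4})$. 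One substitutes these expansions, so that $\langle\bv_i^a,\wt{\cal G}_1(z)\bv_j^a\rangle$ becomes an explicit rational function of $m_{1c}(z),m_{2c}(z)$ plus a controlled error; here I would use $\Pi_1=-z^{-1}(1+m_{2c}A)^{-1}$ from \eqref{defn_pi} and the eigendecomposition of $A$ to compute $\langle\bv_i^a,\Pi_1(z)\bv_j^a\rangle=-\delta_{ij}z^{-1}(1+m_{2c}(z)\sigma_i^a)^{-1}$. The contour integral of the main (deterministic) term is then evaluated by residues: the residue at the outlier $z=\theta_1(\wt\sigma_i^a)$ produces, after a change of variables $z=g_{2c}(\zeta)$ and use of $m_{2c}(\theta_1(\wt\sigma_i^a))=-(\wt\sigma_i^a)^{-1}$, precisely the claimed coefficient $\tfrac{1}{\wt\sigma_i^a}\tfrac{g_{2c}'(-(\wt\sigma_i^a)^{-1})}{g_{2c}(-(\wt\sigma_i^a)^{-1})}$; this is where Lemma \ref{lem_complexderivative} and Lemma \ref{lem_derivativeprop} (the estimates on $m_{1,2c}'$, $g_{1,2c}'$, and $\theta_{1,2}-\lambda_+$) enter to give both the limit and the size of the various factors.

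The remaining, and most delicate, task is the error bookkeeping that yields the precise right-hand side of \eqref{eq_spikedvector}. The error has several sources: (i) the $\phi_n+n^{-1/2}\kappa^{-1/4}$ errors from the anisotropic law, which after integrating the resolvent over $\Gamma$ (of length comparable to $\delta_{\mathfrak a}(S)\Delta^2$ and at distance comparable to the same quantity) and using Cauchy--Schwarz give contributions scaling like $\psi_1(\wt\sigma_i^a)$ for $\al(i)\in S$ and like $\phi_n^2/\delta_{\al(i)}(S)$, $\psi_1^2\Delta_1^2/\delta^2_{\al(i)}(S)$ otherwise; (ii) the fluctuation of the outlier location $\wt\lambda_{\al(i)}$ around $\theta_1(\wt\sigma_i^a)$ from Theorem \ref{thm_outlier}, which perturbs the residue and explains the cross terms $\Delta_1(\wt\sigma_i^a)[\phi_n\delta^{-1/2}_{\al(j)}(S)+\psi_1\Delta_1\delta^{-1}_{\al(j)}(S)]$; (iii) the difference between $\Gamma$-distances and the idealized quantities $\delta_{\mathfrak a}(S)$, controlled through the approximate isometry \eqref{eq_mdiff}--\eqref{eq_gdiff} so that $|m_{2c}(z)-m_{2c}(\theta_1(\wt\sigma_j^a))|$ and related differences are comparable to $\delta_{\mathfrak a}(S)\,\Delta$-type factors on $\Gamma$. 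I expect the main obstacle to be precisely this uniform control of the denominators $\mathcal D^{-1}+z\,\mathbf U^*\Pi(z)\mathbf U$ along $\Gamma$ when $S$ contains near-degenerate spikes or when $\delta_{\mathfrak a}(S)$ is small: one must show this $(r+s)\times(r+s)$ matrix is invertible on $\Gamma$ with smallest singular value bounded below by a quantity of order $\delta_{\mathfrak a}(S)\Delta^2$ (mirroring the lower bound \eqref{diag_big} in the proof of Theorem \ref{thm_outlier}), so that inverting it and integrating produces exactly the $\delta_{\al(i)}(S)$-weighted errors, and no worse. Once that is in place, collecting the residue contributions for $i,j$ with $\al(i),\al(j)\in S$ (diagonal main term), the mixed case $\al(i)\in S,\al(j)\notin S$, and the fully off-$S$ case, and symmetrizing in $i\leftrightarrow j$, gives \eqref{eq_spikedvector}. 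Finally, the cases (a) $\E x_{ij}^3=0$ or (b) $A$ or $B$ diagonal are handled by invoking the stronger local law in part (3) of Theorem \ref{LEM_SMALL}, which removes the $n^{-1/2}\kappa_i$-type terms throughout.
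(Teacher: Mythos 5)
Your contour strategy is essentially the paper's argument for the \emph{first half} of the proof, namely Proposition \ref{prop_outev0}: write $\langle\bv_i^a,\cal P_S\bv_j^a\rangle$ as a contour integral of $\bU^*\wt G\bU$, use the Woodbury/linearization identity to reduce to $(\mathcal D^{-1}+z\bU^*G(z)\bU)^{-1}$, replace $\bU^*G\bU$ by $\bU^*\Pi\bU$ via the outside-spectrum local law, evaluate the main term by residues in the $\zeta=m_{2c}(z)$ variable, and book-keep the errors through the distances $\rho+\delta$ on the contour (the paper's Lemmas \ref{lem_contourpropo} and \ref{lem_distance}). Up to the technical device of defining the contour as a union of discs in the $m_{2c}$-coordinate and the small-perturbation trick used to treat general indices $i,j>r$, this matches the paper.

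However, there is a genuine gap: Proposition \ref{prop_outev} assumes only Assumption \ref{assu_strong}, \emph{not} any separation between the spikes indexed by $S$ and those outside $S$, so $\delta_{\al(i)}(S)$ may be comparable to, or smaller than, $\psi_1(\wt\sigma_i^a)$ and $\phi_n$. In that regime your plan breaks at exactly the point you flag as "the main obstacle": one cannot choose a contour that encloses the $S$-outliers and excludes the others, because by Theorem \ref{thm_outlier} their locations are not separated beyond the fluctuation scale, and the desired lower bound of order $\delta_{\al(i)}(S)\Delta_1^2$ on the smallest singular value of $\mathcal D^{-1}+z\bU^*\Pi(z)\bU$ along such a contour is simply false (the resonance matrix degenerates when an excluded spike sits within the error scale of an included one). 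The paper's Lemma \ref{lem_contourpropo} explicitly needs the non-overlapping Assumption \ref{ass:nonver} for the contour to separate the spectrum correctly. The missing idea is the second, de-overlapping step (Section \ref{sec_remove}): introduce the largest non-overlapping subset $L_1(S)\subset S$ and the smallest non-overlapping superset $L_2(S)\supset S$ inside $\mathcal O^+_{\tau/2}$, apply the contour-based Proposition \ref{prop_outev0} to those sets, and transfer the bounds back to $S$ using the decomposition $\cal P_S=\cal P_{L_1(S)}+\cal P_{S\setminus L_1(S)}$, the monotonicity $0\le\langle\bv,\cal P_{S'}\bv\rangle\le\langle\bv,\cal P_{S''}\bv\rangle$ for $S'\subset S''$, the Cauchy--Schwarz bound $|\langle\bv_i^a,\cal P_S\bv_j^a\rangle|^2\le\langle\bv_i^a,\cal P_S\bv_i^a\rangle\langle\bv_j^a,\cal P_S\bv_j^a\rangle$, and the comparability relations such as $\delta_{\al(i)}(S)\lesssim n^{\wt\tau}\psi_1(\wt\sigma_i^a)\lesssim\delta_{\al(i)}(L_2(S))$ in the overlapping cases; this case analysis is what produces the $\Upsilon$-shaped error in \eqref{eq_spikedvector}. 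Also note that your closing remark about vanishing third moments or diagonal $A,B$ removing $n^{-1/2}\kappa_i$ terms pertains to Theorem \ref{thm_eveout} (via Theorem \ref{thm_noneve}), not to this proposition, whose error bound contains no such terms.
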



The rest of this section is devoted to proving Proposition \ref{prop_outev}. Our strategy is an extension of the one in \cite[Section 5]{principal}. But there is additional complication in our case, because we need to simultaneously handle the outliers caused by the spikes of $\wt{B}$.

\subsection{Non-overlapping condition}\label{sec_eve_green}

We first prove Proposition \ref{prop_outev} under the following additional \emph{non-overlapping condition}. We will remove it later in Section \ref{sec_remove}.   
\begin{assumption}\label{ass:nonver}
 For some fixed small constant $\wt\tau>0,$  we assume that for all $\al(i)\in S$ and $\beta(\mu)\in S$, 
\begin{equation*}
 \delta_{\al(i)}(S) \geq \left[\Delta_1(\widetilde{\sigma}_i^a)\right]^{-1} n^{-1/2+\wt\tau} + n^{\wt\tau}\phi_n,  \end{equation*}
and
\begin{equation*}
  \delta_{\beta(\mu)}(S) \geq \left[\Delta_2(\widetilde{\sigma}_\mu^b)\right]^{-1} n^{-1/2+\wt\tau}+ n^{\wt\tau}\phi_n. 
\end{equation*}
\end{assumption}
\begin{remark}\label{remark_nonover}
This condition is actually a generalization of the second condition in (\ref{eq_sepe}) of the paper. Note that for $1\le i\le r^+$, using \eqref{eq_derivativebound}, \eqref{eq_mderivative} and \eqref{eq_gderivative}, we have
$$\delta_{\al(i), \al(j)}^{a}=|\tsig^{a}_j-\tsig^{a}_i| \sim \frac{|\theta_1(\tsig^{a}_i)-\theta_1(\tsig^{a}_j)|}{[\Delta_1(\wt\sigma_i^a)]^{2}}, \quad 1\le j\le r^+ ,$$
and
\begin{align*}
 \delta_{\al(i), \beta(\nu)}^{a}&=\left| \tsig^{b}_\nu + m_{1c}^{-1}(\theta_1( \tsig^{a}_i))\right| \sim \left| m_{1c}(\theta_2(\wt\sigma_\nu^b))- m_{1c}(\theta_1( \tsig^{a}_i))\right| \\
 & \sim \frac{|\theta_1(\tsig^{a}_i)-\theta_2(\tsig^{b}_\nu)|}{[\Delta_1(\wt\sigma_i^a)]^{2}}.
 \end{align*}
Thus under Assumption \ref{ass:nonver}, we have that for $\al(i)\in S$,
$$n^{-1/2+\wt\tau}\Delta_1(\widetilde{\sigma}_i^a) + n^{\wt\tau}\phi_n\Delta_1^2(\widetilde{\sigma}_i^a)\lesssim
\begin{cases}
|\theta_1(\tsig^{a}_i)-\theta_1(\tsig^{a}_j)|, & \ \text{if} \ \al(j)\notin S \\
|\theta_1(\tsig^{a}_i)-\theta_2(\tsig^{b}_\nu)|, & \ \text{if} \  \beta(\nu) \notin S
\end{cases}.$$
With a similar arguments for $\beta(\mu)\in S$, we conclude that the eigenvalues with indices in $S$ do not overlap with any other eigenvalues by Theorem \ref{thm_outlier}. 
\end{remark}

The main estimate for outlier eigenvectors under the non-overlapping assumption is included in the following proposition.

\begin{proposition}\label{prop_outev0}
Suppose the assumptions in Proposition \ref{prop_outev} hold. Then under Assumption \ref{ass:nonver}, we have that for all $i,j=1, \cdots, p$,  
\be\label{eq_spikedvector0}
\begin{split}
&\left| \langle \bv_i^a, \cal P_S\bv_j^a\rangle- \delta_{ij}\mathbf 1(\al(i)\in S)\frac{1}{\wt{\sigma}_i^a} \frac{g_{2c}'(-(\wt{\sigma}_i^a)^{-1})}{g_{2c}(-(\wt{\sigma}_i)^{-1})} \right| \\
&\prec \mathbf 1(\al(i)\in S,\al(j)\in S) \left(\phi_n +    {n^{-1/2}(\Delta_1(\widetilde{\sigma}_i^a)\Delta_1(\widetilde{\sigma}_j^a)})^{-1/2}\right) \\
&+  \frac{1}{n}  \left(\frac1{\delta_{\al(i)}(S)} + \frac{\mathbf 1(\al(i)\in S)}{\Delta^2_1(\widetilde{\sigma}_i^a)} \right)\left(\frac1{\delta_{\al(j)}(S)} + \frac{\mathbf 1(\al(j)\in S)}{\Delta_1^2(\widetilde{\sigma}_j^a)} \right)\\
& + \phi_n^2  \left[ \left(\frac{\Delta^2_1(\widetilde{\sigma}_i^a)}{\delta_{\al(i)}(S)}+1\right) \left(\frac1{\delta_{\al(j)}(S)} + \frac{\mathbf 1(\al(j)\in S)}{\Delta^2_1(\widetilde{\sigma}_j^a)} \right)\wedge  \left(i\leftrightarrow j \right)\right] \\
&+ \mathbf 1(\al(i)\in S,\al(j)\notin S)  \frac{ \psi_1(\wt\sigma_i^a)\Delta^2_1(\widetilde{\sigma}_i^a)}{\delta_{\al(i),\al(j)}^a}  \\
&+ \mathbf 1(\al(i)\notin S,\al(j)\in S) \frac{\psi_1(\wt\sigma_j^a)\Delta^2_1(\widetilde{\sigma}_j^a)}{\delta_{\al(i),\al(j)}^a},
\end{split}
\ee
 where $\delta^a$ is defined in (\ref{eq_nu}) of the paper, and  $(i\leftrightarrow j)$ denotes the same term  but with $i$ and $j$ interchanged:
 $$(i\leftrightarrow j):=  \left(\frac{\Delta_1(\widetilde{\sigma}_j^a)^2}{\delta_{\al(j)}(S)} +1 \right)\left(\frac1{\delta_{\al(i)}(S)} + \frac{\mathbf 1(\al(i)\in S)}{\Delta_1(\widetilde{\sigma}_i^a)^2} \right) .$$
\end{proposition}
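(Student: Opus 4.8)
\textbf{Proof plan for Proposition~\ref{prop_outev0}.}

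The plan is to use the Cauchy integral representation of the spectral projector together with the master equation from Lemma~\ref{lem_pertubation} and the anisotropic local law (Theorem~\ref{LEM_SMALL} and Theorem~\ref{lem_localout}). First I would fix the index set $S\subset\mathcal O^+$ and, using Theorem~\ref{thm_outlier} and the non-overlapping Assumption~\ref{ass:nonver} (see Remark~\ref{remark_nonover}), construct a contour $\Gamma$ in $\mathbb C\setminus[0,\lambda_+]$ that encloses exactly the outlier eigenvalues $\{\wt\lambda_{\mathfrak a}\}_{\mathfrak a\in S}$ and no other eigenvalue of $\ctQ_1$, with $\operatorname{dist}(\Gamma,\operatorname{Spec}(\ctQ_1))$ comparable to the relevant gaps $\delta_{\mathfrak a}(S)\,\Delta_1^2$. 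Then $\langle\bv_i^a,\mathcal P_S\bv_j^a\rangle = -\tfrac1{2\pi\ii}\oint_\Gamma \langle\bv_i^a,\wt{\mathcal G}_1(z)\bv_j^a\rangle\,\dd z$. The key algebraic step is to express $\wt{\mathcal G}_1$ (equivalently $\wt G$, via \eqref{green2}) in terms of the non-spiked resolvent $G$ using the identity \eqref{Pepsilon0}, the Woodbury identity (Lemma~\ref{lem_woodbury}), and the matrices $\mathbf U,\mathcal D$ of \eqref{Depsilon0}; this produces a formula for $\langle\bv_i^a,\wt{\mathcal G}_1 \bv_j^a\rangle$ as a rational expression in the small matrix $\mathbf U^*G(z)\mathbf U$ and the vectors $\mathbf U^* G(z)\bv_i^a$, $\mathbf U^* G(z)\bv_j^a$, with the ``propagator'' $(\mathcal D^{-1}+z\mathbf U^* G(z)\mathbf U)^{-1}$ appearing — exactly the matrix whose determinant vanishes at the outliers.

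Next I would replace every occurrence of $G(z)$ by its deterministic limit $\Pi(z)$ of \eqref{defn_pi}, controlling the error by the anisotropic local law \eqref{aniso_law}/\eqref{aniso_outstrong} (using $\phi_n + n^{-1/2}\kappa^{-1/4}$ on the parts of $\Gamma$ far from $\lambda_+$ and $\phi_n+\Psi(z)$ on the parts near $\lambda_+$). Since $\mathbf U^*\Pi(z)\mathbf U$ is diagonal with entries built from $(1+m_{2c}(z)\sigma_i^a)^{-1}$ and $(1+m_{1c}(z)\sigma_\mu^b)^{-1}$, the deterministic propagator $(\mathcal D^{-1}+z\mathbf U^*\Pi(z)\mathbf U)^{-1}$ has a pole at each classical location $\theta_1(\wt\sigma_i^a)$, $\theta_2(\wt\sigma_\mu^b)$ with an explicit residue; computing the contour integral of the deterministic part by residues and using the derivative estimates of Lemma~\ref{lem_derivativeprop} and Lemma~\ref{lem_complexderivative} (in particular \eqref{eq_gderivative}, \eqref{eq_gcomplexd}, \eqref{eq_mdiff}) yields the leading term $\delta_{ij}\mathbf 1(\al(i)\in S)\,(\wt\sigma_i^a)^{-1} g_{2c}'(-(\wt\sigma_i^a)^{-1})/g_{2c}(-(\wt\sigma_i^a)^{-1})$. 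The error terms are obtained by bounding the Cauchy integral of the difference: one estimates the length of $\Gamma$, the size of $z-\theta_1$, $z-\theta_2$ along $\Gamma$ (controlled below by the gaps $\delta$ and $\Delta_1^2$), and the local-law error, and then optimizes the choice of $\Gamma$ (this is where the various cases — $\al(i),\al(j)$ both in $S$, one in one out, both out — and the two competing error contributions in the final bound come from). The vanishing-third-moment or diagonal-$A$/$B$ improvement (dropping the $n^{-1/2}\kappa_i$ terms) follows by using the stronger local law on $S_0(\varsigma_1,\varsigma_2,\e)$ in part (3) of Theorem~\ref{LEM_SMALL}.

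The main obstacle I anticipate is the bookkeeping of the propagator near the edge and near near-degenerate outliers: when $\al(i)\in S$ but some $\al(j)\notin S$ with $\theta_1(\wt\sigma_j^a)$ only moderately separated from $\theta_1(\wt\sigma_i^a)$, the contour $\Gamma$ must pass through a region where $(\mathcal D^{-1}+z\mathbf U^*\Pi(z)\mathbf U)^{-1}$ is large, and one must carefully track how the off-diagonal entries $\langle\bv_i^a,\Pi(z)\bv_j^a\rangle$ (which vanish exactly since $\bv_i^a,\bv_j^a$ are $A$-eigenvectors, but only approximately for $G$) propagate through the Woodbury expansion; the cross terms $\langle\bv_i^a, \mathcal Z_S\bv_i^a\rangle^{1/2}\times(\cdots)$ in the final statement of Theorem~\ref{thm_eveout} ultimately trace back to Cauchy–Schwarz applied to exactly these mixed contributions. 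A secondary technical point is ensuring all $z$ on $\Gamma$ lie in the domains $S_{out}(\varsigma_2,\epsilon)$ or $\wt S(\varsigma_1,\varsigma_2,\e)$ where the local laws hold; this is where Assumption~\ref{assu_strong} (super-criticality at scale $n^{-1/3+\tau}+n^\tau\phi_n$) is used, to guarantee $\theta_1(\wt\sigma_i^a)-\lambda_+\gg n^{-2/3+\e}+n^{-1/3+\e}\phi_n^2$ and hence that a contour enclosing the $S$-outliers stays in the region of validity. Finally, the passage from Proposition~\ref{prop_outev0} (with Assumption~\ref{ass:nonver}) to Proposition~\ref{prop_outev} (without it) and then to Theorem~\ref{thm_eveout} is a separate perturbation/polarization argument, deferred to later subsections.
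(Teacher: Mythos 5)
Your overall architecture matches the paper's: a Cauchy integral of $\langle \bv_i^a,\wt G(z)\bv_j^a\rangle$ over a contour enclosing exactly the outliers indexed by $S$, the Woodbury/linearization identity producing the propagator $(\mathcal D^{-1}+z\bU^*G(z)\bU)^{-1}$, the local laws \eqref{aniso_law}/\eqref{aniso_outstrong} to replace $G$ by $\Pi$, and a residue computation for the deterministic part giving the leading term $\delta_{ij}\,(\wt\sigma_i^a)^{-1}g_{2c}'(-(\wt\sigma_i^a)^{-1})/g_{2c}(-(\wt\sigma_i^a)^{-1})$. (Two cosmetic differences: the paper builds the contour as a union of discs in the $m_{2c}$-coordinate around $-(\wt\sigma_i^a)^{-1}$ and $m_{2c}(\theta_2(\wt\sigma_\mu^b))$ and maps it by $g_{2c}$, which is what makes the residue calculus and the distance estimates of Lemma \ref{lem_distance} clean; and general $i,j\le p$ are handled by adding infinitesimal spikes $d^a=\wt\e$ at the extra indices and letting $\wt\e\downarrow 0$, rather than by carrying extra vectors $\bU^*G\bv_{i}^a$ through the Woodbury formula. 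Also, the $n^{-1/2}\kappa_i$ refinement you mention belongs to Theorem \ref{thm_eveout}, not to this proposition.)

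The genuine gap is in your error analysis. You propose to bound the random correction by ``length of $\Gamma$ times sup of the local-law error divided by the distance to the poles, then optimize $\Gamma$.'' This absolute-value estimate of the first-order term cannot reach \eqref{eq_spikedvector0} in the regime that Assumption \ref{ass:nonver} still permits, namely $\delta_{\al(i)}(S)\ll\Delta_1^2(\wt\sigma_i^a)$. Concretely, take $i=j$, $\al(i)\in S$, $\Delta_1^2(\wt\sigma_i^a)\sim 1$, $\phi_n=n^{-1/2}$, and a non-$S$ outlier at distance $\delta_{\al(i)}(S)\sim n^{-1/2+\wt\tau}$: the contour component around $-(\wt\sigma_i^a)^{-1}$ must have radius $\rho\lesssim\delta_{\al(i)}(S)$, the integrand's numerator is of size $\|\mathcal E\|\,|g_{2c}'|\sim n^{-1/2}$, and the naive estimate of the linear-in-$\mathcal E$ term gives $\sim n^{-1/2}/\rho\sim n^{-\wt\tau}$, whereas the claimed bound is $\sim n^{-2\wt\tau}$ (and the true size of that term is only $\psi_1\sim n^{-1/2}$). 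The paper avoids this by first expanding the propagator to second order in $\mathcal E(z)=z\bU^*(\Pi-G)\bU$, then evaluating the linear term $s_1$ \emph{exactly by residues}: the kernel $h_{ij}(\zeta)=(1+\zeta\sigma_i^a)(1+\zeta\sigma_j^a)\mathcal E_{ij}(g_{2c}(\zeta))\,g_{2c}'(\zeta)/g_{2c}(\zeta)$ is holomorphic outside the spectrum, so $s_1$ reduces to $h_{ij}'$ or a difference quotient at the poles, which is bounded through Cauchy's differentiation formula on a disc of radius $|\zeta-m_{2c}(\lambda_+)|/2$, yielding the gap-independent $\psi_1$-type bound of the first line of \eqref{eq_spikedvector0}. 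The quadratic remainder $s_2$ then requires a uniform lower bound on the full random propagator along the contour, $\mathfrak d(\zeta)-\|\mathcal E(g_{2c}(\zeta))\|\gtrsim\rho$, together with the comparability of radii and distances in Lemma \ref{lem_distance}; your plan does not say how the random propagator is controlled on the contour, and without that the expansion in $\mathcal E$ does not close. So the missing ideas are (i) the residue/analyticity cancellation in the first-order error and (ii) the operator lower bound on $\mathcal D^{-1}+z\bU^*G\bU$ along $\Gamma$ needed to tame the second-order remainder.
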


 
The rest of this subsection is devoted to proving Proposition \ref{prop_outev0}. Suppose that Assumptions \ref{assu_strong} and \ref{ass:nonver} hold. Let $\omega<\tau/2$ and $0<\epsilon<\min\{\tau, \wt\tau\}/10$ be small positive constants to be chosen later. 
By Theorem \ref{thm_largerigidity}, Theorem \ref{lem_localout}, and Theorem \ref{thm_outlier}, we can choose a high-probability event $\Xi_1 \equiv \Xi_1(\epsilon,\omega, \tau,\wt\tau)$ in which the following estimates hold. 
\begin{itemize}
\item[(i)] For all
\begin{equation}\label{eq_parameterset}
\begin{split}
&z\in S_{out}(\omega):=\left\{ E+ \ii\eta:  \right. \\
&\left. \lambda_+ + n^\omega(n^{- 2/3}+ n^{-1/3}  \phi_n^2) \le E \le \omega^{-1},\eta\in [0,1]\right\},
\end{split}
\ee
we have the anisotropic local law
\begin{equation}\label{eq_bb1}
\mathbf{1}(\Xi_1) \norm{\bU^* (G(z)-\Pi(z)) \bU} \leq n^\e \phi_n +n^{-1/2+\epsilon}(\kappa+\eta)^{-1/4} .
\end{equation}

\item[(ii)] For all $1\le i \le r^+ $ and $1\le \mu - p\le s^+$, we have 
\begin{equation}\label{rigid_outXi1}
\begin{split}
& \mathbf{1}(\Xi_1)\left|\wt\lambda_{\alpha(i)}-\theta_1(\wt\sigma_i^a)\right| \le n^{-1/2+\e}\Delta_1(\widetilde{\sigma}_i^a)+ n^{ \e}\phi_n\Delta_1^2(\widetilde{\sigma}_i^a),\\
& \mathbf{1}(\Xi_1)\left|\wt\lambda_{\beta(\mu)}-\theta_2(\wt\sigma_\mu^b)\right| \le n^{-1/2+\e} \Delta_2(\widetilde{\sigma}_\mu^b) + n^{ \e}\phi_n\Delta_2^2(\widetilde{\sigma}_\mu^b).
\end{split}
\end{equation}

\item[(iii)] For any fixed integer $\varpi>r+s$ and all $r^++s^+ < i \leq \varpi$, we have 
\begin{equation}\label{rigid_outXi2}
\mathbf{1}(\Xi_1)\left(|\lambda_1-\lambda_+| + |\wt \lambda_{i}-\lambda_+|\right) \leq n^\e (\phi_n^2 + n^{-2/3} ).
\end{equation}
\end{itemize}
As in the proof in Section \ref{sec:ev}, the randomness of $X$ only comes into play to ensure that $\Xi_1$ holds with high probability. The rest of the proof is restricted to the event $\Xi_1$ only, and will be entirely deterministic. 

Given any $1\le i \le r^+$, our first step is to give a contour integral representation of the generalized components $\langle \bv_i^a, \cal P_S\bv_j^a\rangle$ using resolvents. We define the radius 
\begin{equation}\label{radius_rhoia}
\rho^a_{i}=c_i \left[{\delta_{\al(i)}(S) \wedge (\tsig_i^a+m_{2c}^{-1}(\lambda_+))}\right], \quad \al(i) \in S ,
\end{equation}
and
\begin{equation}\label{radius_rhomu}
\rho_{\mu}^b = c_\mu \left[{\delta_{\beta(\mu)}(S) \wedge (\tsig_\mu^b+m_{1c}^{-1}(\lambda_+))}\right], \quad \beta(\mu) \in S,
\ee
for some sufficiently small constants $0<c_i,c_\mu <1$. Define the contour $\Gamma:=\partial \mathsf{C}$ as the boundary of the union of open discs 
\begin{equation}\label{eq_defnc}
 \mathsf{C}:= \bigcup_{\al(i)\in S} \mathsf C_i  \cup  \bigcup_{\beta(\mu)\in S} \mathsf C_\mu , 
 \end{equation}
 where 
 $$\mathsf C_i :=B_{\rho_i^a}\left(-(\wt\sigma_i^a)^{-1}\right), \ \ \mathsf C_\mu:=B_{\rho_\mu^b}\left(m_{2c}(\theta_2(\wt\sigma_\mu^b))\right).$$
Here $B_r(x)$ denotes an open disc of radius $r$ around $x$. By choosing sufficiently small $c_i$ and $c_\mu$, we can assume that $\mathsf C\subset \mathbf D_2(\tau_2,\varsigma)$ in Lemma \ref{lem_complexderivative}. In the following lemma, we shall show that: (i) $\overline{g_{2c}(\mathsf{C})}$ is a subset of the parameter set in (\ref{eq_parameterset}) so that we can use the estimate (\ref{eq_bb1});  
(ii) $\partial g_{2c}(\mathsf{C})=g_{2c}(\Gamma)$ only encloses the outliers with indices in $S$. 

\begin{lemma}\label{lem_contourpropo} 
Suppose that Assumptions \ref{assu_strong} and \ref{ass:nonver} hold true. Then the set $\overline{g_{2c}(\mathsf{C})}$ lies in the parameter set $S_{out}(\omega)$ in (\ref{eq_parameterset}) as long as the $c_i$'s and $c_\mu$'s are sufficiently small. Moreover, we have $\{\wt\lambda_{\mathfrak a}\}_{\mathfrak a\in S} \subset g_{2c}(\mathsf{C})$ and all the other eigenvalues lie in the complement of $\overline{g_{2c}(\mathsf{C})}$. 
\end{lemma}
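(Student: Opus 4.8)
\textbf{Proof proposal for Lemma \ref{lem_contourpropo}.}

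The plan is to verify the two claims separately, relying on the square-root behaviour of $m_{2c}$ near $\lambda_+$ (Lemma \ref{lambdar_sqrt}), the complex-analytic estimates for $g_{2c}$ on $\mathbf D_2(\tau_2,\varsigma)$ (Lemma \ref{lem_complexderivative}, parts (iv) and (v)), and the distance estimates for the outlier locations from Theorem \ref{thm_outlier} together with Lemma \ref{lem_derivativeprop}. First I would show that $\overline{g_{2c}(\mathsf C)}\subset S_{out}(\omega)$. Since $\mathsf C$ is a union of small discs $\mathsf C_i$ centred at $-(\wt\sigma_i^a)^{-1}=m_{2c}(\theta_1(\wt\sigma_i^a))$ and discs $\mathsf C_\mu$ centred at $m_{2c}(\theta_2(\wt\sigma_\mu^b))$, with radii $\rho_i^a, \rho_\mu^b$ a small constant times $\delta_{\mathfrak a}(S)\wedge(\wt\sigma-\text{threshold})$, the estimate \eqref{eq_gdiff} of Lemma \ref{lem_complexderivative} shows that for $\zeta\in\overline{\mathsf C_i}$ we have $|g_{2c}(\zeta)-\theta_1(\wt\sigma_i^a)|\lesssim \rho_i^a\cdot|\,-(\wt\sigma_i^a)^{-1}-m_{2c}(\lambda_+)|\lesssim c_i\,[\Delta_1(\wt\sigma_i^a)]^2\cdot(\wt\sigma_i^a+m_{2c}^{-1}(\lambda_+))=c_i\,[\Delta_1(\wt\sigma_i^a)]^4$, which by \eqref{eq_derivativebound} is a small constant multiple of $\theta_1(\wt\sigma_i^a)-\lambda_+$. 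Hence $\re g_{2c}(\zeta)\ge \lambda_+ + c\,[\Delta_1(\wt\sigma_i^a)]^4$ for $\zeta\in\overline{\mathsf C_i}$; using Assumption \ref{assu_strong}, which forces $[\Delta_1(\wt\sigma_i^a)]^2=\wt\sigma_i^a+m_{2c}^{-1}(\lambda_+)\ge n^{-1/3+\tau}+n^\tau\phi_n$, we get $[\Delta_1(\wt\sigma_i^a)]^4\ge (n^{-1/3+\tau}+n^\tau\phi_n)^2\gg n^{-2/3}+n^{-1/3}\phi_n^2$, so $\re g_{2c}(\zeta)\ge \lambda_+ + n^{\omega}(n^{-2/3}+n^{-1/3}\phi_n^2)$ once $\omega<2\tau$. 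The upper bound $\re g_{2c}(\zeta)\le \omega^{-1}$ and the imaginary part bound $|\im g_{2c}(\zeta)|\le 1$ follow from the boundedness of $\wt\sigma_1^a$ (Assumption \ref{assm_big1}) and the smallness of the radii. The same argument applied via $\eqref{eq_gdiff}$ to $\mathsf C_\mu$, using the isometry property \eqref{isometry2} to transfer $|\zeta-m_{2c}(\theta_2(\wt\sigma_\mu^b))|$ estimates to $|g_{2c}(\zeta)-\theta_2(\wt\sigma_\mu^b)|$ and $\eqref{eq_derivativebound}$ to compare with $\theta_2(\wt\sigma_\mu^b)-\lambda_+$, handles the $\mathsf C_\mu$ discs.

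Next I would prove the inclusion $\{\wt\lambda_{\mathfrak a}\}_{\mathfrak a\in S}\subset g_{2c}(\mathsf C)$ and the exclusion of all other eigenvalues. For $\mathfrak a=\al(i)\in S$, Theorem \ref{thm_outlier} gives $|\wt\lambda_{\al(i)}-\theta_1(\wt\sigma_i^a)|\prec n^{-1/2}\Delta_1(\wt\sigma_i^a)+\phi_n\Delta_1^2(\wt\sigma_i^a)$, and by the approximate isometry \eqref{isometry2} (or directly by \eqref{eq_mdiff}) the preimage distance $|m_{2c}(\wt\lambda_{\al(i)}) - (-(\wt\sigma_i^a)^{-1})|\sim|\wt\lambda_{\al(i)}-\theta_1(\wt\sigma_i^a)|/[\Delta_1(\wt\sigma_i^a)]^2\prec n^{-1/2}\Delta_1^{-1}(\wt\sigma_i^a)+\phi_n$. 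By Assumption \ref{ass:nonver} the radius $\rho_i^a$ is at least a constant times $\min\{\delta_{\al(i)}(S),[\Delta_1(\wt\sigma_i^a)]^2\}$, and both $\delta_{\al(i)}(S)$ and $[\Delta_1(\wt\sigma_i^a)]^2$ dominate $n^{-1/2+\wt\tau}\Delta_1^{-1}(\wt\sigma_i^a)+n^{\wt\tau}\phi_n$ by Assumption \ref{ass:nonver} and Assumption \ref{assu_strong} respectively; hence $m_{2c}(\wt\lambda_{\al(i)})\in\mathsf C_i$, i.e. $\wt\lambda_{\al(i)}\in g_{2c}(\mathsf C_i)$. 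The case $\mathfrak a=\beta(\mu)\in S$ is identical with $m_{2c}$ replaced by $m_{2c}\circ g_{1c}$ via \eqref{isometry2}. For the exclusion: an eigenvalue $\wt\lambda_k$ with $k\notin S$ is either (a) another outlier $\wt\lambda_{\al(j)}$ or $\wt\lambda_{\beta(\nu)}$ with index not in $S$, or (b) a non-outlier near $\lambda_+$. In case (b), \eqref{rigid_outXi2} gives $\wt\lambda_k\le \lambda_+ + n^\e(\phi_n^2+n^{-2/3})$, which is strictly below the left endpoint $\lambda_+ + n^\omega(n^{-2/3}+n^{-1/3}\phi_n^2)$ of $g_{2c}(\mathsf C)$ once $\omega>\e$, so $\wt\lambda_k\notin\overline{g_{2c}(\mathsf C)}$. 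In case (a), the definition of $\delta_{\mathfrak a}(S)$ measures exactly how far (in the $m_{2c}$-coordinate) the index $\al(j)$ or $\beta(\nu)$ sits from the discs comprising $\mathsf C$; combining this with the isometry estimates of Remark \ref{remark_nonover} and the fluctuation bound of Theorem \ref{thm_outlier} shows that $\wt\lambda_{\al(j)}$ or $\wt\lambda_{\beta(\nu)}$ lies outside every $\mathsf C_i,\mathsf C_\mu$ with room to spare, again using Assumption \ref{ass:nonver} to guarantee the gap dominates the fluctuation scale.

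The main obstacle I anticipate is bookkeeping rather than conceptual: one must carefully translate the various quantities $\delta_{\mathfrak a}(S)$, $\Delta_1(\wt\sigma_i^a)$, $\Delta_2(\wt\sigma_\mu^b)$ and the outlier fluctuation scales between the physical coordinate (where the $\wt\lambda$'s live) and the spectral-parameter coordinate (where the discs $\mathsf C_i,\mathsf C_\mu$ are defined), keeping track of which of $g_{1c},g_{2c},m_{1c},m_{2c}$ implements each transfer, and invoking the right approximate-isometry estimate \eqref{isometry}--\eqref{isometry2} each time. The two non-overlap hypotheses (Assumption \ref{ass:nonver}) and the strong-spike hypothesis (Assumption \ref{assu_strong}) must both be used with the correct powers of $n$ to ensure (i) the discs are large enough to contain the true preimages of $\{\wt\lambda_{\mathfrak a}\}_{\mathfrak a\in S}$ and (ii) simultaneously small enough that their $g_{2c}$-images stay safely to the right of $\lambda_+$ and exclude all other eigenvalues. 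Once these comparisons are set up, the monotonicity and square-root estimates of Lemmas \ref{lem_derivativeprop} and \ref{lem_complexderivative} close the argument without further difficulty.
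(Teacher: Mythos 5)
Your overall route is essentially the paper's: first show $\overline{g_{2c}(\mathsf C)}$ stays to the right of $\lambda_+$ by a definite margin using the square-root behaviour of $g_{2c}$ near $m_{2c}(\lambda_+)$, then use the outlier location estimates plus Assumptions \ref{assu_strong} and \ref{ass:nonver} to place the $S$-outliers inside and everything else outside. The only differences are cosmetic: for the first part you invoke the two-point bound \eqref{eq_gdiff} together with \eqref{eq_derivativebound} (showing $|g_{2c}(\zeta)-\theta|\le c\,(\theta-\lambda_+)$ on each disc), whereas the paper proves the sector estimate \eqref{claim_reg2c} and checks the discs lie in such sectors; for the inclusion you argue in the $m_{2c}$-coordinate (mapping $\wt\lambda_{\mathfrak a}$ back into the disc via \eqref{eq_mdiff}), whereas the paper argues in the image coordinate via \eqref{eq_gderivative} and \eqref{rigid_outXi1}. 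Both variants are sound, and your exclusion of the outliers with indices outside $S$ matches the paper's appeal to Assumption \ref{ass:nonver} and Remark \ref{remark_nonover}.

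There is, however, one step that fails as written: the exclusion of the non-outlier eigenvalues. You compare the bound $\wt\lambda_k\le\lambda_++n^{\e}(\phi_n^2+n^{-2/3})$ from \eqref{rigid_outXi2} with the left endpoint $\lambda_++n^{\omega}(n^{-2/3}+n^{-1/3}\phi_n^2)$ of $S_{out}(\omega)$ in \eqref{eq_parameterset} and claim the former is smaller "once $\omega>\e$". Note that the two error scales differ by a factor $n^{1/3}$ in the $\phi_n$-term: if, say, $\phi_n=n^{-1/4}$ (allowed, since $c_\phi$ may be any small constant), then $n^{\e}\phi_n^2=n^{\e-1/2}$ while $n^{\omega}(n^{-2/3}+n^{-1/3}\phi_n^2)\sim n^{\omega-2/3}$, so for small $\omega$ the non-outlier window extends well beyond the $S_{out}(\omega)$ threshold and mere membership of $\overline{g_{2c}(\mathsf C)}$ in $S_{out}(\omega)$ does not exclude these eigenvalues. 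The repair is already contained in your first part: you proved $\re g_{2c}(\zeta)-\lambda_+\gtrsim[\Delta_1(\wt\sigma_i^a)]^4$ on $\overline{\mathsf C_i}$ (and $\gtrsim[\Delta_2(\wt\sigma_\mu^b)]^4$ on $\overline{\mathsf C_\mu}$), which by Assumption \ref{assu_strong} is $\gtrsim n^{2\tau}(n^{-2/3}+\phi_n^2)$, and this does dominate $n^{\e}(\phi_n^2+n^{-2/3})$ since $\e<\tau/10$. This quantitative lower bound—rather than the $S_{out}(\omega)$ endpoint—is what the paper's \eqref{claim_reg2c} supplies for the same purpose, so you should cite your own distance bound at this point instead of the containment in $S_{out}(\omega)$. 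With that substitution (and replacing the inessential mention of $m_{2c}\circ g_{1c}$ for the $\beta(\mu)$-case by the direct use of $m_{2c}$ and \eqref{eq_mdiff}), your argument closes.
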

\begin{proof} 
Our proof is similar to the one for \cite[Lemmas 5.4 and 5.5]{principal}. We first show that each $g_{2c}(\mathsf C_i) $ is a subset of $S_{out}(\omega)$. By (\ref{eq_gcomplex}), it is easy to see that $|g_{2c}(\zeta)| \leq \omega^{-1}$ for all $\zeta \in \mathsf{C}$ as long as $\omega$ is sufficiently small. For the lower bound on $\re g_{2c}(\zeta)$, we claim that for any constant $\wt C>0$ and sufficiently small constant $0< \wt c_0<1$, there exists a constant $\wt c_1 \equiv \wt c_1(\wt c_0, \wt C)$ such that 
\begin{equation}\label{claim_reg2c}
\re g_{2c}(\zeta) \geq \lambda_+ + \wt c_1 (\re \zeta-m_{2c}(\lambda_+))^2,
\end{equation}
for $ \re \zeta \ge m_{2c}(\lambda_+)$, $|\text{Im} \zeta | \leq \wt c_0 (\re \zeta-m_{2c}(\lambda_+)),$ and $|\zeta| \leq \wt C$. In fact, if $0\le \re \zeta-m_{2c}(\lambda_+) \le c_0 $ for some sufficiently small constant $c_0>0$, then \eqref{claim_reg2c} follows from (\ref{sqroot4}) of the paper that
$$\re g_{2c}(\zeta) -\lambda_+ \sim \re( \zeta-m_{2c}(\lambda_+))^2 \sim (\re \zeta-m_{2c}(\lambda_+))^2$$
for $|\text{Im} \zeta | \leq \wt c_0 (\re \zeta-m_{2c}(\lambda_+))$. On the other hand, if $\re \zeta-m_{2c}(\lambda_+) \ge c_0 $, then using \eqref{eq_gcomplexd} we get
$$ \re g_{2c}(\zeta) -\lambda_+ \ge g_{2c}(\re \zeta) -\lambda_+ - C|\zeta-m_{2c}(\lambda_+)|\im \zeta \ge c,$$
for some constants $C>0$ and $c\equiv c(c_0,\wt c_0, \wt C, C)>0$ as long as $\wt c_0$ is small enough. The claim \eqref{claim_reg2c} then follows by first choosing a sufficiently small constant $ \wt c_0$ and then choosing an appropriate constant $\wt c_1$. 

Now as long as $c_i$ is sufficiently small, we conclude that $g_{2c}(\mathsf C_i) \subset S_{out}(\varsigma_2,\e)$ using \eqref{claim_reg2c}, $\im \zeta \le c_i \left(\wt\sigma_i^{a} + m^{-1}_{2c}(\lambda_+)\right)$,
$$\re \zeta-m_{2c}(\lambda_+) \geq \left(-\frac{1}{\wt\sigma_i^a m_{2c}^{-1}(\lambda_+)} - c_i\right)(\wt\sigma_i^a + m_{2c}^{-1}(\lambda_+)),$$
and $(\wt\sigma_i^{-1} + m_{2c}(\lambda_+)) \gtrsim n^\tau ( \phi_n +n^{-1/3})$. Similarly, for $\zeta\in \mathsf C_\mu$, using \eqref{eq_mdiff} and \eqref{eq_derivativebound} we get that
$$\re \zeta-m_{2c}(\lambda_+) \geq m_{2c}(\theta_2(\wt\sigma_\mu^b)) -m_{2c}(\lambda_+) - c_\mu(\wt\sigma^b_\mu + m_{1c}^{-1}(\lambda_+)) \ge \wt c_2(\wt\sigma^b_\mu + m_{1c}^{-1}(\lambda_+)) ,$$
and 
$$\im \zeta \le \wt C_2c_\mu(\wt\sigma_i^{-1} + m_{2c}(\lambda_+))$$ 
for some constants $\wt c_2,\wt C_2>0$ that are independent of $c_\mu$. Then using \eqref{claim_reg2c} and \eqref{eq_spikeassuastr}, we obtain that $g_{2c}(\mathsf C_\mu)\subset  S_{out}(\varsigma_2,\e)$ as long as $c_\mu$ is sufficiently small. This finishes the proof of the first statement.

Next, we prove the second statement. If suffices to show that: 
\begin{itemize}
\item[(i)] $\wt\lambda_{\al(i)} \in g_{2c}(\mathsf{C}_i)$ and $\wt\lambda_{\beta(\mu)} \in g_{2c}(\mathsf{C}_\mu)$ for all  $ \al(i) \in  S$ and $\beta(\mu)\in S$; 

\item[(ii)] all the other eigenvalues $\wt\lambda_j$ satisfies $\wt\lambda_j \notin  g_{2c}(\mathsf{C}_i)$ and $\wt\lambda_j \notin  g_{2c}(\mathsf{C}_\mu)$ for all  $ \al(i) \in  S$ and $\beta(\mu)\in S$. 
\end{itemize}
To prove (i), we notice that under Assumptions \ref{assu_strong} and \ref{ass:nonver}, 
\begin{equation*}\label{eq_rhobound0}
\rho^a_i \geq \left[\Delta_1(\widetilde{\sigma}_i^a)\right]^{-1}n^{-1/2+2\e}+n^{2\e}\phi_n, \quad \rho^b_\mu \geq \left[\Delta_2(\widetilde{\sigma}_\mu^b)\right]^{-1}n^{-1/2+2\e}+n^{2\e}\phi_n,
\end{equation*}
where we recall that $\epsilon < \min \{\tau, \widetilde{\tau}\}/10.$ 
Together with \eqref{eq_gderivative}, we get that
$$\left|g_{2c}\left(-(\wt\sigma_i^a)^{-1} \pm \rho^a_i\right) - g_{2c}\left(-(\wt\sigma_i^a)^{-1} \right) \right| \gtrsim \Delta_1(\widetilde{\sigma}_i^a)n^{-1/2+2\e}+n^{2\e}\phi_n\Delta_1^2(\widetilde{\sigma}_i^a)$$
for $\al(i)\in S,$ and
$$\left|g_{2c}\left(m_{2c}(\theta_2(\wt\sigma_\mu^b)) \pm \rho^b_\mu\right) - \theta_2(\wt\sigma_\mu^b) \right| \gtrsim \Delta_2(\widetilde{\sigma}_\mu^b)n^{-1/2+2\e}+n^{2\e}\phi_n\Delta_2^2(\widetilde{\sigma}_\mu^b)$$
for $\beta(\mu)\in S.$ Then we conclude (i) using \eqref{rigid_outXi1}.
In order to prove (ii), we consider the two cases: (1) $ j\in \mathcal O^+ \setminus S$; (2) $j\notin \mathcal O^+$. In case (1), the claim follows from Assumption \ref{ass:nonver}, \eqref{rigid_outXi1} and (\ref{eq_gderivative}); see Remark \ref{remark_nonover}. 
In case (2), the claim follows from \eqref{rigid_outXi2} and the first statement of this lemma. This concludes the proof. 
\end{proof}

For the proof of Proposition \ref{prop_outev0}, we shall use a contour integral representation of $\cal P_S$.  As in \eqref{spectral1} and \eqref{spectral2}, we have the following spectral decompositions for $\wt G$:
\begin{equation}\label{eq_greenexpan}
\begin{split}
\wt G_{ij} = \sum_{k = 1}^{p} \frac{\wt{\bm \xi}_k(i) \wt{\bm \xi}_k^*(j)}{\wt\lambda_k-z},\  \ & \wt G_{\mu\nu} = \sum_{k = 1}^{n} \frac{\wt{\bm \zeta}_k(\mu) \wt{\bm \zeta}_k^*(\nu)}{\wt\lambda_k-z}, \\
\wt G_{i\mu} = z^{-1/2}\sum_{k = 1}^{p\wedge n} \frac{\sqrt{\wt\lambda_k}\wt{\bm \xi}_k(i) \wt{\bm \zeta}_k^*(\mu)}{\wt\lambda_k-z}, \  \ &\wt G_{\mu i} =  z^{-1/2}\sum_{k = 1}^{p\wedge n} \frac{\sqrt{\wt\lambda_k}\wt{\bm \zeta}_k(\mu) \wt{\bm \xi}_k^*(i)}{\wt\lambda_k-z}. 
\end{split}
\end{equation}
By (\ref{eq_greenexpan}), Lemma \ref{lem_contourpropo} and Cauchy's integral formula, we have 
\begin{equation}\label{eq_greenrepresent}
\langle \mathbf{v}_i^a, \cal P_S\bv_j^a \rangle =-\frac{1}{2 \pi \mathrm{i}}  \oint_{g_{2c}(\Gamma)} \langle \mathbf{v}_i, \widetilde{G}(z) \mathbf{v}_j \rangle dz,
\end{equation} 
where $\bv_{i/j}$ is the natural embedding of $\bv^a_{i/j}$ in $\mathbb C^{\mathcal I}$. We next provide a representation for  $\langle \mathbf{v}_i, \widetilde{G}(z) \mathbf{v}_j \rangle$ for $1\le i,j \le r$. Using \eqref{Pepsilon0} and the Woodbury matrix identity in Lemma \ref{lem_woodbury},  we obtain that 
\begin{equation} \label{eq_gexpan}
\begin{split}
&\bU^*\widetilde{G}(z)\bU=\bU^*P^{-1}\left(H-z+z(1-P^{-2})\right)^{-1}P^{-1}\bU \\
&=\bU^*P^{-1}\left(G^{-1}(z)+z\bU\mathcal D\bU^*\right)^{-1}P^{-1}\bU\\
&=\bU^*P^{-1} \left[G(z)-z G(z)\mathbf{U}\frac{1}{\mathcal D^{-1}+ z\bU^*G(z)\bU} \mathbf{U}^*G(z)\right]P^{-1}\bU\\
&=\wt{\mathcal D}^{\frac12}\left[\bU^*G(z)\bU-z \bU^*G(z)\mathbf{U}\frac{1}{\mathcal D^{-1}+ z\bU^*G(z)\bU} \mathbf{U}^*G(z)\bU\right] \wt{\mathcal D}^{\frac12} ,
\end{split}
\end{equation}
where 
$$\wt{\mathcal D}:=\begin{pmatrix} (1+D^a)^{-1} & 0\\ 0& (1+D^b)^{-1}\end{pmatrix} .$$
With \eqref{eq_greenrepresent} and \eqref{eq_gexpan}, we now give the proof of Proposition \ref{prop_outev0}. 

\begin{proof}[Proof of Proposition \ref{prop_outev0}] 
We denote $\mathcal E(z)=z\mathbf{U}^*(\Pi(z) -G(z))\mathbf{U}.$ Then we can write 
\begin{equation*}
z\mathbf{U}^*G(z) \mathbf{U}= z \mathbf{U}^* \Pi(z) \mathbf{U}-\mathcal E(z). 
\end{equation*} 
We now perform a resolvent expansion for the denominator in (\ref{eq_gexpan}) as
\be\label{resolvent_3rd}
\begin{split}
&\frac{1}{\mathcal D^{-1}+ z\bU^*G(z)\bU} = \frac{1}{\mathcal D^{-1}+ z\bU^*\Pi(z)\bU}  \\
&\quad+ \frac{1}{\mathcal D^{-1}+ z\bU^*\Pi(z)\bU} \mathcal E \frac{1}{\mathcal D^{-1}+ z\bU^*\Pi(z)\bU}  \\
&\quad+ \frac{1}{\mathcal D^{-1}+ z\bU^*\Pi(z)\bU}\mathcal E \frac{1}{\mathcal D^{-1}+ z\bU^*G(z)\bU}\mathcal E \frac{1}{\mathcal D^{-1}+ z\bU^*\Pi(z)\bU} .
\end{split}
\ee
Inserting it into (\ref{eq_greenrepresent}) and using that $\Gamma$ does not enclose any pole of $G$ by \eqref{rigid_outXi2}, we obtain that 
\begin{align*}
\langle \mathbf{v}_i^a,\cal P_S\bv_j^a\rangle=\frac{\sqrt{(1+d_i^a)(1+d_j^a)}}{d_i^a d_j^a}(s_0+s_1+s_2),
\end{align*}
where $s_0$, $s_1$ and $s_2$ are defined as 
\begin{align*}
& s_0=\frac{\delta_{ij}}{2 \pi \mathrm{i}} \oint_{g_{2c}(\Gamma)} \frac{1}{(d_i^a)^{-1}+1-(1+m_{2c}(z)\sigma_i^a)^{-1}} \frac{\dd z}{z}, \\
& s_1=\frac{1}{2 \pi \ri}  \oint_{g_{2c}(\Gamma)} \frac{\mathcal E_{ij}(z)}{\left( (d_i^a)^{-1}+1-(1+m_{2c}(z) \sigma_i^a)^{-1}\right)\left( (d_j^a)^{-1}+1-(1+m_{2c}(z) \sigma_j^a)^{-1}\right) }   \frac{\dd z}{z},  \end{align*}
and
\begin{align*}
& s_2= \frac{1}{2 \pi \ri} \oint_{g_{2c}(\Gamma)} \left( \frac{1}{\mathcal D^{-1}+ z\bU^*\Pi(z)\bU} \mathcal E(z) \frac{1}{\mathcal D^{-1}+ z\bU^*G(z)\bU} \mathcal E(z)  \frac{1}{\mathcal D^{-1}+ z\bU^*\Pi(z)\bU} \right)_{ij} \frac{\dd z}{z}.
\end{align*}

First of all, the zeroth order limit $s_0$ can be calculated using Cauchy's theorem as
\be\label{estimate_s0finish}
\begin{split}
&\frac{\sqrt{(1+d_i^a)(1+d_j^a)}}{d_i^a d_j^a} s_0 =\frac{1+d_i^a}{d_i^a}\frac{\delta_{ij}}{2 \pi \mathrm{i}} \oint_{g_{2c}(\Gamma)} \frac{1+m_{2c}(z)\sigma_i^a}{1+m_{2c}(z)\wt\sigma_i^a } \frac{\dd z}{z} \\
& = \frac{d_i^a+1}{d_i^a\wt{\sigma}_i^a}\frac{\delta_{ij}}{2 \pi \mathrm{i} } \oint_{\Gamma} \frac{g_{2c}'(\zeta)}{g_{2c}(\zeta)} \frac{1+\zeta \sigma_i^a}{\zeta+(\wt{\sigma}_i^a)^{-1}} d\zeta=\delta_{ij} \frac{1}{\wt{\sigma}_i^a} \frac{g_{2c}'(-(\wt{\sigma}_i^a)^{-1})}{g_{2c}(-(\wt{\sigma}_i^a)^{-1})}.
\end{split}
\ee

For the first order error $s_1$, we can further write it as 
\begin{equation}\label{contour_s1}
s_1=\frac{d_i^a d_j^a}{\tsig_i^a\tsig_j^a } \frac1{2 \pi \ri }\oint_{\Gamma} \frac{h_{ij}(\zeta)}{(\zeta+(\tsig_i^a)^{-1})(\zeta+(\tsig_j^a)^{-1})} d \zeta, 
\end{equation} 
where $h_{ii}(\zeta)$ is defined as 
$$h_{ij}(\zeta):=(1+\zeta \sigma_i^a)(1+\zeta \sigma_j^a) \mathcal E_{ij}(g_{2c}(\zeta)) \frac{g_{2c}'(\zeta)}{g_{2c}(\zeta)}.$$
With (\ref{eq_bb1}), (\ref{eq_gcomplex}) and \eqref{eq_gcomplexd}, we find that 
\begin{equation}\label{eq_hjjbound}
\begin{split}
|h_{ij}(\zeta)| & \lesssim n^\e \left(\phi_n + n^{-1/2 }|g_{2c}(\zeta)-\lambda_+|^{-1/4}\right) |\zeta-m_{2c}(\lambda_+)| \\
&\lesssim n^\e \left( \phi_n |\zeta-m_{2c}(\lambda_+)| + n^{-1/2}|\zeta-m_{2c}(\lambda_+)|^{1/2} \right)
\end{split}
\end{equation}
for $\zeta \in \Gamma$, where we used that $(\kappa+\eta)|_{z=g_{2c}(\zeta)} \gtrsim |g_{2c}(\zeta)-\lambda_+|.$ Moreover, $h_{ij}(\xi)$ is holomorphic on $\{\zeta\in \mathbb C: \re\zeta - m_{2c}(\lambda_+)\ge n^\e(\phi_n +n^{-1/3})\}$ by \eqref{rigid_outXi2}. Hence using Cauchy's differentiation formula, we obtain that
\begin{equation*}
h_{ij}'(\zeta)=\frac{1}{2 \pi \ri} \oint_{\mathcal{C}} \frac{h_{ij}(\xi)}{(\xi-\zeta)^2} d \xi,
\end{equation*}
where $\mathcal{C}$ is the disc of radius $|\zeta-m_{2c}(\lambda_+)|/2$ centered at $\zeta.$ Together with (\ref{eq_hjjbound}), we obtain that
\begin{equation}\label{eq_hjjbound'}
|h'_{ij}(\zeta)| \leq Cn^\e\left(\phi_n + |\zeta-m_{2c}(\lambda_+)|^{-1/2} n^{-1/2}\right).
\end{equation}
Next we consider three different cases. First suppose that $\al(i)\in S$ and $\al(j)\in S$. If $\wt\sigma_i^a \ne \wt\sigma_j^a$, we have
\be\label{estimate_s1finish1}
\begin{split}
|s_1| & \le C\left| \frac{h_{ij}(-(\wt\sigma_i^a)^{-1})-h_{ij}((-(\wt\sigma_j^a)^{-1}))}{(\wt\sigma_i^a)^{-1}-(\wt\sigma_j^a)^{-1}}\right| \\
&\le \frac{C}{|(\wt\sigma_i^a)^{-1}-(\wt\sigma_j^a)^{-1}|}\left|\int_{-(\wt\sigma_i^a)^{-1}}^{-(\wt\sigma_j^a)^{-1}}|h'_{ij}(\zeta)|d\zeta \right| \\
& \le Cn^\e\phi_n +\frac{Cn^{-1/2+\e}}{\sqrt{\Delta_1(\wt\sigma_i^a)\Delta_1(\wt\sigma_j^a)}},
\end{split}
\ee
where we used \eqref{eq_hjjbound'} in the last step. If $\wt\sigma_i^a = \wt\sigma_j^a$, then a simple application of the residue's theorem gives the same bound.
Next we suppose that $\al(i)\in S$ and $\al(j)\notin S$. Then we get from \eqref{eq_hjjbound} that
\be\label{estimate_s1finish2}
|s_1|\le C\frac{|h_{ij}(-(\wt\sigma_i^a)^{-1})|}{|\wt\sigma_i^a-\wt\sigma_j^a|} \le Cn^\e \frac{n^{-1/2 }\Delta_1(\wt\sigma_i^a) + \phi_n \Delta^2_1(\wt\sigma_i^a)}{\delta_{\al(i),\al(j)}^a}.
\ee
We have a similar estimate if $\al(i)\notin S$ and $\al(j)\in S$. Finally, if $\al(i)\notin S$ and $\al(j)\notin S$, we have $s_1=0$ by Cauchy's residue theorem. 

It remains to estimate the second order error $s_2$. We decompose the contour into 
\be\label{decompose_contour}
\Gamma=\bigcup_{\al(i)\in S} \Gamma_i \cup \bigcup_{\beta(\mu)\in S} \Gamma_\mu, \quad \Gamma_i:= \Gamma\cap \partial\mathsf C_i, \quad  \Gamma_\mu:= \Gamma\cap \partial\mathsf C_\mu.
\ee
We have the following basic estimates on each of these components.

\begin{lemma}\label{lem_distance}
For any $\al(i)\in S$, $1\le j \le r$, $1\le \nu-p\le s$ and $\zeta\in \partial \mathsf C_i$, we have 
\begin{equation}\label{zeta_rhoi}
|\zeta+(\tsig_j^a)^{-1}| \sim \rho_i^a+\delta^a_{\al(i),\al(j)} , 
\ee
and 
\be\label{zeta_rhomu}
\left|m_{1c}(g_{2c}(\zeta)) + (\wt\sigma_\nu^b)^{-1}\right| \sim  \rho_i^a +\delta^a_{\al(i),\beta(\nu)}.
\ee
For any $\beta(\mu)\in S$, $1\le j \le r$, $1\le \mu-p\le s$ and $\zeta\in \partial \mathsf C_\mu$, we have 
\begin{equation}\label{zeta_rhoi2}
|\zeta+(\tsig_j^a)^{-1}| \sim \rho_\mu^b+\delta_{\beta(\mu), \al(j)}^b, 
\ee
and 
\be\label{zeta_rhomu2}
\left|m_{1c}(g_{2c}(\zeta)) + (\wt\sigma_\nu^b)^{-1}\right| \sim \rho_\mu^b +\delta^b_{\beta(\mu),\beta(\nu)}. 
\ee
\end{lemma}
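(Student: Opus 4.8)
The plan is to reduce all four estimates to a single elementary fact about a point on a circle, after transporting the relevant quantities to the $\zeta$-plane via the comparison lemmas already proved. Each of \eqref{zeta_rhoi}--\eqref{zeta_rhomu2} has the same shape: $\zeta$ lies on a circle of fixed radius ($\rho_i^a$ or $\rho_\mu^b$) whose centre, by \eqref{radius_rhoia}--\eqref{radius_rhomu}, is a value of $m_{2c}$ (namely $-(\tsig_i^a)^{-1}=m_{2c}(\theta_1(\tsig_i^a))$, or $m_{2c}(\theta_2(\tsig_\mu^b))$), and the quantity to be controlled is the distance from $\zeta$ to another point that is again a value of $m_{2c}$, possibly after applying $m_{1c}\circ g_{2c}$. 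First I would rewrite each target quantity as a difference $|m_{\bullet c}(z_1)-m_{\bullet c}(z_2)|$ using the identities $-(\tsig_j^a)^{-1}=m_{2c}(\theta_1(\tsig_j^a))$ and $-(\tsig_\nu^b)^{-1}=m_{1c}(\theta_2(\tsig_\nu^b))$; then I would apply the approximate isometries \eqref{eq_mdiff}, \eqref{isometry}, \eqref{isometry2} of Lemma~\ref{lem_complexderivative}, together with the derivative estimates \eqref{eq_derivativebound}, \eqref{eq_mderivative0}, \eqref{eq_mderivative}, \eqref{eq_gderivative} of Lemma~\ref{lem_derivativeprop} (this is exactly the computation of Remark~\ref{remark_nonover}), to identify the distance between the centre of the circle and the target with the quantity $\delta^a_{\cdots}$ or $\delta^b_{\cdots}$ in the statement. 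Here I would use that, by the standing assumption that \eqref{spikes} holds for all $1\le i\le r$, $1\le\mu\le s$ (Assumption~\ref{ass:spike}), every relevant $\tsig_j^a$ exceeds $-m_{2c}^{-1}(\lambda_+)$ and every relevant $\tsig_\nu^b$ exceeds $-m_{1c}^{-1}(\lambda_+)$, both $\gtrsim1$ by Lemma~\ref{lambdar}; with the upper bound \eqref{assm33} this gives $|(\tsig_i^a)^{-1}-(\tsig_j^a)^{-1}|\sim|\tsig_i^a-\tsig_j^a|$, $|m_{1c}(\theta_1(\tsig_i^a))|\sim1$, and so on.

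The geometric input is: if $\zeta$ lies on the circle of radius $\rho$ about $c_0$ then $\big||c_0-c_1|-\rho\big|\le|\zeta-c_1|\le|c_0-c_1|+\rho$ for any $c_1$, hence $|\zeta-c_1|\sim\rho+|c_0-c_1|$ whenever $|c_0-c_1|\le\rho/2$ or $|c_0-c_1|\ge2\rho$. The substance of the proof is therefore to check that for every relevant target point $c_1$ one of these two regimes holds, or else that the remaining ``comparable'' band is harmless. I would split into two cases. If $c_1$ is the centre of a disc (of either type in \eqref{eq_defnc}) indexed by an element of $S$: when the separation is $\gg\rho$ the first dichotomy applies; when it is comparable to $\rho$ the two indices lie in the same near-degenerate cluster, so — taking all the constants $c_i,c_\mu$ equal to one small absolute constant — the radii and the separation are all mutually comparable, and since $\zeta$ lies on the arc $\Gamma_i$ (or $\Gamma_\mu$) of the outer boundary of $\mathsf C$ in \eqref{decompose_contour}, hence outside the competing disc, one has $|\zeta-c_1|\ge\rho_k^{a/b}\gtrsim\rho+|c_0-c_1|$; the sub-case of equal spikes (coinciding classical locations, $\delta=0$) is immediate since then $c_1=c_0$ and $|\zeta-c_1|=\rho$ exactly. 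If $c_1$ is indexed outside $S$: by \eqref{radius_rhoia} one has $\rho_i^a\le c_i\,\delta_{\al(i)}(S)$, and $\delta_{\al(i)}(S)\le\delta^a_{\al(i),\al(k)}$ (resp.\ $\le\delta^a_{\al(i),\beta(\nu)}$) for $\al(k)\notin S$ (resp.\ $\beta(\nu)\notin S$) by the definition of $\delta_{\al(i)}(S)$, so $|c_0-c_1|\gtrsim\delta^a_{\cdots}\gg\rho_i^a$ once $c_i$ is small, with Assumption~\ref{ass:nonver} supplying the quantitative gain when it is needed; the case of $\rho_\mu^b$ is identical via \eqref{radius_rhomu}.

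Carrying out the two transport steps for each display is then routine. For \eqref{zeta_rhoi} the centre is $c_0=-(\tsig_i^a)^{-1}$, the target is $c_1=-(\tsig_j^a)^{-1}$, and $|c_0-c_1|\sim|\tsig_i^a-\tsig_j^a|=\delta^a_{\al(i),\al(j)}$. For \eqref{zeta_rhomu} I would write $-(\tsig_\nu^b)^{-1}=m_{1c}(\theta_2(\tsig_\nu^b))$ and apply \eqref{isometry2} with $\zeta_2=m_{2c}(\theta_2(\tsig_\nu^b))$ to get $|m_{1c}(g_{2c}(\zeta))+(\tsig_\nu^b)^{-1}|\sim|\zeta-m_{2c}(\theta_2(\tsig_\nu^b))|$, a distance on $\partial\mathsf C_i$ to $c_1=m_{2c}(\theta_2(\tsig_\nu^b))$; by \eqref{eq_mdiff} followed by the computation of Remark~\ref{remark_nonover}, $|c_0-c_1|=|m_{2c}(\theta_1(\tsig_i^a))-m_{2c}(\theta_2(\tsig_\nu^b))|\sim|m_{1c}(\theta_1(\tsig_i^a))-m_{1c}(\theta_2(\tsig_\nu^b))|\sim|\tsig_\nu^b+m_{1c}^{-1}(\theta_1(\tsig_i^a))|=\delta^a_{\al(i),\beta(\nu)}$, and the geometric input finishes it. The estimates \eqref{zeta_rhoi2} and \eqref{zeta_rhomu2} follow in exactly the same way with the discs $\mathsf C_\mu$ in place of $\mathsf C_i$ (noting $\partial\mathsf C_\mu$ is the circle of radius $\rho_\mu^b$ about $m_{2c}(\theta_2(\tsig_\mu^b))$) and the roles of $m_{1c}$ and $m_{2c}$ interchanged through \eqref{isometry}, \eqref{isometry2}.

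I expect the only genuine difficulty to be the bookkeeping that rules out the ``comparable band'' — that no target point sits essentially on the circle $\partial\mathsf C_i$. For targets indexed outside $S$ this is controlled by the smallness of the constants $c_i,c_\mu$ in \eqref{radius_rhoia}--\eqref{radius_rhomu} together with Assumption~\ref{ass:nonver}; for targets indexed inside $S$ it is controlled by the observation that within a near-degenerate cluster all scales are comparable and that $\zeta\in\Gamma_i$ keeps $\zeta$ outside the competing disc. Everything else is a mechanical combination of Lemma~\ref{lem_derivativeprop}, Lemma~\ref{lem_complexderivative} and the triangle inequality.
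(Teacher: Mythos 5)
Your proposal is correct and takes essentially the same route as the paper's proof: the upper bound is the triangle inequality, and the lower bound is obtained by a dichotomy on whether the competing index lies in $S$, using $\rho_i^a\le c_i\,\delta_{\al(i)}(S)$ with sufficiently small constants when it does not, and comparability of the radii within a near-degenerate cluster (the paper's \eqref{comparablE_rho}) when it does, with all type-$b$ comparisons transported through the approximate isometries \eqref{eq_mdiff}, \eqref{isometry}, \eqref{isometry2} exactly as in the paper. Your explicit restriction to $\zeta\in\Gamma_i$ in the overlapping-cluster case matches what the paper's own argument implicitly uses (its step $|\zeta+(\tsig_j^a)^{-1}|\ge\rho_j^a$ likewise requires $\zeta$ to lie outside the open disc $\mathsf C_j$), and since the lemma is only ever applied on $\Gamma_i,\Gamma_\mu$, this is harmless.
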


\begin{proof} 
The proof is similar to but a little more complicated than the one for \cite[Lemma 5.6]{principal}. The upper bound in \eqref{zeta_rhoi} follows from the triangle inequality:
\begin{equation*}
|\zeta+(\tsig_j^a)^{-1}|  \leq \rho_i^a+|(\tsig_i^a)^{-1}-(\tsig_j^a)^{-1}| \lesssim \rho_i^a+\delta^a_{\al(i),\al(j)} .
\end{equation*}
It remains to prove a lower bound. For $\al(j)\notin S$, by Assumptions \ref{assu_strong} and \ref{ass:nonver}, we trivially have $|(\tsig^a_i)^{-1}-(\tsig_j^a)^{-1}| \geq 2\rho_i^a$, from which we obtain that
\begin{equation*}
|\zeta+(\tsig_k^a)^{-1}| \geq |(\tsig_i^a)^{-1}-(\tsig_j^a)^{-1}|-\rho_i^a \gtrsim \rho_i^a+|(\tsig_i^a)^{-1}-(\tsig_j^a)^{-1}|.
\end{equation*} 

Next we consider the case $\al( j)\in S$. Define $\delta:=|(\tsig_i^a)^{-1}-(\tsig_j^a)^{-1}|-\rho_j^a-\rho_i^a$.  First suppose that $C_0 \delta >|(\tsig_i^a)^{-1}-(\tsig_j^a)^{-1}|$ for some constant $C_0>1$. It then follows that $\rho_i^a+\rho_j^a \leq \frac{C_0-1}{C_0} |(\tsig^a_i)^{-1}-(\tsig^a_j)^{-1}|.$ 
As a consequence, we obtain that 
\begin{equation*}
|\zeta+(\tsig^a_j)^{-1}| \geq \left|(\tsig^a_i)^{-1}-(\tsig^a_j)^{-1}\right|-\rho_i^a \geq \frac{1}{C_0}|(\tsig^a_i)^{-1}-(\tsig^a_j)^{-1}| \gtrsim  \rho_i^a+\delta^a_{\al(i),\al(j)}.
\end{equation*}
Suppose now that $C_0 \delta \leq |(\tsig^a_i)^{-1}-(\tsig^a_j)^{-1}|$. Then we have
\begin{equation*}
|(\tsig^a_i)^{-1}-(\tsig^a_j)^{-1}| \leq \frac{C_0}{C_0-1}(\rho_i^a+\rho_j^a) .
\end{equation*}
We claim that for large enough constant $C_0>0$, there exists a constant $\wt C(c_i,c_j,C_0)>0$ such that   
\be\label{comparablE_rho}
\wt C^{-1}\rho_i^a \leq \rho_j^a \leq \wt C \rho_i^a.
\ee
If \eqref{comparablE_rho} holds, then we have
\begin{equation*}
|\zeta+(\tsig^a_j)^{-1}| \geq \rho_j^a \gtrsim \rho_i^a+\rho_j^a \gtrsim \rho_i^a + \delta^a_{\al(i),\al(j)} .
\end{equation*} 
This concludes \eqref{zeta_rhoi}. 

It remains to prove \eqref{comparablE_rho}. Recall the definitions of $\rho_i^a$ in \eqref{radius_rhoia}. We consider the following three cases. (i) If $\rho_i^a = c_i \delta^a_{\al(i),\al(k)}$ for some $k$ such that $\al(k)\notin S$, then we have
\be\label{proof_example}
\frac{\rho_j^a}{c_j}  \le \delta^a_{\al(j),\al(k)} \le \delta^a_{\al(i),\al(k)} + |\wt\sigma_i^a-\wt\sigma_j^a| \le \frac{\rho_i^a}{c_i} + \frac{C_0\wt\sigma^a_i \wt\sigma^a_j}{C_0-1}(\rho_i^a+\rho_j^a) .
\ee
Thus as long as $c_j$ and $C_0$ is chosen such that $c_j^{-1} > \frac{C_0\wt\sigma^a_i \wt\sigma^a_j}{C_0-1}$, we can obtain the upper bound in \eqref{comparablE_rho}. (ii) If $\rho_i^a = c_i (\wt\sigma_i^a + m_{2c}^{-1}(\lambda_+))$, the proof is the same as in case (i). (iii) If $\rho_i^a = c_i \delta^a_{\al(i),\beta(\nu)}$ for some $\nu$ such that $\beta(\nu)\notin S$, then there exists a constant $C>0$ independent of $c_i,c_j,C_0$ such that 
\begin{align*}
\frac{\rho_j^a}{c_j} & \le \left|m_{1c}^{-1}(\theta_1( \tsig^{a}_i))-m_{1c}^{-1}(\theta_1( \tsig^{a}_j))\right|  +\delta^a_{\al(i),\beta(\nu)}\\
& \le \frac{\rho_i^a}{c_i} + C|\wt\sigma_i^a-\wt\sigma_j^a|\le \frac{\rho_i^a}{c_i}+ \frac{CC_0\wt\sigma^a_i \wt\sigma^a_j}{C_0-1}(\rho_i^a+\rho_j^a) ,
\end{align*}
where in the second step we used \eqref{isometry2}. Again we obtain the upper bound in \eqref{comparablE_rho} by choosing appropriate $c_j$ and $C_0$. Finally, the lower bound in \eqref{comparablE_rho} follows immediately by switching the roles of $i$ and $j$.

The proof for \eqref{zeta_rhomu}, \eqref{zeta_rhoi2} and \eqref{zeta_rhomu2} is similar; the only difference is that we need to use the approximate isometry properties in \eqref{isometry} and \eqref{isometry2}. 
\end{proof}

 Now we finish the estimate of $s_2$. First with (\ref{eq_bb1}), (\ref{eq_gcomplex}) and \eqref{eq_gcomplexd}, we can estimate that
\begin{equation}\label{eq_s3bound}
\begin{split}
|s_2| &\leq C \oint_{\Gamma} \frac{n^{2\e}\phi_n^2+n^{-1+2\epsilon}|\zeta-m_{2c}(\lambda_+)|^{-1}}{|\zeta+(\tsig_i^a)^{-1}||\zeta+(\tsig_j^a)^{-1}|} {|g_{2c}'(\zeta)|}  \\
&\quad \times \left\| \left(\mathcal D^{-1}+g_{2c}(\zeta) \mathbf{U}^* G(g_{2c}(\zeta)) \mathbf{U})^{-1}\right)^{-1} \right\| |d \zeta| \\
&\le  C \oint_{\Gamma} \frac{n^{-1+2\epsilon}+n^{2\e}\phi_n^2|\zeta-m_{2c}(\lambda_+)| }{|\zeta+(\tsig_i^a)^{-1}||\zeta+(\tsig_j^a)^{-1}|} \frac{1}{\mathfrak d(\zeta)-\norm{ \mathcal E(g_{2c}(\zeta))} } |d \zeta|,
\end{split}
\end{equation}
where 
\begin{align*}
\mathfrak d(\zeta) : = & \min\left\{\min_{1\le j \le r}\left|(d_j^a)^{-1}+1-(1+\zeta \sigma_j^a)^{-1}\right|,\right. \\
&\left. \min_{1\le \mu-p \le s}\left|(d_\mu^b)^{-1}+1-(1+m_{1c}(g_{2c}(\zeta)) \sigma_\mu^b)^{-1}\right|\right\}.
\end{align*}
We can bound $\norm{ \mathcal E(g_{2c}(\zeta))} $ using (\ref{eq_bb1}), (\ref{eq_gcomplex}) and the Hilbert-Schmidt norm as
\begin{equation}\label{eq_localcontrol}
\norm{\mathcal E(g_{2c}(\zeta))} \leq C \sqrt{rs} n^\e \left[\phi_n +n^{-1/2 } \left(\zeta-m_{2c}(\lambda_+)\right)^{-1/2} \right].
\end{equation}
For $\mathfrak d(\zeta)$, we have for $1\le j \le r,$
\begin{align}\label{eq_matrixentry}
&\frac{d_j^a+1}{d_j^a}-\frac{1}{1+\zeta \sigma_j^a} = \frac{1+ \wt\sigma_j^a\zeta}{d_j^a \sigma_j^a \left(\zeta+(\sigma_j^a)^{-1}\right)},
\end{align}
 and for $1\le \mu-p\le s,$
\begin{align}\label{eq_matrixentry2}
&\frac{d_\mu^b + 1}{d_\mu^b}-\frac1{1+m_{1c}(g_{2c}(\zeta)) \sigma_\mu^b} = \frac{1+\wt\sigma_\mu^b m_{1c}(g_{2c}(\zeta)) }{d_\mu^b \sigma_\mu^b \left(m_{1c}(g_{2c}(\zeta)) + (\sigma_\mu^b)^{-1}\right)}.
\end{align}

Note that we have $|\zeta+(\sigma_j^a)^{-1}|\sim 1$ and $|m_{1c}(g_{2c}(\zeta)) + (\sigma_\mu^b)^{-1}|\sim 1$ by \eqref{Piii}. On the other hand, we can use Lemma \ref{lem_distance} to bound the numerators from below. Thus we obtain that
\begin{align*}
\norm{\mathcal E(g_{2c}(\zeta))}&\ll \left(\wt{\sigma}^a_{i}+{m_{2c}^{-1}(\lambda_+)}\right) \wedge \left[n^{\wt \tau}\phi_n + \left(\tsig_i^a+m_{2c}^{-1}(\lambda_+)\right)^{-1/2} n^{-1/2+\wt\tau} \right]\\
&\lesssim \begin{cases}\rho_i^a \lesssim \mathfrak d(\zeta), \ & \text{for } \zeta\in \Gamma_i\\
\rho_\mu^b \lesssim \mathfrak d(\zeta), \ & \text{for } \zeta\in \Gamma_\mu
\end{cases},
\end{align*}
where we used Assumption \ref{assu_strong}, Assumption \ref{ass:nonver} and \eqref{eq_localcontrol}.
Thus we have  
\begin{equation}\label{eq_boundsim1}
 \frac{1}{\mathfrak d(\zeta)-\norm{ \mathcal E(g_{2c}(\zeta))} } \lesssim \begin{cases}(\rho_i^a)^{-1}, \ & \text{for } \zeta\in \Gamma_i\\
(\rho_\mu^b)^{-1}  , \ & \text{for } \zeta\in \Gamma_\mu
\end{cases} .
\end{equation}
Decomposing the integral contour in \eqref{eq_s3bound} as in \eqref{decompose_contour}, using \eqref{eq_boundsim1} and Lemma \ref{lem_distance}, and recalling that the length of $\Gamma_i$ (or $\Gamma_\mu$) is at most $2 \pi \rho_i^a$ (or $2\pi \rho_\mu^b$), we get that 
\begin{equation}\label{estimate_s2}
\begin{split}
|s_2| & \le  C \sum_{\al(k) \in S} \frac{n^{-1+2\epsilon}+n^{2\e}\phi_n^2\Delta_1^2(\wt\sigma_k^a) }{(\rho_k^a + \delta^a_{\al(k),\al(i)})(\rho_k^a + \delta^a_{\al(k),\al(j)}) } \\
&+C \sum_{\beta(\mu) \in S} \frac{n^{-1+2\epsilon}+n^{2\e}\phi_n^2\Delta_2^2(\wt\sigma_\mu^b)}{(\rho_\mu^b + \delta^b_{\beta(\mu),\al(i)})(\rho_\mu^b + \delta^b_{\beta(\mu),\al(j)})} .
\end{split}
\end{equation}
 Finally, we estimate the RHS of \eqref{estimate_s2}. We have
$$ \Delta_1^2(\wt\sigma_k^a)  \lesssim \Delta_1^2(\wt\sigma_i^a) + \delta^a_{\al(k),\al(i)},\quad \Delta_2^2(\wt\sigma_\mu^b)  \lesssim \Delta_1^2(\wt\sigma_i^a) + \delta^b_{\beta(\mu), \al(i)}.$$
For $\al(i)\notin S$, $\al(k)\in S$ and $\beta(\mu)\in S$, we have
\begin{align*}
& \frac{1}{(\rho_k^a + \delta^a_{\al(k),\al(i)})^2 } +\frac{1}{(\rho_\mu^b + \delta^b_{\beta(\mu),\al(i)})^2 }  \le \frac{1}{(\delta^a_{\al(k),\al(i)})^2 }+ \frac{1}{ (\delta^b_{\beta(\mu),\al(i)})^2 }  \le \frac{C}{\delta_{\al(i)}(S)^2} .
\end{align*}
For $\al(i)\in S$, we have $\rho_k^a + \delta^a_{\al(k),\al(i)}\gtrsim\rho_i^a $ for $\al(k)\in S$, and $\rho_\mu^b + \delta^b_{\beta(\mu),\al(i)}\gtrsim \rho_i^a$ for $\beta(\mu)\in S$ (which follow from arguments that are similar to the first two inequalities in \eqref{proof_example}). Then we have 
$$ \frac{1}{(\rho_k^a + \delta^a_{\al(k),\al(i)})^2 }+ \frac{1}{(\rho_\mu^b + \delta^b_{\beta(\mu),\al(i)})^2 }  \le  \frac{C}{(\rho_i^a)^2}\le \frac{C}{\delta_{\al(i)}(S)^2} + \frac{C}{\Delta_1(\wt\sigma_i^a)^4} .$$
Plugging the above estimates into \eqref{estimate_s2}, we get that 
\begin{equation}\label{estimate_s2finish}
\begin{split}
|s_2| \lesssim n^{-1+2\e}\left(\frac1{\delta_{\al(i)}(S)} + \frac{\mathbf 1(\al(i)\in S)}{\Delta_1(\widetilde{\sigma}_i^a)^2} \right)\left(\frac1{\delta_{\al(j)}(S)} + \frac{\mathbf 1(\al(j)\in S)}{\Delta_1(\widetilde{\sigma}_j^a)^2} \right)\\
+ n^{2\e}\phi_n^2  \left[\left(\frac{\Delta_1(\widetilde{\sigma}_i^a)^2}{\delta_{\al(i)}(S)} +1 \right)\left(\frac1{\delta_{\al(j)}(S)} + \frac{\mathbf 1(\al(j)\in S)}{\Delta_1(\widetilde{\sigma}_j^a)^2} \right)\wedge  \left(i\leftrightarrow j \right)\right] .
\end{split}
\end{equation}
Combining \eqref{estimate_s0finish}, \eqref{estimate_s1finish1}, \eqref{estimate_s1finish2} and \eqref{estimate_s2finish}, we obtain (\ref{eq_spikedvector0}) for $1\le i,j \le r$ since $\e$ can be arbitrarily small.

We can easily extend the above arguments to the general case. For any $i,j \in \{1,\cdots, p\}$, we define $\mathcal R:=\{1,\cdots, r\}\cup \{i,j\}$. Then we define a perturbed model with (recall (\ref{AOBO}) and (\ref{eq_sepamodel}) of the paper)
\begin{equation}\nonumber
\widehat A = A\Big(1+{\widehat V_o^a} {\wh D}^{a} ({\wh V_o}^a)^*\Big), \quad \wh{D}^{a}=\text{diag}(d_k^{a})_{k\in \mathcal R}, \quad V_o^a=(\bv_k^a)_{k\in \mathcal R},
\end{equation}
where 
$$\quad d_{k}^a:=\begin{cases} d_k^a, \ &\text{if } 1\le k\le r\\
\wt\e, \ &\text{if } k\in \mathcal R \text{ and } k>r\end{cases}.
$$
Then all the previous proof goes through for the perturbed model as long as we replace the $\mathbf U$ and $\mathcal D$ in \eqref{eq_gexpan} with
\begin{equation}\label{Depsilon0_general}
\wh{\mathbf{U}}=
\begin{pmatrix}
\wh V_{o}^a & 0 \\
0 & V_{o}^b
\end{pmatrix}, \quad \wh{\mathcal{D}}=
\begin{pmatrix}
\wh D^a(\wh D^a+1)^{-1} & 0 \\
0 & D^b(D^b+1)^{-1}
\end{pmatrix}.
\end{equation}
Note that in the proof, only the upper bound on the $d_k^a$'s were used. Moreover, the proof does not depend on the fact that $\wt\sigma_i^a$ or $\wt\sigma_j^a$ satisfy (\ref{spikes}) of the paper (we only need the indices in $S$ to satisfy Assumptions \ref{assu_strong} and \ref{ass:nonver}). Finally, taking $\wt\e\downarrow 0$ and using continuity, we get (\ref{eq_spikedvector0}) for general $i,j \in \{1,\cdots, p\}$.
\end{proof}

\subsection{Removing the non-overlapping condition}\label{sec_remove}

In this subsection, we prove Proposition \ref{prop_outev} by removing the non-overlapping Assumption \ref{ass:nonver} in Proposition \ref{prop_outev0}. The proof is an extension of the one in \cite[Section 5.2]{principal}


\begin{proof}[Proof of Proposition \ref{prop_outev}] 
Recall the constants $\tau$ in Assumption \ref{assu_strong} and $\wt\tau$ in Assumption \ref{ass:nonver}. Let $\wt\tau<\tau/4$. We define the index set (recall \eqref{eq_otau})
$$\mathcal O^+_{\tau/2}:= \left\{\al(i): i \in \mathcal{O}^{(a)}_{\tau/2}\right\}\cup \left\{\beta(\mu): \mu \in \mathcal{O}^{(b)}_{\tau/2}\right\}.$$
For simplicity, we denote
$$\delta_{\mathfrak a, \cdot} := \begin{cases}\delta^a_{\mathfrak a,\cdot}, \ & \text{if } \fa=\al(i)\\
\delta^b_{\mathfrak a,\cdot}, \ & \text{if } \fa=\beta(\mu)
\end{cases}$$
for any $\mathfrak a\in \mathcal O^+$. We say that $\fa \neq \fb\in \mathcal{O}_{\tau/2}^+$ overlap if $$\delta_{\fa,\fb}\wedge \delta_{\fb,\fa}\leq \begin{cases}\left[\Delta_1(\widetilde{\sigma}_i^a)\right]^{-1} n^{-1/2+\wt\tau} + n^{\wt\tau}\phi_n, \ & \text{if } \fa=\al(i)\\
\left[\Delta_2(\widetilde{\sigma}_\mu^b)\right]^{-1} n^{-1/2+\wt\tau}+ n^{\wt\tau}\phi_n, \ & \text{if } \fa=\beta(\mu)
\end{cases}, $$
 \text{or} $$ \quad  \delta_{\fa,\fb}\wedge \delta_{\fb,\fa}\leq \begin{cases}\left[\Delta_1(\widetilde{\sigma}_j^a)\right]^{-1} n^{-1/2+\wt\tau}+ n^{\wt\tau}\phi_n, \ & \text{if } \fb=\al(j)\\
\left[\Delta_2(\widetilde{\sigma}_\nu^b)\right]^{-1} n^{-1/2+\wt\tau}+ n^{\wt\tau}\phi_n, \ & \text{if } \fb=\beta(\nu)
\end{cases}.$$
\begin{definition}
For $S$ satisfying Assumption \ref{assu_strong}, we define sets $L_1(S)\subset S\subset L_2(S)$ such that 
$L_1(S)$ is the largest subset of $S$ that do not overlap with its complement, and $L_2(S)$ is the smallest subset of $\mathcal{O}^+_{\tau/2}$ that do not overlap with its complement. 
\end{definition}

It is easy to see that $L_1(S)$ and $L_2(S)$ exist and are unique. For an illustration of these two sets, we refer the reader to Fig. 4 of \cite{principal}. The main reason for defining these two sets is that Proposition \ref{prop_outev0} now holds for $(\tau/2,L_1(S))$ or $(\tau/2,L_2(S))$. 
Now we are ready to prove \eqref{eq_spikedvector}. As discussed at the end of Section \ref{sec_eve_green}, without loss of generality, we can assume that $1\le i, j \le r$. There are four cases to consider.

\vspace{5pt}

\noindent {\bf Case (a)}: $\al(i)=\al(j)\notin S$. If $\al(i)\notin L_2(S)$, then using $r+s=\OO(1)$ we see that $\delta_{\al(i)}(S)\sim \delta_{\al(i)}(L_2(S))$. Then Proposition \ref{prop_outev0} gives that
\be\label{proofout_a1}
\begin{split}
 &\langle \bv_i^a, \cal P_S\bv_i^a\rangle \le\langle \bv_i^a, \cal P_{L_2(S)}\bv_i^a\rangle \\
 &\prec  \frac1{n\delta^2_{\al(i)}(L_2(S))} + \phi_n^2 \frac{\Delta_1^2(\widetilde{\sigma}_i^a)+ \delta_{\al(i)}(L_2(S))}{\delta_{\al(i)}^2(L_2(S))} \\
&\lesssim  \frac{\psi_1^2(\wt\sigma_i^a)\Delta_1^2(\widetilde{\sigma}_i^a)}{\delta^2_{\al(i)}(S) } +  \frac{\phi_n^2}{\delta_{\al(i)}(S)} .
\end{split}
\ee
If $\al(i)\in L_2(S)$, an easy argument gives that 
\be\label{proofout_a2}
\delta_{\al(i)}(S)\le Cn^{\wt\tau}\psi_1(\wt\sigma_i^a) 
\le C\delta_{\al(i)}(L_2(S)).
\ee
Then Proposition \ref{prop_outev0} gives that
\be\label{proofout_a3}
\begin{split}
&\langle \bv_i^a, \cal P_S\bv_i^a\rangle\le \langle \bv_i^a, \cal P_{L_2(S)}\bv_i^a\rangle \\
&\prec \frac{1}{\wt{\sigma}_i^a} \frac{g_{2c}'(-(\wt{\sigma}_i^a)^{-1})}{g_{2c}(-(\wt{\sigma}_i)^{-1})} + \phi_n +   \frac{1}{n^{1/2} \Delta_1(\widetilde{\sigma}_i^a)}+ \frac1{n\delta^2_{\al(i)}(L_2(S))} \\
&+ \frac{\phi_n^2 \Delta_1^2(\wt\sigma_i^a)}{\delta_{\al(i)}^2(L_2(S))} + \frac{\phi_n^2}{\Delta_1(\wt\sigma_i^a)^2}\le C\Delta_1^2(\widetilde{\sigma}_i^a) \le \frac{Cn^{2\wt\tau}\psi_1^2(\wt\sigma_i^a)\Delta^2_1(\widetilde{\sigma}_i^a)}{\delta^2_{\al(i)}(S)} ,
\end{split}
\ee
where we also used Assumption \ref{assu_strong} and \eqref{eq_gderivative} in the third step.

\vspace{5pt}

\noindent {\bf Case (b)}: $\al(i)=\al(j)\in S$. We first consider the case $\al(i)\in L_1(S)$. We can write
\be\label{proofout_b1}
\langle \bv_i^a, \cal P_S\bv_i^a\rangle = \langle \bv_i^a, \cal P_{L_1(S)}\bv_i^a\rangle +  \langle \bv_i^a, \cal P_{S\setminus L_1(S)}\bv_i^a\rangle .
\ee
Using Proposition \ref{prop_outev0} and the fact that $\delta_{\al(i)}(S)\sim \delta_{\al(i)}(L_1(S))$, we can estimate the first term as 
\be\label{proofout_b2}
\begin{split}
&\left| \langle \bv_i^a, \cal P_{L_1(S)}\bv_i^a\rangle- \frac{1}{\wt{\sigma}_i^a} \frac{g_{2c}'(-(\wt{\sigma}_i^a)^{-1})}{g_{2c}(-(\wt{\sigma}_i)^{-1})} \right| \\
&\prec  \psi_1(\wt\sigma_i^a) + \psi^2_1(\wt\sigma_i^a) \Delta^2_1(\wt\sigma_i^a) \left(\frac1{\delta^2_{\al(i)}(S)} + \frac{1}{\Delta^4_1(\widetilde{\sigma}_i^a)} \right) \\
&\prec  \psi_1(\wt\sigma_i^a) +  \frac{\psi^2_1(\wt\sigma_i^a) \Delta^2_1(\wt\sigma_i^a) }{\delta^2_{\al(i)}(S)} ,
\end{split}
\ee
where we used that $\psi_1(\wt\sigma_i^a)\le \Delta^2_1(\wt\sigma_i^a) $ in the last step. For the second term in \eqref{proofout_b1}, it suffices to assume that $S\setminus L_1(S)\ne \emptyset$ (otherwise it is equal to zero). Then we observe that $\delta_{\al(i)}(S) \sim \delta_{\al(i)}(S\setminus L_1(S))$. Applying \eqref{proofout_a1} with $S$ replaced by $S\setminus L_1(S)$, we obtain that
\be\label{proofout_b3}
\langle \bv_i^a, \cal P_{S\setminus L_1(S)}\bv_i^a\rangle \prec  \frac{\psi_1^2(\wt\sigma_i^a)\Delta_1^2(\widetilde{\sigma}_i^a)}{\delta^2_{\al(i)}(S) } +  \frac{\phi_n^2}{\delta_{\al(i)}(S)} \le  \phi_n +  \frac{\psi^2_1(\wt\sigma_i^a) \Delta^2_1(\wt\sigma_i^a) }{\delta^2_{\al(i)}(S)} .
\ee
Next, for the case $\al(i)\notin L_1(S)$, it is easy to show that \eqref{proofout_a2} holds, and as in \eqref{proofout_a3}, we get
\be\label{proofout_b4}
\begin{split}
&\left|\langle \bv_i^a, \cal P_S\bv_i^a\rangle- \frac{1}{\wt{\sigma}_i^a} \frac{g_{2c}'(-(\wt{\sigma}_i^a)^{-1})}{g_{2c}(-(\wt{\sigma}_i)^{-1})} \right|\\
&\le \langle \bv_i^a, \cal P_{L_2(S)}\bv_i^a\rangle  +  \frac{1}{\wt{\sigma}_i^a} \frac{g_{2c}'(-(\wt{\sigma}_i^a)^{-1})}{g_{2c}(-(\wt{\sigma}_i)^{-1})} \prec \frac{n^{2\wt\tau}\psi_1^2(\wt\sigma_i^a)\Delta^2_1(\widetilde{\sigma}_i^a)}{\delta^2_{\al(i)}(S)}.
\end{split}
\ee

\vspace{5pt}

Combining \eqref{proofout_a1}, \eqref{proofout_a3} and \eqref{proofout_b2}-\eqref{proofout_b4}, we conclude that 
\be\label{conclude_i=j}
\begin{split}
&\left| \langle \bv_i^a, \cal P_S\bv_i^a\rangle- \mathbf 1(\al(i)\in S)\frac{1}{\wt{\sigma}_i^a} \frac{g_{2c}'(-(\wt{\sigma}_i^a)^{-1})}{g_{2c}(-(\wt{\sigma}_i)^{-1})} \right| \prec n^{2\wt\tau}\Upsilon(i, S).
\end{split}
\ee
This concludes \eqref{eq_spikedvector} for the $i=j$ case since $\wt \tau$ can be chosen arbitrarily small. 

\vspace{5pt}

\noindent {\bf Case (c)}: $i\ne j$ and $\al(i)\notin S$ or $\al(j)\notin S$. Using \eqref{conclude_i=j} and the basic estimate
\be\label{CS new}
\left| \langle \bv_i^a, \cal P_S\bv_j^a\rangle\right|^2 \le \langle \bv_i^a, \cal P_S\bv_i^a\rangle\langle \bv_j^a, \cal P_S\bv_j^a\rangle ,
\ee
we find that in this case, \eqref{eq_spikedvector} holds with an additional $n^{2\wt\tau}$ factor multiplying the RHS. 

\vspace{5pt}

\noindent {\bf Case (d)}: $i\ne j$ and $\al(i),\al(j)\in S$. Our goal is to prove that  
\be\label{eq_pfgoal}
\begin{split}
& \left| \langle \bv_i^a, \cal P_S\bv_j^a\rangle \right| \\
&\prec n^{2\wt\tau} \left[\psi_{1}^{1/2}(\wt\sigma_i^a)+  \frac{\psi_{1}(\wt\sigma_i^a)\Delta_1(\widetilde{\sigma}_i^a) }{\delta_{\al(i)}(S)} \right] \left[\psi_{1}^{1/2}(\wt\sigma_j^a)+  \frac{\psi_{1}(\wt\sigma_j^a)\Delta_1(\widetilde{\sigma}_j^a) }{\delta_{\al(j)}(S)}  \right] .
\end{split}
\ee  
We again split $\cal P_S$ into
\be\label{proofout_d1}
\langle \bv_i^a, \cal P_S\bv_j^a\rangle = \langle \bv_i^a, \cal P_{L_1(S)}\bv_j^a\rangle +  \langle \bv_i^a, \cal P_{S\setminus L_1(S)}\bv_j^a\rangle .
\ee
There are four cases: (i) $\al(i),\al(j)\in L_1(S)$; (ii) $\al(i)\in L_1(S)$ and $\al(j)\notin L_1(S)$; (iii) $\al(i)\notin L_1(S)$ and $\al(j)\in L_1(S)$; (iv) $\al(i),\al(j)\notin L_1(S)$. 

In case (i), we can bound the first term in \eqref{proofout_d1} using Proposition \ref{prop_outev0} and the estimates that $\delta_{\al(i)}(S)\sim \delta_{\al(i)}(L_1(S))$ and $\delta_{\al(j)}(S)\sim \delta_{\al(j)}(L_1(S))$. The second term in \eqref{proofout_d1} can be bounded as in case (c) above (with $S$ replaced by $S\setminus L_1(S)$) together with the estimates $\phi_n\le \delta_{\al(i)}(S) \le C\delta_{\al(i)}(S\setminus L_1(S))$ and $\phi_n\le \delta_{\al(j)}(S) \le C\delta_{\al(j)}(S\setminus L_1(S))$.

In case (ii), we have
\begin{equation}\label{revisionadd}
\begin{split}
&\delta_{\al(i)}(S)\sim \delta_{\al(i)}(L_1(S)),\quad \delta_{\al(i)}(S) \le C\delta_{\al(i),\al(j)}^a, \\
 &\delta_{\al(j)}(S)\le C n^{\wt\tau}\psi_1(\wt\sigma_j^a) 
 \le C\delta_{\al(j)}(L_1(S)).
 \end{split}
 \ee
 Then with Proposition \ref{prop_outev0}, we can bound the first term in \eqref{proofout_d1} as
\be\nonumber
\begin{split}
&\left| \langle \bv_i^a, \cal P_{L_1(S)}\bv_j^a\rangle\right| \prec  \frac{1}{n\delta_{\al(i)}(L_1(S))\delta_{\al(j)}(L_1(S))} + \frac{1 }{n\delta_{\al(j)}(L_1(S))\Delta_1^2(\widetilde{\sigma}_i^a)} \\
& + \phi_n^2 \Delta_1(\widetilde{\sigma}_i^a)\Delta_1(\widetilde{\sigma}_j^a) \left[\left(\frac{1}{\delta_{\al(i)}(L_1(S)) } +\frac1{\Delta_1^2(\widetilde{\sigma}_i^a)}   \right) \left(\frac{1}{\delta_{\al(j)}(L_1(S)) } +\frac1{\Delta_1^2(\widetilde{\sigma}_j^a)}   \right) \right] \\
&+ \frac{\psi_1(\wt\sigma_i^a)\Delta^2_1(\widetilde{\sigma}_i^a)}{\delta_{\al(i),\al(j)}^a}\\
&\lesssim  \left[\psi_{1}^{1/2}(\wt\sigma_i^a)+ \frac{\psi_{1}(\wt\sigma_i^a)\Delta_1(\widetilde{\sigma}_i^a) }{\delta_{\al(i)}(S)}   \right]  \left[\psi_{1}^{1/2}(\wt\sigma_j^a)+\frac{\psi_{1}(\wt\sigma_j^a)\Delta_1(\widetilde{\sigma}_j^a) }{\delta_{\al(j)}(S)} \right] + \frac{\psi_1(\wt\sigma_i^a)\Delta^2_1(\widetilde{\sigma}_i^a)}{\delta_{\al(i),\al(j)}^a}.
\end{split}
\ee
For the last term, we first assume that $\wt\sigma_j^a\le \wt\sigma_i^a$ and $ \wt\sigma_i^a + m_{2c}^{-1}(\lambda_+) \le 2|\wt\sigma_i^a-\wt\sigma_j^a|$. Then
\begin{align*}
  \frac{\psi_1(\wt\sigma_i^a)\Delta^2_1(\widetilde{\sigma}_i^a)}{\delta_{\al(i),\al(j)}^a}\lesssim  \psi_1(\wt\sigma_i^a) \le \sqrt{\psi_1(\wt\sigma_i^a)\psi_1(\wt\sigma_j^a)}. 
 \end{align*}
On the other hand, if $\wt\sigma_j^a\ge \wt\sigma_i^a$ or $ \wt\sigma_i^a + m_{2c}^{-1}(\lambda_+) \ge 2|\wt\sigma_i^a-\wt\sigma_j^a|$, we have $\Delta_1(\wt\sigma_i^a) \lesssim \Delta_1(\wt\sigma_j^a)$. Hence using \eqref{revisionadd}, we get
\begin{align*}
  \frac{\psi_1(\wt\sigma_i^a)\Delta^2_1(\widetilde{\sigma}_i^a)}{\delta_{\al(i),\al(j)}^a}\lesssim n^{\wt\tau} \frac{\psi_1(\wt\sigma_i^a)\Delta_1(\widetilde{\sigma}_i^a)\psi_1(\wt\sigma_j^a)\Delta_1(\widetilde{\sigma}_j^a) }{\delta_{\al(i)}(S)\delta_{\al(j)}(S)}. 
 \end{align*}
 The above estimates show that $ | \langle \bv_i^a, \cal P_{L_1(S)}\bv_j^a\rangle | $ can be bounded by the right-hand side of \eqref{eq_pfgoal}. 
The second term in \eqref{proofout_d1} can be bounded as in case (c) above (with $S$ replaced by $S\setminus L_1(S)$) together with the estimates in \eqref{revisionadd} and
$$\delta_{\al(i)}(S)\sim \delta_{\al(i)}(S\setminus L_1(S))\gtrsim n^{\wt\tau}\phi_n, \ \ \delta_{\al(j)}(S) \lesssim \delta_{\al(j)}(S\setminus L_1(S)) \le Cn^{\wt\tau}\psi_1(\wt\sigma_j^a). $$
Then we get that
\be\nonumber
\begin{split}
&\left| \langle \bv_i^a, \cal P_{S\setminus L_1(S)}\bv_j^a\rangle\right|  \\
&\prec n^{2\wt\tau} \left[  \frac{\phi_n}{\delta^{1/2}_{\al(i)}(S\setminus L_1(S))}+\frac{\psi_{1}(\wt\sigma_i^a)\Delta_1(\widetilde{\sigma}_i^a) }{\delta_{\al(i)}(S\setminus L_1(S))} \right] \left[\psi_{1}^{1/2}(\wt\sigma_j^a)+  \frac{\psi_{1}(\wt\sigma_j^a)\Delta_1(\widetilde{\sigma}_j^a) }{\delta_{\al(j)}(S\setminus L_1(S))}  \right] \\
&\prec  n^{2\wt\tau} \left[ \psi_{1}^{1/2}(\wt\sigma_i^a) +\frac{\psi_{1}(\wt\sigma_i^a)\Delta_1(\widetilde{\sigma}_i^a) }{\delta_{\al(i)}(S )} \right]\left[\psi_{1}^{1/2}(\wt\sigma_j^a)+  \frac{\psi_{1}(\wt\sigma_j^a)\Delta_1(\widetilde{\sigma}_j^a) }{\delta_{\al(j)}(S )} \right].
\end{split}
\ee
This concludes the proof of \eqref{eq_pfgoal} for case (ii). The case (iii) can be handled in the same way by interchanging $i$ and $j$.

Finally, we deal with case (iv). For the first term in \eqref{proofout_d1}, we have
$$\delta_{\al(i)}(S)\lesssim \delta_{\al(i)}(L_1(S)), \quad \delta_{\al(i)}(L_1(S)) \gtrsim \psi_1(\wt\sigma_i^a), $$ 
and similar estimates for the $\al(j)$ case. Then using Proposition \ref{prop_outev0}, we can obtain that 
\be\nonumber
\begin{split}
&\left| \langle \bv_i^a, \cal P_{L_1(S)}\bv_j^a\rangle \right| \prec  \frac{1}{n\delta_{\al(i)}(L_1(S))\delta_{\al(j)}(L_1(S))}  \\
&+  \phi_n^2  \left[\left(\frac{\Delta_1^2(\widetilde{\sigma}_i^a)}{\delta_{\al(i)}(L_1(S))} +1 \right) \frac1{\delta_{\al(j)}(L_1(S))} \right] \wedge  \left[\left(\frac{\Delta_1^2(\widetilde{\sigma}_j^a)}{\delta_{\al(j)}(L_1(S))} +1 \right) \frac1{\delta_{\al(i)}(L_1(S))} \right]\\ 
&\lesssim   \frac{\psi_1(\wt\sigma_i^a)\psi_1(\wt\sigma_j^a)\Delta_1(\widetilde{\sigma}_i^a) \Delta_1(\widetilde{\sigma}_j^a)}{ \sqrt{\delta_{\al(i)}(L_1(S))\delta_{\al(j)}(L_1(S))}}   \left[\left(\frac{1}{\delta_{\al(i)}(L_1(S))} +\frac1{\Delta_1^2(\widetilde{\sigma}_i^a)} \right) \left(\frac{1}{\delta_{\al(j)}(L_1(S))} +\frac1{\Delta_1^2(\widetilde{\sigma}_j^a)} \right)   \right] ^{1/2} \\
&\lesssim \left[ \psi_{1}^{1/2}(\wt\sigma_i^a)+ \frac{\psi_{1}(\wt\sigma_i^a)\Delta_1(\widetilde{\sigma}_i^a) }{\delta_{\al(i)}(S)} \right]  \left[ \psi_{1}^{1/2}(\wt\sigma_j^a)+\frac{\psi_{1}(\wt\sigma_j^a)\Delta_1(\widetilde{\sigma}_j^a) }{\delta_{\al(j)}(S)}  \right] .
\end{split}
\ee
For the second term in \eqref{proofout_d1}, we use the estimate
$$\delta_{\al(i)}(S)\le C\delta_{\al(i)}(S\setminus L_1(S))\le Cn^{\wt\tau}\psi_1(\wt\sigma_i^a) $$
and case (b) to get that
\begin{align*}
 \langle \bv_i^a, \cal P_{S\setminus L_1(s)}\bv_i^a\rangle & \prec \Delta^2 _1(\wt\sigma_i^a)+\psi_{1}(\wt\sigma_i^a)+ n^{2\wt\tau}\left( \psi_{1} (\wt\sigma_i^a)+\frac{\psi_{1}^2(\wt\sigma_i^a)\Delta_1^2(\widetilde{\sigma}_i^a) }{\delta^2_{\al(i)}(S\setminus L_1(S))}  \right)\\
&\lesssim  n^{2\wt\tau}\left(\psi_{1} (\wt\sigma_i^a)+\frac{\psi_{1}^2(\wt\sigma_i^a)\Delta_1^2(\widetilde{\sigma}_i^a) }{\delta^2_{\al(i)}(S )} \right).
\end{align*}
A similar estimate holds for $\langle \bv_j^a, \cal P_{S\setminus L_1(s)}\bv_j^a\rangle$. Then we conclude that 
\begin{align*}
&\left| \langle \bv_i^a, \cal P_S\bv_j^a\rangle\right| \le \langle \bv_i^a, \cal P_S\bv_i^a\rangle^{1/2}\langle \bv_j^a, \cal P_S\bv_j^a\rangle^{1/2}  \\
& \prec n^{2\wt\tau}  \left[\psi_{1}^{1/2}(\wt\sigma_i^a)+  \frac{\psi_{1}(\wt\sigma_i^a)\Delta_1(\widetilde{\sigma}_i^a) }{\delta_{\al(i)}(S)} \right] \left[\psi_{1}^{1/2}(\wt\sigma_j^a)+  \frac{\psi_{1}(\wt\sigma_j^a)\Delta_1(\widetilde{\sigma}_j^a) }{\delta_{\al(j)}(S)} \right].
\end{align*}
This proves \eqref{eq_pfgoal} for case (iv), and hence concludes the proof for case (d).

\vspace{5pt}

Combining cases (c) and (d), we conclude \eqref{eq_spikedvector} for the $i\ne j$ case since $\wt \tau$ can be chosen arbitrarily small. This finishes the proof of Proposition \ref{prop_outev} together with \eqref{conclude_i=j}.
\end{proof}


\section{Non-outlier eigenvectors} \label{sec_nonoutliereve}

In this section, we first prove Theorem \ref{thm_noneve} of the paper, which will then be used to complete the proof of Theorem \ref{thm_eveout} of the paper. In other words, we will remove Assumption \ref{assu_strong} in Proposition \ref{prop_outev}.

Our first goal of this section is to prove the following proposition, from which the Theorem \ref{thm_noneve} of the paper follows.
 
\begin{proposition}\label{prop_nonspike} 
Fix a constant $\wt\tau \in (0, 1/3)$. For $\al( i)\notin \mathcal O^+$ and $ i \leq {\tau}p$, where $\tau >0$ is as given in Theorem \ref{thm_noneve} of the paper, we have
\begin{equation}\label{eq_nonspikeeg1}
 |\langle \bv_j^a, \wt\bxi_{\al(i)}  \rangle|^2 \prec 
\frac{n^{-1}+  \eta_l (\gamma_i) \sqrt{\kappa_{\gamma_i}}+\phi_n^3}{|\tsig_j^a+m_{2c}^{-1}(\lambda_+)|^2+\phi_n^2+\kappa_{\gamma_i}}, 
\end{equation} 
where we recall the definitions \eqref{KAPPA} and \eqref{etalE}. Moreover, if $\al(i) \in \mathcal O^+$ satisfies
\begin{equation}\label{eq_propohold}
  \tsig_i^a + m_c^{-1}(\lambda_+) \leq (\phi_n + n^{-1/3})n^{\wt\tau},  
\end{equation}
then we have 
\begin{equation}\label{eq_nonspikeeg2}
  |\langle \bv_j^a, \wt\bxi_{\al(i)}  \rangle|^2 \prec n^{4\wt\tau}\left( \frac{n^{-1}+  \eta_l (\gamma_i)  \sqrt{\kappa_{\gamma_i}}+\phi_n^3}{|\tsig_j^a+m_{2c}^{-1}(\lambda_+)|^2+\phi_n^2 + \kappa_{\gamma_i}}\right).  
\end{equation}
\end{proposition}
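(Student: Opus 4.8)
The plan is to run the same Cauchy-integral machinery as in the proof of Proposition \ref{prop_outev} (and as in \cite[Section 5]{principal}), but now the contour $\Gamma$ will be taken to enclose a single non-outlier eigenvalue $\wt\lambda_{\al(i)}$ sitting near the spectral edge, rather than a genuine outlier. First I would fix small constants $\e, \omega>0$ and choose a high-probability event $\Xi$ on which the anisotropic local law (Theorem \ref{LEM_SMALL}, Theorem \ref{lem_localout}), the rigidity estimate \eqref{rigidity}, the outlier location bounds (Theorem \ref{thm_outlier}), and the isotropic delocalization bound \eqref{delocal_claim} all hold simultaneously with explicit $n^\e$ powers; the rest of the argument is deterministic on $\Xi$. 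The starting point is the simple resolvent bound already recorded in the ``strategy'' subsection of the paper: for $z_i := \wt\lambda_{\al(i)} + \ii\eta$ one has
\begin{equation*}
|\langle \bv_j^a, \wt\bxi_{\al(i)}\rangle|^2 \le \eta\,\im \langle \bv_j, \wt{\mathcal G}_1(z_i)\bv_j\rangle,
\end{equation*}
where $\bv_j$ is the embedding of $\bv_j^a$ in $\mathbb C^{\mathcal I}$. Then I would apply the Woodbury expansion \eqref{eq_gexpan}--\eqref{resolvent_3rd} to express $\bU^*\wt{\mathcal G}_1 \bU$ (and hence $\langle \bv_j, \wt{\mathcal G}_1 \bv_j\rangle$, after enlarging $\bU$ to include the direction $\bv_j^a$ as in the end of Section \ref{sec_eve_green}) in terms of $\bU^* G(z) \bU$, its deterministic limit $\bU^*\Pi(z)\bU$, and the error $\mathcal E(z) = z\bU^*(\Pi(z)-G(z))\bU$, whose size is controlled by $\phi_n + \Psi(z)$ via \eqref{aniso_law}. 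The key arithmetic input is that the ``denominator'' $\mathcal D^{-1} + z\bU^*\Pi(z)\bU$ has, along the relevant $z$, a smallest singular value comparable to $\min_j |\tsig_j^a + m_{2c}^{-1}(z)|$, which by \eqref{Piii} and the square-root behavior \eqref{sqroot4} is $\sim |\tsig_j^a + m_{2c}^{-1}(\lambda_+)| + \sqrt{\kappa + \eta}$; this is exactly the quantity appearing in the denominator of \eqref{eq_nonspikeeg1}.

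The optimal scale $\eta$ to plug in is the local law scale near the edge, $\eta \asymp n^\e \eta_l(\gamma_i)$ (using $\kappa_{\wt\lambda_{\al(i)}}\sim \kappa_{\gamma_i}$ via rigidity \eqref{rigidity} and \eqref{etallambdak}), at which point $\Psi(z_i)^2 + \phi_n/(n\eta_i) \lesssim n^{-\e/2}\cdot n^{-1/2}$, so $z_i \in \wt S(\varsigma_1,\varsigma_2,\e)$ and the anisotropic local law applies. Carrying the expansion to second order and collecting terms, the $\eta\,\im\langle\cdot\rangle$ bound produces (i) a ``main term'' $\eta_i\,\im m_{2c}(z_i)$ controlled via \eqref{eq_estimm} by $\eta_i\sqrt{\kappa_{\gamma_i}+\eta_i}$, which after using \eqref{etalE} is of order $n^{-1} + \eta_l(\gamma_i)\sqrt{\kappa_{\gamma_i}} + \phi_n^3$; (ii) the denominator factor $|\tsig_j^a + m_{2c}^{-1}(\lambda_+)|^2 + \phi_n^2 + \kappa_{\gamma_i}$ in the denominator, coming from the second-order term $\mathcal E (\mathcal D^{-1}+z\bU^*\Pi\bU)^{-1}\mathcal E$; and (iii) lower-order contributions $\eta_i \cdot n^{-1/2}/(n\eta_i) \lesssim n^{-1}$ and $\eta_i \phi_n$ which are absorbed. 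For the second claim \eqref{eq_nonspikeeg2}, the point is that when the spike $\tsig_i^a$ satisfies the subcriticality-type bound \eqref{eq_propohold}, by Theorem \ref{thm_outlier} the eigenvalue $\wt\lambda_{\al(i)}$ still sticks to within $n^{-2/3}$ of $\lambda_+$, so all the edge estimates above still apply verbatim; the only change is that the gap between $-(\tsig_i^a)^{-1}$ and $m_{2c}(z)$ along the contour can be as small as $n^{-\wt\tau}$ times its ``generic'' value, which costs at most a factor $n^{O(\wt\tau)}$ when inverting the denominator, yielding the stated $n^{4\wt\tau}$ loss.

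I would handle the stronger estimates under the vanishing-third-moment or diagonal assumption ((a) or (b)) exactly as in Theorem \ref{thm_noneve}: in that case the anisotropic local law \eqref{aniso_law} holds down to $\eta = n^{-1+\e}$ and the rigidity \eqref{rigidity2} is optimal, so one may take $\eta_l \equiv n^{-1}$ throughout, killing the $\eta_l(\gamma_i)\sqrt{\kappa_{\gamma_i}}$ term and leaving the $n^{-1} + \phi_n^3$ numerator of Theorem \ref{thm_noneve}. Finally, to pass from Proposition \ref{prop_nonspike} to the removal of Assumption \ref{assu_strong} in Proposition \ref{prop_outev}: for a spike not satisfying \eqref{eq_spikeassuastr} the corresponding ``outlier'' behaves like a near-edge eigenvalue, so the overlap-set constructions $L_1(S), L_2(S)$ of Section \ref{sec_remove} together with the delocalization bound \eqref{eq_nonspikeeg2} let me bound $\langle \bv_i^a, \mathcal P_S \bv_j^a\rangle$ by interpolating between the non-overlapping case (Proposition \ref{prop_outev0}) and the delocalization bound, exactly mirroring the argument in \cite[Section 5.2]{principal}. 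The main obstacle I anticipate is bookkeeping: keeping the two distinct denominator scales — the ``$A$-type'' gap $|\tsig_j^a + m_{2c}^{-1}|$ and the ``$B$-type'' gap $|\tsig_\mu^b + m_{1c}^{-1}|$ — under control simultaneously along the contour, since the second-order term $\mathcal E(\mathcal D^{-1}+z\bU^*\Pi\bU)^{-1}\mathcal E$ mixes both blocks, and verifying via \eqref{Piii} and Lemma \ref{lem_distance}-type estimates that the $B$-block never produces a smaller denominator than the $A$-block direction we are testing against. The analytic estimates themselves (square-root expansions, Cauchy differentiation bounds on $h$) are routine once the geometry of the contour and the choice $\eta_i \asymp n^\e\eta_l(\gamma_i)$ are pinned down.
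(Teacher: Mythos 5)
Your overall skeleton is the paper's: bound $|\langle \bv_j^a, \wt\bxi_{\al(i)}\rangle|^2 \le \eta_i \im \langle \bv_j^a, \wt G(z_i)\bv_j^a\rangle$ at $z_i=\wt\lambda_{\al(i)}+\ii\eta_i$, expand via \eqref{eq_gexpan}--\eqref{resolvent_3rd}, control the error $\mathcal E$ by the anisotropic local law, and bound the denominator $1+m_{2c}(z_i)\tsig_j^a$ from below. (Your opening sentence about a Cauchy contour enclosing the single non-outlier eigenvalue should simply be dropped: such an eigenvalue is not separated from its neighbours beyond the local spacing, so no contour on which the local laws give useful bounds exists; you in fact abandon this immediately in favour of the $\eta\,\im$ bound, which is the right start.)

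The genuine gap is your choice of the spectral scale. You take $\eta_i \asymp n^\e\eta_l(\gamma_i)$, but for the edge eigenvalues ($i=\OO(1)$, so $\eta_l(\gamma_i)\sim n^{-3/4}+n^{-1/2}\phi_n$ while $\kappa_{\wt\lambda_i}$ can be of order $n^{-2/3}+\phi_n^2$ and $\wt\lambda_i$ may lie \emph{above} $\lambda_+$) this scale is too small on two counts. First, at that $\eta_i$ the local-law error is at least $1/(n\eta_i)\gtrsim n^{-1/4}$, which is \emph{not} dominated by $\im m_{2c}(z_i)$ (for $E>\lambda_+$ one has $\im m_{2c}\sim \eta_i/\sqrt{\kappa+\eta_i}\ll n^{-1/3}$); hence the key inversion bound $\|(\mathcal D^{-1}+z\bU^* G\bU)^{-1}\|\lesssim (\im m_{2c}(z_i))^{-1}$, needed to close the second-order term $\mathcal E(\mathcal D^{-1}+z\bU^*G\bU)^{-1}\mathcal E$, cannot be justified. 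Second, your claim that the smallest singular value of $\mathcal D^{-1}+z\bU^*\Pi(z)\bU$ is $\sim |\tsig_j^a+m_{2c}^{-1}(\lambda_+)|+\sqrt{\kappa+\eta}$ is false in precisely the delicate regime: when $\tsig_j^a$ is near-critical and $\wt\lambda_i$ sticks slightly above $\lambda_+$, the real parts of $1+m_{2c}(z_i)\tsig_j^a$ nearly cancel, and the only available lower bound involves $\im m_{2c}(z_i)$ (this is Lemma \ref{lem_denominator}, with its restriction on the range of $\wt\lambda_i$ and the $n^{-2\delta}$ loss, which is also the true source of the $n^{4\wt\tau}$ in \eqref{eq_nonspikeeg2}); moreover the final conversion requires $\im m_{2c}(z_i)\gtrsim \phi_n+\sqrt{\kappa_{\gamma_i}}$, which again fails at your scale when $\wt\lambda_i>\lambda_+$. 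The paper resolves both issues by the implicit choice \eqref{eq_defneta}: $\eta_i=\wh\eta_i\vee n^\e\eta_l(\gamma_i)$ with $\im m_{2c}(\wt\lambda_i+\ii\wh\eta_i)=n^{2\e}\phi_n+n^{-1+6\e}\wh\eta_i^{-1}$, so that near the edge $\eta_i\sim n^{4\e}(\phi_n^2+n^{-2/3})\gg n^\e\eta_l(\gamma_i)$; this simultaneously makes $\im m_{2c}(z_i)$ beat $\|\mathcal E(z_i)\|$ (estimates \eqref{ll1}--\eqref{ll2}) and keeps $\eta_i\,\im m_{2c}(z_i)=n^{2\e}\phi_n\wh\eta_i+n^{-1+6\e}\lesssim n^{\OO(\e)}(n^{-1}+\phi_n^3)$, which is exactly what produces the sharp numerator $n^{-1}+\eta_l(\gamma_i)\sqrt{\kappa_{\gamma_i}}+\phi_n^3$ in \eqref{eq_nonspikeeg1}. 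Without this balancing choice (or an equivalent device), your plan does not yield the stated bound for the extremal non-outlier eigenvectors, which are the main case of interest.
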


\begin{proof} 
 By Theorems \ref{thm_outlier}, \ref{thm_eigenvaluesticking},  \ref{LEM_SMALL},  \ref{thm_largerigidity}, \ref{lem_localout} and Lemma \ref{delocal_rigidity}, for any fixed $\e>0$, we can choose a high-probability event $\Xi_2$ in which \eqref{aniso_lawev}-\eqref{eq_bound2ev}, \eqref{eq_stickingrigi}-\eqref{eq_stickingrigi2}, \eqref{rigid_outXi1} and the following estimate hold:  
\begin{equation}\label{eq_tildelambda}
\begin{split}
 \mathbf{1}(\Xi_2)|\wt\lambda_i-\gamma_i| & \leq Cn^{\e/2}\left(i^{-1/3} n^{-2/3} + \eta_l(\gamma_i) + i^{-2/3} n^{-1/3}\phi_n^2 \right)\\
& +   Cn^{\e/2}\phi_n^2\mathbf 1_{|\al(i)|\le r+s } 
 \end{split}
\end{equation}
for $\al(i)\notin \mathcal O^+ \text{ and } i \leq \tau p.$   In fact, \eqref{eq_tildelambda} follows from \eqref{eq_stickingrigi} and \eqref{eq_stickingrigi2} combined with the interlacing, Lemma \ref{lem_weylmodi}.

Now we fix an $\al(i)\notin \mathcal O^+$ or $\al(i)\in \mathcal O^+$ satisfying \eqref{eq_propohold}, and some $1\le j \le {\tau}p$. As discussed at the end of Section \ref{sec_eve_green}, we may define $\mathcal R:=\{1,\cdots,r\}\cup \{j\}$ and can assume without loss of generality that $\wt\sigma_j^a$ also has a nonzero perturbation $d_j^a$ (even though it may not cause any outlier). For simplicity, we still use the unperturbed notations and denote $\wh A$ as $A$. 

We choose a specific spectral parameter as 
$z_i=\wt\lambda_i+\ri \eta_i $.
Here $\eta_i : = \wh \eta_i \vee n^{\e}\eta_{l}(\gamma_i)$, where $\wh\eta_i$ is defined as the 
solution of  
\begin{equation}\label{eq_defneta}
 \im m_{2c}(\wt\lambda_i + \ii\wh\eta_i)=n^{2\e}\phi_n + n^{-1+6\epsilon} \wh\eta_i^{-1}. 
\end{equation}
In fact, the solution exists and is unique since $\eta\im m_c(\wt\lambda_i + \ii\eta)$ is a strictly monotonically increasing function of $\eta$. With \eqref{eq_estimm}, one can check that
\be\label{eq_eta}
\wh\eta_i \sim \begin{dcases}
  n^{4 \epsilon}\left( \phi_n^2 + n^{-2/3}\right)  , \ &\text{if } \ |\wt\lambda_i - \lambda_+|\le n^{4 \epsilon}\left( \phi_n^2 + n^{-2/3}\right) \\
 n^{2\e}\phi_n \sqrt{\kappa_{\wt\lambda_i}}+ n^{-1/2+3 \epsilon} \kappa_{\wt\lambda_i}^{1/4},  \ &\text{if }\ \wt\lambda_i \geq  \lambda_++n^{4 \epsilon}\left( \phi_n^2 + n^{-2/3}\right)
\end{dcases},
\ee 
and if $\wt\lambda_i \leq \lambda_+-n^{4 \epsilon}\left( \phi_n^2 + n^{-2/3}\right)$, we have 
\be\label{eq_eta2}
\wh\eta_i \sim \begin{dcases}
  n^{4\e}\left( \phi_n^2 + n^{-2/3}\right) , \ &\text{if } \ \kappa_{\wt\lambda_i}\le n^{-2+4\e}\phi_n^{-4} \\
 n^{-1+6 \epsilon}\kappa_{\wt\lambda_i}^{-1/2}  ,  \ &\text{if }\ \kappa_{\wt\lambda_i}> n^{-2+4\e}\phi_n^{-4}
\end{dcases}.
\ee 
Note that by \eqref{eq_stickingrigi}, in order to have $\wt\lambda_i \geq  \lambda_++n^{4 \epsilon}\left( \phi_n^2 + n^{-2/3}\right)$, we must have $\al(i)\in \cal O^+$.
\nc
Moreover, with \eqref{etalE} and \eqref{eq_tildelambda}, we obtain that
\begin{align}
&\kappa_{\wt\lambda_i} - \kappa_{\gamma_i}  \nonumber\\
&\lesssim n^{\e/2}\left(  i^{-1/3}n^{-2/3} + \eta_l(\gamma_i)+i^{-2/3} n^{-1/3}\phi_n^2  +  \phi_n^2\mathbf 1_{|\al(i)|\le r+s}  \right) ,\label{kappaj}
\end{align}
and
\begin{align*}\eta_l(\wt\lambda_i) & \lesssim n^{-3/4} + n^{-1/2}\phi_n + n^{-1/2+\e/4}\left( \kappa_{\gamma_i} + \eta_l(\gamma_i)+ n^{-1/3}\phi_n^2  +  \phi_n^2\mathbf 1_{|\al(i)|\le r+s}  \right) ^{1/2} \\
&\lesssim n^{\e/4}\eta_l(\gamma_i).\end{align*}
In particular, 
we see that $z_i\in \wt S(\varsigma_1,\varsigma_2,\e)$ and 
(\ref{aniso_lawev}) can be applied at $z_i$. We consider two cases: (i) $\wh\eta_i \ge n^\e \eta_l(\gamma_i)$, and (ii) $\wh\eta_i < n^\e \eta_l(\gamma_i)$. In case (i), (\ref{aniso_lawev}) gives that
\begin{equation}\label{ll1}
\begin{split}
\norm{\bU^*(G(z_i)-\Pi(z_i))\bU} &\leq n^{\e/2}\phi_n + n^{\e/2} \Psi(z_i)  \lesssim n^{-3\e/2}\im m_{2c}(z_i).
\end{split}
\end{equation}
 In case (ii), with \eqref{eq_eta} and \eqref{kappaj} we can readily check that $\wt\lambda_i \leq \lambda_+-n^{4 \epsilon}\left( \phi_n^2 + n^{-2/3}\right)$ and $\kappa_{\wt\lambda_i} \gtrsim n^{-1/2+3\e} + n^{4\e}\phi_n^2,$ 
which further imply that 
\be\label{gammaisim}
\kappa_{\wt\lambda_i}=\kappa_{\gamma_i}(1+\oo(1)),\ \  n^{\e}\left(\eta_l(\gamma_i)+\frac{\phi_n^2}{i^{2/3} n^{1/3}} \right) \ll \kappa_{\gamma_i},\ \ \frac{n^{6\e}}{n\eta_l(\gamma_i)} \lesssim \sqrt{\kappa_{\gamma_i}}.
\ee
Together with \eqref{eq_estimm}, we get that
\be\label{ll2}
\begin{split}
&\norm{\bU^*(G(z_i)-\Pi(z_i))\bU} \leq n^{\e/2}\phi_n +n^{\e/2}\Psi(\wt\lambda_i + \ii n^{\e}\eta_l(\gamma_i)) \\
& \lesssim n^{\e/2}\phi_n +\sqrt{\frac{\sqrt{\kappa_{\gamma_i}}}{n\eta_l(\gamma_i)}} + \frac{1}{n^{1+\e/2}\eta_l(\gamma_i)} \\
& \le n^{\e/2}\phi_n + n^{-\e}\sqrt{\kappa_{\gamma_i}} \lesssim n^{-\e}\im m_{2c}(z_i).
\end{split}
\ee
 
%

After these preparations, we are ready to give the proof. As in \eqref{iso_delocal_pf}, with the spectral decomposition \eqref{eq_greenexpan}, we have the following bound
\begin{equation}\label{eq_nonspike1}
\left|\langle \bv^a_j, \wt\bxi_{\al(i)} \rangle\right|^2 \leq \eta_i \im \langle \bv_j^a, \widetilde{G}(z_i) \bv_j^a\rangle, 
\end{equation}
Applying \eqref{Huaineq} to \eqref{eq_gexpan}, we obtain another identity
\begin{equation} \label{eq_evgenerralreprest}
\begin{split}
\bU^*\widetilde{G}(z)\bU&=z^{-1}\wt{\mathcal D}^{1/2}\left({\mathcal D}^{-1}-{\mathcal D}^{-1}\frac{1}{\mathcal D^{-1}+ z\bU^*G(z)\bU} {\mathcal D}^{-1}\right) \wt{\mathcal D}^{1/2} .
\end{split}
\end{equation}
In particular, we have
\begin{align} \label{eq_expan_nonspike}
\begin{split}
&z \langle \bv_j^a, \widetilde{G}(z) \bv_j^a \rangle=\frac{1}{d^a_j}-\frac{1+d^a_j}{(d^a_j)^2} \left( \frac{1}{\mathcal D^{-1}+z \bU^* G(z) \bU} \right)_{jj}  \\
& =\frac{1}{d^a_j}-\frac{1+d^a_j}{(d^a_j)^2} \left[ \Phi_j (z)+ \Phi_j^2(z) \left( \mathcal E(z)+\mathcal E(z) \frac{1}{\mathcal D^{-1}+z \bU^* G(z) \bU} \mathcal E(z) \right)_{jj} \right],
\end{split}
\end{align}
where we used the resolvent expansion in \eqref{resolvent_3rd} and abbreviated 
\begin{equation*}
\Phi_j(z):=\frac{1}{(d_j^a)^{-1}+1-(1+m_{2c}(z) \sigma_j^a )^{-1}}.
\end{equation*}
By \eqref{ll1} and \eqref{ll2}, we have that 
\begin{equation*}
\min_j \left|(d_j^a)^{-1}+1-\frac{1}{1+m_{2c}(z_i) \sigma_j^a}\right|   \gtrsim  \im m_{2c}(z_i) \gg \|\mathcal E(z_i)\|. 
\end{equation*}
Thus as in (\ref{eq_boundsim1}), we conclude that 
\begin{equation*}
\left\| \frac{1}{\mathcal D^{-1}+z \bU^* G(z_i) \bU}\right\| \le \frac{C}{\text{Im} \ m_{2c}(z_i)} \ll \|\mathcal E(z_i)\|^{-1}.
\end{equation*} 
Inserting it into (\ref{eq_expan_nonspike}) and using \eqref{eq_matrixentry}, we obtain that
\begin{equation}\label{eq_reduceim}
z \langle \bv_j^a, \widetilde{G}(z_i) \bv_j^a \rangle=-(1+m_{2c}(z_i) \tsig_j^a)^{-1}+\OO\left(\frac{\|\mathcal E(z_i)\|}{ |1+m_{2c}(z_i) \tsig_j^a |^2} \right). 
\end{equation}

The next lemma provides a lower bound for $(1+m_{2c}(z) \tsig^a_j)^{-1}$. Its proof is the same as the one for (6.10) in \cite{principal}, where the only input is Lemma \ref{lem_mplaw}.

\begin{lemma}\label{lem_denominator} 
For any fixed $\delta \in [0, 1/3-\epsilon),$ there exists a constant $c>0$ such that 
\begin{equation*}
|1+m_{2c}(z_i) \tsig^a_j|  \geq c\left[n^{-2\delta}|\tsig_j^{a}+m_{2c}^{-1}(\lambda_+)|+\operatorname{Im} m_{2c}(z_i)\right]
\end{equation*}
holds whenever $\wt\lambda_i \in [0, \theta_1(-m_{2c}^{-1}(\lambda_+)+(\phi_n+n^{-1/3})n^{\delta+\e})].$
\end{lemma}
Now we fix the $\delta$ in Lemma \ref{lem_denominator}. By (\ref{eq_nonspike1}) and (\ref{eq_reduceim}), we have that
\begin{equation}\label{eq_finalbound}
\begin{split}
&\left|\langle \bv^a_j, \wt\bxi_{\al(i)} \rangle\right|^2   \leq -\eta_i \im \left[ z_i^{-1} (1+m_{2c}(z_i) \tsig^a_j)^{-1} \right]+\frac{C\eta_i\|\mathcal E(z_i)\| } {|1+m_{2c}(z_i) \tsig^a_j|^{2}} \\
& = -\frac{\eta_i^2}{|z_i|^2} \re\left(1+m_{2c}(z_i) \tsig^a_j\right)^{-1}- \frac{\eta_i \wt\lambda_i}{|z_i|^2}\im \left(1+m_{2c}(z_i) \tsig^a_j\right)^{-1} \\
&+\frac{C \eta_i \|\mathcal E(z_i)\|}{|1+m_{2c}(z_i) \tsig^a_j|^2}.
\end{split}
\ee
We next estimate the terms in (\ref{eq_finalbound}) one by one. First, $|z_i|\sim 1$ by \eqref{eq_tildelambda}
and hence we have 
\begin{align}\label{eq_firstone}
 -\frac{\eta_i^2}{|z_i|^2} \re\left(1+m_{2c}(z_i) \tsig^a_j\right)^{-1} \leq  \frac{C\eta_i^2}{|1+m_{2c}(z_i) \tsig^a_j|} \leq  \frac{C\eta_i^2}{\im m_{2c}(z_i)},  
\end{align}
where we used Lemma \ref{lem_denominator} in the second step. If $\wh\eta_i \ge n^\e \eta_l(\gamma_i)$, then with (\ref{eq_defneta}) we get
\be
  \eqref{eq_firstone} \le \frac{C\wh \eta_i^2}{n^{2\e}\phi_n + n^{-1+6\epsilon} \wh\eta_i^{-1}} \leq C n^{-1+6 \epsilon + 3\delta} + Cn^{2\e+\delta}\phi_n\wh \eta_i ,\nonumber
\ee
where we used that $\wh\eta_i \le (\phi_n^2+n^{-2/3})n^{\delta+4\e}$, as follows from \eqref{eq_eta}. If $\wh\eta_i < n^\e \eta_l(\gamma_i)$, by \eqref{eq_estimm} and \eqref{gammaisim} we get
\be
\eqref{eq_firstone} \le \frac{C \eta_i^2}{\sqrt{\kappa_{\gamma_i}}} \le n^{\e}\eta_l(\gamma_i)\sqrt{\kappa_{\gamma_i}}.\nonumber 
\ee
%
%
Similarly, for the second item of (\ref{eq_finalbound}), we have
\begin{equation*}
\begin{split}
- \frac{\eta_i \wt\lambda_i}{|z_i|^2}\im \left(1+m_{2c}(z_i) \tsig^a_j\right)^{-1} &\leq  \frac{C\eta_i \im m_{2c}(z_i)}{|1+m_{2c}(z_i) \tsig^a_j|^2}  \\
& \leq  \begin{dcases}\frac{C(n^{2\e}\phi_n \wh \eta_i+n^{-1+6 \epsilon})}{|1+m_{2c}(z_i) \tsig^a_j|^2}, \ &\text{if } \wh\eta_i \ge n^\e \eta_l(\gamma_i)\\
\frac{Cn^{\e}\eta_l(\gamma_i)\sqrt{\kappa_{\gamma_i}}}{|1+m_{2c}(z_i) \tsig^a_j|^2}, \ &\text{if } \wh\eta_i < n^\e \eta_l(\gamma_i)
\end{dcases}.
\end{split}
\end{equation*}
Finally, the third term of (\ref{eq_finalbound}) can be estimated using \eqref{ll1} and \eqref{ll2} by
\begin{equation*}
\frac{C \eta_i \|\mathcal E(z_i)\|}{|1+m_{2c}(z_i) \tsig^a_j|^2} \leq  \begin{dcases}\frac{ n^{\e}\phi_n \wh \eta_i+n^{-1+5 \epsilon}}{|1+m_{2c}(z_i) \tsig^a_j|^2}, \ &\text{if } \wh\eta_i \ge n^\e \eta_l(\gamma_i)\\
\frac{C\eta_l(\gamma_i)\sqrt{\kappa_{\gamma_i}}}{|1+m_{2c}(z_i) \tsig^a_j|^2}, \ &\text{if } \wh\eta_i < n^\e \eta_l(\gamma_i)
\end{dcases} . 
\end{equation*}
 Combining all the above estimates, we conclude that 
\begin{equation} \label{final_estimate}
\begin{split}
|\langle \bv^a_j, \wt\bxi_{\al(i)} \rangle|^2  &\lesssim n^{-1+6 \epsilon + 3\delta}+ n^{6\e+2\delta}\phi_n^3\\
&+\frac{ n^{-1+6\e+\delta} + n^{6\e+\delta}\phi_n^3 + n^\e \eta_l(\gamma_i)\sqrt{\kappa_{\gamma_i}}}{|1+m_{2c}(z) \tsig^a_j|^2},
\end{split}
\end{equation}
where we used that  for $\wh\eta_i \ge n^\e \eta_l(\gamma_i)$,
\begin{align*}
\phi_n \wh \eta_i &\lesssim n^{4\e+\delta}\phi_n (\phi_n^2 + n^{-2/3}) \lesssim n^{4\e+\delta} (\phi_n^3 + n^{-1})
\end{align*}
by \eqref{eq_eta} and \eqref{eq_eta2}.

We still need to estimate the denominator of \eqref{final_estimate} from below using Lemma \ref{lem_denominator}, which requires a lower bound on $\im m_{2c}(z_i).$ For $\al(i)\notin \mathcal O^+$, with (\ref{eq_estimm}), \eqref{eq_tildelambda}, (\ref{eq_eta}) and \eqref{eq_eta2}, we find that $\im m_{2c}(z_i) \gtrsim  \phi_n + \sqrt{\kappa_{\gamma_i}}$. Together with \eqref{final_estimate}, this concludes the proof of (\ref{eq_nonspikeeg1}) by choosing $\delta=0$ in Lemma \ref{lem_denominator}. On the other hand, when $\al(i) \in \mathcal O^+$ such that (\ref{eq_propohold}) holds, with \eqref{rigid_outXi1} and \eqref{eq_eta} we can verify that 
\begin{equation*}
\wt\lambda_i \leq \theta_1\left(-m_{2c}^{-1}(\lambda_+)+n^{\wt\tau+\epsilon}(n^{-1/3}+\phi_n)\right),
\end{equation*}
and
\begin{equation*}
\im m_{2c}(z_i) \ge (\phi_n +n^{-1/3})n^{2\epsilon-\wt\tau}\ge n^{-\wt\tau}(\phi_n + \sqrt{\kappa_{\gamma_i}}). 
\end{equation*}
We can therefore conclude the proof of (\ref{eq_nonspikeeg2}) with \eqref{final_estimate} by letting $\delta=\wt\tau$ in Lemma \ref{lem_denominator}. \nc
\end{proof}

\begin{proof}[Proof of Theorem \ref{thm_noneve} ]
We decompose 
\be\label{para+perp} \bv = \bv_{\parallel} + \bv_{\perp}, \quad \bv_{\parallel} := \sum_{i=1}^{r}v_i \bv_i^a ,\quad \bv_{\perp}:= \sum_{i>r} v_i \bv_i^a.\ee
Then the bound on $|\langle \bv_{\parallel}, \wt\bxi_{\al(i)} \rangle|^2$ is an easy corollary of (\ref{eq_nonspikeeg1}) using \eqref{etalE}.  For $|\langle \bv_{\perp}, \wt\bxi_{\al(i)} \rangle|^2$, we repeat the previous proof: applying similar arguments as below \eqref{eq_evgenerralreprest}, we get 
\begin{equation}\label{eq_reduceim2}
 \langle \bv_{\perp}, \widetilde{G}(z_i) \bv_{\perp} \rangle\prec  \|\mathcal E(z_i)\| ,\quad \mathcal E(z_i):=\bv_{\perp}^* (G(z_i)-\Pi(z_i))\bU
\end{equation} 
which is a similar version as in \eqref{eq_reduceim}. Then using \eqref{eq_nonspike1}, we get that
\begin{equation}\label{eq_nonspike1111111}
\left|\langle \bv_{\perp}, \wt\bxi_{\al(i)} \rangle\right|^2 \prec \eta_i \left( \phi_n + \Psi(z_i)\right) \lesssim n^{4\e} (n^{-1}+\phi_n^3), 
\end{equation}
where we used \eqref{aniso_law} in the first step, and \eqref{eq_defneta}-\eqref{eq_eta2} in the second step. This concludes the bound on $|\langle \bv_{\perp}, \wt\bxi_{\al(i)} \rangle|^2$.

If we have (a) (\ref{assm_3rdmoment}) of the paper holds, or (b) either $A$ or $B$ is diagonal, then we can remove the $\eta_i$ term and prove the stronger estimate (\ref{eq_evebulka_strong}) of the paper by using the stronger versions of Theorem \ref{thm_outlier}, Theorem \ref{LEM_SMALL} and Theorem \ref{thm_largerigidity}.
\end{proof}

Finally, we can prove Theorem \ref{thm_eveout} without the Assumption \ref{assu_strong}. 

\begin{proof}[Proof of Theorem \ref{thm_eveout}]
Suppose we have proved that \eqref{eq_spikedvector} holds for $S\subset \mathcal O^+$, where for all $\al( i) \in S $ and $\beta(\mu)\in S$, 
\begin{equation}\label{eq_spikeweak}
\wt{\sigma}^a_{i}+{m_{2c}^{-1}(\lambda_+)} \geq n^{-1/3}+\phi_n,\quad \wt{\sigma}^b_{\mu}+{m_{1c}^{-1}(\lambda_+)} \geq n^{-1/3}+\phi_n.
\end{equation}
Again we consider the decomposition \eqref{para+perp}. Since 
$$\langle \bv_{\parallel},  \cal Z_S \bv_{\perp}\rangle =\langle \bv_{\perp},  \cal Z_S \bv_{\perp}\rangle=0,$$ 
we have that
$$\left| \langle \bv, \left(\cal P_S   -  \cal Z_S\right) \bv\rangle\right| = \left|\langle \bv_{\parallel}, \left(\cal P_S-\cal Z_S\right) \bv_{\parallel}\rangle\right| + 2 \left|\langle \bv_{\parallel}, \cal P_S \bv_{\perp}\rangle\right| +  \left|\langle \bv_{\perp}, \cal P_s \bv_{\perp}\rangle\right| .$$
Now using \eqref{eq_spikedvector}, we obtain from Cauchy-Schwarz inequality that
\begin{align*}
& \left|\langle \bv_{\parallel}, \left(\cal P_s-\cal Z_S\right) \bv_{\parallel}\rangle\right| \\
&\prec  \sum_{1\le i\le r:\al(i)\in S}|v_i|^2  \psi_{1}(\wt\sigma_i^a)  +\sum_{1\le i\le r:\al(i)\notin S}|v_i|^2  \frac{\phi_n^2}{\delta_{\al(i)}(S)} + \sum_{1\le i \le r}{|v_i|^2} \frac{\psi_{1}^2(\wt\sigma_i^a)\Delta_1^2(\widetilde{\sigma}_i^a) }{\delta^2_{\al(i)}(S)}  \nonumber \\
&+ \langle \bv , \cal Z_S\bv \rangle^{1/2}\left[ \sum_{1\le i \le r:\al(i)\notin S}{|v_i|^2} \left(\frac{\psi_{1}^2(\wt\sigma_i^a)\Delta_1^2(\widetilde{\sigma}_i^a) }{\delta^2_{\al(i)}(S)}+ \frac{\phi_n^2}{\delta_{\al(i)}(S)}\right) \right]^{1/2} ,
\end{align*}
where we also used the fact that
$$ \frac{1}{\wt{\sigma}_i^a} \frac{g_{2c}'(-(\wt{\sigma}_i^a)^{-1})}{g_{2c}(-(\wt{\sigma}_i)^{-1})} \sim  \Delta^2_1(\widetilde{\sigma}_i^a) , \quad 1\le i \le r, $$
since $g_{2c}(-(\wt{\sigma}_i)^{-1}) \sim 1$, $\wt{\sigma}_i^a\sim 1$ and $g_{2c}'(-(\wt{\sigma}_i^a)^{-1})\sim \Delta_1^2(\widetilde{\sigma}_i^a)$ by \eqref{eq_gderivative}. 
For the term $\left|\langle \bv_{\perp}, \cal P_s \bv_{\perp}\rangle\right|$, using Theorem \ref{thm_noneve} and the estimate $|\wt\sigma_i^a+m_{2c}^{-1}(\lambda_+)|\sim 1 $ for $i>r$, we get 
\begin{align*}
\left|\langle \bv_{\perp}, \cal P_s \bv_{\perp}\rangle\right|&\prec \sum_{i> r} |v_i|^2 \left( n^{-1} + \eta_i \sqrt{\kappa_i} + \phi_n^3\right)\lesssim \sum_{i> r} |v_i|^2 \left( n^{-1} + n^{-1/2}\kappa_i+ \phi_n^3\right) ,
\end{align*}
where we used the definition of $\eta_i$ and $\kappa_i$ in the second step. For the term $\left|\langle \bv_{\parallel}, \cal P_S \bv_{\perp}\rangle\right| $, we use Cauchy-Schwarz inequality to get that 
\begin{align*}
&\left|\langle \bv_{\parallel}, \cal P_S \bv_{\perp}\rangle\right| \le \left|\langle \bv_{\parallel}, \cal P_S \bv_{\parallel}\rangle\right|^{1/2} \left|\langle \bv_{\perp}, \cal P_S \bv_{\perp}\rangle\right|^{1/2} \\
&\le \left|\langle \bv_{\parallel}, (\cal P_S-\cal Z_s) \bv_{\parallel}\rangle\right| + \left|\langle \bv_{\perp}, \cal P_S \bv_{\perp}\rangle\right|+ \left|\langle \bv , \cal Z_s \bv \rangle\right|^{1/2} \left|\langle \bv_{\perp}, \cal P_S \bv_{\perp}\rangle\right|^{1/2} .
\end{align*}
Combining the above estimates, we conclude (\ref{eq_spikePA}) of the paper using $\delta_{\al(i)}(S) \sim \Delta_1^2(\wt\sigma_i^a) \sim 1 $ for $i\ge r$.  If we have (a) (\ref{assm_3rdmoment}) of the paper holds, or (b) either $A$ or $B$ is diagonal, then we can remove the $n^{-1/2}\kappa_i$ term by using the stronger versions of Theorem \ref{thm_noneve}, Theorem \ref{LEM_SMALL} and Theorem \ref{thm_largerigidity}.


The rest of the proof is devoted to showing that \eqref{eq_spikedvector} holds for $S\subset \mathcal O^+$ where \eqref{eq_spikeweak} holds. Fix a constant $\epsilon>0.$ Note that it is easy to check by contradiction that there exists some $x_0 \in [1, r+s]$ satisfying the following gap property: for all $k$ such that $\tsig_k^a> -m_{2c}^{-1}(\lambda_+)+x_0 n^{\epsilon}(n^{-1/3}+\phi_n)$, we have $\tsig_k^a> -m_{2c}^{-1}(\lambda_+)+(x_0+1)n^{\epsilon}(n^{-1/3}+\phi_n)$. Following the idea in \cite[Section 6.2]{principal}, for such $x_0$, we split $S=S_{0} \cup S_1$ such that $\tsig_k^a \le -m_{2c}^{-1}(\lambda_+)+x_0 n^{\epsilon}(n^{-1/3}+\phi_n)$ for $\al(k) \in S_0$, and $\tsig_k^a> -m_{2c}^{-1}(\lambda_+)+(x_0+1)n^{\epsilon}(n^{-1/3}+\phi_n)$ for $\al(k)\in S_1$. Without loss of generality, we assume that $S_0\ne \emptyset$, since otherwise the claim already follows from Proposition \ref{prop_outev}.


There are totally six cases: (a) $\al(i),\al(j)\in S_0$; (b) $\al(i)\in S_0$ and $\al(j)\in S_1$; (c) $\al(i)\in S_0$ and $\al(j)\notin S$; (d) $\al(i),\al(j)\in S_1$; (e) $\al(i)\in S_1$ and $\al(j)\notin S$; (f) $\al(i),\al(j)\notin S$.  

\vspace{5pt}

\noindent{\bf Case (a)}: $\al(i),\al(j)\in S_0$. We have the splitting 
\be\label{split_general}
\langle \bv_i^a, \cal P_S\bv_j^a\rangle = \langle \bv_i^a, \cal P_{S_0}\bv_j^a\rangle +  \langle \bv_i^a, \cal P_{S_1}\bv_j^a\rangle.
\ee
Applying \eqref{CS new} and (\ref{eq_nonspikeeg2}) to the first term, and Proposition \ref{prop_outev} to the second term, we get that
\begin{align*}
&\left |\langle \bv_i^a, \cal P_S\bv_j^a\rangle-\delta_{ij}\frac{1}{\wt{\sigma}_i^a} \frac{g_{2c}'(-(\wt{\sigma}_i^a)^{-1})}{g_{2c}(-(\wt{\sigma}_i)^{-1})} \right|  \prec \delta_{ij}\Delta_1^2(\wt\sigma_i^a)+\frac{n^{4\e}\left( n^{-1} + \phi_n^3\right)}{\Delta_1^2(\widetilde{\sigma}_i^a) \Delta_1^2(\widetilde{\sigma}_j^a)} \\
&+\left[  \frac{\phi_n}{\delta_{\al(i)}^{1/2}(S_1)}+\frac{\psi_{1}(\wt\sigma_i^a)\Delta_1(\widetilde{\sigma}_i^a) }{\delta_{\al(i)}(S_1)} \right]   \left[  \frac{\phi_n}{\delta^{1/2}_{\al(j)}(S_1)}+\frac{\psi_{1}(\wt\sigma_j^a)\Delta_1(\widetilde{\sigma}_j^a) }{\delta_{\al(j)}(S_1)} \right]   \\
&\lesssim 
n^{4\e}\psi_1^{1/2}(\wt \sigma_i^a)\psi_1^{1/2}(\wt \sigma_j^a),
\end{align*}
where we used that $\eta_l (\gamma_i) \sqrt{\kappa_{\gamma_i}} \lesssim n^{-1} +\phi_n n^{-5/6}\lesssim n^{-1}+\phi_n^3$ for $k=\OO(1)$ in the first step, and $(n^{-1/3} +\phi_n)  \le \Delta_1^2(\wt\sigma_{i/j}^a) \lesssim n^\e(n^{-1/3} +\phi_n) \lesssim \delta_{\al(i/j)}(S_1) $ in the second step.

\vspace{5pt}

\noindent{\bf Case (b)}: $\al(i)\in S_0$ and $\al(j)\in S_1$. First suppose that Assumption \ref{ass:nonver} holds for some constant $0<\wt\tau<\e$. Applying Cauchy-Schwarz and Proposition \ref{prop_nonspike} to the first term in \eqref{split_general}, we get that
\begin{align*}
 &|\langle \bv_i^a, \cal P_{S_0}\bv_j^a\rangle|\prec \frac{n^{4\e}(n^{-1}+\phi_n^3)}{\Delta_1^2 (\widetilde{\sigma}_i^a)\Delta_1^2(\widetilde{\sigma}_j^a)} \lesssim n^{4\e} \psi_1^{1/2}(\wt\sigma_i^a)\psi_1^{1/2}(\wt\sigma_j^a).
\end{align*}
Applying \eqref{eq_spikedvector0} to the second term in \eqref{split_general}, we get that
\begin{align*}
 &|\langle \bv_i^a, \cal P_{S_1}\bv_j^a\rangle|\\
 &\prec \frac{\psi_1(\wt\sigma_j^a)\Delta^2_1(\widetilde{\sigma}_j^a)}{\delta_{\al(i),\al(j)}^a} +\psi_1^2(\wt\sigma_i^a)\Delta^2_1(\widetilde{\sigma}_i^a)  \left(\frac{1}{\delta_{\al(i)}(S_1)}+ \frac{1}{\Delta_1^2(\widetilde{\sigma}_i^a)}\right)\left(\frac1{\delta_{\al(j)}(S_1)} + \frac{1}{\Delta_1^2(\widetilde{\sigma}_j^a)} \right)\\
 & \lesssim \left[\psi_{1}^{1/2}(\wt\sigma_i^a)+  \frac{\psi_{1}(\wt\sigma_i^a)\Delta_1(\widetilde{\sigma}_i^a) }{\delta_{\al(i)}(S)}   \right] \left[\psi_{1}^{1/2}(\wt\sigma_j^a)+  \frac{\psi_{1}(\wt\sigma_j^a)\Delta_1(\widetilde{\sigma}_j^a) }{\delta_{\al(j)}(S)}  \right] ,
\end{align*}
where we used 
$$\delta_{\al(i)}(S_1) \gtrsim \Delta_1^2(\wt\sigma_{i}^a),\quad \delta_{\al(j)}(S_1)\gtrsim \Delta_1^2(\wt\sigma_{j}^a) \wedge \delta_{\al(j)}(S),\quad \psi_1(\wt\sigma_j^a) \lesssim \psi_1(\wt\sigma_i^a),$$ 
and
$$\delta_{\al(i),\al(j)}^a\gtrsim \Delta_1^2(\widetilde{\sigma}_j^a) \gtrsim \Delta_1^2(\widetilde{\sigma}_i^a),\quad \psi_1(\wt\sigma_j^a) \Delta_1(\wt\sigma_j^a)\gtrsim \psi_1(\wt\sigma_i^a)\Delta_1(\wt\sigma_i^a).$$ 
This concludes the proof of case (b) if the non-overlapping Assumption \ref{ass:nonver} holds. Otherwise, the argument is similar to the one in Section \ref{sec_remove} by using the set $L_1(S_1)$, and we ignore the details. 

\vspace{5pt}

\noindent{\bf Cases (c), (e) and (f)}: We use the splitting \eqref{split_general}, where we will apply \eqref{CS new} and Proposition \ref{prop_nonspike} to the first term, and Proposition \ref{prop_outev} to the second term.  Note that in all cases, we have $\delta_{\al(j)}(S)\le \delta_{\al(j)}(S_1)$ and $\delta_{\al(j)}(S)\lesssim n^\e (\Delta_1^2(\wt\sigma_{j}^a)+\phi_n + n^{-1/3} )$. In case (c) with $\al(i)\in S_0$ and $\al(j)\notin S$, we obtain that
\begin{align*}
 |\langle \bv_i^a, \cal P_{S_0}\bv_j^a\rangle| &\prec \frac{n^{4\e}(n^{-1}+\phi_n^3)}{\Delta_1^2(\widetilde{\sigma}_i^a) \left(\Delta_1^2(\widetilde{\sigma}_j^a)+\phi_n+n^{-1/3}\right)} \\
&  \lesssim n^{5\e}  \psi_1^{1/2}(\wt\sigma_i^a)  \left[  \frac{\phi_n}{ \delta^{1/2}_{\al(j)}(S)}+\frac{\psi_{1}(\wt\sigma_j^a)\Delta_1(\widetilde{\sigma}_j^a) }{\delta_{\al(j)}(S)} \right] ,
\end{align*}
where we also used $\delta_{\al(i)}(S_1) \gtrsim\Delta^2_1(\wt\sigma_i^a)\ge n^{-1/3}+\phi_n$ in the second step, and  
\begin{equation*}
\begin{split}
 |\langle \bv_i^a, \cal P_{S_1}\bv_j^a\rangle| &\prec \left[  \frac{\phi_n}{ \delta^{1/2}_{\al(i)}(S_1)}+\frac{\psi_{1}(\wt\sigma_i^a)\Delta_1(\widetilde{\sigma}_i^a) }{\delta_{\al(i)}(S_1)} \right] \left[  \frac{\phi_n}{ \delta^{1/2}_{\al(j)}(S_1)}+\frac{\psi_{1}(\wt\sigma_j^a)\Delta_1(\widetilde{\sigma}_j^a) }{\delta_{\al(j)}(S_1)} \right] \\
&  \lesssim    \psi_1^{1/2}(\wt\sigma_i^a) \left[  \frac{\phi_n}{ \delta^{1/2}_{\al(j)}(S)}+\frac{\psi_{1}(\wt\sigma_j^a)\Delta_1(\widetilde{\sigma}_j^a) }{\delta_{\al(j)}(S)} \right] .
\end{split}
\end{equation*}
In case (e) with $\al(i)\in S_1$ and $\al(j)\notin S$, the $ |\langle \bv_i^a, \cal P_{S_0}\bv_j^a\rangle|$ can be bounded in the same way as case (c). On the other hand,
\begin{align*}
&|\langle \bv_i^a, \cal P_{S_1}\bv_j^a\rangle|  \prec  \Delta_1(\widetilde{\sigma}_i^a)\left[  \frac{\phi_n}{ \delta^{1/2}_{\al(j)}(S_1)}+\frac{\psi_{1}(\wt\sigma_j^a)\Delta_1(\widetilde{\sigma}_j^a) }{\delta_{\al(j)}(S_1)} \right] \\
&+ \left[   \psi^{1/2} (\wt\sigma_i^a)+\frac{\psi_{1}(\wt\sigma_i^a)\Delta_1(\widetilde{\sigma}_i^a) }{\delta_{\al(i)}(S_1)}  \right] \left[  \frac{\phi_n}{ \delta^{1/2}_{\al(j)}(S_1)}+\frac{\psi_{1}(\wt\sigma_j^a)\Delta_1(\widetilde{\sigma}_j^a) }{\delta_{\al(j)}(S_1)} \right] \\
& \lesssim   \Delta_1(\widetilde{\sigma}_i^a)\left[  \frac{\phi_n}{ \delta^{1/2}_{\al(j)}(S)}+\frac{\psi_{1}(\wt\sigma_j^a)\Delta_1(\widetilde{\sigma}_j^a) }{\delta_{\al(j)}(S)} \right]  \\
&+\left[   \psi^{1/2} (\wt\sigma_i^a)+\frac{\psi_{1}(\wt\sigma_i^a)\Delta_1(\widetilde{\sigma}_i^a) }{\delta_{\al(i)}(S)} \right] \left[  \frac{\phi_n}{ \delta^{1/2}_{\al(j)}(S)}+\frac{\psi_{1}(\wt\sigma_j^a)\Delta_1(\widetilde{\sigma}_j^a) }{\delta_{\al(j)}(S)} \right] ,
\end{align*}
where we used $ \delta_{\al(i)}(S_1)\gtrsim \Delta_1^2(\wt\sigma_i^a) \wedge \delta_{\al(i)}(S)$ in the second step. In case (f) with $\al(i),\al(j)\notin S$, we obtain that
\begin{align*}
 |\langle \bv_i^a, \cal P_{S_0}\bv_j^a\rangle| &\prec \frac{n^{4\e}(n^{-1}+\phi_n^3)}{\left(\Delta_1^2(\widetilde{\sigma}_i^a)+\phi_n+n^{-1/3}\right) \left(\Delta_1^2(\widetilde{\sigma}_j^a)+\phi_n+n^{-1/3}\right)} \\
&  \lesssim n^{6\e}  \left[  \frac{\phi_n}{\delta_{\al(i)}^{1/2}(S)}+\frac{\psi_{1}(\wt\sigma_i^a)\Delta_1(\widetilde{\sigma}_i^a) }{\delta_{\al(i)}(S)} \right]   \left[  \frac{\phi_n}{\delta^{1/2}_{\al(j)}(S)}+\frac{\psi_{1}(\wt\sigma_j^a)\Delta_1(\widetilde{\sigma}_j^a) }{\delta_{\al(j)}(S)} \right] , 
\end{align*}
where in the second step we used 
$$ \delta_{\al(i/j)}(S)\lesssim n^\e (\Delta_1^2(\tsig_{i/j}^a) +\phi_n + n^{-1/3} ) .$$
For the $\cal P_{S_1}$ term, we have
\begin{align*}
 |\langle \bv_i^a, \cal P_{S_1}\bv_j^a\rangle| &\prec \left[  \frac{\phi_n}{\delta_{\al(i)}^{1/2}(S_1)}+\frac{\psi_{1}(\wt\sigma_i^a)\Delta_1(\widetilde{\sigma}_i^a) }{\delta_{\al(i)}(S_1)} \right]   \left[  \frac{\phi_n}{\delta^{1/2}_{\al(j)}(S_1)}+\frac{\psi_{1}(\wt\sigma_j^a)\Delta_1(\widetilde{\sigma}_j^a) }{\delta_{\al(j)}(S_1)} \right]  \\
& \le  \left[  \frac{\phi_n}{\delta_{\al(i)}^{1/2}(S)}+\frac{\psi_{1}(\wt\sigma_i^a)\Delta_1(\widetilde{\sigma}_i^a) }{\delta_{\al(i)}(S)} \right]   \left[  \frac{\phi_n}{\delta^{1/2}_{\al(j)}(S)}+\frac{\psi_{1}(\wt\sigma_j^a)\Delta_1(\widetilde{\sigma}_j^a) }{\delta_{\al(j)}(S)} \right] ,
\end{align*}
where we used $\delta_{\al(i/j)}(S)\le \delta_{\al(i/j)}(S_1)$ in the second step.

\vspace{5pt}

\noindent{\bf Case (d)}: $\al(i),\al(j)\in S_1$.  Again using \eqref{split_general}, Proposition \ref{prop_nonspike} and Proposition \ref{prop_outev}, we get that 
\begin{align*}
& \left|\langle \bv_i^a, \cal P_S\bv_j^a \rangle - \delta_{ij}\frac{1}{\wt{\sigma}_i^a} \frac{g_{2c}'(-(\wt{\sigma}_i^a)^{-1})}{g_{2c}(-(\wt{\sigma}_i)^{-1})}\rangle\right| \prec \frac{n^{4\e}(n^{-1}+\phi_n^3)}{\Delta^2_1(\widetilde{\sigma}_i^a) \Delta^2_1(\widetilde{\sigma}_j^a)}  \\
&+\left[   \psi^{1/2} (\wt\sigma_i^a)+\frac{\psi_{1}(\wt\sigma_i^a)\Delta_1(\widetilde{\sigma}_i^a) }{\delta_{\al(i)}(S_1)} \right]\left[   \psi^{1/2} (\wt\sigma_j^a)+\frac{\psi_{1}(\wt\sigma_j^a)\Delta_1(\widetilde{\sigma}_j^a) }{\delta_{\al(j)}(S_1)}\right] \\
&\prec n^{4\e} \left[   \psi^{1/2} (\wt\sigma_i^a)+\frac{\psi_{1}(\wt\sigma_i^a)\Delta_1(\widetilde{\sigma}_i^a) }{\delta_{\al(i)}(S)}\right]\left[   \psi^{1/2} (\wt\sigma_j^a)+\frac{\psi_{1}(\wt\sigma_j^a)\Delta_1(\widetilde{\sigma}_j^a) }{\delta_{\al(j)}(S)} \right] .
\end{align*}
where we used $ \delta_{\al(i/j)}(S_1)\gtrsim \Delta_1^2(\wt\sigma_{i/j}^a)\wedge \delta_{\al(i/j)}(S)$ in the second step. 

\vspace{5pt}

Combining all the above six cases, we conclude that even without the Assumption \ref{assu_strong}, the estimate \eqref{eq_spikedvector} still holds with an additional factor $n^{6\e}$ multiplying with the RHS. Since $\e$ can be arbitrarily small, we conclude the proof.
\end{proof}

\end{appendix}

\bibliographystyle{abbrv}
\bibliographystyle{plain}
\bibliography{sepcov}

\end{document}